\setlist[enumerate]{leftmargin=*}
\newcommand{\e}{\mathrm{e}}
\newcommand{\rad}{\mathrm{rad}}
\newcommand{\diff}{{d}} 
\renewcommand{\approx}{ \asymp}
\DeclareMathOperator{\supp}{supp}
\DeclareFontFamily{U}{mathx}{\hyphenchar\font45}
\DeclareFontShape{U}{mathx}{m}{n}{
	<5> <6> <7> <8> <9> <10>
	<10.95> <12> <14.4> <17.28> <20.74> <24.88>
	mathx10
}{}
\DeclareSymbolFont{mathx}{U}{mathx}{m}{n}
\DeclareMathAccent{\widecheck}{0}{mathx}{"71}
\DeclareMathAccent{\wideparen}{0}{mathx}{"75}
\newcommand{\leqnomode}{\tagsleft@true}
\newcommand{\reqnomode}{\tagsleft@false}
\numberwithin{equation}{section}
\renewcommand{\leq}{\leqslant}
\renewcommand{\geq}{\geqslant}
\newcommand{\dd}{{d}}
\newcommand{\R}{\mathbb{R}}
\newcommand{\N}{\mathbb{N}}
\theoremstyle{theorem}
\newtheorem{theorem}{\sc \textbf{Theorem}}[section]  
\newtheorem{proposition}[theorem]{\sc \textbf{Proposition}}   
\newtheorem{corollary}[theorem]{\sc \textbf{Corollary}}        
\newtheorem{lemma}[theorem]{\sc \textbf{Lemma}}
\renewcommand{\approx}{ \asymp}
\theoremstyle{plain}
\newcounter{thm}
\newtheorem{main_theorem}[thm]{Theorem}
\theoremstyle{remark}
\newtheorem{definition}[theorem]{\sc \textbf{Definition}}
\newtheorem{remark}[theorem]{\sc \textbf{Remark}}
\DeclareFontFamily{T1}{calligra}{}
\DeclareFontShape{T1}{calligra}{m}{n}{<->s*[1.44]callig15}{}
\DeclareMathAlphabet\mathcalligra   {T1}{calligra} {m} {n}
\DeclareMathAlphabet\mathzapf       {T1}{pzc} {mb} {it}
\DeclareMathAlphabet\mathchorus     {T1}{qzc} {m} {n}
\DeclareMathAlphabet\mathrsfso      {U}{rsfso}{m}{n}
\newcommand{\myitem}[1]{%
	\item[#1]\protected@edef\@currentlabel{#1}%
}
\begin{document}

	\title[Blow-up exponents and a semilinear fractional equation on $\mathbb{H}^{n}$]{Blow-up exponents and a semilinear elliptic equation for the fractional Laplacian on hyperbolic spaces}

	\author[T.\ Bruno]{Tommaso Bruno}
	\address{Dipartimento di Matematica, Universit\`a degli Studi di Genova\\ Via Dodecaneso 35, 16146 Genova, Italy}
	\email{tommaso.bruno@unige.it}
	
	\author[E.\ Papageorgiou]{Effie Papageorgiou}
	\address{Institut f{\"u}r Mathematik, Universit\"at Paderborn, Warburger Str. 100, D-33098
		Paderborn, Germany}
	\email{papageoeffie@gmail.com}

	\keywords{Hyperbolic space, Fractional Laplacian, Fujita exponent, semilinear elliptic equations, Poincar\'e inequality}
	\thanks{\emph{Math Subject Classification} 35J60 (primary) 26A33, 43A85, 22E30 (secondary)}
\thanks{T.\ B.\  was partially supported by the 2025 INdAM--GNAMPA grant {\em Function Spaces and Applications} (CUP\_E53C22001930001). E.\ P.\ acknowledges support by the Deutsche Forschungsgemeinschaft (DFG, German Research Foundation)--SFB-Gesch{\"a}ftszeichen--Projektnummer SFB-TRR 358/1 2023 --491392403.}

	\begin{abstract}
Let $\mathbb{H}^n$ be the $n$-dimensional real hyperbolic space, $\Delta$ its nonnegative Laplace--Beltrami operator whose bottom of the spectrum we denote by $\lambda_{0}$, and $\sigma \in (0,1)$. 

The aim of this paper is twofold. On the one hand, we determine the Fujita exponent for the fractional heat equation
\[
			\partial_t u + \Delta^{\sigma}u = \e^{\beta t}|u|^{\gamma-1}u,
\]
by proving that nontrivial positive global solutions exist if and only if $\gamma\geq  1 + \beta/ \lambda_{0}^{\sigma}$. On the other hand, we prove the existence of non-negative, bounded and finite energy solutions of the semilinear fractional elliptic equation
\[
			 \Delta^{\sigma} v - \lambda^{\sigma} v - v^{\gamma}=0
\]
for $0\leq \lambda \leq \lambda_{0}$ and $1<\gamma< \frac{n+2\sigma}{n-2\sigma}$. The two problems are known to be connected and the latter, aside from its independent interest, is actually instrumental to the former.

\smallskip

At the core of our results stands a novel fractional Poincar\'e-type inequality expressed in terms of a new scale of $L^{2}$ fractional Sobolev spaces, which sharpens those known so far, and which holds more generally on Riemannian symmetric spaces of non-compact type. We also establish an associated Rellich--Kondrachov-like compact embedding theorem for radial functions, along with other related properties.
\end{abstract}
	
		\maketitle
	
	\addtocontents{toc}{\setcounter{tocdepth}{1}}
	\tableofcontents

	\section{Introduction}

The global solvability of nonlinear evolution problems, such as 
\begin{equation}\label{eq: heatRn}
\partial_tu+\Delta u=|u|^{\gamma-1}u, \quad u(0,\cdot)=u_{0}\geq 0,   \quad \text{in }  (0,\tau)\times \mathbb{R}^n,
\end{equation} 
where $\Delta$ is the nonnegative Laplacian, occupies a special place in the theory of nonlinear partial differential equations. Assuming that $u_0\in L^{\infty}(\mathbb{R}^n)$, the problem has a long history which goes back to Fujita~\cite{Fuj} (see also~\cite{Hay,KST}), who showed that for~\eqref{eq: heatRn} the following dichotomy holds:
\begin{itemize}
	\item if $1 < \gamma \leq  \gamma^{*} := 1+ \frac2n$, then~\eqref{eq: heatRn} does not possess nontrivial global solutions;
	\item if $\gamma > \gamma^{*}$, then solutions corresponding to small data in an appropriate sense are global in time.
\end{itemize}
This behavior of the solutions was named ``Fujita phenomenon'' thereafter. On the other hand, by a generalization of result by Kaplan \cite{Kaplan}, solutions corresponding to sufficiently large data blow up for any $\gamma>1$.

\smallskip

The situation on a general Riemannian manifold, $\Delta$ being now the nonnegative Laplace--Beltrami operator,  can be significantly different. A remarkable case is that of the hyperbolic space $\mathbb{H}^n$, where an analogue of \eqref{eq: heatRn} was first studied by Bandle, Pozio and Tesei~\cite{BPT}. They showed that for all $\gamma>1$, sufficiently small initial data give always rise to global in time solutions; in other words, the Fujita phenomenon does not occur. Nonetheless, the dichotomy can be recovered provided a suitable time-dependent nonlinearity is present. Namely, if the reaction term $|u|^{\gamma-1}u$ is replaced by $\e^{\beta t}\, |u|^{\gamma-1}u$, where $\beta>0$ is a fixed parameter, then a Fujita-type phenomenon takes indeed place, the threshold value being $\gamma^{*}=1+\beta/\lambda_0$,  where $\lambda_0:= (n -1)^2/4$ is the bottom of the $L^2$ spectrum of $\Delta$ on $\mathbb{H}^n$. It is also shown in \cite{BPT} that if the exponential factor in time is replaced by a power of time the Fujita phenomenon still does not occur. Heuristically, the exponential-type time-dependent nonlinearity may be seen as a counterbalance to the negative curvature and the spectral gap of $\Delta$; so that rather its Euclidean counterpart, the Fujita problem on $\mathbb{H}^{n}$ resembles the  Euclidean problem on bounded domains~\cite{Meier}. 

For our discussion, it is important to stress that though the critical case $\gamma=\gamma^{*}$ was shown to belong to the non-blow-up case in \cite{BPT}, this was done only for a restricted range of $\beta$'s. The puzzle for the critical case $\gamma=\gamma^{*}$ was completed by Wang and Yin~\cite{WY}, who proved the existence of supersolutions of the form $\e^{-\lambda t}\bar{v}$, where $\bar{v}$ is a nonnegative supersolution of the semilinear elliptic equation
\begin{equation}\label{eq:semilinear}
	\Delta v-\lambda v - u^{\gamma}=0
\end{equation}
which had already been studied by Mancini and Sandeep~\cite{MS}, and which has an independent interest beyond the Fujita problem. Let us also emphasize that to the best of our knowledge, the hyperbolic space case is the only case to date where a complete picture of the Fujita phenomenon for $\Delta$ outside the Euclidean setting is understood. On other negatively curved Riemannian manifolds, only partial results are at disposal, see e.g.~\cite{Punzo2012, Punzo2014}.

\smallskip

From the point of view of L\'evy processes, as well as from an abstract functional-analytic perspective, it is of interest to consider an analogue of~\eqref{eq: heatRn} where $\Delta$ is replaced by a nonlocal operator like its fractional powers $\Delta^{\sigma}$, where $\sigma \in (0,1)$. As long as the Euclidean setting is concerned, a fractional semilinear heat equation was considered in~\cite{Sug, IKK}, see also~\cite{BPV25,HIN, DPF25}, where it is shown that the Fujita phenomenon occurs at the threshold $1 +\frac{2\sigma}{n}$. A related fractional version of the semilinear problem~\eqref{eq:semilinear}, 
in the Sobolev space $H^{\sigma}(\mathbb{R}^n)$, was studied for $n=1$ in~\cite{FL13} and later for all $n\geq 1$ in \cite{FLS16}.

\smallskip

In this paper we provide a complete picture of the Fujita phenomenon for the fractional Laplace--Beltrami operator on the hyperbolic spaces $\mathbb{H}^{n}$, and at the same time, we study nonnegative solutions to a fractional version of~\eqref{eq:semilinear} as we now explain.

Our first result concerns the Fujita exponent for the fractional heat equation
\begin{equation}\label{eq:fracheat0}
\begin{cases}
	\partial_t u + \Delta^{\sigma}u = \e^{\beta t}|u|^{\gamma-1}u, \quad t>0, \, x\in \mathbb{H}^n \\
	u(0, \cdot)=f,
\end{cases} 
\end{equation}
where $\Delta$ is the nonnegative Laplace--Beltrami operator on the hyperbolic space $\mathbb{H}^n$, and $f \geq 0$ is bounded and continuous. It reads as follows.
	\begin{main_theorem}\label{thm:A}
	Suppose $\beta>0$ and $\sigma \in (0,1)$. Define $\gamma^{*} = 1 + \frac{\beta}{\lambda_{0}^{\sigma}}$. Then the following holds.
	\begin{itemize}
		\item If $1<\gamma< \gamma^{*}$, then every nontrivial nonnegative mild solution to the problem~\eqref{eq:fracheat0} blows up in finite time.
		\item If $\gamma\geq \gamma^{*}$, then there exists a nontrivial nonnegative classical global solution to the problem~\eqref{eq:fracheat0} for sufficiently small initial data $f$.
	\end{itemize}
\end{main_theorem}
Theorem~\ref{thm:A} may be seen as the fractional analogue on $\mathbb{H}^{n}$ of the above-mentioned celebrated results by Bandle, Pozio and Tesei~\cite{BPT} and Wang and Yin~\cite{WY}. It does not come as a surprise  that the result does not resemble its fractional Euclidean counterpart.

The critical case $\gamma=\gamma^{*}$ for small $\beta$ requires a detour in the spirit of~\cite{WY} where the fractional analogue of~\eqref{eq:semilinear} comes into play, as the discussion above suggests. This leads us to our second main result, which has an independent interest and which is the first of its kind on hyperbolic spaces.
	\begin{main_theorem}\label{thm:B}
	Suppose $0\leq \lambda \leq \lambda_{0}$, $\sigma \in (0,1)$ and $1<\gamma< \frac{n+2\sigma}{n-2\sigma}$. Then the equation
	\begin{equation}\label{criticreduction0}
		\Delta^{\sigma} v - \lambda^{\sigma} v - v^{\gamma}=0
	\end{equation}		
	has at least one nontrivial nonnegative radial bounded solution such that
\begin{equation}\label{fracenergy}
	\int_{\mathbb{H}^{n}} (|\Delta^{\sigma/2} v|^{2}-\lambda^{\sigma}v^{2})\, \dd \mu
	\end{equation}
	is finite, and which belongs to $L^q(\mathbb{H}^{n})$ for all $q\in (2, \infty]$. The solution is understood in the classical sense, i.e.  $v\in \mathcal{C}^{0,2\sigma+\varepsilon}(\mathbb{H}^{n})$ if $0<\sigma<1/2$, while $v\in \mathcal{C}^{1,2\sigma-1+\varepsilon}(\mathbb{H}^{n})$ if $1/2\leq \sigma <1$ for some small $\varepsilon>0$, and~\eqref{criticreduction0} is satisfied pointwise everywhere on $\mathbb{H}^n$.
\end{main_theorem}
The energy~\eqref{fracenergy} of the solution is linked to our third main result, which is the actual backbone of the paper. It is a fractional Poincar\'e-type inequality which extends those known so far, see~\cite{MS, BP2022} and Remark~\ref{rem:Poincarecomparison} below.
	\begin{main_theorem}\label{thm:C}
	Suppose $\sigma \in (0,1)$ and $2<q\leq \frac{2n}{n-2\sigma}$. There exists $C>0$ such that for all $\phi\in \mathcal{C}_{c}^{\infty}(\mathbb{H}^{n})$
	\begin{equation*}
\int_{\mathbb{H}^n} (|\Delta^{\sigma/2} \phi |^{2}-\lambda_{0}^{\sigma}\phi^{2})\, \dd \mu \geq C \|\phi \|_{q}^{2}.
	\end{equation*}
\end{main_theorem}
Let us stress that the left-hand side of such inequality is not equivalent to the fractional Sobolev norm or order $\sigma$ of $\phi$. It is, though, a norm, which gives rise to a new scale of $L^2$ fractional Sobolev spaces. For these we shall prove a compact embedding theorem in $L^{q}$ for radial functions of Rellich--Kondrachov-type, when $q$ is in the open range $2<q< \frac{2n}{n-2\sigma}$, along with other related properties. It is also worth pointing out that the corresponding non-fractional inequality for test functions (where $\sigma=1$ and $\Delta^{1/2}$ is replaced by the Riemannian gradient $\nabla$) holds for $n\geq 3$ and $2<q\leq \frac{2n}{n-2}$, while for $n=2$ it holds for all $q>2$. Since $\sigma\in (0,1)$ this dichotomy on the dimension for the range of $q$ does not occur in our case. 

\smallskip

A few comments on our results and techniques are now in order. Let us first say that as already known in the Euclidean case, $\Delta^{\sigma}f$ can be expressed with a pointwise formula only if $f$ has a mild growth at infinity.  Inspired by Silvestre~\cite{Sil}, we detect a suitable class of functions for which this holds, tailored on the hyperbolic geometry at infinity; see~\eqref{eq: class Lsigma}. 

\smallskip

The presence of the spectral gap of $\Delta$, as well as the non-locality of its fractional powers, considerably change the picture with respect to both the Euclidean setting and the non-fractional hyperbolic case. At the same time, despite some a posteriori similarities, the techniques for bounded domains on $\mathbb{R}^n$ do not apply. As for Theorem~\ref{thm:A}, when $\gamma<\gamma^{*}$ we rely on precise pointwise bounds for the fractional heat kernel with respect to both space and time. Let us stress that these differ from the Euclidean case and are often more challenging to handle in the hyperbolic setting. On the other hand, if $\gamma>\gamma^{*}$, or $\gamma=\gamma^{*}$ and $\beta >\frac{2}{3}\lambda_{0}^{\sigma}$, we prove global existence by adapting some ideas of Weissler~\cite{Weissler1981}; precise information on the fractional heat kernel is required once more. However, much like in the non fractional case, the critical regime $\gamma=\gamma^{*}$ and $\beta \leq\frac{2}{3}\lambda_{0}^{\sigma}$ needs a different approach, which goes through a comparison principle for pointwise (super)solutions to \eqref{eq:fracheat0} with enough regularity; see Theorem \ref{thm: comparison}. As mentioned above, equation~\eqref{criticreduction0} comes into play.
 
 \smallskip
 
 This brings us to Theorems~\ref{thm:B}, and in turn to~\ref{thm:C}. Provided that a good fractional Sobolev embedding is at hand, we use functional-analytic techniques and a compactness argument to show existence of a weak solution to~\eqref{criticreduction0}. Indeed, if one defines 
 \[
 \|\phi \|_{\lambda,\sigma}:= \| (\Delta^{\sigma} - \lambda^{\sigma})^{1/2}\phi\|_{2}, \qquad \phi \in \mathcal{C}_{c}^{\infty},
 \]
 then $\| \cdot \|_{\lambda,\sigma}$ is a norm, and the completion $\mathcal{H}_{\lambda,\sigma}$ of $\mathcal{C}_{c}^{\infty}$ with respect to it is a Hilbert space. If $\lambda <\lambda_{0}$, then $\mathcal{H}_{\lambda,\sigma}$ is nothing but the fractional Sobolev space $\mathrm{H}^{\sigma}$ of order $\sigma$, but this is not the case if $\lambda = \lambda_{0}$; actually $\mathcal{H}_{\lambda_{0},\sigma} \supsetneq \mathrm{H}^{\sigma}$ for all $\sigma \in (0,1)$. Nevertheless, Theorem~\ref{thm:C} shows that $\mathcal{H}_{\lambda_{0},\sigma} \hookrightarrow L^{q}$ for  $2<q  \leq  \frac{2n}{n-2\sigma}$. 
 
 Despite being a larger space, $\mathcal{H}_{\lambda_{0},\sigma}$ shares many convenient properties with $\mathrm{H}^{\sigma}$, e.g., the facts that absolute value and radialization preserve the space $\mathcal{H}_{\lambda_{0},\sigma}$ without increasing the norm; see Corollary~\ref{cor:abs} and Proposition~\ref{prop:rad}. Moreover -- and this is the key towards Theorem~\ref{thm:B} -- a compact embedding in $L^{q}$ for radial functions holds in the open range of $q$'s; this is Theorem \ref{thm: cpt radial emb}. Its proof needs quite some effort, as different techniques are needed locally and globally: the first is treated by a classical Rellich-type compactness argument, while at infinity we prove and use a hyperbolic fractional analog of a result due to Lions~\cite{Lions}. To this end, a further distinction is needed between the case $0<\sigma\leq 1/2$, which we treat via harmonic analysis tools and pointwise bounds, and the case $\sigma>1/2$, for which we rely on interpolation.
  
With this toolkit at hand, inspired by Mancini and Sandeep~\cite{MS} and by Dutta and Sandeep~\cite{DuttaSandeep}, we show in Theorem \ref{existence-MS} that for $0\leq \lambda \leq \lambda_{0}$, $\sigma \in (0,1)$ and $1<\gamma< \frac{n+2\sigma}{n-2\sigma}$, equation~\eqref{criticreduction0} has at least one nontrivial non-negative weak solution in $\mathcal{H}_{\lambda,\sigma}$. This is done via a compactness argument. The problem of its regularity boils down, by means of some regularity results from~\cite{Sil} and~\cite{BanEtAl} for the fractional Laplacian, to showing that such a solution is bounded. The latter is proved by exploiting the pointwise behavior of the convolution kernel of $(\Delta^{\sigma}-\lambda^{\sigma})^{-1}$. 

It is clear by now that the fractional Poincar{\'e} inequality given by Theorem~\ref{thm:C} lies at the heart of all our results. We prove it via a delicate use of pointwise and Fourier estimates for the fractional heat kernel and the Kunze--Stein phenomenon.

\smallskip

All of our results actually generalize, with essentially no additional effort, to rank one symmetric spaces of non-compact type. We keep the discussion on a hyperbolic space level only for simplicity. In higher rank the fractional Poincar{\'e} inequality remains valid, and the same is true for all Fujita-type results except for the critical range of parameters $\gamma=\gamma^{*}$ and $0<\beta<\frac{2}{\nu}\lambda_{0}^{\sigma}$, where $\nu$ is now the pseudo-dimension of the symmetric space. We discuss this at the very end.

\smallskip

Let us finally say that in this paper we do not delve into an exhaustive study of equation~\eqref{criticreduction0} in the spirit of~\cite{MS} or~\cite{DuttaSandeep}, but rather restrict to those results needed by the Fujita problem. In particular, we do not have pointwise results for the solutions: in the search for these, the nonlocal nature of the fractional Laplacian $\Delta^{\sigma}$ manifests in its stronest form.  This is certainly an interesting problem for the future.

\subsection*{Structure of the paper} Let us finally describe the structure of the paper. In Section \ref{Sec:2} we recall certain preliminaries on hyperbolic spaces and Fourier analysis thereon. In Section \ref{Sec:3} we discuss the Fujita problem, various notions on solutions and present our complete results concerning finite time blow-up or global existence of solutions to \eqref{eq:fracheat0}. Their proofs are then given in Section \ref{Sec:4} and Section \ref{Sec:5}, respectively,  under the assumption that Theorems \ref{thm:B} and \ref{thm:C} hold. To this end, we prove Theorem~\ref{thm:C}, i.e.\ the fractional Poincar{\'e} inequality, in Section~\ref{Sec:6}, as an indispensable tool towards Theorem~\ref{thm:B}, to which the following three sections are dedicated. More precisely, in Section \ref{Sec:7} we discuss the space $\mathcal{H}_{\lambda, \sigma}$. In Section \ref{Sec:8} we show that the embedding of the radial subspace $\mathcal{H}_{\lambda, \sigma}^{\text{rad}}\hookrightarrow L^q$ is compact if $q\in (2, \frac{2n}{n-2\sigma})$. Section~\ref{Sec:9} contains the actual proof of Theorem~\ref{thm:B}. Finally, in Section \ref{Sec:10} we discuss the validity of our results on Riemannian symmetric spaces of non-compact type.

	\subsection*{Acknowledgements} T. B. was partially supported by the INdAM--GNAMPA Project “Function Spaces and Applications” (CUP\_E5324001950001). 
	
E. P. is supported by the Deutsche Forschungsgemeinschaft (DFG, German Research Foundation) –SFB-Gesch{\"a}ftszeichen –Projektnummer SFB-TRR
358/1 2023 –491392403. 

The authors are extremely grateful to Kunnath Sandeep for an enlightening discussion about his work. 
	
		\section{Preliminaries: analysis on hyperbolic spaces}	\label{Sec:2}
	In this section we give some preliminaries on the real hyperbolic space $\mathbb{H}^n=\mathbb{H}^n(\mathbb{R})$, $n\geq 2$, a model of which is given by the upper sheet of the hyperboloid
	\[
	\mathbb{H}^n=\{ (x_0,x_1, ... ,x_n)\in\mathbb{R}^{n+1} \colon x_0^2-x_1^2-...-x_n^2=1,\;x_0>0\}.\]
	Using polar coordinates,
	\[
	x_0=\cosh r\quad\text{and}\quad(x_1, ..., x_n)=(\sinh r)\,\omega\quad
	\text{with}\quad r\ge 0,\,\; \omega\in\mathbb{S}^{n-1}.
	\]
	The distance of a point $x=x(r,\omega)=\bigl(\cosh r,(\sinh r)\,\omega\bigr)\in\mathbb{H}^n$
	to the origin $o=(1,0, ... ,0)$ of the hyperboloid  is
	\[
	d(x,o)=r,
	\]
	and more generally, the distance between two arbitrary points
	$x=x(r,\omega)$ and $y=y(s,\omega')$ satisfies
\[
		\cosh d(x,y)=(\cosh r)(\cosh s)-(\sinh r)(\sinh s)\,\omega\cdot\omega'.
\]
We shall often write $|x|$ for $d(x,o)$, and denote the geodesic ball around the origin of radius $\varrho>0$ by $B(o, \varrho)$. Integration on $\mathbb{H}^n$ with respect to its Riemannian measure $\mu$ is given by
	\begin{equation*}
		\int_{\mathbb{H}^n} f(x) \, d\mu(x)
		=\int_0^{\infty}\int_{\mathbb{S}^{n-1}}u_0(r,\omega) \, d\omega \,(\sinh r)^{n-1} \, dr .
	\end{equation*}
Given that it can be written as the quotient $\mathbb{H}^n=SO^{0}(n,1)/SO(n)$, the hyperbolic space $\mathbb{H}^n$ is the simplest example of a noncompact symmetric space (of rank one). Let us elaborate on this, since the group framework will be quite useful. Our main reference is~\cite{Hel}.
	
	\subsection{Rank one noncompact symmetric space structure}\label{subsection SS} 
	Let ${G}$ be a connected,  noncompact semisimple Lie group with finite center. Let $K$ be a maximal compact subgroup of ${G}$ and $\mathbb{X}={G}/K$ be the corresponding symmetric space. We consider a Cartan decomposition $\mathfrak{g}=\mathfrak{k}\oplus\mathfrak{p}$ of the Lie algebra of ${G}$. Fix a maximal abelian subspace $\mathfrak{a}$ of $\mathfrak{p}$ and consider the decomposition $\mathfrak{g}=\mathfrak{n}\oplus\mathfrak{a}\oplus\mathfrak{k}$. If $\mathfrak{a}\cong \mathbb{R}$, then we say that the symmetric space $\mathbb{X}$ has rank one. Taking $G=SO^{0}(n,1)$ and $K=SO(n)$ from now on, one has that the real hyperbolic space $\mathbb{H}^n$ is a symmetric space of rank one.
	
	The group admits the following decompositions,
	\begin{align*}
		\begin{cases}
			\,{G}\,=\,N\,(\exp \mathfrak{a})\,K 
			\qquad&\textnormal{(Iwasawa)}, \\[5pt]
			\,{G}\,=\,K\,(\exp\overline{\mathfrak{a}^{+}})\,K
			\qquad&\textnormal{(Cartan)},
		\end{cases}
	\end{align*}
	where $N\cong \mathbb{R}^{n-1}.$
Denote by $\alpha$ the unique positive root of $\mathfrak{g}$ with respect to $\mathfrak{a}$, with multiplicity $n-1$, and let $H_0$ be the unique element of $\mathfrak{a}$ with the property that $\langle \alpha, H_0 \rangle=1$. Let $\tau(g)$ be the real number such that
	\[
	g=n\exp (\tau(g)  H_0)k, \qquad g\in G.
	\] To simplify the notation,
	we often identify the Lie subgroup $\exp \mathfrak{a}$ with the real line $\mathbb{R}$ using the map $\tau \mapsto \exp(\tau\,H_0)$, and identify  $\exp\overline{\mathfrak{a}^{+}}$ with $[0, +\infty)$. In the Cartan decomposition, integration on $G$ with respect to a Haar measure writes
\[
		\int_{{G}}f(g)\,\diff{g}
		=
		\textrm{const.} \int_{K}
		\int_{0}^{\infty} \int_{K}f(k_{1}(\exp r\, H_0)k_{2}) \delta(r) \, \diff{k_2}\, \diff{r} \, \diff{k_1}.
\]
Here $K$ is equipped with its normalized Haar measure and ``const'' is a positive normalizing constant, so that for right-$K$ invariant functions, we have
	$$\int_{\mathbb{H}^n}f(x)\,\diff\mu(x)=\int_{{G}}f(g)\,\diff g.$$
	The density $\delta$ satisfies
	\begin{equation}\label{dens}
		\delta(r) =(\sinh r)^{n-1} \asymp 
		\begin{cases} r^{n-1}, \quad & 0<r<1\\
			\e^{(n-1)r}, \quad & r\geq 1.
		\end{cases}
	\end{equation}
Above and all throughout, we write $f \approx g$ for two positive functions $f$ and $g$ whenever there exists $C > 0$ (depending on circumstantial parameters) such that $C^{-1}g \leq f \leq C g$. Analogously, we shall write $f\lesssim g$ if there exists such a $C$ such that $f\leq C g$. 	
	
	Finally, we describe Fourier analysis on rank one non-compact symmetric spaces. We refer to \cite{Hel}*{Ch.\ III} for more details. Denote by $\rho$ the number $(n-1)/2$. For continuous compactly supported functions, the Helgason--Fourier transform is defined by
	\begin{align}\label{H-Ftr}
		\widehat{f}(\xi,k\mathbb{M})=\,\int_{{G}}\,f(gK)\,
		\e^{(-i\xi+\rho)\,\tau(k^{-1}g)}\,\diff{g}, \qquad \xi \in \mathbb{C}, \; k\in K.
	\end{align}
	Here, $\mathbb{M}$ denotes the centralizer of $\exp\mathfrak{a}$ in $K$. Let us also define the spherical transform of continuous compactly supported and radial functions by
	\begin{align*}
		\mathcal{H}f(\xi)=\!\int_{G}f(gK)\,\varphi_{-\xi}(g)\, \diff{g}=C\int_{0}^{\infty} f(r)\, \varphi_{-\xi}(r)\, (\sinh r)^{n-1} \, \dd r \qquad \xi \in \mathbb{C},
	\end{align*}
	where $\varphi_{\xi}$ is the
	elementary spherical function of index $\xi\in\mathbb{C}$. The functions $\varphi_{\xi}$ are normalized eigenfunctions of the (nonnegative) Laplace-Beltrami operator $\Delta$, that is, $\Delta \varphi_{\xi}=(\xi^2+\rho^2)\varphi_{\xi}$ with $\varphi_{\xi}(o)=1$, the spectrum of $\Delta$ being
\[
\sigma(\Delta) =[\lambda_{0}, \infty), \qquad  \lambda_0=\rho^2=(n-1)^2/4.
\]
The spherical functions $\varphi_{\xi}$ are smooth and radial in space, and even in the spectral parameter, that is $\varphi_{\xi}=\varphi_{-\xi}$. This in turn implies that $\mathcal{H}f$ is even on $\mathbb{R}$ when $f$ is radial on $\mathbb{H}^n$; recall that a function $f$ on $\mathbb{H}^n$ is said to be \emph{radial}  if there exists $\tilde f \colon [0,+\infty) \to \mathbb{C}$ which we call the \emph{profile} of $f$ such that $f(x) = \tilde f (d(x,o))$. In the group framework,  $f$ radial means that $f(kx)=f(x)$ for all $k\in SO(n)$ and $x\in \mathbb{H}^n$.

 An integral representation formula is given by
	\begin{align}\label{eq: spherical hyp}
		\varphi_\xi(r) & =\int_K \e^{(-i\xi+\rho)\tau(kx)}\, \dd k
		\end{align}
	see for instance \cite[p.40]{Koo84}. Notice that globally, it holds 
	\begin{equation}\label{eq: ground}
		\varphi_{0}(r)\asymp (1+r)\,\e^{-\frac{n-1}{2}r},
	\end{equation}
whence $\varphi_{0}\in L^{2+\varepsilon}$ for every $\varepsilon>0$. Recall that in the case of radial smooth and compactly supported functions, the Helgason--Fourier transform boils down to the spherical transform, that is, 
\[
\widehat{f}(\xi,k\mathbb{M})=\mathcal{H}f(\xi) \qquad  \xi\in\R, \, k\in K.
\]
	Passing to inversion formulas, for a smooth and compactly supported function $f$ on $\mathbb{H}^n$ it holds
	\begin{equation}\label{eq:inversionHelg}
	f(gK)=\text{const.}\int_{K}\int_{-\infty}^{\infty}\e^{(i\xi+\rho)\tau(k^{-1}g)} \widehat{f}(\xi, k\mathbb{M}) \,  \frac{\dd \xi}{|\mathbf{c}(\xi)|^{2}}\, \dd k, \qquad g\in G,
	\end{equation}
	where the Plancherel density is
	\begin{equation}\label{eq: Plancherel}
		|\mathbf{c}(\xi)|^{-2}=\mathbf{c}(\xi)^{-1}\mathbf{c}(-\xi)^{-1}=\text {const.} \frac{\left|\Gamma\left(i \xi+\frac{n-1}{2}\right)\right|^2}{|\Gamma(i \xi)|^2} \asymp \begin{cases}\xi^2 & \text { if }|\xi| \leq 1 \\ |\xi|^{n-1} & \text { if }|\xi| \geq 1\end{cases},
	\end{equation}
	and if the function is in addition radial, then the inversion formula simplifies to
	$$
	f(x)=\text{const.}\int_{0}^{\infty} \mathcal{H}f(\xi) \,  \varphi_{\xi}(x)\frac{\dd \xi}{|\mathbf{c}(\xi)|^{2}}.
	$$
	
	A version of the Plancherel theorem holds in $\mathbb{H}^n$, \cite[Chapter III, Theorem 1.5]{Hel}. Indeed for all $f\in L^{2}$
	\begin{equation}\label{eq: Plancherel thm}
		\int_{\mathbb{H}^n} |f(x)|^2 \, \dd \mu (x)= \text{const.}\int_{K}\int_{\mathbb{R}} |\widehat{f}(\xi, k\mathbb{M})|^2\, |\mathbf{c}(\xi)|^{-2} \, \dd \xi \, \dd k.
	\end{equation}
	Moreover, a version of the Paley--Wiener theorem also holds, \cite[Chapter III, Theorem 5.1]{Hel}: if  $f\in \mathcal{C}_c^{\infty}$, then for all $N\in \mathbb{N}$
	\begin{equation}\label{eq: PW}
		\sup_{\xi \in \mathbb{R}, k\in K} (1+|\xi|)^N |\widehat{f}(\xi, k\mathbb{M})|<\infty.
	\end{equation}

	\subsection{Heat kernel}
Consider the heat equation for $\Delta$, given by
\[
		\begin{cases}
			\partial_{t}u(t,x)+\Delta u(t,x)=0, 
			\qquad t>0,\,\,x\in\mathbb{H}^n,\\
			u(0,x)=f(x).
		\end{cases}
\]
The heat kernel  $h_{t}$, that is, the minimal
	positive fundamental solution of the heat equation or, equivalently, the
	convolution kernel of the heat semigroup $\e^{-t\Delta}$, is a smooth radial function. Its behavior is described by
\[
		h_t(x)\asymp  t^{-\frac n2}(1 + r)\,(1+ t+r)^{\frac{n-3}2}
		\e^{-(\frac{n-1}2)^2t-\frac{n-1}{2}r-\frac{r^2}{4t}}, \qquad r=d(x,o),
\]
	see for instance \cite{AJ99, AO, DaMa1988}.

	\subsection{Positive powers of the Laplacian} Suppose $\sigma \in (0,1)$, and consider the function
	$$
	P_0^\sigma(x):=\int_0^{\infty} h_t(x) \frac{\dd t}{t^{1+\sigma}}.
	$$
which is a nonnegative radial function, as is $h_{t}$. The following two-sided estimates for it were proved in \cite[Theorem 2.4]{BanEtAl} (see also \cite[Theorem 3.3]{BP2022}): 
\begin{alignat}{2}
    P_0^{\sigma}(x) &\asymp |x|^{-1-\sigma} \, \e^{-(n-1)|x|} &&\quad \text{for } |x| \geq 1, \notag \\
                    &\asymp |x|^{-n - 2\sigma}               &&\quad \text{for } |x| < 1, \label{eq: P0 est}
\end{alignat}
	which in turn imply that $P_0^{\sigma}\in L^p(\mathbb{H}^n\backslash B(o,1))$ for all $p\in[1,\infty]$.
	
	These estimates and~\cite[Theorem 2.5]{BanEtAl} yield that if $ {\phi}\in \mathcal{C}_c^{\infty}$, then for all $0<\sigma<1$, the operator $\Delta^{\sigma}$ defined spectrally by
	\[
	\widehat{\Delta^{\sigma}  {\phi}}(\xi, k\mathbb{M}) = (\xi^{2} +\rho^{2})^{\sigma} \widehat{ {\phi}}(\xi, k\mathbb{M}) \qquad \xi\in \R, \, k\in K,
	\]
	can be represented by the integral
	\begin{equation}\label{eq:singint}
		\Delta^{\sigma} {\phi}(x)=\frac{1}{|\Gamma(-\sigma)|} \, \mathrm{p.v.} \! \int_{\mathbb{H}^n}( {\phi}(x)- {\phi}(z)) \, P_0^{\sigma}(z^{-1}x) \, \dd z.
	\end{equation}
This formula actually holds for more general classes of functions, which we shall discuss in due time.  	

For all $0<\sigma<1$, $0\leq \lambda\leq\lambda_{0}$ and for all $\alpha>0$, taking also into account the behavior~\eqref{eq: Plancherel} of the Plancherel density $|\mathbf{c}(\xi)|^{-2}$, the Plancherel theorem~\eqref{eq: Plancherel thm} yields
	\[
	\|(\Delta^{\sigma}-\lambda^{\sigma})^{\frac{\alpha}{2}}f\|_2=\int_K\int_{\mathbb{R}} \left((\xi^2+\lambda_0)^{\sigma}-\lambda^{\sigma}\right)^{\alpha}|\widehat{f}(\xi, k\mathbb{M})|^2 \, \frac{\dd \xi}{|\mathbf{c}(\xi)|^2}\, \dd k<\infty.
	\]

\subsection{Fractional heat kernel}
	Suppose $\sigma\in (0,1)$ and let $\eta_{t}^{\sigma}$ be the inverse Laplace transform of the function $s \mapsto \exp\{-t s^{\sigma}\}$.  Via subordination to the heat semigroup, we may write 
	\[
	\e^{-t\Delta^{\sigma}}=\int_0^{+\infty}\eta_{t}^{\sigma}(s)\, \e^{-s\Delta} \,\diff{s}.
	\]
	Then, the solution to the (homogeneous) fractional heat equation
	\begin{equation*}
		\begin{cases}
			\partial_{t}u(t,x)+\Delta^{\sigma} u(t,x)=0, 
			\qquad t>0,\,\,x\in\mathbb{H}^n,\\
			u(0,x)=f(x).
		\end{cases}
	\end{equation*} 
	is given by the right convolution (for sufficiently regular functions $f$)
	\[
	u(t,x)=\e^{-t\Delta^{\sigma}}f(x)=f\ast P_t^{\sigma}(x)=\int_{G} P_t^{\sigma}(y^{-1}x)f(y)\,\diff{y}, \qquad x\in \mathbb{H}^{n}, \, t>0,
	\]
	where the kernels are given by
	\begin{equation}\label{kernelFH}
		P_t^{\sigma}(x)=\int_0^{+\infty} h_s(x)\,\eta_{t}^{\sigma}(s)\,\diff{s}, \qquad x\in \mathbb{H}^{n} , \, t>0.
	\end{equation}
	Precise estimates for the subordinator $\eta_{t}^{\sigma}$ from above and below are know, see e.g.~\cite[(8)~and~(9)]{GS04}. Observe moreover that formula~\eqref{kernelFH} implies that $P_t^{\sigma}$, which we call \emph{fractional heat kernel}, is radial and positive. We recall the following result concerning its behavior, see~\cite{GS04, BP}: for any $x\in \mathbb{H}^n$, $t>0$, and $\kappa>0$,
		\begin{equation}\label{P kernel estimates}
			P_t^{\sigma}(x)\asymp
			\begin{cases}
				t\,(t^{\frac{1}{2\sigma}}+|x|)^{-(n+2\sigma)} & \quad \text{if } t+|x| \leq \kappa,
				\\[5pt]
				\varphi_{0}(x) \, t^{\frac{1}{2-2\sigma}} \, (t+|x|)^{-\frac{3}{2}-\frac{1}{2-2\sigma}}\, \e^{-\lambda_0^\sigma t} &\quad  \text{if } t+|x| \geq \kappa \text{ and } |x|\leq \sqrt{t},
				\\[5pt]
				\varphi_{0}(x)\,t\,(t+|x|)^{-2-\sigma}\e^{-\rho|x|} &\quad  \text{if } t+|x| \geq \kappa \text{ and } |x|\geq t^{1/\sigma}.
			\end{cases}
		\end{equation}
	The expressions of upper and lower bounds of $P_t^{\sigma}$ in the region excluded above are more complicated, and actually involve a certain implicit function, which we set aside now in order not to overwhelm the discussion at this stage.
	
The semigroup $\e^{-t\Delta^{\sigma}}$ is a strongly continuous contraction semigroup on $L^{p}$ for all $p\in [1,\infty)$. For the $C_0$-semigroup property, see \cite[p.260]{Y}, while for the contractivity, the subordination formula~\eqref{kernelFH} and the fact that the heat kernel is a probability measure on $\mathbb{H}^n$ implies, via a Fubini argument, that $\|P_t^{\sigma}\|_1=1$ for all $t>0$. We also have {$\left\|P_t^{\sigma}\right\|_p \asymp t^{-\frac{n}{2\sigma p'}}, \; t\in(0,1),$} while for $t\geq 1$,
\begin{equation}\label{PtLp}
\left\|P_t^{\sigma}\right\|_p \asymp
\begin{cases}
	t^{-\frac{1}{2 p^{\prime}}} \e^{-t\frac{\rho^{2\sigma}}{(p p^{\prime})^\sigma}}, & 1 \leq p<2; \\
	t^{-\frac{3}{4}} \e^{-\rho^{2 \sigma} t}, & p=2; \\
	t^{-\frac{3}{2}} \e^{-\rho^{2 \sigma} t}, & 2<p\leq \infty.
\end{cases}
\end{equation}
These estimates can be found in \cite[Lemma 3]{CGMII} for $1<p\leq 2$ and $p=\infty$. The upper bound for $2<p<\infty$ may be obtained via the Kunze-Stein phenomenon, see e.g. \cite[p.122]{CGM} and the semigroup property from which $\|P_t^{\sigma}\|_p= \|P_{t/2}^{\sigma}\ast P_{t/2}^{\sigma}\|_p\leq\|P_{t/2}^{\sigma}\|_2^2$. As for the lower bound for $2<p<\infty$, it suffices to integrate over $B(o, \sqrt{t})$, $t\geq 1$, where according to \eqref{P kernel estimates} it holds $P_t^{\sigma}(r)\asymp t^{-\frac{3}{2}}\,\e^{-\rho^{2\sigma}t}\, (1+r)\, \e^{-\rho r}$. 
	
\subsection{Some classes of functions} In this section we discuss some function spaces on $\mathbb{H}^{n}$ which will be used all throughout the paper; some of these have already appeared. In particular, we shall write
\begin{itemize}
\item $\mathcal{C}_{c}^{\infty}$ for the space of smooth and compactly supported functions;
\item $\mathcal{C}^{k}$, for $k\in\N \cup \{0\}$, for the space of $k$-times continuously differentiable functions.
\item $\mathcal{C}_0$ for the space of continuous functions vanishing at infinity.
\item $\mathcal{C}^{0,\alpha}$, given $\alpha \in (0,1]$, for the space of (H\"older continuous) functions $f$ such that there exists $C>0$ for which
	\[
	|f(x)-f(y)|\leq C\, d(x,y)^{\alpha}, \qquad \forall x, y\in \mathbb{H}^n;
	\]
\item $\mathcal{C}^{k,\alpha}$, given $k\in \N$ and $\alpha \in (0,1]$, for the H\"older space of functions $f$ such that $\nabla^{k}f$ belongs to $\mathcal{C}^{0,\alpha}$;
\item $\mathrm{H}^{\sigma}$ for the $L^{2}$ Sobolev space of order $\sigma>0$, defined as the space of functions $f\in L^{2}$ such that $\Delta^{\sigma} f \in L^{2}$, endowed with the Sobolev norm
\[
\|f\|_{\mathrm{H}^{\sigma}} = \|f\|_{2} + \|\Delta^{\sigma} f\|_{2}.
\]
\item $L_{\sigma}$ for the space of measurable functions $f$ such that 
		\begin{equation}\label{eq: class Lsigma}
		\int_{\mathbb{H}^n}|f(x)|\,(1+|x|)^{-1-\sigma}\,\e^{-(n-1)|x|} \, \dd \mu(x)
		\end{equation}
		is finite.
	The class $L_{\sigma}$ is the hyperbolic analog of the Euclidean class of functions introduced by Silvestre (wherefrom the notation $L_{\sigma}$) in \cite[Proposition 2.1.4]{Sil}, as we shall see below.
\end{itemize}

Let us now note or recall some properties of some of these function spaces. First of all, because of the spectral gap of $\Delta$, the Sobolev norm $\| \cdot \|_{\mathrm{H^{\sigma}}}$ is equivalent to the norm
\[
\|\Delta^{\sigma}f\|_{2} = \int_{K}\int_{\mathbb{R}}|\widehat{f}(\xi, k\mathbb{M})|^2 \,(\xi^2+\rho^2)^{\sigma} \, |\textbf{c}(\xi)|^{-2} \,\dd\xi \, \dd k.
\]
Other basic properties of the spaces $\mathrm{H^{\sigma}}$ can be found e.g. in~\cite[Lemma 2.2]{An92}; among these we shall need that $\mathcal{C}_c^{\infty}$ is dense in $\mathrm{H}^{\sigma}$, and that they behave properly with respect to complex interpolation:
	$$
	\mathrm{H}^{\sigma} =\left[\mathrm{H}^{\sigma_0} , \mathrm{H}^{\sigma_1} \right]_\theta \quad \text { when } \sigma=(1-\theta) \sigma_0+\theta {\sigma_1}, \qquad \theta \in (0,1), \, \sigma_{0},\sigma_{1}>0.
	$$
As for the space $L_{\sigma}$, let us notice that $L^p\subseteq L_{\sigma}$ for all $p\in [1,\infty]$, and that we have the following.
	
	\begin{proposition}\label{Prop 4.3} 
	Suppose $0<\sigma<1$ and $\phi\in \mathcal{C}_c^{\infty}$. Then
		\[
		|\Delta^{\sigma}\phi(x)|\lesssim (1+|x|)^{-1-\sigma}\, \e^{-(n-1)|x|},\quad x\in \mathbb{H}^n
		\]
		where the implied constant depends on $\phi$ (but not on $x$).		Hence $\Delta^{\sigma}\phi\in L^p$, $1\leq p \leq \infty$. 
	\end{proposition}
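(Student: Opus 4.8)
The plan is to bound $|\Delta^{\sigma}\phi(x)|$ directly from the singular integral representation~\eqref{eq:singint}, splitting the integral over $\mathbb{H}^n$ into a ``near'' region where the smoothness of $\phi$ controls the difference $\phi(x)-\phi(z)$, and a ``far'' region where one simply uses boundedness of $\phi$ together with the decay of the kernel $P_0^\sigma$. Since $\phi$ is compactly supported, say $\supp\phi \subseteq B(o,R)$, the geometry naturally separates into the case $|x|\leq 2R$ (where the claimed bound $(1+|x|)^{-1-\sigma}\e^{-(n-1)|x|}$ is just a positive constant, so we only need $\Delta^{\sigma}\phi$ bounded there) and the case $|x|>2R$ (where $\phi(x)=0$ and the decay has to be extracted from the kernel).

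\textbf{The case $|x|\leq 2R$.} Here I would write, using~\eqref{eq:singint},
\[
|\Delta^{\sigma}\phi(x)| \lesssim \int_{d(x,z)<1}|\phi(x)-\phi(z)|\,P_0^\sigma(z^{-1}x)\,\dd z + \int_{d(x,z)\geq 1}|\phi(x)-\phi(z)|\,P_0^\sigma(z^{-1}x)\,\dd z.
\]
For the first integral I would use the Lipschitz bound $|\phi(x)-\phi(z)|\lesssim d(x,z)$ (with constant $\|\nabla\phi\|_\infty$) — or, if $\sigma\geq 1/2$ forces a second-order Taylor argument to beat the $|x|^{-n-2\sigma}$ singularity in~\eqref{eq: P0 est}, I would symmetrize the principal value and use $|\phi(x)-\tfrac12(\phi(x+h)+\phi(x-h))|\lesssim d^2$ — combined with $P_0^\sigma(z^{-1}x)\asymp d(x,z)^{-n-2\sigma}$ for $d(x,z)<1$, which yields a convergent integral $\int_{d<1} d^{1 \text{ or } 2}\cdot d^{-n-2\sigma}\,d^{n-1}\,\dd d<\infty$. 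For the second integral, $|\phi(x)-\phi(z)|\leq 2\|\phi\|_\infty$ and $P_0^\sigma\in L^1(\mathbb{H}^n\setminus B(o,1))$ (noted right after~\eqref{eq: P0 est}) give a finite bound. All bounds here are uniform in $x$ over the compact set $\overline{B(o,2R)}$.

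\textbf{The case $|x|>2R$.} Now $\phi(x)=0$ and $\phi(z)\neq 0$ only for $z\in B(o,R)$, so from~\eqref{eq:singint}
\[
|\Delta^{\sigma}\phi(x)| = \frac{1}{|\Gamma(-\sigma)|}\left|\int_{B(o,R)}\phi(z)\,P_0^\sigma(z^{-1}x)\,\dd z\right| \leq \frac{\|\phi\|_\infty}{|\Gamma(-\sigma)|}\int_{B(o,R)}P_0^\sigma(z^{-1}x)\,\dd z.
\]
For $z\in B(o,R)$ and $|x|>2R$ the triangle inequality gives $d(x,z)\geq |x|-R \geq |x|/2 \geq 1$, so the first line of~\eqref{eq: P0 est} applies: $P_0^\sigma(z^{-1}x)\asymp d(x,z)^{-1-\sigma}\e^{-(n-1)d(x,z)}$. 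Since $d(x,z)\geq |x|-R$ and the map $t\mapsto t^{-1-\sigma}\e^{-(n-1)t}$ is decreasing, $P_0^\sigma(z^{-1}x)\lesssim (|x|-R)^{-1-\sigma}\e^{-(n-1)(|x|-R)} \lesssim (1+|x|)^{-1-\sigma}\e^{-(n-1)|x|}$, where the last step absorbs the factor $\e^{(n-1)R}$ and compares $|x|-R$ with $1+|x|$, both valid up to a constant depending only on $R$ (hence on $\phi$). Integrating the constant over $B(o,R)$ contributes only $\mu(B(o,R))$, another $\phi$-dependent constant, and the claimed pointwise estimate follows. Finally, the $L^p$ membership for $1\leq p\leq\infty$ is immediate: the right-hand side $(1+|x|)^{-1-\sigma}\e^{-(n-1)|x|}$ is bounded, and in view of the volume growth~\eqref{dens}, $\int_{\mathbb{H}^n}\big((1+r)^{-1-\sigma}\e^{-(n-1)r}\big)^p (\sinh r)^{n-1}\,\dd r \lesssim \int_1^\infty (1+r)^{-p(1+\sigma)}\e^{-p(n-1)r}\e^{(n-1)r}\,\dd r<\infty$ for every $p\geq 1$ (the exponential wins since $p(n-1)>n-1$), and the integral near $r=0$ is trivially finite.

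\textbf{Main obstacle.} The only genuinely delicate point is the near-diagonal term when $\sigma\in[1/2,1)$: the naive Lipschitz estimate $|\phi(x)-\phi(z)|\lesssim d(x,z)$ against the singularity $d(x,z)^{-n-2\sigma}$ produces a divergent integral $\int_0 d^{-n-2\sigma+1+n-1}\,\dd d = \int_0 d^{-2\sigma}\,\dd d$. The standard fix — exploiting the principal-value cancellation and a second-order Taylor expansion of $\phi$, which upgrades the numerator to $d(x,z)^2$ and restores convergence since $\int_0 d^{2-2\sigma}\,\dd d<\infty$ — requires only that $\phi\in\mathcal{C}^2$, which holds. (This is exactly the classical argument for the Euclidean fractional Laplacian; the hyperbolic volume element $(\sinh d)^{n-1}\sim d^{n-1}$ near the diagonal means no new difficulty arises.) Everything else is bookkeeping with the explicit kernel bounds~\eqref{eq: P0 est} and the volume growth~\eqref{dens}.
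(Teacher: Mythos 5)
Your proof is correct and arrives at the same estimates, but there is one genuine difference in strategy worth noting. For the region $|x|\leq 2R$ you carry out the singular-integral analysis by hand (Lipschitz bound on $|\phi(x)-\phi(z)|$ for $\sigma<1/2$, or a second-order Taylor expansion plus p.v.\ cancellation for $\sigma\geq 1/2$, against the singularity $d(x,z)^{-n-2\sigma}$ from~\eqref{eq: P0 est}). The paper instead dispenses with this entirely by citing~\cite[Proposition~2.6(a)]{BanEtAl}, which already gives $\Delta^{\sigma}\phi\in\mathcal{C}^{k,\alpha}$ for all $k,\alpha$, hence boundedness on any compact set. Your route is more self-contained; the paper's is shorter. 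For the region $|x|>2R$ the two arguments are identical in substance: $\phi(x)=0$ there, only $z\in\supp\phi$ contributes, $d(x,z)>R$ makes the kernel non-singular, and the first line of~\eqref{eq: P0 est} plus the comparison $d(x,z)\geq |x|-R\asymp 1+|x|$ gives the stated decay (the paper writes the final bound directly, you justify the comparison via monotonicity of $t\mapsto t^{-1-\sigma}\e^{-(n-1)t}$ — same thing). The derivation of $L^p$ membership from the pointwise bound is also the same.

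One small caution on your near-diagonal Taylor step for $\sigma\geq 1/2$: the cancellation of the first-order term is not purely a matter of the volume element behaving like $d^{n-1}$. In the hyperbolic setting one passes to normal coordinates at $x$ via $\exp_x$, and the first-order term integrates to zero because \emph{both} the Jacobian of $\exp_x$ and the kernel $P_0^{\sigma}$ are radial, so the integrand is odd in the tangent variable. This is exactly the argument the paper spells out in the proof of Proposition~\ref{prop: Sil2.1.4} for the case $\sigma\geq 1/2$, and it is worth being explicit about the role of radiality rather than appealing to the Euclidean case verbatim. With that clarification, your proof is sound.
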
 
	
	\begin{proof}
Suppose $\phi\in \mathcal{C}_c^{\infty}$ and $\sigma \in (0,1)$. By~\cite[Proposition 2.6 (a)]{BanEtAl}, we have $\Delta^{\sigma}\phi\in \mathcal{C}^{k, \alpha}$ for all $k\in \mathbb{N}\cup\{0\}$ and $\alpha \in (0,1)$. Hence it suffices to show the desired decay only at infinity, which in turn ensures that $\Delta^{\sigma}\phi\in L^p$ for all $p\in[1,\infty]$.

		Let $m>1$ be such that $\operatorname{supp} \phi \subseteq B(o, m)$. We shall prove that the stated decay holds for $|x|>2m$. Notice that if $x\in B(o, 2m)^{c}$ and $z\in B(o, m)$, then  $d(x,z)\geq d(x,o)-d(z,o)>m$ by the triangle inequality, whence $P_0^{\sigma}(z^{-1} x)$ is non-singular according to~\eqref{eq: P0 est}. This means that by~\eqref{eq:singint} we can write, for $|x|>2m$, 
		\begin{align*}
			|\Delta^{\sigma}\phi(x)|&\leq C(\sigma) \int_{B(o,m)}|\phi(z)|\, P_0^{\sigma}(z^{-1}x)\, \dd z\\
			&\leq C(\sigma) \int_{B(o,m)}|\phi(z)|\,(1+d(x,z))^{-1-\sigma}\,\e^{-(n-1)d(x,z)}\, \dd z\\
			&\leq C(\sigma, m) (1+d(x,o))^{-1-\sigma}\,\e^{-(n-1)d(x,o)}\, \|\phi\|_1,
		\end{align*}
which completes the proof.
	\end{proof}
	 
	 As a consequence of Proposition~\ref{Prop 4.3}, if $u\in L_{\sigma}$ and $\phi\in \mathcal{C}_c^{\infty}$, then
	\begin{align*}
			|	\langle  \Delta^{\sigma}\phi, u\rangle |&\leq  \int_{\mathbb{H}^n} |\Delta^{\sigma}\phi(x)|\, |u(x)| \,\dd \mu(x)\\ 
			&\leq C(\phi) \,\int_{\mathbb{H}^n} |u(x)|\,(1+|x|)^{-1-\sigma}\,\e^{-(n-1)|x|} \, \dd \mu(x)<\infty.
		\end{align*}
		Therefore one can define $\Delta^{\sigma}u$ as a distribution via
		\[
		\langle \Delta^{\sigma}u, \phi\rangle:= \langle u, \Delta^{\sigma}\phi\rangle \qquad \forall {\phi} \in \mathcal{C}_c^{\infty}.
		\]	
We conclude this section with the following proposition, which is modeled after \cite[Proposition 2.1.4]{Sil}.
	\begin{proposition}\label{prop: Sil2.1.4}
	Suppose $\sigma \in (0,1)$, and let $f \in L_{\sigma}$ be such that 
	\begin{itemize}
	\item $f \in \mathcal{C}^{0,2\sigma+\varepsilon}$ if $\sigma <1/2$, or
	\item $f\in \mathcal{C}^{1, 2\sigma+\varepsilon-1}$ if $\sigma \geq 1/2$
	\end{itemize}
 for some $\varepsilon>0$. Then $\Delta^{\sigma}f$ is a continuous function and its values are given by the pointwise formula~\eqref{eq:singint}.
	\end{proposition}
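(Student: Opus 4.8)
\textbf{Proof plan for Proposition~\ref{prop: Sil2.1.4}.}

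The plan is to mirror Silvestre's Euclidean argument, making the modifications dictated by the hyperbolic geometry at infinity and by the kernel estimates~\eqref{eq: P0 est}. Fix $x \in \mathbb{H}^n$; by $K$-invariance of $P_0^{\sigma}$ and translation we may as well work near a fixed point. First I would verify that the singular integral
\[
\mathrm{p.v.} \int_{\mathbb{H}^n} (f(x) - f(z))\, P_0^{\sigma}(z^{-1}x)\, \dd z
\]
is absolutely convergent once the principal value is taken. Split the domain into a small geodesic ball $B(x,1)$ and its complement. On the complement, $P_0^{\sigma}(z^{-1}x) \asymp d(x,z)^{-1-\sigma}\e^{-(n-1)d(x,z)}$ by~\eqref{eq: P0 est}, and the two contributions $f(x) P_0^\sigma$ and $f(z) P_0^\sigma$ are each separately integrable: the first because $P_0^{\sigma} \in L^1(\mathbb{H}^n \setminus B(o,1))$, and the second because $f \in L_\sigma$ (the weight in~\eqref{eq: class Lsigma} is exactly the large-distance profile of $P_0^{\sigma}$, up to the triangle-inequality comparison $d(x,z) \gtrsim d(z,o) - |x|$ used as in the proof of Proposition~\ref{Prop 4.3}). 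On $B(x,1)$ the kernel behaves like $d(x,z)^{-n-2\sigma}$, and here the H\"older hypothesis enters: if $\sigma < 1/2$, then $|f(x) - f(z)| \lesssim d(x,z)^{2\sigma + \varepsilon}$ gives an integrand of size $d(x,z)^{-n + \varepsilon - \dots}$, wait — more precisely $d(x,z)^{2\sigma+\varepsilon - n - 2\sigma} = d(x,z)^{\varepsilon - n}$, which is locally integrable; if $\sigma \geq 1/2$, one uses a first-order Taylor expansion of $f$ at $x$, noting that the odd (gradient) term integrates to zero over geodesic spheres by the radiality of $P_0^{\sigma}$, leaving a remainder bounded by $d(x,z)^{1 + (2\sigma + \varepsilon - 1)} = d(x,z)^{2\sigma+\varepsilon}$, and one concludes as before. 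This shows the right-hand side of~\eqref{eq:singint} is a well-defined finite number for every $x$.

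Next I would show that this pointwise expression agrees with $\Delta^{\sigma} f$ in the distributional sense already defined via $\langle \Delta^\sigma u, \phi \rangle = \langle u, \Delta^\sigma \phi\rangle$, and that it is continuous in $x$. For agreement, take $\phi \in \mathcal{C}_c^{\infty}$ and compute $\langle u, \Delta^\sigma \phi \rangle$ using the pointwise formula~\eqref{eq:singint} for $\Delta^\sigma \phi$ (valid since $\phi \in \mathcal{C}_c^\infty$), then apply Fubini: the double integral is absolutely convergent thanks to Proposition~\ref{Prop 4.3} (which controls $\Delta^\sigma \phi$) combined with $f \in L_\sigma$, and after symmetrizing the kernel $P_0^{\sigma}(z^{-1}x) = P_0^\sigma(x^{-1}z)$ one recognizes $\int \phi(x)\big(\mathrm{p.v.}\int (f(x)-f(z)) P_0^\sigma(z^{-1}x)\,\dd z\big)\dd x$. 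This requires a small amount of care in handling the principal value under Fubini — the standard trick is to truncate the singularity at radius $\epsilon$, use dominated convergence on the (now absolutely convergent) truncated integrals, and pass $\epsilon \to 0$ at the end, with the error terms vanishing because the antisymmetric part of the truncation integrates against the smooth $\phi$ to something $O(\epsilon^{\text{something positive}})$. Continuity of $x \mapsto \Delta^\sigma f(x)$ then follows from dominated convergence in the two regions, using local uniform continuity of $f$ (resp.\ $\nabla f$) near $x$ for the inner ball and the integrable, $x$-locally-uniform majorant coming from $f \in L_\sigma$ for the outer region.

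The main obstacle I anticipate is not any single estimate but the bookkeeping at infinity: unlike the Euclidean case, where the kernel decays polynomially, here $P_0^{\sigma}$ decays like $d(x,z)^{-1-\sigma}\e^{-(n-1)d(x,z)}$, so the relevant function class $L_\sigma$ is adapted to an exponentially weighted $L^1$ condition, and one must repeatedly invoke the reverse triangle inequality $d(x,z) \geq d(z,o) - |x|$ to transfer decay in $d(x,z)$ to decay in $d(z,o)$, picking up an $x$-dependent but locally bounded constant $\e^{(n-1)|x|}$ — exactly as in Proposition~\ref{Prop 4.3}. Keeping these constants locally uniform in $x$ is what makes the continuity and the Fubini steps go through, and it is the one place where the hyperbolic geometry genuinely changes the argument rather than merely decorating it. Everything else is a faithful transcription of~\cite[Proposition 2.1.4]{Sil}.
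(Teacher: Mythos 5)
Your plan is correct and would work, but it takes a genuinely different route from the paper. You verify directly that the singular integral defines a finite number at each $x$, then match it to the distributional $\Delta^{\sigma}f$ by an $\epsilon$-truncation of the principal value, Fubini on the truncated (absolutely convergent) double integral, symmetrization $P_0^{\sigma}(z^{-1}x)=P_0^{\sigma}(x^{-1}z)$, and a final dominated-convergence pass as $\epsilon\to 0$; continuity of $x\mapsto\Delta^{\sigma}f(x)$ is a separate dominated-convergence check. The paper instead works by approximation: it constructs $f_k=f\,\psi(d(\cdot,o)/k)\in\mathcal{C}_c^{\infty}$ uniformly bounded in the H\"older class, converging to $f$ both uniformly on a fixed compact $\Omega$ and in $L_\sigma$, and shows that $\Delta^{\sigma}f_k$ converges \emph{uniformly on $\Omega$} to the integral expression and \emph{distributionally} to $\Delta^{\sigma}f$, so the two limits coincide and continuity is automatic from uniform convergence of continuous functions. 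Your direct approach is conceptually leaner and avoids introducing the cutoff sequence, at the price of having to justify the Fubini/p.v.\ interchange and a separate continuity argument; the paper's approximation scheme trades that technical care for a routine uniqueness-of-limits step and gets continuity for free. One small point where you are terse and the paper is explicit: for $\sigma\geq 1/2$ the cancellation of the gradient term requires passing to normal geodesic coordinates at each point and invoking the radiality of the Jacobian $(\sinh|W|/|W|)^{n-1}$ \emph{together} with that of $P_0^{\sigma}$; your phrase ``integrates to zero over geodesic spheres'' gestures at this correctly but would need the normal-coordinates step to be made rigorous.
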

	\begin{proof}
		Assume first that $0<\sigma <1/2$  and $f\in \mathcal{C}^{0, 2\sigma+\varepsilon}$, and take an arbitrary open relatively compact set $\Omega$. Then there exists a sequence $(f_k)_{k\in \mathbb{N}} \subseteq \mathcal{C}_c^{\infty}$ uniformly bounded in $\mathcal{C}^{0, 2\sigma+\varepsilon}$, converging uniformly to $f$ in $\Omega$, and converging also to $f$ in the norm of $L_\sigma$: take, e.g., 
		\[
		f_k=f\, \psi(d(\cdot, o)/k)
		\]
		where $\psi\in \mathcal{C}_c^{\infty}(\mathbb{R})$ is such that $\supp \psi\subseteq [-2,2]$ and $\psi\equiv 1$ on $[-1,1]$.
		
		Let $\delta$ be any positive real number. Then there is a radius $r>0$ such that
		$$
		\int_{B_r(o)} \frac{1}{|y|^{n-\varepsilon}} \dd \mu(y) \leq C \int_{0}^{r}s^{-1+\varepsilon}\, \dd s \leq  \frac{\delta}{3M},
		$$
		where $M = \sup[f_k]_{\mathcal{C}^{0,2\sigma+\varepsilon}}$. Next, write
		\begin{align*}
	|\Gamma(-\sigma)|(\Delta^\sigma f_k)(x) 
		&= \int_{B_r(x)} (f_k(x) - f_k(y))P_0^{\sigma}(y^{-1}x) \dd\mu(y) \\
		& \qquad + \int_{ B_r(x)^{c}} (f_k(x) - f_k(y))P_0^{\sigma}(y^{-1}x) \dd \mu(y)		= I_1 + I_2.
	\end{align*}
		For $I_1$, since $P_0^{\sigma}(y^{-1}x)\asymp d(x,y)^{-(n+2\sigma)}$ when $d(x,y)$ is small, the uniform H{\"o}lder boundedness of $(f_k)_{k\in \mathbb{N}}$ and a change of variables yield, since $f \in \mathcal{C}^{0,2\sigma+\varepsilon}$,
	\begin{equation}\label{sigma<121}
		|I_1| \leq \int_{B_r(o)} \frac{M}{d(z,o)^{n-\varepsilon}} \dd \mu(z) \le C\, \frac{\delta}{3}.
\end{equation}
		For $I_2$, recall that $P_0^{\sigma}$ is in $L^1(\mathbb{H}^n \setminus B_r(o))$ by~\eqref{eq: P0 est}. Hence for $x\in \Omega$ and $k$ large enough we have
		\begin{align*}
		\bigg| I_2 - \int_{B_r(x)^{c}} (f(x) - f(y))P_{0}^{\sigma}(y^{-1}x)\, \dd \mu(y) \bigg| &\leq |f_k(x)-f(x)|  \int_{B_r(x)^c}P_0^{\sigma}(y^{-1}x)\, \dd \mu(y)\\ & \;\;+ C \int_{B_r(x)^c}\frac{|f_k(y)-f(y)|}{(1+d(x,y))^{\sigma+1}}\, \e^{-(n-1)d(x,y)}\,\dd \mu(y) \\
		&\le C \frac{\delta}{3},
		\end{align*}
where for the first integral we used that $f_k \to f$ in $\Omega$, while for the second that $f_k \to f$ in $L_\sigma$, the triangle inequality and the fact that $d(x,o)\leq C$ for all $x\in \Omega$.
	Finally, by the H{\"o}lder regularity of $f \in \mathcal{C}^{0,2\sigma+\varepsilon}$
		\begin{equation}\label{sigma<122}
		\left| \int_{B_r(x)} \frac{f(x) - f(y)}{d(x,y)^{n+2\sigma}} \dd \mu(y) \right| \leq C \int_{ B_r(x)}\frac{1}{d(x,y)^{n-\varepsilon}}\, \dd \mu(y)\leq  C\, \frac{\delta}{3}.
		\end{equation}
		It follows that
		$$
		\left| \int_{\mathbb{H}^n} (f_k(x) - f_k(y))P_0^{\sigma}(y^{-1}x) \, \dd \mu(y) - \int_{\mathbb{H}^n} (f(x) - f(y))P_0^{\sigma}(y^{-1}x) \, \dd \mu(y) \right| \leq C\,  \delta,
		$$
		where the constant $C$ may depend on $f$ and $\Omega$, but not on $x\in \Omega$. Since $\delta$ can be chosen arbitrarily small, for all $x\in \Omega$
		\[
		|\Gamma(-\sigma)| \Delta^{\sigma}f_k=	 \int_{\mathbb{H}^n} (f_k - f_k(y))P_0^{\sigma}(y^{-1}\cdot ) \, \dd \mu(y)\rightarrow  \int_{\mathbb{H}^n} (f - f(y))P_0^{\sigma}(y^{-1}\cdot ) \, \dd \mu(y)
		\]
uniformly in $\Omega$. But since for all $\phi \in \mathcal{C}_c^{\infty}$, by Proposition~\ref{Prop 4.3} we have
	\begin{align*}
	|\langle\Delta^{\sigma} f_k - \Delta^{\sigma} f , \phi \rangle| &
	= 		|\langle f_k - f , \Delta^{\sigma} \phi \rangle| \\
	& \leq C    
			\int_{\mathbb{H}^n} |f_k(x)-f(x)|\,(1+|x|)^{-\sigma-1}\, \e^{-(n-1)|x|} \, \dd \mu(x) \rightarrow 0
			\end{align*}
		by the fact that $f_k\rightarrow f$ in $L_{\sigma}$, we deduce that $\langle\Delta^{\sigma}f_k, \phi \rangle\rightarrow \langle\Delta^{\sigma}f, \phi\rangle$  for all $\phi\in \mathcal{C}_c^{\infty}$. Therefore
		\[
	\Delta^{\sigma}f = \int_{\mathbb{H}^n} (f - f(y))P_0^{\sigma}(y^{-1}\cdot ) \, \dd \mu(y)
	\] 
	in  $\Omega$ by uniqueness of the limit, and $\Delta^{\sigma}f$ is continuous in $\Omega$ since it is the uniform limit of continuous functions. By the arbitrariness of $\Omega$, this happens for all $x\in \mathbb{H}^{n}$.
		
		\smallskip
		
	The case  $\sigma \geq 1/2$ and $f\in \mathcal{C}^{1,2\sigma+\varepsilon-1}$ can be treated in a similar manner, as only the estimates~\eqref{sigma<121} and~\eqref{sigma<122} need to be justified differently.
	
	We shall pass to the tangent spaces (see for instance  \cite[Lemmata 5.2 and 10.3]{GIMM24} for a related computation), and in fact argue in an essentially euclidean fashion. Let $T_y\mathbb{H}^n \cong \mathbb{R}^{n}$ be the tangent space at $y$. Let also $F_y: T_y\mathbb{H}^n\rightarrow \mathbb{R}$, $F_y(W):=f(\exp_y(W))$, be the expression of $f$ in normal geodesic coordinates around the point $y$, and  let $V_{y,x}\in T_y\mathbb{H}^n$ be the unique vector transporting $y$ to $x$ through the Riemannian exponential mapping (i.e., $\exp_yV_{y,x}=x$). By means of the change of variables $W=V_{y,x}$ we pass to integration over $T_y\mathbb{H}^n$. Then the radiality of the Jacobian  $J_{\exp_y}(W)=(\sinh |W|/|W|)^{n-1}$ and of $P_0^{\sigma}$ implies that the first order term in the expansion 
	\[
	F_y(W)-F_y(0)=\nabla F_y(0)\cdot W+\mathrm{O}(|W|^{2\sigma+\varepsilon})
	\]
	integrated over $T_y\mathbb{H}^n$ vanishes. Therefore, we can control
	\[
	\left|\int_{ B_r(x)}\frac{f(x)-f(y)}{d(x,y)^{n+2\sigma}}\, \dd \mu(y) \right|\leq C \int_{ B_r(x)}{d(x,y)^{-(n-\varepsilon)}}\, \dd \mu(y),
	\]
which is the analogue of~\eqref{sigma<122}. It is now enough to follow the exact same argument as in the case $\sigma<1/2$ to complete the proof.
	\end{proof}

	\section{The Fujita problem}\label{Sec:3}
Let us consider the nonlinear fractional heat equation, for  $\sigma \in (0,1)$ and $\gamma>1$,
	\begin{equation}\label{fracheat}
		\begin{cases}
			\partial_t u + \Delta^{\sigma}u = h(t)|u|^{\gamma-1}u, \qquad \; t>0,\\
			u(0,\cdot)=f,
		\end{cases} 
	\end{equation}
where $f$ and $h$ are measurable functions on $\mathbb{H}^{n}$ and $[0,+\infty)$ respectively, satisfying the following standing assumptions:
	\begin{itemize}
\myitem{(A1)}\label{A1}$f\geq 0$ and $f\in L^{\infty}\cap \mathcal{C}^{0}$,
\myitem{(A2)}\label{A2} $h \geq 0$ is continuous on $[0,+\infty)$,
	\end{itemize}
	which will be supposed all throughout. In certain statements and proofs, they might actually be slightly weakened; but for the sake of clarity and to avoid overwhelming the discussion, we shall not focus on the minimal assumptions under which each result can be proved. The reader should have no difficulty adapting the proofs to their needs.
	
	We look for \emph{non-negative} solutions in the following sense.
	
\begin{definition}\label{sols}
A \emph{classical solution} to the problem~\eqref{fracheat} is a function $u\colon [0,\tau)\times \mathbb{H}^n\rightarrow \mathbb{R}$ for some $\tau>0$, such that $u(t,\cdot) \in L_{\sigma}$ for all $t\in [0,\tau)$, such that
\begin{itemize}
	\item[(i)] $u\in \mathcal{C}([0, \tau)\times \mathbb{H}^n)$;
	\item[(ii)] $\partial_{t}u, \Delta^{\sigma}u\in \mathcal{C}((0,\tau)\times \mathbb{H}^n)$;
	\item[(iii)] $\Delta^{\sigma}u$ is given by the pointwise formula~\eqref{eq:singint} for $0<t<\tau$,
\end{itemize}
and which satisfies~\eqref{fracheat} on $[0, \tau)\times\mathbb{H}^n$ in the pointwise sense.

A \emph{mild solution} to the problem~\eqref{fracheat} is a function $u\colon [0,\tau)\times \mathbb{H}^n\rightarrow \mathbb{R}$ for some $\tau>0$ such that (i) holds and which satisfies
\begin{equation}\label{eq:mildsol}
u(t,\cdot ) = \e^{-t\Delta^{\sigma}}f + \int_{0}^{{t}} h(s) \, \e^{-(t-s)\Delta^{\sigma}}(|u|^{\gamma-1}u) \, \dd s
\end{equation}
on $[0, \tau)\times\mathbb{H}^n$ in the pointwise sense.

We say that a solution blows up in finite time whenever there exists $\tau> 0$ such that 
\[
\lim_{t\rightarrow \tau^{-}}\|u(t, \cdot)\|_{\infty}=+\infty,
\]
and $\tau$ is called blow-up time for $u$. Otherwise, if $u(t, \cdot) \in L^{\infty}(\mathbb{H}^n)$ for all $t > 0$, then we say that it is global. If $u\in L^{\infty}([0,\tau)\times \mathbb{H}^{n})$, then we say that $u$ is bounded.
\end{definition}	


The following result makes use of standard arguments, see e.g.~\cite[Proposition 51.40]{QS} or~\cite[Proposition 4.1]{Punzo2012}. We include a proof for the sake of completeness.

\begin{proposition}\label{prop:unique}
The following holds:
		\begin{itemize}
	\item[\emph{(i)}] there exists $\tau> 0$ such that there is a unique bounded mild solution to~\eqref{fracheat} in $[0,\tau]\times \mathbb{H}^{n}$;
	\item[\emph{(ii)}] such a solution can be uniquely extended to a maximum interval $[0, \tau_{\max})$. If $\tau_{\max}$ is finite, then the solution blows up at time $\tau_{\max}$;
	\item[\emph{(iii)}] such a mild solution is also a classical solution.
		\end{itemize}
	\end{proposition}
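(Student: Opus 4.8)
The plan is to prove the three assertions by following the standard fixed-point/bootstrap scheme, adapted to the hyperbolic setting via the $L^p$--decay estimates~\eqref{PtLp} for the fractional heat kernel.

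\textbf{Local existence and uniqueness (i).} First I would set up a contraction argument in the Banach space $X_\tau := \mathcal{C}([0,\tau]; L^\infty(\mathbb{H}^n))$, or more precisely in the closed ball $B_R \subseteq X_\tau$ of radius $R := 2\|f\|_\infty$ around the function $t\mapsto \e^{-t\Delta^\sigma} f$. Define the map
\[
(\Phi u)(t,\cdot) := \e^{-t\Delta^\sigma} f + \int_0^t h(s)\, \e^{-(t-s)\Delta^\sigma}(|u|^{\gamma-1}u)\, \dd s.
\]
Since $\e^{-t\Delta^\sigma}$ is a contraction semigroup on $L^\infty$ (by $\|P_t^\sigma\|_1=1$, as recalled after~\eqref{PtLp}), one has $\|\e^{-t\Delta^\sigma}f\|_\infty \le \|f\|_\infty$ and, for $u\in B_R$,
\[
\|(\Phi u)(t)\|_\infty \le \|f\|_\infty + \Big(\sup_{[0,\tau]}h\Big)\, \tau\, R^\gamma,
\]
using continuity of $h$ on $[0,+\infty)$ (assumption~\eqref{A2}). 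Choosing $\tau$ small enough depending on $\|f\|_\infty$, $\sup_{[0,1]}h$ and $\gamma$ makes $\Phi$ map $B_R$ into itself. The contraction estimate follows from the local Lipschitz bound $\big||u|^{\gamma-1}u - |v|^{\gamma-1}v\big| \le C_\gamma (|u|^{\gamma-1}+|v|^{\gamma-1})|u-v| \le C_{\gamma,R}\,|u-v|$ valid on $B_R$, again paired with the $L^\infty$-contractivity of the semigroup; shrinking $\tau$ further (or working with an equivalent weighted norm $\sup_t \e^{-Lt}\|u(t)\|_\infty$) gives a strict contraction. The Banach fixed point theorem then yields a unique $u\in B_R$ solving~\eqref{eq:mildsol}. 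Uniqueness among \emph{all} bounded mild solutions, not just those in $B_R$, follows by a standard Gronwall argument: if $u_1,u_2$ are two bounded mild solutions on $[0,\tau]$ with common bound $M$, then $\|u_1(t)-u_2(t)\|_\infty \le C_{\gamma,M}\big(\sup h\big)\int_0^t \|u_1(s)-u_2(s)\|_\infty\, \dd s$, whence $u_1\equiv u_2$.

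\textbf{Maximal extension and blow-up alternative (ii).} Here I would run the usual continuation argument. Let $[0,\tau_{\max})$ be the union of all intervals on which a bounded mild solution exists; by (i) and uniqueness these patch together consistently, giving a maximal bounded mild solution $u$. Suppose $\tau_{\max}<\infty$ but $\limsup_{t\to\tau_{\max}^-}\|u(t)\|_\infty =: M_0 < \infty$. Then one can find $t_0 < \tau_{\max}$ with $\|u(t_0)\|_\infty \le M_0+1$ and restart the local construction from initial time $t_0$; since the length of the existence interval produced by (i) depends only on the size of the datum (here bounded by $M_0+1$) and on $\sup_{[t_0, t_0+1]} h$, which is bounded uniformly in $t_0 \le \tau_{\max}$ because $h$ is continuous on the compact $[0,\tau_{\max}+1]$, we obtain a solution on $[t_0, t_0+\delta]$ with $\delta>0$ independent of how close $t_0$ is to $\tau_{\max}$. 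Choosing $t_0$ with $\tau_{\max}-t_0<\delta$ extends $u$ beyond $\tau_{\max}$, contradicting maximality. Hence $\lim_{t\to\tau_{\max}^-}\|u(t)\|_\infty = +\infty$, which is the claimed blow-up.

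\textbf{From mild to classical (iii).} This is the step I expect to require the most care, since it is where the specifically hyperbolic/nonlocal features enter. The goal is to show that the mild solution $u$ from (i)--(ii) satisfies conditions (i)--(iii) of Definition~\ref{sols} and solves~\eqref{fracheat} pointwise. I would proceed by parabolic bootstrap. Fix $0<t_0<\tau_1<\tau_{\max}$. On $[0,\tau_1]$ the function $u$ is bounded and continuous in $(t,x)$ jointly --- continuity in $t$ with values in $L^\infty$ is built into $X_{\tau_1}$, and joint continuity follows because $\e^{-t\Delta^\sigma}f\in \mathcal{C}([0,\tau_1]\times\mathbb{H}^n)$ (strong continuity of the semigroup together with $f\in\mathcal{C}^0$, assumption~\eqref{A1}) and the Duhamel integral is continuous by dominated convergence using $\|P_t^\sigma\|_1=1$; this gives (i) of Definition~\ref{sols}. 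Since $u$ is bounded, $u(t,\cdot)\in L^\infty\subseteq L_\sigma$ for every $t$, and the nonlinearity $g(t,x):=h(t)|u|^{\gamma-1}u$ is bounded and continuous on $[t_0,\tau_1]\times\mathbb{H}^n$. One then invokes smoothing properties of $\e^{-t\Delta^\sigma}$: writing, for $t\in(t_0,\tau_1]$,
\[
u(t,\cdot) = \e^{-(t-t_0)\Delta^\sigma} u(t_0,\cdot) + \int_{t_0}^t \e^{-(t-s)\Delta^\sigma} g(s,\cdot)\, \dd s,
\]
the first term is smooth in $x$ and $\mathcal{C}^\infty$ in $t>t_0$ because $\e^{-\tau\Delta^\sigma}$ maps $L^\infty$ into $\mathcal{C}^\infty$ with all derivatives controlled (the kernel $P_\tau^\sigma$ and all its $x$- and $t$-derivatives lie in $L^1$ for $\tau>0$, by~\eqref{PtLp} and standard subordination estimates), while the Duhamel term gains Hölder regularity in $x$ by the usual singular-integral/Schauder-type estimate for the fractional heat operator --- here I would cite the parabolic regularity available for $\Delta^\sigma$, e.g.\ as in~\cite{Sil} transplanted to $\mathbb{H}^n$ via the local comparison with the Euclidean fractional Laplacian, combined with Proposition~\ref{Prop 4.3} to handle the behaviour at infinity. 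A first pass gives $u(t,\cdot)\in\mathcal{C}^{0,\alpha}$ for $\sigma<1/2$, respectively $\mathcal{C}^{1,\alpha}$ for $\sigma\ge 1/2$, with $\alpha$ just above the threshold $2\sigma$ (resp.\ $2\sigma-1$); Proposition~\ref{prop: Sil2.1.4} then applies and shows that $\Delta^\sigma u(t,\cdot)$ is a continuous function given by the pointwise formula~\eqref{eq:singint}, yielding (iii) of Definition~\ref{sols}. Differentiating the mild formula in $t$ (justified by the smoothing just established, for $t$ bounded away from $0$) produces $\partial_t u = -\Delta^\sigma u + h(t)|u|^{\gamma-1}u$ pointwise on $(t_0,\tau_1)\times\mathbb{H}^n$, with both $\partial_t u$ and $\Delta^\sigma u$ continuous there, giving (ii) and the pointwise equation. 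Since $t_0\in(0,\tau_{\max})$ was arbitrary, $u$ is a classical solution on $[0,\tau_{\max})$. The main obstacle is making the Hölder/Schauder bootstrap fully rigorous on $\mathbb{H}^n$ rather than on $\mathbb{R}^n$: one must split the convolution kernel into a local singular part (where hyperbolic and Euclidean fractional Laplacians agree up to lower-order terms, so Euclidean parabolic regularity theory applies on coordinate charts) and a global exponentially decaying tail (controlled by~\eqref{eq: P0 est} and Proposition~\ref{Prop 4.3}), and check that the constants are uniform in $x\in\mathbb{H}^n$ by homogeneity of the space.
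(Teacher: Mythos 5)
Your proof of parts (i) and (ii) follows the same strategy as the paper: a Banach fixed-point argument in $\mathcal{C}([0,\tau]\times\mathbb{H}^n)\cap L^\infty$, using $\|P_t^\sigma\|_1=1$ to make $\Phi$ a contraction for small $\tau$, followed by the standard continuation-plus-restart argument for the blow-up alternative. Your use of a weighted norm or a Gronwall argument to get uniqueness among all bounded mild solutions (not only those in the fixed-point ball) is a welcome explicit addition, but does not change the structure of the argument.

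The genuine divergence is in part (iii). The paper handles the regularity of the Duhamel representation in one stroke by citing \cite[Theorem~3.2]{DPF25}, which already yields $u\in\mathcal{C}^{\infty}((0,\tau)\times\mathbb{H}^n)$; combined with boundedness (hence $u(t,\cdot)\in L_\sigma$) and Proposition~\ref{prop: Sil2.1.4}, this immediately gives the pointwise formula~\eqref{eq:singint} and continuity of $\partial_t u$, $\Delta^\sigma u$. You instead propose a self-contained Schauder-type bootstrap: $L^\infty$ datum plus bounded source $\Rightarrow$ H\"older regularity just above the $2\sigma$ threshold $\Rightarrow$ Proposition~\ref{prop: Sil2.1.4} applies $\Rightarrow$ classical solution. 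This is a legitimate route, and it would arguably make the paper more self-contained, but as written it leaves a real gap: the crucial ``first pass'' step — that the Duhamel term of a bounded nonlinearity under $\e^{-t\Delta^\sigma}$ lands in $\mathcal{C}^{0,2\sigma+\varepsilon}$ (resp.\ $\mathcal{C}^{1,2\sigma-1+\varepsilon}$) uniformly on $\mathbb{H}^n$ — is exactly the hyperbolic Schauder estimate you flag as the ``main obstacle,'' and it is not available in the paper's toolkit (the paper does not prove it, which is presumably why it defers to~\cite{DPF25}). The localization-plus-tail strategy you outline (Euclidean parabolic Schauder on charts for the singular part of $P_0^\sigma$, exponential decay~\eqref{eq: P0 est} for the tail, homogeneity for uniformity in $x$) is plausible, but you would need to actually prove the uniform local estimate before the bootstrap closes; without that, part (iii) is a plan rather than a proof. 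Note also that the constant in the ``frozen-datum'' smoothing $\|\nabla^k P_\tau^\sigma\|_1\lesssim \tau^{-k/(2\sigma)}$ blows up as $\tau\to 0$, so the smoothing of $\e^{-(t-t_0)\Delta^\sigma}u(t_0,\cdot)$ alone does not control the Duhamel piece; the singular integral in $s$ must be handled with care, which is precisely the content of the Schauder estimate.
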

	
\begin{proof}
Let us write $\delta=\|f\|_{L^{\infty} }$, and for $\tau>0$ fixed, consider the  following Banach space 
\[
	\Theta_s^{\sigma}:=\left\{v \in \mathcal{C}([0, \tau] \times\mathbb{H}^n) \cap L^{\infty}([0, \tau] \times\mathbb{H}^n) \colon \|v\|_{\Theta_s^{\sigma}} \leq 2 \delta, \quad  v(0,\cdot)=f\right\},
\]
	where
	$$
	\|v\|_{\Theta_s^{\sigma}}=\sup_{t\in [0,\tau]}\|v(t, \cdot )\|_{\infty}=\|v\|_{L^{\infty}([0, \tau] \times \mathbb{H}^n)} .
	$$
	For any $v \in \Theta_s^{\sigma}$, we define a map
	\begin{equation}\label{eq: (3.3)}
	\Phi(v)(t,\cdot ):=\e^{-t\Delta^{\sigma}} f+\int_0^t h(s)\,\e^{-(t-s)\Delta^{\sigma}}v^{\gamma}(s,\cdot)\,\dd s, \qquad t\in [0,\tau].
	\end{equation}
	We first show that if $v \in \Theta_{{\tau}}^{\sigma}$, then $\Phi(v) \in \Theta_{{\tau}}^{\sigma}$. By~\eqref{eq: (3.3)}
	\begin{align*}
	\|\Phi(v)(t,\cdot )\|_{L^{\infty} }
	&\leq \|f\|_{\infty}+\int_{0}^{t}h(s)\,\|v(s,\cdot )\|^{\gamma}_{\infty} \,\dd s
	\end{align*}
	so
	\begin{align*}
		\|\Phi(v)\|_{\Theta_s ^{\sigma}}&= \sup_{t\in [0,\tau]}\|\Phi(v)(t,\cdot )\|_{L^{\infty} }\\
		&\leq \|f\|_{\infty}+\sup_{t\in [0, \tau]}\int_{0}^{t} h(s)\, \|v(s,\cdot )\|^{\gamma}_{\infty} \,\dd s \\
		&\leq \delta+2^{\gamma}\delta^{\gamma} \sup_{t\in [0,\tau]}\int_{0}^{t}h(s)\dd s \leq \delta+2^{\gamma}\delta^{\gamma}\int_{0}^{\tau}h(s)\dd s.
		\end{align*}
Then for $\tau>0$, sufficiently small, one can get
	$$
	\|\Phi(v)\|_{\Theta_s^\sigma } \leq 2 \delta.
	$$
Moreover, $\Phi(v) \in \mathcal{C}([0,\tau]\times \mathbb{H}^{n})$ (continuity of the first term, in particular at $t=0$, follows since $f$ is bounded and continuous; the second term is easily checked to be continuous being $v\in \Theta_s^{\sigma}$, see also \cite[Eq. (51.109)]{QS}).

This shows that $\Phi(v) \in \Theta_s^{\sigma}$. We now proceed to show that for sufficiently small $\tau>0$, the map $\Phi$ is a contraction map on $\Theta_s^{\sigma}$. For any  $v_1, v_2 \in \Theta_s^{\sigma}$, we have
\[
\Phi(v_1)(t, \cdot )-\Phi(v_2)(t,\cdot)=\int_{0}^{t}h(s)\,\e^{-(t-s)\Delta^{\sigma}}(v_1^{\gamma}(s,\cdot)-v_2^{\gamma}(s,\cdot))\, \dd s,
\]
therefore for any $0<t<\tau$,
\begin{align*}
	\|\Phi(v_1)(t,\cdot)-\Phi(v_2)(t,\cdot)\|_{{\infty} }&\leq \int_0^t h(s)\|\e^{-(t-s)\Delta^{\sigma}}(v_1^{\gamma}(s,\cdot)-v_2^{\gamma}(s,\cdot))\|_{\infty}\,\dd s\\
	&\leq C(\gamma)\delta^{\gamma-1}\int_0^t h(s)\|v_1(s,\cdot)-v_2(s,\cdot)\|_{\infty}\,\dd s\\
	&\leq  C(\gamma)\delta^{\gamma-1} \, \sup_{s\in [0,\tau]}\|v_1(s,\cdot)-v_2(s,\cdot)\|_{\infty}\int_0^{ \tau} h(s)\,\dd s
\end{align*}
which implies that
\[
	\|\Phi(v_1)-\Phi(v_2)\|_{\Theta_s^{\sigma}} \leq \frac{1}{2}\|v_1-v_2\|_{\Theta_T^{\sigma}}
\]
for $\tau>0$ sufficiently small. Hence, the Banach fixed point theorem ensures the existence and uniqueness of a local mild solution to~\eqref{fracheat} in $ \mathcal{C}([0, \tau] \times\mathbb{H}^n) \cap L^{\infty}([0, \tau] \times\mathbb{H}^n)$.

We now show (ii), i.e.\ the blow up if the maximal time of existence is finite. Let
$$
\tau_{\max }:=\sup \{\tau>0 \colon \text{the Cauchy problem}~\eqref{fracheat}  \text { admits a mild solution in } [0, \tau] \times \mathbb{H}^{n}\}.
$$
Assume that $\tau_{\max}$ is finite and that for each $t \in\left[0, \tau_{\max}\right)$, we have
$$
\|u(t, \cdot )\|_{\infty} \leq C,
$$
for some constant $C>0$. Fix $t^* \in\left(\tau_{\max}/ 2, \tau_{\max}\right)$ and for $\tilde{t} \in\left(0, \tau_{\max }\right)$, let us consider a space
$$
\mathcal{M}:=\{v \in  \mathcal{C}([0, \tilde{t}] \times\mathbb{H}^n) \cap L^{\infty}([0, \tilde{t}] \times\mathbb{H}^n)\colon  \|v\|_{L^{\infty}([0,\tilde{t}]\times \mathbb{H}^n)}<2C, \quad  v(0,\cdot )=u(t^*,\cdot)\}.
$$
With arguments similar to those already used, we define a map  $\Phi^{\prime}$ on $\mathcal{M} $ given by
$$
\Phi^{\prime}(v)(t,\cdot ):=\e^{-t\Delta^{\sigma}} u(t^*,\cdot )+\int_0^t \e^{-(t-s)t\Delta^{\sigma}}v^{\gamma}(s,\cdot) \dd s,
$$
for $t \in[0, \tilde{t}]$. One can see that it is a contraction map on $\mathcal{M} $, so the Banach fixed point theorem ensures the existence of a fixed point of $\mathcal{M}$, say $v$. We now take $t^*$ such that
$$
\tilde{t}+t^*>\tau_{\max }
$$
and consider
\begin{equation}\label{Eq. 3.7}
\bar{u}(t,\cdot ):= \begin{cases}u(t,\cdot ), & \text { for } 0 \leq t \leq t^*, \\ v(t-t^*,\cdot ) & \text { for } t^* \leq t \leq t^*+\tilde{t}.\end{cases}
\end{equation}
Observe that $\bar{u} \in \mathcal{C}([0,  t^*+\tilde{t}] \times\mathbb{H}^n) \cap L^{\infty}([0,  t^*+\tilde{t}] \times\mathbb{H}^n)$ is a solution of~\eqref{fracheat}. This, together with~\eqref{Eq. 3.7}, contradicts the definition of $\tau_{\max}$. Therefore, we get that
$$
\|u(t,\cdot )\|_{\infty} \to + \infty  \quad \text { as } t \to \tau_{\max },
$$
and this concludes the proof of (ii).

It remains to show that $u$ satisfies the required regularity to be a classical solution too. Since
\[
	u(t,x)=\int_{\mathbb{H}^n}P_t^{\sigma}(d(x,y))f(y)\, \dd \mu(y)+\int_0^{t}h(s)\int_{\mathbb{H}^n} P_{t-s}^{\sigma}(d(x,y))(|u|^{\gamma-1}u(s,y)) \,\dd \mu(y)\dd s
\]
for $t\in [0,\tau)$ and $x\in \mathbb{H}^{n}$, in fact $u\in C^{\infty}((0,\tau)\times \mathbb{H}^n)$, see~\cite[Theorem 3.2]{DPF25}. Finally, the fact that $\Delta^{\sigma}u$ is given by the pointwise formula~\eqref{eq:singint} for $0<t<\tau$ holds by the boundedness of $u$ which guarantees that $u(t,\cdot) \in L_{\sigma}$ for all $t\in (0,\tau)$, the smoothness of $u$ in space and Proposition~\ref{prop: Sil2.1.4}.
\end{proof}

One of our main results is the aforementioned Theorem~\ref{thm:A}, which we re-state for the reader's convenience. 
	\begin{theorem}\label{mainconj}
		Suppose $h(t)=\e^{\beta t}$ for some $\beta>0$ and $\sigma \in (0,1)$. Define $\gamma^{*} = 1 + \frac{\beta}{\lambda_{0}^{\sigma}}$. Then the following holds.
		\begin{itemize}
			\item If $1<\gamma< \gamma^{*}$, then every nontrivial nonnegative mild solution to the problem~\eqref{fracheat} blows up in finite time.
			\item If $\gamma\geq \gamma^{*}$, then there exists a nontrivial nonnegative classical global solution to the problem~\eqref{fracheat} for sufficiently small initial data $f$.
		\end{itemize}
	\end{theorem}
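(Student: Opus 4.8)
\emph{Overall plan.} I would prove the two alternatives by different means. The subcritical blow-up is a Kaplan-type argument in which the r\^ole of the first Dirichlet eigenfunction (unavailable here, since the ground spherical function $\varphi_{0}\notin L^{1}$) is taken by the fractional heat kernel itself, tested at a fixed final time; the sharp kernel behaviour \eqref{P kernel estimates} is the crucial input. Global existence for $\gamma>\gamma^{*}$ (and $\gamma=\gamma^{*}$ with $\beta$ large) is a Weissler-type contraction based on the $L^{p}$-decay \eqref{PtLp}, while the remaining critical range is reduced à la Wang--Yin to the elliptic equation \eqref{criticreduction0} via Theorem~\ref{thm:B} and a comparison principle. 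Throughout, Proposition~\ref{prop:unique} turns a global mild solution into a classical one.

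\emph{Blow-up for $1<\gamma<\gamma^{*}$.} Let $u\not\equiv 0$ be a nonnegative mild solution and suppose it is global. A nontrivial $u$ forces $f\not\equiv0$ (otherwise $u\equiv0$ by uniqueness), and since \eqref{eq:mildsol} is order preserving in the datum (positivity of the semigroup, monotonicity of $r\mapsto r^{\gamma}$), replacing $f$ by $f\,\chi_{B_{R}(x_{0})}$ on a ball where $f>0$ gives a smaller datum whose solution is dominated by $u$, hence still global and — by positivity of $P_{t}^{\sigma}$ — still nontrivial; so I may assume $f$ compactly supported. Fix $T>0$ and set $z(t):=(\e^{-(T-t)\Delta^{\sigma}}u(t))(o)=\int_{\mathbb{H}^{n}}P^{\sigma}_{T-t}(y)\,u(t,y)\,\dd\mu(y)$ for $t\in[0,T)$. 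Discarding the nonnegative linear term in \eqref{eq:mildsol}, applying the positivity-preserving operator $\e^{-(T-t)\Delta^{\sigma}}$, using the semigroup law and Tonelli, and evaluating at $o$ gives
\[
z(t)\ \ge\ \int_{0}^{t}\e^{\beta s}\bigl(\e^{-(T-s)\Delta^{\sigma}}u^{\gamma}(s)\bigr)(o)\,\dd s\ \ge\ \int_{0}^{t}\e^{\beta s}z(s)^{\gamma}\,\dd s,
\]
the last step being Jensen's inequality for the convex function $r\mapsto r^{\gamma}$ against the probability measure $P^{\sigma}_{T-s}\,\dd\mu$ (recall $\|P^{\sigma}_{\tau}\|_{1}=1$). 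Since $z(t)\ge(\e^{-T\Delta^{\sigma}}f)(o)=:a(T)>0$, a standard ODE argument shows $z$ must become infinite by a time $t_{\ast}(T)$ with $\e^{\beta t_{\ast}(T)}\asymp a(T)^{-(\gamma-1)}$; but $z(t)\le\|u(t)\|_{\infty}<\infty$ on $[0,T)$ by globality, so $t_{\ast}(T)\ge T$, i.e. $\e^{\beta T}\lesssim a(T)^{-(\gamma-1)}$ for every $T$. Finally, as $f$ is compactly supported near $o$, the second regime of \eqref{P kernel estimates} (where $\varphi_{0}\asymp1$) gives $a(T)\asymp T^{-3/2}\e^{-\lambda_{0}^{\sigma}T}$ for large $T$, whence $\e^{(\beta-(\gamma-1)\lambda_{0}^{\sigma})T}\lesssim T^{3(\gamma-1)/2}$ — impossible as $T\to\infty$ when $\gamma<\gamma^{*}$, i.e. $\beta>(\gamma-1)\lambda_{0}^{\sigma}$. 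Hence $u$ blows up in finite time. (At $\gamma=\gamma^{*}$ this degenerates to $1\lesssim T^{3(\gamma-1)/2}$ and yields nothing, consistently with global existence there.)

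\emph{Global existence for $\gamma\ge\gamma^{*}$, small $f$.} For $\gamma>\gamma^{*}$, and for $\gamma=\gamma^{*}$ with $\beta>\tfrac23\lambda_{0}^{\sigma}$, I would solve \eqref{eq:mildsol} by the Banach fixed point theorem in a space of functions with $\sup_{t}w(t)\|u(t)\|_{p}<\infty$ (plus a companion sup-bound), the weight $w$ modelled on the decay \eqref{PtLp}; smallness of $f$ makes the contraction close globally in time. The exponentials combine as $\e^{\beta s}\cdot\e^{-(\gamma-1)\lambda_{0}^{\sigma}s}\cdot\e^{-\lambda_{0}^{\sigma}(t-s)}\,s^{-\frac32(\gamma-1)}\asymp\e^{-\lambda_{0}^{\sigma}t}\,s^{-\frac32(\gamma-1)}\e^{(\beta-(\gamma-1)\lambda_{0}^{\sigma})s}$, so the controlling time integral $\int_{1}^{t}s^{-\frac32(\gamma-1)}\e^{(\beta-(\gamma-1)\lambda_{0}^{\sigma})s}\,\dd s$ stays bounded in $t$ precisely when $\gamma>\gamma^{*}$, or $\gamma=\gamma^{*}$ and $\tfrac32(\gamma-1)>1$, i.e. $\beta>\tfrac23\lambda_{0}^{\sigma}$ — the power $3/2$ of \eqref{PtLp} producing this threshold; the solution's regularity then comes from Propositions~\ref{prop:unique} and~\ref{prop: Sil2.1.4}. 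In the leftover case $\gamma=\gamma^{*}$, $\beta\le\tfrac23\lambda_{0}^{\sigma}$, Theorem~\ref{thm:B} with $\lambda=\lambda_{0}$ provides a nontrivial, nonnegative, bounded radial $\bar v$ — regular enough for \eqref{eq:singint} to hold for it pointwise — with $\Delta^{\sigma}\bar v-\lambda_{0}^{\sigma}\bar v=\bar v^{\gamma}$; then $w(t,x):=\e^{-\lambda_{0}^{\sigma}t}\bar v(x)$ satisfies
\[
\partial_{t}w+\Delta^{\sigma}w-\e^{\beta t}w^{\gamma}=\bar v^{\gamma}\bigl(\e^{-\lambda_{0}^{\sigma}t}-\e^{(\beta-\gamma\lambda_{0}^{\sigma})t}\bigr)=0,
\]
the equality holding because $\gamma=\gamma^{*}$ means $\beta-\gamma\lambda_{0}^{\sigma}=-\lambda_{0}^{\sigma}$. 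Thus $w$ is a bounded global (super)solution of \eqref{fracheat} with datum $\bar v$, and for $0\le f\le\bar v$ (the precise meaning of ``sufficiently small'') the comparison principle of Theorem~\ref{thm: comparison} yields $0\le u\le w$ on the whole interval of existence, so $u$ is global.

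\emph{Main obstacle.} In the blow-up half the delicate point will be extracting the sharp two-sided asymptotics $a(T)=(\e^{-T\Delta^{\sigma}}f)(o)\asymp T^{-3/2}\e^{-\lambda_{0}^{\sigma}T}$ from \eqref{P kernel estimates} — these space- and time-dependent kernel estimates on $\mathbb{H}^{n}$ being far more delicate than their Euclidean analogues — and in checking that the polynomial correction lands on the harmless side of the final inequality, so that the dichotomy is governed exactly by the sign of $\beta-(\gamma-1)\lambda_{0}^{\sigma}$. On the global side the Weissler scheme is essentially routine; the genuine weight of the proof sits in the critical regime, which is not self-contained: it rests on Theorem~\ref{thm:B}, hence ultimately on the fractional Poincar\'e inequality (Theorem~\ref{thm:C}) and the radial compact embedding, together with a comparison principle for pointwise supersolutions with the regularity of Proposition~\ref{prop: Sil2.1.4}.
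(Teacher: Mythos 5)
Your Kaplan-type argument at the basepoint $o$ is a valid alternative to the paper's route: the paper (Lemma~\ref{lemma:existence}) iterates the integral inequality $u(t)\ge\int_0^t h(s)(\e^{-t\Delta^\sigma}f)^{\gamma^k}\cdots$ to derive $\|\e^{-t\Delta^\sigma}f\|_\infty\le C\e^{-\beta t/(\gamma-1)}$ and then contradicts this with a lower bound, whereas you test against $P^\sigma_{T-t}$ at $o$, derive $z(t)\ge\int_0^t\e^{\beta s}z(s)^\gamma\,\dd s$ with $z\ge a(T)$, and use the ODE blow-up time. Both arguments close with exactly the same ingredient, the lower bound $(\e^{-T\Delta^\sigma}f)(o)\gtrsim T^{-3/2}\e^{-\lambda_0^\sigma T}$ from the middle regime of~\eqref{P kernel estimates}. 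Two small remarks: the reduction to compactly supported $f$ is unnecessary (the lower bound for $a(T)$ holds for any nontrivial nonnegative $f$ by restricting the $y$-integral to a fixed ball, which is also how the paper avoids any restriction on $f$); and you only need the lower bound for $a(T)$, not a two-sided asymptotic. One also needs to justify that a nontrivial mild solution has $f\not\equiv 0$; this follows from uniqueness only among bounded solutions, so one should argue that mild solutions in the paper's sense are locally bounded — minor, but worth flagging.

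\textbf{Genuine gap in the critical regime.} In the range $\gamma=\gamma^*$, $0<\beta\le\tfrac23\lambda_0^\sigma$, you invoke Theorem~\ref{thm:B} with exponent $\gamma^*$ to obtain $\bar v$ solving $\Delta^\sigma\bar v-\lambda_0^\sigma\bar v=\bar v^{\gamma^*}$. But Theorem~\ref{thm:B} needs $\gamma<\frac{n+2\sigma}{n-2\sigma}$, and since $\gamma^*=1+\beta/\lambda_0^\sigma$ and $\frac{n+2\sigma}{n-2\sigma}=1+\frac{4\sigma}{n-2\sigma}$, the condition $\gamma^*<\frac{n+2\sigma}{n-2\sigma}$ is equivalent to $\beta<\frac{4\sigma}{n-2\sigma}\lambda_0^\sigma$. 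When $n\ge 8\sigma$ (so for \emph{every} $n\ge 8$, and for $n<8$ whenever $\sigma\le n/8$) one has $\frac{4\sigma}{n-2\sigma}\lambda_0^\sigma\le\frac23\lambda_0^\sigma$, leaving a nonempty band $\frac{4\sigma}{n-2\sigma}\lambda_0^\sigma\le\beta\le\frac23\lambda_0^\sigma$ where $\gamma^*\ge\frac{n+2\sigma}{n-2\sigma}$ and Theorem~\ref{thm:B} gives you nothing. Your $\bar v$ and hence your candidate $w=\e^{-\lambda_0^\sigma t}\bar v$ are simply unavailable there. The paper (Proposition~\ref{prop:criticalFujita}) closes this gap with a rescaling device: apply Theorem~\ref{thm:B} at the fixed subcritical exponent $\bar\gamma=1+\frac{2\sigma}{n-2\sigma}<\frac{n+2\sigma}{n-2\sigma}$ to get $v$, set $\eta=\max(1,\|v\|_\infty)$ and $v_\eta=v/\eta$; then $\eta\ge 1$, $\gamma^*>\bar\gamma$ and $0\le v_\eta\le 1$ yield $\Delta^\sigma v_\eta-\lambda_0^\sigma v_\eta-v_\eta^{\gamma^*}\ge 0$, a bounded nonnegative \emph{supersolution} of the $\gamma^*$-equation, and $w=\e^{-\lambda_0^\sigma t}v_\eta$ is then a global supersolution of~\eqref{fracheat}, which suffices together with the comparison principle. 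Without this step your proof does not cover the full stated range of $n$, $\sigma$ and $\beta$.

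Apart from this, the Weissler-type argument for $\gamma>\gamma^*$ (and $\gamma=\gamma^*$, $\beta>\tfrac23\lambda_0^\sigma$) differs only cosmetically from the paper's Lemma~\ref{prop:ex}: the paper builds a monotone iteration bounded by the explicit $\omega(t)$ rather than running a weighted contraction, but both hinge on the same convergence of $\int_0^\infty\e^{\beta s}\|\e^{-s\Delta^\sigma}f\|_\infty^{\gamma-1}\,\dd s$, and hence produce the identical threshold $\beta>\tfrac23\lambda_0^\sigma$ at $\gamma=\gamma^*$.
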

We will specify in due course what \emph{sufficiently small} means. The proof of Theorem~\ref{mainconj} will be discussed in the following sections, and will be split into Propositions~\ref{prop:blowup},~\ref{prop:oldFujita} and~\ref{prop:criticalFujita}.

	\section{Blow-up in finite time}\label{Sec:4}
	
In this section we prove the blow-up part of Theorem~\ref{mainconj}, namely the following.
	
	\begin{proposition}\label{prop:blowup}
		Suppose $h(t) \geq \e^{\beta t}$ for some $\beta>0$, and $\sigma \in (0,1)$. If $1<\gamma<\gamma^{*}$, then every nontrivial nonnegative mild solution to the problem~\eqref{fracheat} blows up in finite time.
	\end{proposition}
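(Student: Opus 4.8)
The plan is to use the classical Kaplan-type eigenfunction method, adapted to the hyperbolic setting and to the nonlocal operator $\Delta^{\sigma}$. Suppose for contradiction that $u$ is a nontrivial nonnegative \emph{global} mild solution. The key test object is the ground spherical function $\varphi_{0}$, which satisfies $\Delta\varphi_{0}=\lambda_{0}\varphi_{0}$ and hence, by spectral calculus, $\Delta^{\sigma}\varphi_{0}=\lambda_{0}^{\sigma}\varphi_{0}$; moreover $\varphi_{0}>0$, is radial, and by~\eqref{eq: ground} lies in $L^{2+\varepsilon}$ but crucially \emph{not} in $L^{1}$. To get around the non-integrability of $\varphi_{0}$ one introduces a cutoff: fix a smooth radial $\chi_{R}$ supported in $B(o,2R)$, equal to $1$ on $B(o,R)$, and set $\psi_{R}=\varphi_{0}\chi_{R}$, so that $\psi_{R}\in\mathcal{C}_{c}^{\infty}$ and $\int_{\mathbb{H}^{n}}\psi_{R}\,\dd\mu\to\infty$ as $R\to\infty$. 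Define the weighted mass
\[
F_{R}(t)=\int_{\mathbb{H}^{n}}u(t,x)\,\psi_{R}(x)\,\dd\mu(x).
\]
Testing the mild formulation (or, using Proposition~\ref{prop:unique}(iii), the classical equation) against $\psi_{R}$, integrating by parts the nonlocal term — legitimate since $\psi_{R}\in\mathcal{C}_{c}^{\infty}$ and $u(t,\cdot)\in L_{\sigma}$, so $\langle\Delta^{\sigma}u,\psi_{R}\rangle=\langle u,\Delta^{\sigma}\psi_{R}\rangle$ — and using Jensen's inequality with the probability measure $\psi_{R}\,\dd\mu/\!\int\psi_{R}\,\dd\mu$ on the convex nonlinearity $s\mapsto s^{\gamma}$, one arrives at a differential inequality of the form
\[
F_{R}'(t)\ \geq\ -\,\langle u(t,\cdot),\Delta^{\sigma}\psi_{R}\rangle\ +\ \e^{\beta t}\,\Big(\!\int_{\mathbb{H}^{n}}\psi_{R}\,\dd\mu\Big)^{1-\gamma}F_{R}(t)^{\gamma}.
\]
The term $\Delta^{\sigma}\psi_{R}$ is the obstacle: unlike in the local case, $\Delta^{\sigma}(\varphi_{0}\chi_{R})\neq\chi_{R}\Delta^{\sigma}\varphi_{0}$ because of nonlocality. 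One must show that $\Delta^{\sigma}\psi_{R}\leq\lambda_{0}^{\sigma}\psi_{R}+($controlled error$)$, or more precisely that $\langle u,\Delta^{\sigma}\psi_{R}\rangle\leq\lambda_{0}^{\sigma}F_{R}(t)+o(1)$-type error as $R\to\infty$, so that in the limit $F'(t)\gtrsim-\lambda_{0}^{\sigma}F(t)+\e^{\beta t}(\text{mass})^{1-\gamma}F(t)^{\gamma}$. I expect this to be the main technical difficulty, and it should be handled by writing $\Delta^{\sigma}\psi_{R}-\chi_{R}\Delta^{\sigma}\varphi_{0}$ via the singular integral~\eqref{eq:singint} and estimating the cross terms using the pointwise decay of $P_{0}^{\sigma}$ from~\eqref{eq: P0 est} together with the decay $\varphi_{0}(r)\asymp(1+r)\e^{-\rho r}$; the error is supported essentially in the annulus $R\leq|x|\leq 2R$ and, after pairing with $u$ (which is bounded, since $u$ is assumed global), contributes a term that is negligible relative to $F_{R}(t)$ as $R\to\infty$, because the bulk of $\int\psi_{R}$ comes from $|x|\le R$ where $\Delta^{\sigma}\psi_{R}=\lambda_{0}^{\sigma}\varphi_{0}+(\text{small})$.

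Next I renormalize: set $G(t)=\e^{\lambda_{0}^{\sigma}t}F(t)$ (with $F$ the $R\to\infty$ limit, or work with $F_{R}$ and pass to the limit at the end). Then the differential inequality becomes
\[
G'(t)\ \geq\ c\,\e^{\beta t}\,\e^{-\lambda_{0}^{\sigma}t}\Big(\e^{-\lambda_{0}^{\sigma}t}G(t)\Big)^{\gamma}\!\Big/(\text{mass})^{\gamma-1}
\ =\ c'\,\e^{(\beta-\gamma\lambda_{0}^{\sigma})t}\,G(t)^{\gamma},
\]
after absorbing the fixed mass constant. Since $\gamma<\gamma^{*}=1+\beta/\lambda_{0}^{\sigma}$ is equivalent to $\beta-\gamma\lambda_{0}^{\sigma}>-\lambda_{0}^{\sigma}$, i.e.\ $\beta-(\gamma-1)\lambda_{0}^{\sigma}>0$... more to the point, the exponent $\beta-\gamma\lambda_{0}^{\sigma}$ may be negative, but one checks that $1+(\beta-\gamma\lambda_{0}^{\sigma})\cdot\frac{1}{\gamma-1}>0$ precisely when $\gamma<\gamma^{*}$, which is exactly the condition making the Bernoulli-type ODE $g'=c'\e^{at}g^{\gamma}$ blow up in finite time for any positive initial datum. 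Concretely, since $u$ is nontrivial, nonnegative and continuous, $F(t_{0})>0$ for some $t_{0}>0$ (by the strong positivity of the kernel $P_{t}^{\sigma}$, a nonnegative nontrivial solution is strictly positive for $t>0$), hence $G(t_{0})>0$; separating variables in $G'\geq c'\e^{at}G^{\gamma}$ with $a=\beta-\gamma\lambda_{0}^{\sigma}$ gives $G(t)^{1-\gamma}\leq G(t_{0})^{1-\gamma}-\frac{c'}{a}(\e^{at}-\e^{at_{0}})$ when $a\neq 0$ (and the analogous bound with $c'(t-t_{0})$ when $a=0$), and since $\gamma>1$ the right-hand side reaches $0$ in finite time provided $\int_{t_{0}}^{\infty}\e^{as}\,\dd s=\infty$, i.e.\ $a\geq 0$, or — when $a<0$ — provided $G(t_{0})^{1-\gamma}<-c'/a\cdot\e^{at_{0}}$, which after unwinding is again guaranteed in the range $\gamma<\gamma^{*}$ by choosing $t_{0}$ large (using that $F(t)$, hence $G(t_{0})$, does not decay too fast — one may need the lower heat-kernel bounds~\eqref{P kernel estimates} to see $F(t_{0})\gtrsim\e^{-\lambda_{0}^{\sigma}t_{0}}\cdot(\text{something not decaying faster})$). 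In all cases $G$, hence $F$, blows up in finite time, contradicting $u(t,\cdot)\in L^{\infty}$ for all $t$ together with $\psi_{R}\in L^{1}$ — which would force $F_{R}(t)\leq\|u(t,\cdot)\|_{\infty}\|\psi_{R}\|_{1}<\infty$. This contradiction shows no nontrivial nonnegative global mild solution exists, i.e.\ every such solution blows up in finite time. The delicate points to be careful about are (a) justifying the integration by parts and the differentiation under the integral sign at the level of mild solutions — handled via Proposition~\ref{prop:unique}(iii), which upgrades mild to classical; (b) the uniform-in-$R$ control of the nonlocal error $\Delta^{\sigma}\psi_{R}-\chi_{R}\lambda_{0}^{\sigma}\varphi_{0}$; and (c) the strict positivity $F(t_{0})>0$, which follows from positivity of $P_{t}^{\sigma}$.
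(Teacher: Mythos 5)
Your route is genuinely different from the paper's. The paper first proves (Lemma~\ref{lemma:existence}) by an iterated Jensen argument on the Duhamel formula that if a global mild solution exists then $\|\e^{-t\Delta^{\sigma}}f\|_{\infty}\lesssim \e^{-\frac{\beta}{\gamma-1}t}$, and then contradicts this via the lower bounds~\eqref{P kernel estimates} on the fractional heat kernel, since $\beta/(\gamma-1)>\lambda_{0}^{\sigma}$ when $\gamma<\gamma^{*}$. You instead propose a Kaplan-type eigenfunction argument against a cutoff ground state $\psi_{R}=\varphi_{0}\chi_{R}$. As it stands, however, your argument has a structural gap as well as an arithmetic error.

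The structural gap: since $\varphi_{0}\notin L^{1}$, you cannot take $R\to\infty$ in the Jensen step, because the coefficient $(\int\psi_{R}\,\dd\mu)^{1-\gamma}\to 0$ as $R\to\infty$ (as $\gamma>1$), so the nonlinear term disappears in the limit. You must therefore keep $R$ fixed, and then you must genuinely control the nonlocal commutator
\[
E_{R}(x)=\Delta^{\sigma}\psi_{R}(x)-\chi_{R}(x)\,\Delta^{\sigma}\varphi_{0}(x)
= \frac{1}{|\Gamma(-\sigma)|}\int_{\mathbb{H}^{n}}\varphi_{0}(y)\,\big(\chi_{R}(x)-\chi_{R}(y)\big)\,P_{0}^{\sigma}(y^{-1}x)\,\dd\mu(y),
\]
which, unlike its local analogue, is not supported in an annulus: it is nonzero on all of $\mathbb{H}^{n}$. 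To absorb $\langle u(t,\cdot),E_{R}\rangle$ into $\lambda_{0}^{\sigma}F_{R}(t)$ one would want a pointwise bound $|E_{R}|\leq\varepsilon\psi_{R}$, which is impossible since $E_{R}\neq 0$ where $\psi_{R}=0$; and pairing a weaker bound against $u(t,\cdot)$ is delicate because globality only gives $u(t,\cdot)\in L^{\infty}$ for each $t$, not a uniform-in-time bound. You flag this as ``the main technical difficulty'' but do not resolve it, so the key differential inequality $F_{R}'\geq-\lambda_{0}^{\sigma}F_{R}+c_{R}\e^{\beta t}F_{R}^{\gamma}$ is not established.

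The arithmetic error: with $G=\e^{\lambda_{0}^{\sigma}t}F$ the chain rule gives $G'=\lambda_{0}^{\sigma}G+\e^{\lambda_{0}^{\sigma}t}F'$, so the renormalized inequality is $G'\geq c'\e^{(\beta-(\gamma-1)\lambda_{0}^{\sigma})t}G^{\gamma}$, with exponent $\beta-(\gamma-1)\lambda_{0}^{\sigma}$, \emph{not} $\beta-\gamma\lambda_{0}^{\sigma}$; you dropped the $\lambda_{0}^{\sigma}G$ term from differentiating the exponential prefactor. The correct exponent is strictly positive exactly when $\gamma<\gamma^{*}$, so $\int^{\infty}\e^{(\beta-(\gamma-1)\lambda_{0}^{\sigma})s}\,\dd s=\infty$ and the Bernoulli ODE blows up for \emph{any} positive datum; your subsequent case analysis on the sign of $a=\beta-\gamma\lambda_{0}^{\sigma}$ and the detour through lower heat-kernel bounds to choose $t_{0}$ large are unnecessary once the exponent is fixed.
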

	

	We begin with a lemma.
	
	\begin{lemma}\label{lemma:existence}
		Suppose $h(t) \geq  \e^{\beta t}$ for some $\beta>0$, $\sigma \in (0,1)$ and $\gamma>1$. Suppose a global mild solution to the problem~\eqref{fracheat} exists. Then there exists a constant $C>0$, depending only on $\beta, \sigma$ and $\gamma$, such that
		\[
		\|  \e^{-t \Delta^{\sigma}}f \|_{\infty} \leq C \, \e^{-\frac{\beta}{\gamma-1} t}, \qquad t \geq \beta^{-1}\log(2).
		\]
	\end{lemma}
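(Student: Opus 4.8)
The plan is to force decay of $\e^{-t\Delta^\sigma}f$ out of the mere existence of a global solution, by iterating the Duhamel formula~\eqref{eq:mildsol}. Two structural facts drive the argument. First, since $P_r^\sigma\geq 0$ and $\|P_r^\sigma\|_1=1$, the operator $\e^{-r\Delta^\sigma}$ is a positivity-preserving contraction fixing constants, so Jensen's inequality yields the pointwise bound $(\e^{-r\Delta^\sigma}g)^\gamma\leq \e^{-r\Delta^\sigma}(g^\gamma)$ for every bounded $g\geq 0$ and every exponent $\gamma\geq 1$. Second, the semigroup law $\e^{-r\Delta^\sigma}\e^{-s\Delta^\sigma}=\e^{-(r+s)\Delta^\sigma}$ lets one restart~\eqref{eq:mildsol} from any time; using that the solution is nonnegative (so the reaction term equals $u^\gamma$), this gives $u(r,\cdot)\geq \e^{-(r-s)\Delta^\sigma}u(s,\cdot)\geq 0$ for $r\geq s$, and in particular $u(t,\cdot)\geq w(t,\cdot):=\e^{-t\Delta^\sigma}f\geq 0$.

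First I would establish, by induction on $k$, the following chain estimate: for every partition $0=s_0<s_1<\dots<s_k=t$,
\[
u(t,\cdot)\ \geq\ c_k\,w(t,\cdot)^{\gamma^k},\qquad c_k:=\prod_{i=1}^{k}\Big(\int_{s_{i-1}}^{s_i}h(r)\,\diff r\Big)^{\gamma^{k-i}}.
\]
The inductive step restarts~\eqref{eq:mildsol} on $[s_j,s_{j+1}]$, drops the nonnegative terms, inserts $u(r,\cdot)\geq \e^{-(r-s_j)\Delta^\sigma}u(s_j,\cdot)$ for $r$ in that interval, and then uses Jensen (with exponent $\gamma^j$) together with the semigroup law: the latter both makes the integrand independent of $r$ and converts $w(s_j,\cdot)$ into $w(s_{j+1},\cdot)$ with no compounding loss. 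Plugging in the inductive hypothesis and collecting the exponents of $\gamma$ then produces $c_{j+1}$.

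Since the solution is global, $\|u(t,\cdot)\|_\infty<\infty$, so (leaving the trivial solution aside, so that $\|u(t,\cdot)\|_\infty\in(0,\infty)$) taking sup norms in the chain estimate gives
\[
\|w(t,\cdot)\|_\infty\ \leq\ \|u(t,\cdot)\|_\infty^{1/\gamma^k}\,\exp\!\Big(-\tfrac{1}{\gamma^k}\log c_k\Big),
\]
and letting $k\to\infty$ removes the first factor. It then suffices to choose the partition so that $\tfrac{1}{\gamma^k}\log c_k=\sum_{i=1}^k\gamma^{-i}\log\big(\int_{s_{i-1}}^{s_i}h(r)\,\diff r\big)$ converges to at least $\tfrac{\beta}{\gamma-1}t-C_0$ with $C_0=C_0(\beta,\gamma)$. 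Using $h(r)\geq \e^{\beta r}$ one has $\int_{s_{i-1}}^{s_i}h(r)\,\diff r\geq \beta^{-1}\e^{\beta t}\big(\e^{-\beta(t-s_i)}-\e^{-\beta(t-s_{i-1})}\big)$, and these increments telescope over $i$ to $1-\e^{-\beta t}\geq \tfrac12$ when $t\geq \beta^{-1}\log 2$. Picking the $s_i$ so that the $i$-th increment equals $(1-\e^{-\beta t})2^{-i}$ (the last interval absorbing the remainder) yields $\int_{s_{i-1}}^{s_i}h(r)\,\diff r\geq \beta^{-1}2^{-i-1}\e^{\beta t}$, hence $\log\big(\int_{s_{i-1}}^{s_i}h(r)\,\diff r\big)\geq \beta t-\log\beta-(i+1)\log 2$ for all $i$; summing against the weights $\gamma^{-i}$, with $\sum_{i\geq 1}\gamma^{-i}=\tfrac{1}{\gamma-1}$ and $\sum_{i\geq 1}i\gamma^{-i}=\tfrac{\gamma}{(\gamma-1)^2}$, produces precisely the exponent $\tfrac{\beta}{\gamma-1}t$ and a finite constant, and the lemma follows with $C=\e^{C_0}$.

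The main obstacle is exactly this last choice of partition: splitting $[0,t]$ into pieces of comparable size would send $\tfrac{1}{\gamma^k}\log c_k$ to $-\infty$, so one must arrange the masses $\int_{s_{i-1}}^{s_i}h$ to decay only geometrically in $i$ --- which is essentially best possible, since they sum to at most $\beta^{-1}\e^{\beta t}$ --- and then verify that the resulting logarithmic corrections stay summable against the $\gamma^{-i}$ and that the final constant depends on $\beta$ and $\gamma$ only, not on $t$ or on $u$.
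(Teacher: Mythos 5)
Your proof is correct, but it organizes the Duhamel iteration differently from the paper. The paper iterates the mild formula over the \emph{entire} interval $[0,t]$ at each step, producing the nested functionals $H_1(t)=\int_0^t h$ and $H_k(t)=\int_0^t h(s)\,H_{k-1}(s)^{\gamma}\,\diff s$, and the chain $u(\cdot,t)\geq H_k(t)\,(\e^{-t\Delta^\sigma}f)^{\gamma^k}$; it then bounds $H_k$ from below inductively for $h\geq\e^{\beta\cdot}$ and passes to the limit $k\to\infty$ after rearranging, with the constant emerging from the infinite product $\prod_{j\geq 2}\big(\tfrac{\gamma^j-1}{\gamma-1}\big)^{\gamma^{-j}}$. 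You instead restart the Duhamel formula sequentially on a partition $0=s_0<\dots<s_k=t$, using the inductive information only at the restart points; this yields the weaker (for fixed $k$) bound $u(t,\cdot)\geq c_k\,w(t,\cdot)^{\gamma^k}$ with $c_k=\prod_{i=1}^k\big(\int_{s_{i-1}}^{s_i}h\big)^{\gamma^{k-i}}$, and you compensate for the loss by tuning the partition so that the interval masses $\int_{s_{i-1}}^{s_i}h$ decay geometrically rather than uniformly, which keeps the weighted sum $\sum_i\gamma^{-i}\log(\int_{s_{i-1}}^{s_i}h)$ bounded and tending to $\tfrac{\beta}{\gamma-1}t-C_0$. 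Both routes are valid and yield a constant depending only on $\beta,\gamma$. The paper's nested integrals $H_k$ give a sharper lower bound per iteration and a more mechanical induction; your partition argument trades a slightly lossier iteration for an explicit combinatorial choice where the source of the $\tfrac{\beta}{\gamma-1}$ exponent (geometric decay of masses against geometric weights $\gamma^{-i}$) is especially transparent, and it makes the role of the condition $t\geq\beta^{-1}\log 2$ (guaranteeing $1-\e^{-\beta t}\geq\tfrac12$) visible at the exact point where it is used.
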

	\begin{proof}
		Suppose that $u$ is a non-negative global mild solution to the problem~\eqref{fracheat}. Since $u$ satisfies~\eqref{eq:mildsol} and $f \geq 0$,
			\begin{equation}\label{2ineq}
			u(t,\cdot)\geq  \e^{-t\Delta^{\sigma}}f, \qquad u(t,\cdot) \geq \int_{0}^{{t}} h(s) \, \e^{-(t-s)\Delta^{\sigma}}(u(s,\cdot)^{\gamma}) \, \dd s.
		\end{equation}
By Jensen's inequality (recall that $\|P_{t}^{\sigma}\|_{1}=1$ whence, by left-invariance and symmetry, $\int_{\mathbb{H}^{n}} P_{t}^{\sigma}(y^{-1}x) \, dy =1$ for all $x$), the first inequality in~\eqref{2ineq} leads to
		\[
		\e^{-(t-s)\Delta^{\sigma}}(u(\cdot ,s)^{\gamma}) \geq \e^{-(t-s)\Delta^{\sigma}}(\e^{-s\Delta^{\sigma}}f)^{\gamma} \geq  (\e^{-t\Delta^{\sigma}} f)^{\gamma},
		\]
whence by the second inequality in~\eqref{2ineq} 
		\[
		u(\cdot,t) \geq \int_{0}^{t} h(s) (\e^{-t\Delta^{\sigma}} f)^{\gamma} \, \dd s = H_{1}(t)  (\e^{-t\Delta^{\sigma}} f)^{\gamma},
		\]
		where $H_{1}(t) = \int_{0}^{t} h(s)\, \dd s$. With the exact same reasoning
		\begin{align*}
			u(\cdot ,t) 
			&\geq \int_{0}^{t} h(s) \e^{-(t-s)\Delta^{\sigma}} (H_{1}(s)(\e^{-s\Delta^{\sigma}} f)^{\gamma})^{\gamma}\, \dd s\\
			& \geq \int_{0}^{t} h(s) H_{1}^{\gamma}(s)(\e^{-t\Delta^{\sigma}} f)^{\gamma^{2}}\, \dd s\\
			& = H_{2}(t) (\e^{-t\Delta^{\sigma}} f)^{\gamma^{2}},
		\end{align*}
		where $H_{2}(t) = \int_{0}^{t} h(s)H_{1}^{\gamma}(s)\, \dd s$. Inductively, one gets
		\[
		u(\cdot ,t)  \geq H_{k}(t) (\e^{-t\Delta^{\sigma}} f)^{\gamma^{k}}, \qquad H_{k}(t) = \int_{0}^{t} h(s)H_{k-1}^{\gamma}(s)\, \dd s, \qquad k\geq 2.
		\]
		Assume now that $h(t) \geq \e^{\beta t}$. Then, for all $t  \geq \beta^{-1}\log(2)$,
		\[
		H_{1}(t)  \geq  \int_{0}^{t} \e^{\beta s} \, \dd s =   \frac{1}{\beta}(\e^{\beta t} -1) \geq  \frac{1}{2\beta} \e^{\beta t}.
		\]
		Thus, for $t  \geq \beta^{-1}\log(2)$,
		\begin{align*}
			H_{2}(t)
			&\geq \int_{0}^{t} \e^{\beta s}  \bigg( \frac{1}{2\beta} \e^{\beta s} \bigg)^{\gamma} \, \dd s\\
			& =  \frac{1}{(2\beta)^{\gamma}} \int_{0}^{t} \e^{\beta (1+\gamma)s} \, \dd s =    \frac{1}{(2\beta)^{\gamma}} \frac{1}{\beta(1+\gamma)}( \e^{\beta(1+\gamma) t}-1) \geq   \frac{1}{(2\beta)^{\gamma}} \frac{1}{2\beta(1+\gamma)} \e^{\beta(1+\gamma) t}
		\end{align*}
		and inductively
		\[
		H_{k}(t) \geq \frac{1}{(2\beta)^{1+\dots + \gamma^{k-1}}} \frac{1}{(1+\gamma)^{\gamma^{k-2}}(1+\gamma+\gamma^{2})^{\gamma^{k-3}} \cdots (1+ \gamma + \dots + \gamma^{k-1})} \e^{\beta t (1 + \dots + \gamma^{k-1})},
		\]
		whence
		\begin{multline*}
			u( \cdot ,t)  \geq \frac{1}{(2\beta)^{1+\dots + \gamma^{k-1}}} \frac{1}{(1+\gamma)^{\gamma^{k-2}}} \frac{1}{(1+\gamma+\gamma^{2})^{\gamma^{k-3}}} \dots \frac{1}{(1+ \gamma + \dots + \gamma^{k-1})} \\
			\times \e^{\beta t (1+ \gamma + \dots + \gamma^{k-1})} (\e^{-t\Delta^{\sigma}} f)^{\gamma^{k}}.
		\end{multline*}
		This implies
		\begin{align*}
			\e^{ \beta t \frac{\gamma^{k}-1}{\gamma^{k}(\gamma-1)}} \e^{-t\Delta^{\sigma}} f  &=\e^{\beta t \frac{1+ \dots + \gamma^{k-1}}{\gamma^{k}}}  \e^{-t\Delta^{\sigma}} f\\
			& \leq (2\beta)^{\frac{1+\dots + \gamma^{k-1}}{\gamma^{k}}} (1+\gamma)^{\gamma^{-2}} \cdots (1+ \gamma + \dots + \gamma^{k-1})^{\gamma^{-k}} u(\cdot, t)^{\gamma^{-k}}\\
			& = (2\beta)^{\frac{\gamma^{k-1}}{(\gamma-1)\gamma^{k}}}  u(\cdot, t)^{\gamma^{-k}} \prod_{j=2}^{k} \bigg( \frac{\gamma^{j}-1}{\gamma-1} \bigg)^{\gamma^{-j}}.
		\end{align*}
		Taking the limit for $k\to \infty$, one gets, for all $t  \geq \beta^{-1}\log(2)$
		\[
		(0\leq ) \quad \e^{\frac{\beta }{\gamma-1}t} \e^{-t\Delta^{\sigma}} f \leq C
		\]
		which completes the proof of the lemma.
	\end{proof}
	
	\begin{proof}[Proof of Proposition~\ref{prop:blowup}]
We first observe that it is enough to prove that 
		\begin{equation}\label{claim:div}
			\lim_{t \to \infty} \e^{\frac{\beta}{\gamma-1} t } \|\e^{-t\Delta^{\sigma}} f \|_{\infty} \to +\infty
		\end{equation}
		whenever $\frac{\beta}{\gamma-1} > \lambda_{0}^{\sigma}$, since this combined with Lemma~\ref{lemma:existence} proves the statement.	
			
		Suppose $t\geq \beta^{-1}\log 2$. Then by~\eqref{P kernel estimates} we have
		\begin{align*}
			\e^{-t\Delta^{\sigma}} f(x) 
			&= \int_G P_{t}^{\sigma}(y) \,f(xy^{-1})\, \dd y \\
			& \geq  \int_{\{|y|\leq \sqrt{t}\}} P_{t}^{\sigma}(y) \, f(xy^{-1}) \, \dd y\\
			& \gtrsim  \int_{\{|y|\leq \sqrt{t}\}} \varphi_{0}(y)\, t^{\frac{1}{2-2\sigma}} \,(t+|y|)^{-\frac{3}{2}-\frac{1}{2-2\sigma}} \,\e^{-\lambda_0^{\sigma} t} \, f(xy^{-1}) \, \dd y. 
		\end{align*}
		Since $$t+|y|\asymp t \quad \text{ for } |y|\leq\sqrt{t}, \, t\geq \beta^{-1}\ln2,$$
		it follows that  
		\begin{align*}
			t^{\frac{3}{2}} \e^{ \lambda_{0}^{\sigma} t} \|\e^{-t\Delta^{\sigma}} f \|_{\infty}
			&\geq \sup_{x\in G} \int_{\{|y|\leq \sqrt{t}\}} \varphi_{0}(y)  \, f(xy^{-1}) \, \dd y  \\
			& \geq \sup_{x\in G}\int_{B(o,\frac{1}{2}\sqrt{\beta^{-1}\ln2})}f(xy^{-1})\, \varphi_{0}(y)\, \dd y\\
			& = \sup_{x\in G}(f\ast\varphi_0\mathbbm{1}_{B(o,\frac{1}{2}\sqrt{\beta^{-1}\ln2})})(x),
		\end{align*}
from which~\eqref{claim:div} follows.
	\end{proof}

	\section{Global existence}\label{Sec:5}

\begin{definition}
A function $u\colon [0,\tau)\times \mathbb{H}^n\rightarrow \mathbb{R}$  such that $u(t,\cdot) \in L_{\sigma}$ for all $t\in (0,\tau)$, and satisfying (i)--(iii) of Definition~\ref{sols} is said to be a \textit{classical supersolution (subsolution)} of~\eqref{fracheat} if
\[
		\begin{cases}
			\partial_t u + \Delta^{\sigma}u \geq  (\leq) \e^{\beta t}u^{\gamma}, \qquad &t\in(0, \tau),\, x\in \mathbb{H}^n\\
			u(x,0)\geq  (\leq)f(x), \qquad &x\in \mathbb{H}^n.
		\end{cases} 
\]
We say that $u$ is bounded if  $u\in L^{\infty}([0,\tau) \times \mathbb{H}^{n})$.
\end{definition}

	\subsection{A comparison principle}
\begin{theorem}\label{thm: comparison}
		Let $\overline{u}$ and $\underline{u}$ be a bounded nonnegative classical supersolution and a subsolution of~\eqref{fracheat}, respectively. Then $\underline{u}\leq \overline{u}$.
	\end{theorem}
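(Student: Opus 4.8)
The plan is to set $w:=\underline u-\overline u$ and prove $w\leq 0$ on $[0,\tau)\times\mathbb{H}^n$. Since the conclusion is local in time it suffices to fix $T\in(0,\tau)$ and argue on $[0,T]\times\mathbb{H}^n$, where both solutions are bounded, say $0\leq\underline u\leq M$ and $0\leq\overline u\leq M$. Then $s\mapsto s^\gamma$ is Lipschitz on $[0,M]$ with constant $\gamma M^{\gamma-1}$, and combining this with its monotonicity gives the pointwise bound
\[
\e^{\beta t}\bigl(\underline u(t,x)^\gamma-\overline u(t,x)^\gamma\bigr)\ \leq\ C\,w^+(t,x),\qquad C:=\gamma M^{\gamma-1}\e^{\beta T},
\]
where $w^+:=\max(w,0)$: indeed when $w(t,x)\leq 0$ the left-hand side is $\leq 0=C\,w^+(t,x)$. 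Note also that $w(0,\cdot)=\underline u(0,\cdot)-\overline u(0,\cdot)\leq f-f=0$.

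Next I would pass to an integral (Duhamel) formulation. The semigroup $\e^{-t\Delta^\sigma}$ is positivity preserving and $L^\infty$-contractive, since the kernel $P_t^\sigma$ of~\eqref{kernelFH} is nonnegative with $\|P_t^\sigma\|_1=1$ (indeed $\e^{-t\Delta^\sigma}\mathbbm{1}=\mathbbm{1}$). Writing $g:=\partial_t\overline u+\Delta^\sigma\overline u-\e^{\beta t}\overline u^\gamma\geq 0$, continuous on $(0,\tau)\times\mathbb{H}^n$, and applying the variation-of-parameters representation of the bounded classical solution of the linear equation $\partial_t v+\Delta^\sigma v=\e^{\beta r}\overline u^\gamma+g$ on a slab $[s_0,T]\times\mathbb{H}^n$ (all data being bounded and continuous there), then discarding the nonnegative term $\int_{s_0}^t\e^{-(t-r)\Delta^\sigma}g(r,\cdot)\,\dd r$, one gets for $0<s_0<t\leq T$
\[
\overline u(t,\cdot)\ \geq\ \e^{-(t-s_0)\Delta^\sigma}\overline u(s_0,\cdot)+\int_{s_0}^t\e^{-(t-r)\Delta^\sigma}\bigl(\e^{\beta r}\,\overline u(r,\cdot)^\gamma\bigr)\,\dd r,
\]
and the reverse inequality for $\underline u$. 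Subtracting and using positivity preservation, $L^\infty$-contractivity and the Lipschitz bound above, for every $x$,
\[
w(t,x)\ \leq\ \bigl(\e^{-(t-s_0)\Delta^\sigma}w(s_0,\cdot)\bigr)(x)+C\int_{s_0}^t\|w^+(r,\cdot)\|_\infty\,\dd r .
\]
Letting $s_0\to 0^+$, the integrand $\e^{\beta r}(\underline u(r)^\gamma-\overline u(r)^\gamma)$ is bounded on $[0,T]$ so the integral tends to $\int_0^t$, while dominated convergence in the convolution (continuity of $w$ at $t=0$, boundedness of $w$, continuity of $r\mapsto P_r^\sigma$) gives $\e^{-(t-s_0)\Delta^\sigma}w(s_0,\cdot)\to\e^{-t\Delta^\sigma}w(0,\cdot)\leq\e^{-t\Delta^\sigma}w^+(0,\cdot)=0$. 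Hence, with $\phi(t):=\|w^+(t,\cdot)\|_\infty$, one obtains $\phi(t)\leq C\int_0^t\phi(r)\,\dd r$ on $[0,T]$ with $\phi$ bounded and $\phi(0)=0$; iterating (Gr\"onwall) forces $\phi\equiv 0$ on $[0,T]$, i.e.\ $w\leq 0$ there, and since $T<\tau$ is arbitrary we conclude $\underline u\leq\overline u$.

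The main difficulty I anticipate is the Duhamel step: converting the pointwise differential super/subsolution inequalities into integral ones when $\overline u,\underline u$ are only assumed bounded (their derivatives $\partial_t u,\Delta^\sigma u$ are continuous but not a priori bounded on $[0,T]\times\mathbb{H}^n$), which requires a representation/uniqueness statement for the linear inhomogeneous fractional heat equation on the noncompact manifold $\mathbb{H}^n$ with bounded continuous data, together with care at the endpoints $r\to s_0^+,\ r\to t^-$ and in the limit $s_0\to 0^+$; this can be handled by a fixed-point and uniqueness argument along the lines of Proposition~\ref{prop:unique}. An alternative that avoids Duhamel altogether is a direct nonlocal maximum-principle argument: for $\varepsilon>0$ set $\overline u_\varepsilon:=\overline u+\varepsilon\,\e^{2Ct}(1+\psi)$ for a radial barrier $\psi\geq 0$ in $L_\sigma$ with $\psi(x)\to\infty$ as $|x|\to\infty$ and $\Delta^\sigma\psi\lesssim 1+\psi$, so that if $\underline u>\overline u_\varepsilon$ somewhere then $\underline u-\overline u_\varepsilon$ attains an interior spatial maximum at some $x_0$, where $\Delta^\sigma(\underline u-\overline u_\varepsilon)(x_0)\geq 0$ by the singular-integral formula~\eqref{eq:singint}; inserting this into the equation yields a contradiction, and $\varepsilon\to 0$ gives the claim. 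This trades the linear-uniqueness issue for the construction of such a $\psi$ on $\mathbb{H}^n$ and the estimate of $\Delta^\sigma\psi$, a modest task in its own right.
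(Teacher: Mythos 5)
Your primary route (Duhamel plus Gr\"onwall) is not what the paper does, and as you yourself anticipate it has a genuine gap that is not trivially closed. The step where you assert the variation-of-parameters representation
\[
\overline u(t,\cdot)\ =\ \e^{-(t-s_0)\Delta^\sigma}\overline u(s_0,\cdot)+\int_{s_0}^t\e^{-(t-r)\Delta^\sigma}\bigl(\partial_r\overline u+\Delta^\sigma\overline u\bigr)(r,\cdot)\,\dd r
\]
for a bounded classical supersolution requires a \emph{uniqueness} theorem for bounded classical solutions of the linear fractional heat equation on $\mathbb{H}^n$ (so that the given $\overline u$ coincides with the mild solution of the linear problem with the same data and source). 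Proposition~\ref{prop:unique} does not supply this: it proves uniqueness in the class of mild solutions, not that an arbitrary bounded classical solution is mild. Moreover the source $h=\partial_t\overline u+\Delta^\sigma\overline u$ is only known to be continuous and nonnegative, not bounded, so the Duhamel integral is not a priori finite, and the usual proof of the representation (differentiate $r\mapsto\e^{-(t-r)\Delta^\sigma}\overline u(r)$ and integrate) cannot be run without further justification. The natural way to supply the missing linear uniqueness/representation on the noncompact $\mathbb{H}^n$ is again a maximum-principle argument with an unbounded barrier, at which point you might as well run that argument directly on $w=\underline u-\overline u$, which is precisely what the paper does.

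Your ``alternative that avoids Duhamel altogether'' is essentially the paper's proof. The paper takes the concrete barrier $\phi(x)=\log(2+|x|^2)$, proves in Lemma~\ref{lemmalog} that $\Delta^\sigma\phi\in L^\infty$ (a stronger statement than your requested $\Delta^\sigma\psi\lesssim 1+\psi$, but this is what makes the computation clean), sets $w=\underline u-\overline u-\varepsilon\e^{\mu t}\phi$, uses boundedness of $\underline u,\overline u$ and unboundedness of $\phi$ to confine any touching point to a fixed ball $|x|\leq R$, and then at the first time $t_0$ and point $x_0$ where $w$ reaches $0$ uses \emph{both} $w_t(x_0,t_0)\geq 0$ \emph{and} $\Delta^\sigma w(x_0,t_0)\geq 0$ (the latter from the singular-integral formula~\eqref{eq:singint}, since $w\leq 0=w(x_0,t_0)$ everywhere at time $t_0$). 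Your sketch mentions only the spatial condition $\Delta^\sigma(\underline u-\overline u_\varepsilon)(x_0)\geq 0$; you also need the time-derivative sign, which is why one looks at the first touching time rather than merely a spatial maximum. Finally, $\mu$ must be chosen large depending on $T$, $\|\Delta^\sigma\phi\|_\infty$ and the Lipschitz constant of $s\mapsto s^\gamma$ on the range of the solutions; this is straightforward once $\phi$ is bounded below by a positive constant, as $\log(2+|x|^2)$ is. With these two details filled in, your alternative route is a correct reconstruction of the paper's argument.
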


From this it follows
	
\begin{corollary}\label{faith1}
Suppose a nonnegative bounded global supersolution to~\eqref{fracheat} exists. Then there exists a nonnegative bounded global classical solution to~\eqref{fracheat}.
\end{corollary}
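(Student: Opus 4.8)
The plan is to construct the desired solution as the pointwise limit of a monotone iteration scheme squeezed between the zero subsolution and the given supersolution $\overline u$, and then invoke the comparison principle (Theorem~\ref{thm: comparison}) together with the local well-posedness and regularity theory of Proposition~\ref{prop:unique} to conclude. Set $\underline u\equiv 0$, which is trivially a nonnegative bounded global subsolution since $f\ge 0$ and $\Delta^\sigma 0 = 0 \le \e^{\beta t}\cdot 0^\gamma$. By Proposition~\ref{prop:unique}, there is a unique bounded mild (hence classical) solution $u$ to~\eqref{fracheat} on a maximal interval $[0,\tau_{\max})$, and if $\tau_{\max}<\infty$ then $\|u(t,\cdot)\|_\infty\to\infty$ as $t\to\tau_{\max}^-$. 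The key point is that this solution is also a classical supersolution and a classical subsolution of~\eqref{fracheat} (with equality), so Theorem~\ref{thm: comparison} applies on every compact subinterval $[0,\tau]\subset[0,\tau_{\max})$ in both directions: comparing $u$ (as a subsolution) against $\overline u$ (as a supersolution) gives $0\le u(t,\cdot)\le \overline u(t,\cdot)$ for all $t\in[0,\tau_{\max})$.

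Next I would use the a priori bound just obtained to rule out finite-time blow-up. Since $\overline u$ is a \emph{global} bounded supersolution, we have $\overline u\in L^\infty([0,\infty)\times\mathbb{H}^n)$, so in particular $\sup_{t\in[0,T]}\|\overline u(t,\cdot)\|_\infty<\infty$ for every $T>0$. Combined with $0\le u\le \overline u$, this yields $\|u(t,\cdot)\|_\infty\le\|\overline u\|_{L^\infty([0,\infty)\times\mathbb{H}^n)}$ uniformly in $t\in[0,\tau_{\max})$. If $\tau_{\max}$ were finite, this would contradict the blow-up alternative in part (ii) of Proposition~\ref{prop:unique}. Hence $\tau_{\max}=+\infty$, i.e.\ $u$ is a global classical solution. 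The nonnegativity $u\ge 0$ is already guaranteed by the comparison $u\ge\underline u=0$ (or, alternatively, directly from the mild formulation~\eqref{eq:mildsol} since $f\ge 0$, $h\ge 0$ and the semigroup $\e^{-t\Delta^\sigma}$ is positivity-preserving). Boundedness of $u$ on $[0,\infty)\times\mathbb{H}^n$ is exactly the uniform bound by $\|\overline u\|_\infty$ just derived.

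The one point requiring a little care — and the main obstacle — is that Theorem~\ref{thm: comparison} as stated compares a supersolution and a subsolution on an interval $[0,\tau)$; to apply it to the constructed solution $u$ on all of $[0,\tau_{\max})$ one should check that $u$ restricted to any $[0,\tau]$ with $\tau<\tau_{\max}$ is indeed bounded (true, since it is the classical solution there) and that $\overline u$ restricted to $[0,\tau]$ remains a bounded classical supersolution (immediate). A subtlety worth a remark: the comparison principle requires both competitors to lie in the class for which the pointwise formula~\eqref{eq:singint} is valid, i.e.\ $u(t,\cdot),\overline u(t,\cdot)\in L_\sigma$ with the regularity of Proposition~\ref{prop: Sil2.1.4}; for $u$ this is part (iii) of Proposition~\ref{prop:unique}, and for $\overline u$ it is built into the definition of classical supersolution. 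Once these bookkeeping checks are in place, the argument closes, and in fact one gets the sharper statement that the minimal nonnegative classical solution (obtained, e.g., as the increasing limit of Picard iterates starting from $\underline u=0$) is global and dominated by $\overline u$.
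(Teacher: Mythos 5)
Your proposal is correct and follows essentially the same route as the paper: local existence via Proposition~\ref{prop:unique}(i)--(iii), comparison of the resulting classical solution against $\overline u$ via Theorem~\ref{thm: comparison} to obtain a uniform $L^\infty$ bound, and then the blow-up alternative of Proposition~\ref{prop:unique}(ii) to extend to $[0,\infty)$. You merely spell out a few of the bookkeeping checks (the zero subsolution, the $L_\sigma$-regularity hypotheses for comparison) that the paper leaves implicit.
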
 

\begin{proof}
By Proposition~\ref{prop:unique} (i), there exists a local in time, bounded, mild (thus classical by Proposition~\ref{prop:unique} (iii)) solution to~\eqref{fracheat}, which by Theorem~\ref{thm: comparison} is bounded above by the given supersolution. By Proposition \ref{prop:unique} (ii), then, such local solution can be extended to the maximum interval $[0, \infty)$. This proves the claim.
\end{proof}

To prove Theorem~\ref{thm: comparison} we shall need an auxiliary result: i.e., that the positive smooth radial function
\[
\phi(x)=\log (2+|x|^2)
\]
is unbounded at infinity, which is clear, and that $\Delta^{\sigma}\phi\in L^{\infty}$. 

With the use of polar coordinates, it can be easily verified that the function belongs to $L_{\sigma}$, as defined in~\eqref{eq: class Lsigma}. Hence $\Delta^{\sigma}\phi$ makes sense as a distribution. We next show that it is in fact a continuous bounded function.

\begin{lemma}\label{lemmalog}
$\Delta^{\sigma}\phi\in \mathcal{C}^{0} \cap L^{\infty}$ for all $\sigma \in (0,1)$. 
\end{lemma}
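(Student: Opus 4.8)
The plan is to split the singular integral~\eqref{eq:singint} for $\Delta^\sigma\phi(x)$ into a local part and a tail part, and to estimate each uniformly in $x\in\mathbb{H}^n$. First I would record that $\phi(x)=\log(2+|x|^2)$ is smooth, radial, and that its Riemannian gradient and Hessian are bounded: writing $r=|x|$, one has $\phi=\log(2+r^2)$ with $|\phi'(r)|=\frac{2r}{2+r^2}\lesssim 1$ and $|\phi''(r)|\lesssim 1$, and since the relevant geometric coefficients ($\coth r$ etc.) are bounded away from the origin while near the origin $\phi$ is genuinely smooth, one gets $\phi\in\mathcal{C}^{1,\alpha}_{\mathrm{loc}}$ uniformly, indeed $\phi\in\mathcal{C}^{2}$ with $\|\nabla\phi\|_\infty,\|\Hess\phi\|_\infty<\infty$. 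In particular $\phi\in\mathcal{C}^{0,1}\subseteq\mathcal{C}^{0,2\sigma+\varepsilon}$ when $\sigma<1/2$ and $\phi\in\mathcal{C}^{1,1}\subseteq\mathcal{C}^{1,2\sigma-1+\varepsilon}$ when $\sigma\geq 1/2$ (for $\varepsilon$ small), so by Proposition~\ref{prop: Sil2.1.4} $\Delta^\sigma\phi$ is a continuous function given pointwise by~\eqref{eq:singint}. This already yields $\Delta^\sigma\phi\in\mathcal{C}^0$; it remains to prove the $L^\infty$ bound.

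For the uniform bound, fix $x$ and split $\mathbb{H}^n=B_1(x)\cup B_1(x)^c$ in~\eqref{eq:singint}. On $B_1(x)$ we use $P_0^\sigma(z^{-1}x)\asymp d(x,z)^{-n-2\sigma}$ together with the second-order Taylor expansion of $\phi$ in normal coordinates at $x$: the first-order term integrates to zero against the radial kernel $P_0^\sigma$ and the radial Jacobian (exactly as in the proof of Proposition~\ref{prop: Sil2.1.4}), so $|\phi(x)-\phi(z)|$ is effectively $\mathrm{O}(d(x,z)^2)$ with constant $\lesssim\|\Hess\phi\|_\infty$, giving a contribution $\lesssim\int_{B_1(o)}d(x,z)^{2-n-2\sigma}\,\dd\mu\lesssim 1$ uniformly in $x$. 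On $B_1(x)^c$ we use $P_0^\sigma(z^{-1}x)\lesssim (1+d(x,z))^{-1-\sigma}\e^{-(n-1)d(x,z)}$ from~\eqref{eq: P0 est} and bound
\[
\left|\int_{B_1(x)^c}(\phi(x)-\phi(z))P_0^\sigma(z^{-1}x)\,\dd\mu(z)\right|\leq\int_{B_1(x)^c}|\phi(x)-\phi(z)|\,(1+d(x,z))^{-1-\sigma}\e^{-(n-1)d(x,z)}\,\dd\mu(z).
\]
Here the crucial point is the logarithmic growth: $|\phi(x)-\phi(z)|\leq C\log(2+d(x,z))$ by the mean value theorem applied to $t\mapsto\log(2+t^2)$ along the geodesic, since $|\nabla\phi|\leq 1$ implies $|\phi(x)-\phi(z)|\leq d(x,z)$, and one can also use the cruder $|\phi(x)-\phi(z)|\leq\log(2+|x|^2)+\log(2+|z|^2)$ — but the bound $|\phi(x)-\phi(z)|\leq d(x,z)$ is cleanest. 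Then the integral is at most $\int_0^\infty t\,(1+t)^{-1-\sigma}\e^{-(n-1)t}(\sinh t)^{n-1}\,\dd t\lesssim\int_0^\infty t\,(1+t)^{-1-\sigma}\,\dd t$ — wait, this diverges; so instead I use that $(\sinh t)^{n-1}\asymp\e^{(n-1)t}$ for $t\geq 1$ cancels the exponential, leaving $\int_1^\infty t(1+t)^{-1-\sigma}\,\dd t$, which still diverges for $\sigma\leq 1$. The fix is to keep the full kernel estimate~\eqref{eq: P0 est}, $P_0^\sigma(z^{-1}x)\asymp d(x,z)^{-1-\sigma}\e^{-(n-1)d(x,z)}$ for $d(x,z)\geq 1$, so after integrating over the sphere the radial integral is $\int_1^\infty d(x,z)^{-1-\sigma}\e^{-(n-1)d(x,z)}\cdot d(x,z)\cdot(\sinh d(x,z))^{n-1}\,\dd(d(x,z))\lesssim\int_1^\infty s^{-\sigma}\,\dd s$, which also diverges — so the genuine argument must exploit that the geodesic sphere of radius $s$ around $x$ that lies outside $B_1(x)$ has the measure-density $\delta(s)\asymp\e^{(n-1)s}$ only with respect to the center $x$, and crucially $|\phi(x)-\phi(z)|$ cannot grow like $s$ for \emph{all} $z$ on that sphere when $|x|$ is large: averaging over the sphere, the mean of $|\,|z|-|x|\,|$ is $\mathrm{O}(1)$ by a horospherical/exponential-map comparison, exactly the kind of cancellation used for $\mathrm{O}(|W|^2)$ in Proposition~\ref{prop: Sil2.1.4}. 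This averaging/cancellation step is the main obstacle; once it gives an effective $\mathrm{O}(1)$ replacement for $|\phi(x)-\phi(z)|$ on average, combined with $\|P_0^\sigma\|_{L^1(B_1(x)^c)}<\infty$ from~\eqref{eq: P0 est}, the tail contribution is $\lesssim 1$ uniformly in $x$, completing the proof that $\Delta^\sigma\phi\in L^\infty$.

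Summarizing the structure: (1) regularity and applicability of the pointwise formula via Proposition~\ref{prop: Sil2.1.4}, giving continuity; (2) local estimate near $x$ using the second-order vanishing of the first Taylor term against the radial kernel, bounded by $\|\Hess\phi\|_\infty$ and the integrability of $d(x,z)^{2-n-2\sigma}$ near $0$; (3) the tail estimate, where the subexponential (logarithmic, or at worst linear) growth of $\phi$ together with the exponential decay $\e^{-(n-1)d(x,z)}$ of $P_0^\sigma$ and — if needed — a spherical-averaging cancellation of the oscillation of $|z|$ around $|x|$, yields a uniform bound. The only step requiring real care is~(3): naively bounding $|\phi(x)-\phi(z)|$ pointwise by $d(x,z)$ and pairing with the $L^1$ tail of $P_0^\sigma$ loses one power and the growth of $\phi$ along the sphere must be handled, but since $P_0^\sigma\in L^1(B_1(x)^c)$ \emph{and} $|\phi(x)-\phi(z)|\lesssim d(x,z)\lesssim |z|+|x|$ while the kernel's $\e^{-(n-1)d(x,z)}$ beats any polynomial, one in fact finds $\int_{B_1(x)^c}d(x,z)\,P_0^\sigma(z^{-1}x)\,\dd\mu(z)=\int_1^\infty s\cdot s^{-1-\sigma}\e^{-(n-1)s}(\sinh s)^{n-1}\,\dd s<\infty$ after all — the point being $\e^{-(n-1)s}(\sinh s)^{n-1}\asymp 1$, not $\asymp\e^{(n-1)s}$, a routine cancellation — so the tail is in fact uniformly $\mathrm{O}(1)$ without any averaging, and I expect this to be the route the authors take.
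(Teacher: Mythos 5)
Your steps (1) and (2) match the paper: continuity and the validity of the pointwise formula come from Proposition~\ref{prop: Sil2.1.4} (applied to $\phi\in\mathcal{C}^2$ with bounded gradient and Hessian), and the local contribution over $B_1(x)$ is handled by the second-order Taylor expansion with first-order cancellation against the radial kernel. The gap is in step (3), and it is a genuine one, not just a presentational issue: the integral you end up with,
\[
\int_1^\infty s\cdot s^{-1-\sigma}\,\e^{-(n-1)s}(\sinh s)^{n-1}\,\dd s
\;\asymp\;\int_1^\infty s^{-\sigma}\,\dd s,
\]
\emph{diverges} for every $\sigma\in(0,1)$, even though you assert convergence at the end. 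Your own intermediate worry (``this still diverges for $\sigma\leq 1$'') was correct, and the ``routine cancellation'' you invoke resolves the exponential factors but leaves precisely $s^{-\sigma}$, which is not integrable at infinity. The Lipschitz bound $|\phi(x)-\phi(z)|\leq d(x,z)$ throws away the essential feature that $\phi$ grows only logarithmically, and with it the lemma.

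The fix in the paper is to further split the tail $B_1(x)^c$ according to the relative sizes of $|x|$ and $|z|$ (taking $|x|\geq 1$ WLOG by continuity): when $|z|\geq 2|x|$ one has $d(x,z)\asymp|z|$, so $|\phi(x)-\phi(z)|\lesssim\log(2+d(x,z)^2)$, and $\int_1^\infty\log(2+4s^2)\,s^{-1-\sigma}\,\dd s<\infty$; when $|x|\geq 2|z|$ one has $d(x,z)\asymp|x|$, so one can sacrifice a small power $(1+d(x,z))^{-\sigma/2}$ of the kernel to absorb the factor $\log(2+|x|^2)$, again ending with a convergent integral; and when $|x|/2\leq|z|\leq 2|x|$ the ratio $(2+|x|^2)/(2+|z|^2)$ is bounded, so $|\phi(x)-\phi(z)|=O(1)$. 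In all three regimes the bound is uniform in $x$. The averaging/cancellation idea you sketch as an alternative is not needed and would require substantially more work than this regionwise logarithmic bound. The speculative horospherical averaging is not what the paper does, and should not replace the concrete split into $R_1,R_2,R_3$.
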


\begin{proof}
Since $\phi$ smooth and in $L_{\sigma}$, Proposition~\ref{prop: Sil2.1.4} yields that $\Delta^{\sigma}\phi$ is continuous on $\mathbb{H}^n$ and we may use the integral representation~\eqref{eq:singint} to express $\Delta^{\sigma}\phi$.

For all $x_1, x_2 \in \mathbb{H}^n$ and $d=d(x_1,x_2)$, notice that first and second derivatives of $\phi(d(\cdot,x_2))$  are uniformly bounded on $x_2$. Indeed,
$\phi'(d)=(2d)/(2+d^2)$, $\phi''(d)$ is bounded, and the first and second derivatives of the distance satisfy~\cite[Theorem 1.3]{BSa}. 
Hence, using a Taylor expansion of order $2$ (there is no contribution of first order terms in the integral by parity) and \eqref{eq: P0 est} we have for any $x\in \mathbb{H}^n$, 
$$\left| \int_{d(x,y)<1}(\phi(x)-\phi(y))\,P_0^{\sigma}(d(x,y)) \, \dd \mu (y)\right|\leq C(n, \sigma).
$$

Therefore, it remains to show that 
\[
I(x)=\int_{|y^{-1}x|\geq 1}(\log(2+|x|^2)-\log(2+|y|^2))\, P_0^{\sigma}(y^{-1}x) \, \dd y
\]
is bounded on $\mathbb{H}^n$. Owing to the continuity of $\Delta^{\sigma}\phi$, we may assume that $|x|\geq 1$. Let us split $I=\int_{R_1}+\int_{R_2}+\int_{R_3}=I_{1}+I_{2}+I_{3}$, where 
\begin{align*}
	R_1&=B(x,1)^c\cap \{y: \, |y|\geq 2|x|\}, \\
	R_2&=B(x,1)^c\cap \{y: \, |x|\geq 2|y|\}, \\
	R_3&=B(x,1)^c\cap \{y: \, |x|/2\leq |y| \leq 2|x|\}.
\end{align*}
On $R_1$, we have $\log \left(2+|x|^2\right) \leq \log \left(2+|y|^2\right)$ and 
$\frac{1}{2}|y| \leqslant\left|y^{-1} x\right| \leqslant \frac{3}{2}|y|$
by the triangle inequality. Hence using the estimates~\eqref{eq: P0 est} of $P_0^{\sigma}$ at infinity,
\begin{align*}
	|I_{1}(x)|
	\lesssim \int_{|y^{-1} x|>1} \log \left(2+4|y^{-1}x|^2\right)\,\left(1+\left|y^{-1} x\right|\right)^{-\sigma-1} \,\e^{-(n-1)\left|y^{-1} x\right|}\, \dd y  \lesssim 1.
\end{align*}
On $R_2$, $\log \left(2+|y|^2\right) \leq \log \left(2+|x|^2\right)$ and 
$
\frac{1}{2}|x| \leqslant\left|y^{-1} x\right| \leqslant \frac{3}{2}|x|,
$
thus
\begin{align*}
	|I_{2}(x)|
	\lesssim \log(2+4|x|^2)\, (2+|x|)^{-\frac{\sigma}{2}}\int_{|y^{-1} x|>1} \left(1+|y^{-1} x|\right)^{-\frac{\sigma}{2}-1} \,\e^{-(n-1)|y^{-1} x|}\, \dd y  \lesssim 1.
\end{align*}
Finally, on $R_3$, $|\log(2+|x|^2)-\log(2+|y|^2)|=|\log((2+|x|^2)/(2+|y|^2))|\lesssim 1$, hence 
\begin{align*}
	|I_{3}(x)|
	\lesssim \int_{|y^{-1} x|>1} \left(1+|y^{-1} x|\right)^{-\sigma-1} \,\e^{-(n-1)|y^{-1} x|}\, \dd y  \lesssim 1.
\end{align*}
The proof is now complete.
\end{proof}

We are now ready to prove the comparison principle given by Theorem~\ref{thm: comparison}.

\begin{proof}[Proof of Theorem~\ref{thm: comparison}]
Suppose $\underline{u},\overline{u}$ are bounded sub/supersolutions respectively, defined on $\mathbb{H}^{n} \times [0,\tau)$ with $\tau \in (0,+\infty]$. We shall prove that 
\[
\underline{u} (x,t) \leq \overline{u} (x,t) \qquad \forall \, (x,t) \in \mathbb{H}^{n} \times [0,T), \; \; \forall \, T \in (0,\tau).
\]

The proof follows a classical argument, see e.g.~\cite[Theorem 3.3]{DPF25}, but we give the full arguments for the sake of completeness.
First, according to Lemma~\ref{lemmalog}, we note that $\phi(x)=\log(2+|x|^2)$  is a positive regular function which satisfies
	$$
	\lim _{|x| \rightarrow \infty} \phi(x)=+\infty, \quad |\Delta^{\sigma} \phi(x)| \leq K.
	$$
	Now, let us define $w=\underline{u}-\overline{u}-\varepsilon \e^{\mu t} \phi(x)$, for any $\varepsilon, \mu>0$. Since $\overline{u}$ and $\underline{u}$ are bounded and $w \leq \underline{u}-\overline{u}-\varepsilon \phi(x)$, there is $R>0$ independent of $\mu$ such that
	$$
	w(x, t)<0, \quad |x|>R, \;\; t\in [0, T).
	$$
It will be enough to prove, then, that $w(x, t)\leq 0$ for $|x|\leq R$ and $t\in [0, T)$ if $\mu$ is sufficiently large (independently of $\varepsilon$).  If this is the case, indeed, taking the limit $\varepsilon \rightarrow 0$ gives the desired result.

Notice that $w(\cdot, 0) \leq-\frac12\varepsilon$ in $\mathbb{H}^n$. We argue by contradiction, and suppose that there exists
\[
t_{0} := \inf\{t>0\colon \exists \, x\in \mathbb{H}^{n}, \, |x|\leq R, \mbox{ s.t.\ }w(x,t)=0 \}.
\]
Then, by continuity of $w$ and by compactness, at $t=t_{0}$ there is at least one $x_{0}\in \mathbb{H}^{n}$ with $|x_{0}|\leq R$ such that $w(x_{0},t_{0})=0$. By definition of $t_{0}$ we have $w(x, t)<0$ for all $|x|\leq R$ and $t\in [0,t_{0})$,  so $w(x,t_0) \leq 0$ for all $|x|\leq R$ by continuity. Thus
\[
w_t\left(x_0, t_0\right) \geq 0, \quad \Delta^{\sigma} w\left(x_0, t_0\right) \geq 0,
\]
by the pointwise formula~\eqref{eq:singint} for $\Delta^{\sigma} w$ (notice that $w(x_0,t_0)-w(x, t_0)=-w(x, t_0)\geq 0$ for all $x\in \mathbb{H}^{n}$).

On the other hand, since $\underline{u}$ and $\overline{u}$ are bounded sub/supersolutions, for all $(x,t) \in \mathbb{H}^{n} \times [0,T)$ one has
	\begin{align*}
	w_t(x,t)+\Delta^{\sigma} w(x,t) & \leq h(t)(\underline{u}^{\gamma}(x,t)-\overline{u}^{\gamma}(x,t))-\varepsilon \mu \e^{\mu t} \phi(x)-\varepsilon \e^{\mu t} \Delta^{\sigma} \phi (x)\\
	& \leq C h(t)  |\underline{u}(x,t)-\overline{u}(x,t)|-\varepsilon \mu \e^{\mu t} \phi(x)+\varepsilon \e^{\mu t} K \\
	& \leq C \e^{\beta t} \left(|w|(x,t)+\varepsilon \e^{\mu t} \phi(x)\right)-\varepsilon \mu \e^{\mu t} \phi(x)+\varepsilon \e^{\mu t} K.
\end{align*}
Therefore
$$
0 \leq w_t\left(x_0, t_0\right)+\Delta^{\sigma} w\left(x_0, t_0\right) \leq \varepsilon \e^{\mu t}\left(C\e^{\beta t} \phi\left(x_0\right)-\mu \phi\left(x_0\right)+K\right)<0
$$
if $\mu>\frac{Ce^{\beta T}\phi(x_0)+K}{\phi(x_0)}$, and this is a contradiction.	
\end{proof}
	
\subsection{Subcritical regime and mild solutions} Let us start by recalling that $\e^{-t\Delta^{\sigma}}$ is strongly continuous on the space of continuous functions vanishing at infinity $\mathcal{C}_{0}$, see e.g.~\cite[p.\ 260]{Y}. The next result is inspired by~\cite[Theorem 3]{Weissler1981}.
	\begin{lemma}\label{prop:ex} 
		If $f\in \mathcal{C}_{0}$ and
		\begin{equation}\label{lem2_for1}
			\int_{0}^{\infty} h(s)\|\e^{-s\Delta^{\sigma}}  f\|_\infty^{\gamma-1}ds<\frac{1}{\gamma-1},
		\end{equation}
		then there exists a non-negative global mild solution $u$ to~\eqref{fracheat}.
	\end{lemma}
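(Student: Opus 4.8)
The plan is to construct $u$ as the monotone limit of Picard iterates, controlling them by an explicit scalar comparison profile built from the hypothesis \eqref{lem2_for1}. Set $g(s):=\|\e^{-s\Delta^\sigma}f\|_\infty$ (so $g$ is non-increasing with $g(s)\le\|f\|_\infty$) and $A:=\int_0^\infty h(s)\,g(s)^{\gamma-1}\,\dd s$, which by \eqref{lem2_for1} satisfies $(\gamma-1)A<1$. Define $u_0(t,\cdot):=\e^{-t\Delta^\sigma}f$ and $u_{n+1}:=\Phi(u_n)$, where
\[
\Phi(v)(t,\cdot):=\e^{-t\Delta^\sigma}f+\int_0^t h(s)\,\e^{-(t-s)\Delta^\sigma}\bigl(v^\gamma(s,\cdot)\bigr)\,\dd s
\]
(along the iteration every $v$ will be non-negative, so $|v|^{\gamma-1}v=v^\gamma$). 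The two facts used repeatedly are that $\e^{-t\Delta^\sigma}$ is positivity-preserving and an $L^\infty$-contraction (since $P_t^\sigma\ge 0$ and $\|P_t^\sigma\|_1=1$), and the semigroup identity $\e^{-(t-s)\Delta^\sigma}\e^{-s\Delta^\sigma}=\e^{-t\Delta^\sigma}$.

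Introduce the comparison profile
\[
\Psi(t):=\Bigl(1-(\gamma-1)\int_0^t h(s)\,g(s)^{\gamma-1}\,\dd s\Bigr)^{-1/(\gamma-1)},
\]
which is well defined, non-decreasing, and bounded by $\Psi_\infty:=(1-(\gamma-1)A)^{-1/(\gamma-1)}<\infty$ precisely because of \eqref{lem2_for1}. A direct differentiation gives $\Psi'=h\,g^{\gamma-1}\Psi^\gamma$ with $\Psi(0)=1$, equivalently $\Psi(t)=1+\int_0^t h(s)\,g(s)^{\gamma-1}\Psi(s)^\gamma\,\dd s$. I would then prove by induction on $n$ that each $u_n$ is a well-defined, non-negative, continuous function on $[0,\infty)\times\mathbb{H}^n$ with $0\le u_n(t,x)\le \Psi(t)\,(\e^{-t\Delta^\sigma}f)(x)$. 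The inductive step rests on the pointwise inequality $(\e^{-s\Delta^\sigma}f)^\gamma\le g(s)^{\gamma-1}\,\e^{-s\Delta^\sigma}f$ (valid since $0\le\e^{-s\Delta^\sigma}f\le g(s)$): applying the positive operator $\e^{-(t-s)\Delta^\sigma}$, using the semigroup identity and integrating against $h$, the Duhamel term of $u_{n+1}$ is dominated by $(\Psi(t)-1)\,\e^{-t\Delta^\sigma}f$, which closes the bound. Continuity of $u_0$ is the strong continuity of $\e^{-t\Delta^\sigma}$ on $\mathcal{C}_0$ (to which $f$ belongs), and continuity of the Duhamel term for bounded measurable data is standard, just as in the proof of Proposition~\ref{prop:unique} (cf.\ \cite[Eq.~(51.109)]{QS}).

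Next I would check that $(u_n)_n$ is non-decreasing: $u_1\ge u_0$ because the added Duhamel term is non-negative, and $u_{n+1}-u_n=\int_0^t h(s)\,\e^{-(t-s)\Delta^\sigma}(u_n^\gamma-u_{n-1}^\gamma)(s)\,\dd s\ge 0$ by monotonicity of $x\mapsto x^\gamma$ on $[0,\infty)$ and positivity of the semigroup. Hence $u:=\lim_n u_n=\sup_n u_n$ exists pointwise, is measurable, and satisfies $0\le u(t,x)\le\Psi(t)(\e^{-t\Delta^\sigma}f)(x)\le\Psi_\infty\|f\|_\infty$; in particular $u(t,\cdot)\in L^\infty$ for every $t>0$. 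Two applications of the monotone convergence theorem — first in the spatial integral defining $\e^{-(t-s)\Delta^\sigma}(u_n^\gamma)(s,x)$, then in the $s$-integral — allow passage to the limit in $u_{n+1}=\Phi(u_n)$, yielding $u=\Phi(u)$, i.e.\ $u$ solves \eqref{eq:mildsol} on all of $[0,\infty)\times\mathbb{H}^n$. Since $u$ is bounded and measurable, $\Phi(u)$ is continuous, so $u\in\mathcal{C}([0,\infty)\times\mathbb{H}^n)$; therefore $u$ is a non-negative global mild solution.

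The one genuinely delicate point is the choice of $\Psi$. A naive bound $u_n\le c_n\,\e^{-t\Delta^\sigma}f$ with a numerical sequence $c_{n+1}=1+A\,c_n^\gamma$ stays bounded only under the strictly stronger smallness $A\le(\gamma-1)^{\gamma-1}/\gamma^\gamma$; one must instead track the $t$-dependent profile solving the companion ODE $\Psi'=h\,g^{\gamma-1}\Psi^\gamma$, which is exactly what makes the threshold $1/(\gamma-1)$ in \eqref{lem2_for1} the right one. The remaining ingredients — $L^\infty$-contractivity and positivity of $\e^{-t\Delta^\sigma}$, its strong continuity on $\mathcal{C}_0$, continuity of Duhamel integrals, and the monotone convergence passages — are routine.
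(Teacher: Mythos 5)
Your proof is correct and follows essentially the same route as the paper: you construct the same comparison profile (your $\Psi$ is the paper's $\omega$), the same Picard iterates, and use the same monotone convergence argument with the key inequality $(\e^{-s\Delta^\sigma}f)^\gamma\le g(s)^{\gamma-1}\e^{-s\Delta^\sigma}f$ to close the induction. The only minor difference is that the paper's invariance statement also tracks the lower bound $\e^{-t\Delta^\sigma}f\le u_n(t,\cdot)$, and the paper spells out the continuity in $t$ uniformly in $x$ in more detail (splitting off $\e^{-(t-t')\Delta^\sigma}F_s-F_s$ and applying dominated convergence), whereas you invoke it as standard; neither of these is a substantive difference.
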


	\begin{proof}
		Define
		\[
		\omega(t) = \bigg(1-(\gamma-1)\int_{0}^{t} h(s) \|  \e^{-s\Delta^{\sigma}}  f\|^{\gamma-1}_\infty\, ds\bigg)^{-\frac{1}{\gamma-1}},
		\]
		and observe that by~\eqref{lem2_for1}, $\omega$ is well defined, bounded, and that $\omega(0)=1$ and $\omega(t)>1$ for all $t>0$. Moreover, a simple computation shows that $\omega'(t)=h(t)\|  \e^{-t\Delta^{\sigma}}f\|^{\gamma-1}_\infty  \omega(t)^{\gamma}$, whence \begin{equation}\label{eq_w(t)_1}
			\omega(t)=1+\int_{0}^{t}h(s)\omega(s)^{\gamma} \| \e^{-s\Delta^{\sigma}}  f\|^{\gamma-1}_\infty \, ds.
		\end{equation}
		Assume now that $v\colon [0,\infty) \times \mathbb{H}^{n}  \to \R$ is a function satisfying
		\begin{equation}\label{choiceu}
			\e^{-t\Delta^{\sigma}}  f\leq v(t,\cdot )\leq \omega(t)\e^{-t\Delta^{\sigma}}   f
		\end{equation}
		for all $t> 0$, and $v(0,\cdot )=f$; whence $v(t,\cdot )\in L^{\infty}$ for $t\geq 0$. Define
		\[
		\Phi(v)(t,\cdot) = \e^{-t\Delta^{\sigma}}  f+\int_{0}^{t} h(s)\, \e^{-(t-s)\Delta^{\sigma}} (v(s,\cdot))^{\gamma}ds .
		\]
		By the upper bound in~\eqref{choiceu}, we get
	\[
			\begin{split}
				\e^{-(t-s)\Delta^{\sigma}}(v(s,\cdot))^{\gamma} &\leq \e^{-(t-s)\Delta^{\sigma}}(\omega(s)  \e^{-s \Delta^{\sigma}}   f)^{\gamma} \\
				&\leq  \omega(s)^{\gamma} \| \e^{-s\Delta^{\sigma}}  f\|_{{\infty}}^{\gamma-1}\e^{-t\Delta^{\sigma}}  f.
			\end{split}
\]		Therefore
		\begin{equation*}\label{eq_w(t)_2}
			\begin{split}
				\Phi(v)(t,\cdot)
				&\leq     \e^{-t\Delta^{\sigma}}   f\bigg(1+\int_{0}^{t} h(s) \omega(s)^{\gamma} \|   \e^{-s\Delta^{\sigma}}  f\|^{\gamma-1}_{{\infty}}\, ds\bigg )= \omega(t)\e^{-t\Delta^{\sigma}}  f,
			\end{split}
		\end{equation*}
		the last equality by~\eqref{eq_w(t)_1}. The above proves
		\begin{equation}\label{calF}
			\mbox{$v$ satisfies~\eqref{choiceu}} \quad \Longrightarrow \quad  \e^{-t\Delta^{\sigma}}   f\leq \Phi(v)(t,\cdot)\leq     \omega(t)\e^{-t\Delta^{\sigma}}  f, \quad t>0.
		\end{equation}
		Consider now the sequence of functions $(u_{k}(t,\cdot))_{k}$ defined by
		\[
		u_{0}(t,\cdot)= \e^{-t\Delta^{\sigma}}  f, \qquad u_{k+1}(t,\cdot)=\Phi(u_{k})(t,\cdot).
		\]
		We show that this sequence converges to the desired solution. Since $u_0$ satisfies~\eqref{choiceu}, by~\eqref{calF}  we get 
		\begin{equation}\label{bounduk}
			\e^{-t\Delta^{\sigma}}  f\leq u_{k}(t, \cdot )\leq \omega(t) \e^{-t\Delta^{\sigma}}   f
		\end{equation}
		for all $k$. We also note that if $0\leq v\leq w$, then $0\leq \Phi(v) \leq \Phi(w)$, whence $0\leq u_{k} \leq u_{k+1}$. For each $t>0$, thus, $u_{k}(t, \cdot )$ is a non-decreasing sequence of non-negative functions, and by the monotone convergence theorem the sequence $u_{k}(t,\cdot )$ converges pointwise to a function which we call $u(t, \cdot)$. By~\eqref{bounduk}, moreover, 
		\begin{equation}\label{upplowu}
			\e^{-t\Delta^{\sigma}}  f\leq u(t,\cdot )\leq \omega(t) \e^{-t\Delta^{\sigma}}   f.
		\end{equation}
		We now prove that $u(t,\cdot )$ is a global mild solution of~\eqref{fracheat}.
Observe that the functions $\e^{-(t-s)\Delta^{\sigma}}(u_{k}(s,\cdot))^{\gamma}$ converge to $\e^{-(t-s)\Delta^{\sigma}}(u(s,\cdot))^{\gamma}$ pointwise. Then, by monotone convergence	again	\[
		\lim_{k\rightarrow\infty}\int_{0}^{t}h(s)\,\e^{-(t-s)\Delta^{\sigma}}(u_{k}(s,\cdot))^{\gamma}ds=\int_{0}^{t}h(s)\,\e^{-(t-s)\Delta^{\sigma}}(u(s,\cdot))^{\gamma}\, ds,
		\]
		which implies
		\[
		\lim_{k\rightarrow\infty}\Phi(u_{k})(t,\cdot) = \Phi(u)(t,\cdot).
		\]
		We finally conclude
		\[
		u(t,\cdot)=\lim_{k\rightarrow \infty} u_{k+1}(t,\cdot)=\lim_{k\rightarrow \infty} \Phi(u_{k})(t,\cdot)=\Phi(u)(t,\cdot),
		\]
	which proves that $u$ is a global mild solution of~\eqref{fracheat} provided the regularity $u \in \mathcal{C}([0,+\infty) \times \mathbb{H}^{n})$ holds. Notice that by~\eqref{upplowu}, $u(t,\cdot) \in \mathcal{C}_{0}$ for all fixed $t>0$ (the vanishing at infinity ensured by \eqref{upplowu}, while continuity in space for each $t>0$ by $u=\Phi(u)$ and properties of $f$, $P_t^{\sigma}$); this implies that $\Phi(u)(t,\cdot)$, whence $u(t,\cdot)$, are continuous for all $t>0$.
	
Moreover
	\[
	\lim_{t \to 0^{+}} \| \e^{-t\Delta^{\sigma}}f - f\|_{\infty} =0
	\]
because $f\in \mathcal{C}_{0}$ and recalling $u(0,\cdot)=f$ by~\eqref{upplowu}.  Let now $t>t'>0$ (one works similarly for $t<t'$). We show that when $t'\rightarrow t$, we have 
		\[
		\|u(t,\cdot )-u(t',\cdot )\|_\infty \rightarrow 0.
		\]
		In other words, $u$ is continuous in $t$ uniformly in $x$. This together with its continuity in $x$ for all fixed $t>0$ implies that $u$ is jointly continuous, i.e.\ $u \in \mathcal{C}([0,+\infty) \times \mathbb{H}^{n})$.
		
Writing $F_s=u(s, \cdot)^{\gamma}$ for notational convenience,
		\begin{align*}
			u(t,\cdot)-u(t',\cdot )&=\Phi(u)(t, \cdot)-\Phi(u)(t', \cdot)= \\
			&=\e^{-t\Delta^{\sigma}}  f+\int_{0}^{t} h(s)\, \e^{-(t-s)\Delta^{\sigma}} F_{s}\, ds - \e^{-{t'\Delta^{\sigma}}}  f - \int_{0}^{t'} h(s)\, \e^{-(t'-s)\Delta^{\sigma}} F_{s}\, ds\\
			&=(\e^{-t\Delta^{\sigma}}- \e^{-t'\Delta^{\sigma}}) f + \int_{0}^{t} h(s)\, \e^{-(t-s)\Delta^{\sigma}} F_{s}\, ds -\left( \int_{0}^{t}-\int_{t'}^{t}\right)\\
			&=\e^{-t'\Delta^{\sigma}}(\e^{-(t-t')\Delta^{\sigma}}f-f)\\
			&\qquad +\int_{0}^{t}h(s)\,\e^{-(t'-s)\Delta^{\sigma}}(\e^{-(t-t')\Delta^{\sigma}}F_{s}-F_{s})\,\diff s + \int_{t'}^{t}h(s)\,\e^{-(t'-s)\Delta^{\sigma}}F_{s} \,\diff s.
		\end{align*}
		Now, since by the upper bound~\eqref{upplowu}
	\[
				\|F_{s}\|_{\infty} = \|u(s, \cdot)^{\gamma}\|_{\infty}	\leq \omega(s)^{\gamma}\,\|{\e^{-s\Delta^{\sigma}}f}\|_{\infty}^{\gamma-1} \|\e^{-s\Delta^{\sigma}}f\|_{\infty} \leq C\|f\|_{\infty} \|{\e^{-s\Delta^{\sigma}}f}\|_{\infty}^{\gamma-1} ,
				\]
we have
				\[
				h(s)\,\|\e^{-(t-t')\Delta^{\sigma}}F_s-F_s\|_{ {\infty}}\ \leq 2 h(s) \|F_{s}\|_{\infty} \leq C h(s) {\|f\|_{\infty}}\|{\e^{-s\Delta^{\sigma}}f}\|_{\infty}^{\gamma-1}
				\]
				and this latter function belongs to $L^{1}(0,t)$ for all $t>0$ by  assumption~\eqref{lem2_for1}. It remains to observe that
						\begin{align*}
			\|	u(t,\cdot )-u(t',\cdot )\|_\infty &\leq 
			\|\e^{-t'\Delta^{\sigma}}(\e^{-(t-t')\Delta^{\sigma}}f-f)\|_\infty \\
			& \hspace{-1.5cm} +\int_{0}^{t}h(s)\,\|\e^{-(t'-s)\Delta^{\sigma}}(\e^{-(t-t')\Delta^{\sigma}}F_s-F_s)\|_\infty\,\diff s +\int_{t'}^{t}h(s)\,\|\e^{-(t'-s)\Delta^{\sigma}}F_s \|_\infty \,\diff s \\
			&\leq \|\e^{-(t-t')\Delta^{\sigma}}f-f\|_\infty  \\
			&\quad +\int_{0}^{t}h(s)\,\|\e^{-(t-t')\Delta^{\sigma}}F_s-F_s\|_\infty \,\diff s +\int_{t'}^{t}h(s)\,\|F_s \|_\infty \,\diff s .
		\end{align*}
	Notice in particular that $\|\e^{-(t-t')\Delta^{\sigma}}F_s-F_s\|_\infty \rightarrow 0$ as $t'\rightarrow t$, because $u(s, \cdot)$ whence $F_s$ belong to $\mathcal{C}_0$ for all $s>0$, thanks to $\eqref{upplowu}$. By dominated convergence, as $t' \rightarrow t$ from the left, the above quantities tend to zero, and the proof is complete.
		 \end{proof}
	
	\begin{proposition}\label{prop:oldFujita}
		Suppose $h(t) = \e^{\beta t}$. Then there exists a non-negative global mild solution $u$  to~\eqref{fracheat} provided
		\begin{itemize}
			\item $\gamma>\gamma^{*}$, or
			\item $\gamma=\gamma^{*}$ and $\beta >\frac{2}{3}\lambda_{0}^{\sigma}$.
		\end{itemize}
	\end{proposition}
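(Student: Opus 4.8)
The plan is to reduce the statement to Lemma~\ref{prop:ex}: it suffices to produce a nonnegative $f\in\mathcal{C}_0$, suitably small, satisfying the smallness condition~\eqref{lem2_for1}, i.e.
\[
\int_0^\infty \e^{\beta s}\,\|\e^{-s\Delta^\sigma}f\|_\infty^{\gamma-1}\,\dd s<\frac{1}{\gamma-1}.
\]
Concretely, I would fix a nontrivial nonnegative $f_0\in\mathcal{C}_c^\infty$ (so that $f_0\in\mathcal{C}_0\cap L^1$) and look for a solution with datum $f=\varepsilon f_0$, the parameter $\varepsilon>0$ to be chosen small at the end; by homogeneity this reduces everything to a single bound on $f_0$.

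To control the integral I split it at $s=1$. On $(0,1]$, Young's inequality and $\|P_s^\sigma\|_1=1$ give the crude bound $\|\e^{-s\Delta^\sigma}f_0\|_\infty\le\|f_0\|_\infty$, so this portion contributes $\lesssim\|f_0\|_\infty^{\gamma-1}$; note that one cannot use the $\|f_0\|_1$-bound near $s=0$ because $\|P_s^\sigma\|_\infty\asymp s^{-n/(2\sigma)}$ is singular there. On $[1,\infty)$, Young's inequality together with the large-time $L^\infty$ estimate~\eqref{PtLp}, namely $\|P_s^\sigma\|_\infty\asymp s^{-3/2}\,\e^{-\rho^{2\sigma}s}$ with $\rho^{2\sigma}=\lambda_0^\sigma$, yields $\|\e^{-s\Delta^\sigma}f_0\|_\infty\le\|P_s^\sigma\|_\infty\|f_0\|_1\lesssim\|f_0\|_1\,s^{-3/2}\,\e^{-\lambda_0^\sigma s}$, so that
\[
\int_1^\infty \e^{\beta s}\,\|\e^{-s\Delta^\sigma}f_0\|_\infty^{\gamma-1}\,\dd s\lesssim \|f_0\|_1^{\gamma-1}\int_1^\infty s^{-\frac{3(\gamma-1)}{2}}\,\e^{(\beta-\lambda_0^\sigma(\gamma-1))s}\,\dd s.
\]

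The remaining integral is finite in exactly the two asserted regimes. If $\gamma>\gamma^*=1+\beta/\lambda_0^\sigma$, then $\beta-\lambda_0^\sigma(\gamma-1)<0$ and the exponential factor secures convergence irrespective of the power of $s$. If $\gamma=\gamma^*$, the exponent vanishes and the integral becomes $\int_1^\infty s^{-3(\gamma-1)/2}\,\dd s$, which converges iff $\tfrac{3(\gamma-1)}{2}>1$, i.e.\ $\gamma-1>\tfrac{2}{3}$; since here $\gamma-1=\beta/\lambda_0^\sigma$, this is precisely $\beta>\tfrac{2}{3}\lambda_0^\sigma$. In either case one obtains a finite constant $M_0=M_0(f_0,\beta,\sigma,\gamma)$ with $\int_0^\infty \e^{\beta s}\|\e^{-s\Delta^\sigma}f_0\|_\infty^{\gamma-1}\,\dd s\le M_0$. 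As $\|\e^{-s\Delta^\sigma}(\varepsilon f_0)\|_\infty=\varepsilon\|\e^{-s\Delta^\sigma}f_0\|_\infty$, choosing $\varepsilon>0$ so small that $\varepsilon^{\gamma-1}M_0<\tfrac{1}{\gamma-1}$ makes~\eqref{lem2_for1} hold for $f=\varepsilon f_0$, and Lemma~\ref{prop:ex} delivers a nonnegative global mild solution; it is nontrivial because $f$ is, and globally bounded because it lies between $\e^{-t\Delta^\sigma}f$ and $\omega(t)\,\e^{-t\Delta^\sigma}f$ with $\omega$ bounded and $\|\e^{-t\Delta^\sigma}f\|_\infty\le\|f\|_\infty$. (Equivalently, ``sufficiently small'' may be read as: $\|f\|_\infty$ and $\|f\|_1$ both below a threshold depending only on $\beta,\sigma,\gamma$.)

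The one genuinely delicate point is the emergence of the sharp constant $\tfrac{2}{3}$ in the critical case $\gamma=\gamma^*$: it is produced entirely by the polynomial prefactor $s^{-3/2}$ in the large-time $L^\infty$ bound for $P_s^\sigma$ --- a hyperbolic feature with no Euclidean analogue --- through the integrability requirement $\tfrac{3(\gamma-1)}{2}>1$. This is precisely why the present iteration scheme breaks down when $\gamma=\gamma^*$ and $\beta\le\tfrac{2}{3}\lambda_0^\sigma$, a borderline range which must instead be treated in Proposition~\ref{prop:criticalFujita} via the comparison principle of Theorem~\ref{thm: comparison} together with the elliptic equation~\eqref{criticreduction0}.
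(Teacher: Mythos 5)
Your proof is correct and takes essentially the same route as the paper: reduce to Lemma~\ref{prop:ex} and verify~\eqref{lem2_for1} using the large-time $L^\infty$ bound $\|P_s^\sigma\|_\infty\asymp s^{-3/2}\e^{-\lambda_0^\sigma s}$, which is what produces the threshold $\beta>\tfrac23\lambda_0^\sigma$. The only difference is cosmetic: the paper dominates $f\leq\varepsilon P_\eta^\sigma$ and uses the semigroup property to get a uniform bound $\|\e^{-s\Delta^\sigma}f\|_\infty\leq\varepsilon\|P_{s+\eta}^\sigma\|_\infty$ valid for all $s\geq 0$ in one stroke, while you take $f=\varepsilon f_0$ with $f_0\in\mathcal{C}_c^\infty$ and split the time integral at $s=1$, handling the short-time singularity of $\|P_s^\sigma\|_\infty$ by contractivity; both choices deliver the same finite constant and the same conclusion.
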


	\begin{proof}
		Suppose $\eta>0$ and $0\leq f\leq \varepsilon   P_{\eta}^{\sigma}$ for some $\varepsilon$ to be determined. By Lemma~\ref{prop:ex}, it will be enough to show that with a suitable choice of $\varepsilon$, property~\eqref{lem2_for1} holds. Observe that by the semigroup property
		\begin{align*}
			\int_{0}^{\infty} h(s) \|  \e^{-s\Delta^{\sigma}}   f\|_\infty^{\gamma-1}\, \dd s &=
			\int_{0}^{\infty}h(s)\|   f\ast P_{s}^{\sigma}\|_\infty^{\gamma-1}\, \dd s \\
			& \leq \varepsilon\int_{0}^{\infty} \e^{\beta s}\|  P_{s+\eta}^\sigma\|_\infty^{\gamma-1}\, \dd s\\
			& \leq C_{\eta}\varepsilon \int_{0}^{\infty} \e^{\beta s}(s+\eta)^{-\frac{3}{2}(\gamma-1)}  \, \e^{- \lambda_{0}^{\sigma} (s+\eta)(\gamma-1)} \, \dd s\\
			& \leq C_{\eta}\varepsilon \int_{0}^{\infty} \e^{(\beta- \lambda_{0}^{\sigma} (\gamma-1))s}(s+\eta)^{-\frac{3}{2}(\gamma-1)} \, \dd s,
		\end{align*}
		the last but one inequality by the large-time $L^{\infty}$ estimates of~\eqref{PtLp}. Now we have two cases: if $\gamma>\gamma^{*}$, then  $\beta- \lambda_{0}^{\sigma} (\gamma-1)<0$, whence
		\begin{align*}
			\int_{0}^{\infty} h(s) \|  \e^{-s\Delta^{\sigma}}   f\|_\infty^{\gamma-1}\, ds \leq C_{\eta}\varepsilon \int_{0}^{\infty} \e^{(\beta- \lambda_{0}^{\sigma} (\gamma-1))s} \, ds 
			& \leq C\varepsilon \frac{1}{\lambda_{0}^{\sigma} (\gamma-1)- \beta}\\
			& \leq  C\varepsilon \frac{1}{\gamma-\gamma^{*}}
		\end{align*}
		for a suitable $C>0$; if $\gamma=\gamma^{*}$, then $\beta- \lambda_{0}^{\sigma} (\gamma-1)=0$, and
		\begin{align*}
			\int_{0}^{\infty} h(s) \|  \e^{-s\Delta^{\sigma}}   f\|_\infty^{\gamma-1}\, ds \leq C_{\eta}\varepsilon \int_{0}^{\infty}(s+\eta)^{-\frac{3}{2}(\gamma^{*}-1)} \, ds,
		\end{align*}
		which is finite whenever $\frac{3}{2}(\gamma^{*}-1)>1$, namely $\beta >\frac{2}{3}\lambda_{0}^{\sigma}$.
		In both cases, by taking $\varepsilon$ small enough, the statement follows by Lemma~\ref{prop:ex}.
	\end{proof}
	
\begin{remark}
			In the same subcritical regime of Proposition \ref{prop:oldFujita}, we can also give a proof of the existence of a non-negative global \textit{classical} solution $u$  to~\eqref{fracheat} in the spirit of \cite{BPT}. By Corollary~\ref{faith1}, it is enough to construct a bounded global supersolution $\bar{u}$.

				Let us start with the case $\gamma>\gamma^{*}$. 	
				Take $w=c\,\varphi_0$ for some $c>0$ to be determined later. Set
				$$
				\bar{u}(x, t):=\e^{-\lambda_0^{\sigma} t}\, \zeta(t) \,w(x)
				$$
				for some regular $\zeta(t)>0$. 
				Define the functions
				\[
				\tilde{h}(t):=h(t) \,\e^{-(\gamma-1) \lambda_0^{\sigma} t}, \qquad \tilde{H}(t):=\int_0^t \tilde{h}(s) d s,
				\]
				and let $\zeta$ solve the problem
				$$
				\left\{\begin{array}{l}
					\zeta^{\prime}(t)=\|w\|_{\infty}^{\gamma-1} \tilde{h}(t) \zeta^{\gamma}(t) \\
					\zeta(0)=1,
				\end{array}\right.
				$$
				namely
				$$
				\zeta(t)=\left[1-(\gamma-1)\|w\|_{\infty}^{\gamma-1} \tilde{H}(t)\right]^{-\frac{1}{\gamma-1}}.
				$$
				Since $\Delta^{\sigma}\varphi_0=\lambda_{0}^{\sigma}\varphi_0$, one gets
				\[
				\partial_{t}\bar{u}(t,\cdot) + \Delta^{\sigma}\bar{u}(t,\cdot) = \e^{-\lambda_{0}^{\sigma}t} \|w\|_{\infty}^{\gamma-1} \tilde{h}(t) \zeta^{\gamma}(t) \geq 0,
				\]
				whence $\bar{u}$ is a supersolution of~\eqref{fracheat} if $f \leqslant w$. Observe now that $\gamma>\gamma^{*}$ implies that
				\[
				\tilde H_{\infty} := \lim_{t \to \infty}\tilde H(t) <\infty,
				\]
				so if the constant $c>0$ is chosen so that 
				\[
				\|w\|_{\infty}<\left[\frac{1}{(\gamma-1) \tilde{H}_{\infty}}\right]^{\frac{1}{\gamma-1}},
				\]
				then $\bar{u}$ exists for all $t>0$ and is bounded.
				
				Suppose now $\gamma=\gamma^{*}$ and $\beta>\frac{2}{3}\lambda_0^{\sigma}$, and define for some $t_0>0$
				\[
				\bar{u}=\bar{u}(x, t):=\xi(t) \, P^{\sigma}_{t+t_0}(x).
				\]
				Then we have
				$$
				\partial_t \bar{u}+\Delta^{\sigma} \bar{u}-\e^{\beta t} \bar{u}^{\gamma}=P_{t+t_0}^{\sigma} \left[{\xi}^{\prime}-\e^{\beta t} (P_{t+t_0}^{\sigma})^{\beta/\lambda_{0}^{\sigma}}\, \xi^{1+ \beta/\lambda_{0}^{\sigma}}\right].
				$$
				By~\eqref{PtLp} we deduce that there exists $k_1>0$ such that
				\begin{equation}\label{estPtt0}
					P_{t+t_0}^{\sigma}(x) \leq  k_1 \left(t+t_0\right)^{-3 / 2} \e^{-\lambda_0^{\sigma}\left(t+t_0\right)}
				\end{equation}
				for any $x \in \mathbb{H}^n$ and $t \in[0, \infty)$. Hence $\bar{u}$ is a supersolution of~\eqref{fracheat} if $f \leq \xi(0) P_{t+t_0}^{\sigma}$  and if 
				\[
				{\xi}^{\prime}(t)-\e^{\beta t} (P_{t+t_0}^{\sigma})^{\beta/\lambda_{0}^{\sigma}}\, \xi(t)^{1+ \beta/\lambda_{0}^{\sigma}} \geq 0,
				\]
				which by~\eqref{estPtt0} holds in particular if $\xi$ solves the problem (with $K_{1} = k_{1}^{\beta/\lambda_{0}^{\sigma}} \e^{-\beta t_{0}}$)
				$$
				\xi^{\prime}(t)=K_1(t+t_0)^{-\frac{3}{2}\frac{\beta}{\lambda_0^{\sigma}}}\xi(t)^{\frac{\beta}{\lambda_0^{\sigma}}+1}, 
				$$
				which amounts to
				$$
				\xi(t)^{-\frac{\beta}{\lambda_0^{\sigma}}}=\xi(0)^{-\frac{\beta}{\lambda_0^{\sigma}}}-\frac{\beta K_1}{\lambda_0^{\sigma}}\int_{0}^{t}(s+t_0)^{-\frac{3}{2}\frac{\beta}{\lambda_0^{\sigma}}}\, \dd s.
				$$
				Hence $\xi(t)$ exists for all $t>0$ if $\beta>\frac{2}{3} \lambda_0^{\sigma}$ and if $\xi(0)$ is sufficiently small. Notice that $\xi$ hence the supersolution $\bar{u}$ are bounded, thus by the comparison principle Theorem~\ref{thm: comparison} the proof is complete.
	\end{remark}
	
		\subsection{The critical regime: $\gamma=\gamma^{*}$ and $\beta \leq \frac{2}{3}\lambda_{0}^{\sigma}$}
	From this point on we assume $h(t)= \e^{\beta t}$.
	
	By Proposition~\ref{prop:unique}, there exists a mild solution to the problem~\eqref{fracheat} on $[0,\tau) \times \mathbb{H}^{n}$ for some $\tau \in (0,+\infty]$, which we may assume to be the maximal existence time of the solution.  The key observation, inspired by~\cite{WY}, is the following.
	
	\begin{remark}\label{faith2}
	Suppose $\overline{v}$ is a bounded, nonnegative supersolution of the equation 
\[
			 \Delta^{\sigma} v - \lambda_{0}^{\sigma} v - v^{\gamma}=0,
\]
		namely
		\[
		 \Delta^{\sigma} \bar{v} - \lambda_{0}^{\sigma} \bar{v} - \bar{v}^{\gamma} \geq 0,
		\]
by which we mean that  $\bar{v}\in \mathcal{C}^{0,2\sigma+\varepsilon}$ if $0<\sigma<1/2$, while $\bar{v}\in \mathcal{C}^{1,2\sigma-1+\varepsilon}$ if $1/2\leq \sigma <1$ for some small $\varepsilon>0$ and the equation (or inequality) is satisfied pointwise everywhere on $\mathbb{H}^n$. Then the function $\bar{u}(t,x)=\e^{-t\lambda_{0}^{\sigma}} \bar{v}(x)$ is a global nonnegative bounded classical supersolution of the problem~\eqref{fracheat}. By Corollary \ref{faith1}, this is enough to prove the existence of a nonnegative global classical solution to~\eqref{fracheat} for $f \leq \bar{v}$. Moreover, notice that if $0\leq \bar{v}\in L^q$ for some $q\in [1, \infty]$, then $\|u(t, \cdot)\|_q\leq \e^{-t\lambda_{0}^{\sigma}}\|\bar{v}\|_q$. 

\end{remark}
	
	This leads us to Theorem~\ref{thm:B}, namely the following theorem.
	
	\begin{theorem}\label{propeqlambda}
		Suppose $0\leq \lambda \leq \lambda_{0}$, $\sigma \in (0,1)$ and $1<\gamma< \frac{n+2\sigma}{n-2\sigma}$. Then the equation
		\begin{equation}\label{criticreduction}
			 \Delta^{\sigma} v - \lambda^{\sigma} v - v^{\gamma}=0
		\end{equation}		
			 has at least one nontrivial nonnegative radial bounded solution such that
		\[
		\int_{\mathbb{H}^{n}} (|\Delta^{\sigma/2} v|^{2}-\lambda^{\sigma}v^{2})\, \dd \mu \]
is finite and which belongs to $L^q$ for all $q\in (2, \infty]$. The solution is understood in the classical sense, i.e.  $v\in \mathcal{C}^{0,2\sigma+\varepsilon}$ if $0<\sigma<1/2$, while $v\in \mathcal{C}^{1,2\sigma-1+\varepsilon}$ if $1/2\leq \sigma <1$ for some small $\varepsilon>0$ and~\eqref{criticreduction} is satisfied pointwise everywhere on $\mathbb{H}^n$.
		\end{theorem}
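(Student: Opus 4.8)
The plan is to obtain the solution variationally, as a rescaled minimizer of a constrained problem on the radial subspace $\mathcal{H}_{\lambda,\sigma}^{\mathrm{rad}}$ (this is the content of Theorem~\ref{existence-MS}), and then to upgrade it to a bounded classical solution. Set
\[
S:=\inf\Big\{\|v\|_{\lambda,\sigma}^{2}\ :\ v\in\mathcal{H}_{\lambda,\sigma}^{\mathrm{rad}},\ \int_{\mathbb{H}^{n}}|v|^{\gamma+1}\,\dd\mu=1\Big\}.
\]
The hypothesis $\gamma<\frac{n+2\sigma}{n-2\sigma}$ is equivalent to $2<\gamma+1<\frac{2n}{n-2\sigma}$: the first inequality together with Theorem~\ref{thm:C} (used directly when $\lambda=\lambda_{0}$, and a fortiori when $\lambda<\lambda_{0}$ since then $\|\cdot\|_{\lambda,\sigma}$ dominates $\|\cdot\|_{\lambda_{0},\sigma}$) gives $\|v\|_{\gamma+1}^{2}\lesssim\|v\|_{\lambda,\sigma}^{2}$, so $S>0$; the second inequality places $\gamma+1$ in the \emph{open} range where $\mathcal{H}_{\lambda,\sigma}^{\mathrm{rad}}\hookrightarrow L^{\gamma+1}$ is compact by Theorem~\ref{thm: cpt radial emb}.

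A minimizing sequence is bounded in the Hilbert space, so a subsequence converges weakly to some $v$, and by compactness strongly in $L^{\gamma+1}$; hence $\int|v|^{\gamma+1}\,\dd\mu=1$ (so $v\neq0$) and $\|v\|_{\lambda,\sigma}^{2}\le S$ by weak lower semicontinuity, so $v$ realizes $S$. Passing to $|v|$ changes neither the constraint nor, by Corollary~\ref{cor:abs}, the value of the norm, so we may take $v\ge0$; it is radial by construction. The Lagrange multiplier rule, tested against $v$ itself, identifies the multiplier as $S$, and since $v$ and $\Delta^{\sigma}-\lambda^{\sigma}$ are radial the resulting identity extends from radial test functions to all test functions; thus $v$ is a weak solution of $\Delta^{\sigma}v-\lambda^{\sigma}v=S\,v^{\gamma}$. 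Rescaling $v\mapsto S^{1/(\gamma-1)}v$ produces a nontrivial nonnegative radial weak solution of~\eqref{criticreduction} whose energy $\int_{\mathbb{H}^{n}}(|\Delta^{\sigma/2}v|^{2}-\lambda^{\sigma}v^{2})\,\dd\mu=\|v\|_{\lambda,\sigma}^{2}$ (read through the Plancherel formula) is finite.

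It remains to show $v$ is bounded and then classical. Since $v\in\mathcal{H}_{\lambda,\sigma}$, Theorem~\ref{thm:C} already yields $v\in L^{q}$ for $q\in(2,\frac{2n}{n-2\sigma}]$. For boundedness one uses the representation $v=(\Delta^{\sigma}-\lambda^{\sigma})^{-1}(v^{\gamma})=G_{\lambda}\ast v^{\gamma}$, where $G_{\lambda}$ is the convolution kernel of the resolvent, with its pointwise behaviour extracted from~\eqref{P kernel estimates}: a local Riesz-type singularity $\asymp|x|^{-(n-2\sigma)}$ and exponential decay at infinity. Splitting $G_{\lambda}$ into a compactly supported piece and a tail, the tail is controlled by Young's inequality against $v^{\gamma}\in L^{s}$ (using $G_{\lambda}$'s exponential decay and the $L^{q}$-range just obtained), while the singular piece is handled by a Hardy--Littlewood--Sobolev-flavoured iteration that terminates at $L^{\infty}$ in finitely many steps precisely because $\gamma<\frac{n+2\sigma}{n-2\sigma}$. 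Interpolating $L^{\infty}$ with any $L^{q_{0}}$, $q_{0}\in(2,\frac{2n}{n-2\sigma}]$, then gives $v\in L^{q}$ for all $q\in(2,\infty]$. Finally, $v\in L^{\infty}\subseteq L_{\sigma}$ together with the interior regularity theory for $\Delta^{\sigma}$ from~\cite{Sil,BanEtAl} gives $v\in\mathcal{C}^{0,2\sigma+\varepsilon}$ if $\sigma<1/2$ and $v\in\mathcal{C}^{1,2\sigma-1+\varepsilon}$ if $\sigma\ge1/2$, and Proposition~\ref{prop: Sil2.1.4} upgrades~\eqref{criticreduction} to hold pointwise everywhere on $\mathbb{H}^{n}$.

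The main obstacle I expect is the boundedness step, particularly in the critical case $\lambda=\lambda_{0}$, where $\Delta^{\sigma}-\lambda_{0}^{\sigma}$ has $0$ at the bottom of its $L^{2}$-spectrum: the resolvent kernel is then no longer globally integrable, the decay of its tail is borderline, and one must combine the precise pointwise bounds from~\eqref{P kernel estimates} (exploiting the extra polynomial gains and the factor $\varphi_{0}$ coming from the negative curvature) with sharp convolution estimates, where the Kunze--Stein phenomenon enters as in the proof of Theorem~\ref{thm:C}. A secondary technical point is to verify, for the weak solution produced variationally, that it genuinely equals the fixed point $G_{\lambda}\ast v^{\gamma}$ and decays enough for Proposition~\ref{prop: Sil2.1.4} to be applicable, so that the passage to a classical, pointwise-everywhere solution is legitimate.
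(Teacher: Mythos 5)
Your variational step takes a genuinely different, arguably more direct route than the paper. You minimize $\|v\|_{\lambda,\sigma}^{2}$ over the constraint $\{v\in\mathcal{H}_{\lambda,\sigma}^{\rad}:\|v\|_{\gamma+1}=1\}$ and invoke the Lagrange multiplier rule; the paper instead works on the Nehari manifold and extracts a Palais--Smale sequence via Ekeland's variational principle (Proposition~\ref{Ekeland-PS}), identifying $J'(u_{j})\to 0$ and passing to the limit. Both routes rest on the same functional-analytic infrastructure --- the compact embedding $\mathcal{H}_{\lambda,\sigma}^{\rad}\hookrightarrow L^{\gamma+1}$ (Theorem~\ref{thm: cpt radial emb}), the fact that $|\cdot|$ does not increase $\|\cdot\|_{\lambda,\sigma}$ (Corollary~\ref{cor:abs}), the radialization bound (Proposition~\ref{prop:rad}), and the extension from radial to arbitrary test functions. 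The last of these is the content of Lemma~\ref{Effielemma}; you state it in one line (``since $v$ and $\Delta^{\sigma}-\lambda^{\sigma}$ are radial the identity extends to all test functions''), but it is not automatic and does require an argument by density, which the paper carries out. Note also that your Lagrange multiplier route implicitly requires the constraint functional $\int|v|^{\gamma+1}\,\dd\mu$ to be $\mathcal{C}^{1}$ on $\mathcal{H}_{\lambda,\sigma}$, which is essentially the content of Lemma~\ref{J'} and not free (the duality $L^{(\gamma+1)'}\subseteq\mathcal{H}_{\lambda,\sigma}^{-1}$ enters). The boundedness step you outline --- $v=v^{\gamma}\ast k_{\lambda,\sigma}$ with the local Riesz-type singularity iterated to $L^{\infty}$ --- coincides with the paper's Propositions~\ref{uL1loc} and~\ref{prop: u bdd}.

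The regularity step as written has a genuine gap. A single application of the interior estimate in~\cite[Prop.\ 2.6(c)]{BanEtAl} gives $v\in\mathcal{C}^{0,\alpha}$ only for $\alpha<2\sigma$ (resp.\ $v\in\mathcal{C}^{1,\alpha}$ for $\alpha<2\sigma-1$), \emph{not} for $\alpha=2\sigma+\varepsilon$; to reach the exponent $2\sigma+\varepsilon$ (which is what Proposition~\ref{prop: Sil2.1.4} needs in order to make the pointwise formula~\eqref{eq:singint} legitimate) one must bootstrap: once $v$ is H\"older, so is $v^{\gamma}$ (by~\eqref{fbfa}), hence $\Delta^{\sigma}v$, and then~\cite[Prop.\ 2.6(b)]{BanEtAl} upgrades the H\"older exponent. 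For $\sigma<1/2$ this iteration is routine. For $\sigma\geq 1/2$, however, the paper must additionally prove that $\nabla v\in L^{\infty}$ globally (Proposition~\ref{prop:gradbounded}), which hinges on nontrivial pointwise bounds for the radial derivative of the resolvent kernel $k_{\sigma}$, obtained via subordination, Laplace's method, and the estimates for $\partial_{r}h_{s}$ from~\cite{ADY96}. Your sketch does not acknowledge this step at all; without it, the passage from local H\"older regularity of $\nabla v$ to the global $\mathcal{C}^{0,1}$-bound needed to start the bootstrap in the regime $\sigma\geq 1/2$ is not justified.
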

The proof of Theorem~\ref{propeqlambda} will need a rather long detour and it will occupy the second part of the paper. Before getting into it, let us see how to complete the proof of the missing part of Theorem~\ref{mainconj} following~\cite{WY}.
	
	\begin{proposition}\label{prop:criticalFujita}
		Suppose $h(t) = \e^{\beta t}$. If $\gamma=\gamma^{*}$ and $0<\beta \leq \frac{2}{3}\lambda_{0}^{\sigma}$, then there exists a non-negative global classical solution $u$ to~\eqref{fracheat} for some positive continuous $f$ in $L^\infty$. 
	\end{proposition}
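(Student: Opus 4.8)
The plan is to deduce this proposition directly from Theorem~\ref{propeqlambda} (i.e.\ Theorem~B with $\lambda=\lambda_{0}$) together with Remark~\ref{faith2}. By that remark and Corollary~\ref{faith1}, it suffices to exhibit \emph{one} bounded, nonnegative, classical supersolution $\bar v$ of the elliptic equation
\[
\Delta^{\sigma} v-\lambda_{0}^{\sigma} v-v^{\gamma^{*}}=0,
\]
meaning $\bar v\in\mathcal C^{0,2\sigma+\varepsilon}$ (if $\sigma<1/2$) or $\bar v\in\mathcal C^{1,2\sigma-1+\varepsilon}$ (if $\sigma\ge1/2$) for some $\varepsilon>0$, with $\Delta^{\sigma}\bar v-\lambda_{0}^{\sigma}\bar v-\bar v^{\gamma^{*}}\ge0$ pointwise everywhere on $\mathbb{H}^n$. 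Granted such a $\bar v$, the function $\bar u(t,x):=\e^{-\lambda_{0}^{\sigma}t}\,\bar v(x)$ is a bounded, nonnegative, global classical supersolution of~\eqref{fracheat} with $h(t)=\e^{\beta t}$ and $\gamma=\gamma^{*}$ (the key cancellation being $\beta-\gamma^{*}\lambda_{0}^{\sigma}=-\lambda_{0}^{\sigma}$), whence by the comparison principle Theorem~\ref{thm: comparison} a global classical solution exists for every $0\le f\le\bar v$.

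Producing $\bar v$ is where Theorem~\ref{propeqlambda} enters, always applied with $\lambda=\lambda_{0}$. If $\gamma^{*}<\frac{n+2\sigma}{n-2\sigma}$ we simply take for $\bar v$ the nontrivial nonnegative bounded solution of $\Delta^{\sigma}v-\lambda_{0}^{\sigma}v-v^{\gamma^{*}}=0$ furnished by that theorem: a solution is in particular a supersolution, it has the required H\"older regularity, and it lies in $L^{q}$ for all $q\in(2,\infty]$. If instead $\gamma^{*}\ge\frac{n+2\sigma}{n-2\sigma}$ --- which \emph{can} occur in the present regime $\beta\le\frac23\lambda_{0}^{\sigma}$, e.g.\ for $\sigma$ small, since then $\gamma^{*}\le\frac53$ while $\frac{n+2\sigma}{n-2\sigma}$ may be close to $1$ --- we fix any exponent $\delta$ with $1<\delta<\frac{n+2\sigma}{n-2\sigma}$, so that automatically $\delta<\gamma^{*}$, apply Theorem~\ref{propeqlambda} with exponent $\delta$ to get a bounded nonnegative $\bar w$ solving $\Delta^{\sigma}\bar w-\lambda_{0}^{\sigma}\bar w=\bar w^{\delta}$, and set $\bar v:=c\,\bar w$ with $c>0$ small. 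Then pointwise
\[
\Delta^{\sigma}\bar v-\lambda_{0}^{\sigma}\bar v-\bar v^{\gamma^{*}}
= c\,\bar w^{\delta}-c^{\gamma^{*}}\bar w^{\gamma^{*}}
= c\,\bar w^{\delta}\bigl(1-c^{\gamma^{*}-1}\bar w^{\gamma^{*}-\delta}\bigr)\ge0
\]
(both sides vanish where $\bar w=0$), as soon as $c^{\gamma^{*}-1}\|\bar w\|_{\infty}^{\gamma^{*}-\delta}\le1$, which holds for $c$ small since $\gamma^{*}>1$ and $\gamma^{*}-\delta>0$; and $\bar v$ inherits from $\bar w$ the H\"older regularity and membership in every $L^{q}$, $q\in(2,\infty]$.

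In both cases we thus obtain an admissible $\bar v$, and Remark~\ref{faith2} yields a nonnegative global classical solution of~\eqref{fracheat} for $f=\bar v$, which is continuous, bounded, nontrivial, and in fact strictly positive: indeed $\Delta^{\sigma}\bar v=\lambda_{0}^{\sigma}\bar v+\bar v^{\gamma^{*}}\ge0$, so if $\bar v(x_{0})=0$ the pointwise formula~\eqref{eq:singint} gives $\Delta^{\sigma}\bar v(x_{0})=-|\Gamma(-\sigma)|^{-1}\int_{\mathbb{H}^n}\bar v(z)\,P_{0}^{\sigma}(z^{-1}x_{0})\,\dd z\le0$, forcing $\bar v\equiv0$, a contradiction. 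This proves the proposition.

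As for difficulty: for this statement \emph{per se} there is essentially no obstacle --- it is a short deduction --- and all the substance lies in Theorem~\ref{propeqlambda}, whose proof (resting in turn on the fractional Poincar\'e inequality of Theorem~\ref{thm:C} and the compact radial embedding) occupies the remainder of the paper. The only point needing a little care here is the Sobolev-exponent bookkeeping, handled by the elementary scaling above, since the critical Fujita exponent $\gamma^{*}$ need not lie below the fractional critical Sobolev exponent $\frac{n+2\sigma}{n-2\sigma}$.
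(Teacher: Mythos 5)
Your proof is correct and follows essentially the same strategy as the paper: reduce via Remark~\ref{faith2} and Corollary~\ref{faith1} to producing a bounded nonnegative classical supersolution of the elliptic equation at $\lambda=\lambda_{0}$, then split on whether $\gamma^{*}$ is Sobolev-subcritical. The only minor difference is the scaling mechanism in the supercritical case: the paper picks the specific exponent $\bar\gamma=1+\tfrac{2\sigma}{n-2\sigma}$ and normalizes the solution by $\max(1,\|v\|_{\infty})$, whereas you pick an arbitrary subcritical exponent $\delta$ and multiply by a small constant $c$; these are interchangeable computations. Your case split (directly on $\gamma^{*}\gtrless\frac{n+2\sigma}{n-2\sigma}$) is slightly cleaner than the paper's explicit reformulation in terms of $n,\sigma,\beta$, and the closing strict-positivity observation is a nice addition not present in the paper's version.
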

	
	\begin{proof}
By Remark~\ref{faith2}, it is enough to construct a nonnegative supersolution for~\eqref{criticreduction} when $\lambda^{\sigma} =\frac{\beta}{\gamma^{*}-1} =\lambda_{0}^{\sigma}$, i.e.\ $\lambda=\lambda_{0}$. We distinguish two cases.

Let us first consider the case when $n<8\sigma$, or $n\geq 8\sigma$ and $\beta <\frac{4\sigma}{n-2\sigma}\lambda_{0}^{\sigma}$. Then $\gamma^{*} < \frac{n+2\sigma}{n-2\sigma}$.  Thus Theorem~\ref{propeqlambda} provides the existence a nonnegative bounded solution to~\eqref{criticreduction}, which in particular is a supersolution.

Suppose then that $n\geq 8\sigma$ and $\frac{4\sigma}{n-2\sigma}\lambda_{0}^{\sigma} \leq \beta \leq \frac{2}{3}\lambda_{0}^{\sigma}$. Consider $\bar{\gamma} =1+\frac{2\sigma}{n-2\sigma} $. Since $\bar{\gamma}<\frac{n+2\sigma}{n-2\sigma}$, then by Theorem~\ref{propeqlambda} there exists a nonnegative bounded $v$ such that
		\[
		\Delta^{\sigma} v - \lambda_{0}^{\sigma} v - v^{\bar{\gamma}}=0.
		\]
		Define now $\eta=\eta(v) = \max(1, \|v\|_{\infty})$ and correspondingly $v_{\eta} = v/\eta$. Then $\|v_{\eta}\|_{\infty} \leq 1$. Now, since $\eta \geq 1$, $\gamma^{*}>\bar{\gamma}$ and $v_{\eta} \leq 1$,
		\begin{align*}
			0&= \Delta^{\sigma} v_{\eta}  -  \lambda_{0}^{\sigma} v_{\eta} - \frac{1}{\eta}v^{\bar{\gamma}} \\
			& \leq  \Delta^{\sigma} v_{\eta} - \lambda_{0}^{\sigma} v_{\eta} - \frac{1}{\eta^{\bar{\gamma}}}v^{\bar{\gamma}}  \leq \Delta^{\sigma} v_{\eta} - \lambda_{0}^{\sigma} v_{\eta} - v_{\eta}^{\bar{\gamma}} \leq  \Delta^{\sigma} v_{\eta} - \lambda_{0}^{\sigma} v_{\eta} - v_{\eta}^{{\gamma^{*}}}  .
		\end{align*}
		Thus $v_{\eta}$ is a nonnegative bounded supersolution of~\eqref{criticreduction}. 
	\end{proof}

	From this point on, our goal is to prove Theorem~\ref{propeqlambda}.
	
	\section{Fractional Poincar\'e inequality}\label{Sec:6}

	In this section we aim to prove Theorem~\ref{thm:C}, namely the following fractional Poincar\'e inequality.
		
	\begin{theorem}\label{thm:fracPoincare}
		Suppose $\sigma \in (0,1)$ and $2<q\leq \frac{2n}{n-2\sigma}$. Then there exists $C>0$ such that for all $\phi\in \mathcal{C}_{c}^{\infty} $
		\begin{equation}\label{fracPoincare}
			\int_{\mathbb{H}^n} (|\Delta^{\sigma/2} \phi |^{2}-\lambda_{0}^{\sigma}\phi^{2})\, \dd \mu \geq C \|\phi \|_{q}^{2}.
		\end{equation}
	\end{theorem}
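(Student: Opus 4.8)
The plan is to dualise the inequality and recast it as a mapping property of the resolvent of $\Delta^{\sigma}-\lambda_{0}^{\sigma}$. Set $q'=q/(q-1)\in[\frac{2n}{n+2\sigma},2)$ and $B:=(\Delta^{\sigma}-\lambda_{0}^{\sigma})^{1/2}$, so that by Plancherel $\int_{\mathbb{H}^n}(|\Delta^{\sigma/2}\phi|^{2}-\lambda_{0}^{\sigma}\phi^{2})\,\dd\mu=\|B\phi\|_{2}^{2}$ for $\phi\in\mathcal{C}_{c}^{\infty}$. Testing against $\psi\in\mathcal{C}_{c}^{\infty}$ gives $|\langle\phi,\psi\rangle|=|\langle B\phi,B^{-1}\psi\rangle|\leq\|B\phi\|_{2}\,\langle(\Delta^{\sigma}-\lambda_{0}^{\sigma})^{-1}\psi,\psi\rangle^{1/2}$, where $B^{-1}\psi$ is well defined: by Paley--Wiener~\eqref{eq: PW} the transform $\widehat{\psi}$ is rapidly decaying, while the symbol $M(\xi):=(\xi^{2}+\rho^{2})^{\sigma}-\rho^{2\sigma}\asymp\min(\xi^{2},|\xi|^{2\sigma})$ of $B^{2}$ vanishes only quadratically at $\xi=0$, which is integrable against $|\mathbf{c}(\xi)|^{-2}\,\dd\xi\asymp\xi^{2}\,\dd\xi$. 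Since $\mathcal{C}_{c}^{\infty}$ is dense in $L^{q'}$, it is therefore enough to show that $(\Delta^{\sigma}-\lambda_{0}^{\sigma})^{-1}\colon L^{q'}\to L^{q}$ is bounded. As $(\Delta^{\sigma}-\lambda_{0}^{\sigma})^{-1}=\int_{0}^{\infty}\e^{-s(\Delta^{\sigma}-\lambda_{0}^{\sigma})}\,\dd s=\int_{0}^{\infty}\e^{s\lambda_{0}^{\sigma}}\e^{-s\Delta^{\sigma}}\,\dd s$, this operator is convolution by the radial kernel $G_{\sigma}(x)=\int_{0}^{\infty}\e^{s\lambda_{0}^{\sigma}}P_{s}^{\sigma}(x)\,\dd s$; its convergence for $x\neq o$ and local integrability follow at once from~\eqref{P kernel estimates}, the crucial point being that for $s$ large the growing factor $\e^{s\lambda_{0}^{\sigma}}$ is exactly compensated by the $\e^{-\lambda_{0}^{\sigma}s}$ inside $P_{s}^{\sigma}$, leaving an integrable $s^{-3/2}$-type tail.

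I would then split $G_{\sigma}=G_{\sigma}^{\mathrm{loc}}+G_{\sigma}^{\infty}$ according to $|x|\leq1$ and $|x|>1$. From the short-time regime of~\eqref{P kernel estimates} one gets $G_{\sigma}^{\mathrm{loc}}(x)\lesssim|x|^{2\sigma-n}\,\mathbbm{1}_{\{|x|\leq1\}}$: a truncated Riesz kernel of order $2\sigma$. Since $\mathbb{H}^{n}$ is locally Euclidean, convolution with $|x|^{2\sigma-n}\mathbbm{1}_{\{|x|\leq1\}}$ maps $L^{q'}$ into $L^{q}$ for the whole closed range $2<q\leq\frac{2n}{n-2\sigma}$: the endpoint $q=\frac{2n}{n-2\sigma}$ is precisely the Hardy--Littlewood--Sobolev inequality, and the subcritical cases follow by interpolating it with Young's inequality (the kernel being compactly supported and in $L^{p}$ for all $p<\frac{n}{n-2\sigma}$). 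This settles the local part, endpoint included.

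The global part carries all the difficulty. Running carefully through every regime of~\eqref{P kernel estimates} --- including the implicitly defined intermediate regime $\sqrt{s}\leq|x|\leq s^{1/\sigma}$ set aside in Section~\ref{Sec:2} --- one obtains a bound $G_{\sigma}^{\infty}(x)\lesssim(1+|x|)^{a}\,\e^{-\rho|x|}\,\mathbbm{1}_{\{|x|>1\}}$ with $a\geq0$, the leading contribution $\asymp\varphi_{0}(x)/|x|\asymp\e^{-\rho|x|}$ coming from the range $s\gtrsim|x|^{2}$. This radial kernel lies in $L^{q}(\mathbb{H}^{n})$ for every $q>2$ but only barely fails to lie in $L^{2}$, so plain Young's inequality (which would demand $G_{\sigma}^{\infty}\in L^{q/2}$, hence $q>4$) does not suffice. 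Here one invokes the Kunze--Stein phenomenon: convolution by a radial function decaying like $\e^{-\rho|x|}$ up to a polynomial is bounded from $L^{q'}$ to $L^{q}$ for every $q>2$ --- a gain in integrability straddling the exponent $2$ that has no Euclidean analogue and rests on the exponential volume growth (equivalently, the spectral gap $\lambda_{0}>0$) of $\mathbb{H}^{n}$. Combining the local and global estimates, $(\Delta^{\sigma}-\lambda_{0}^{\sigma})^{-1}\colon L^{q'}\to L^{q}$ is bounded, and feeding this back through the duality of the first step yields Theorem~\ref{thm:fracPoincare}.

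The hard part is exactly this global step, and within it the necessity of \emph{gaining} integrability across $L^{2}$: one must pass from $L^{q'}$ with $q'<2$ to $L^{q}$ with $q>2$, which on $\mathbb{H}^{n}$ is available only through Kunze--Stein, and moreover the kernel $G_{\sigma}^{\infty}\asymp\e^{-\rho|x|}$ decays at precisely the critical rate, so there is no room to waste --- obtaining the sharp exponential--polynomial pointwise bound on $G_{\sigma}^{\infty}$ forces one to use the fractional heat-kernel estimates~\eqref{P kernel estimates} in every regime, including the implicit intermediate one. A secondary subtlety is that the endpoint $q=\frac{2n}{n-2\sigma}$ is out of reach of Young-type bounds on the local part and really needs the Hardy--Littlewood--Sobolev inequality, and one should note (harmlessly) that although $\Delta^{\sigma}-\lambda_{0}^{\sigma}$ has $0$ in its spectrum, so that $(\Delta^{\sigma}-\lambda_{0}^{\sigma})^{-1}$ is unbounded on $L^{2}$, all the pairings above are finite on $\mathcal{C}_{c}^{\infty}$, so the duality argument is legitimate.
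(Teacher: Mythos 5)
Your argument is essentially the paper's own: dualize to reduce the inequality to $L^{q'}\to L^{q}$ boundedness of $(\Delta^{\sigma}-\lambda_{0}^{\sigma})^{-1}$, write that operator as convolution by $k_{\sigma}=\int_{0}^{\infty}\e^{\lambda_{0}^{\sigma}t}P_{t}^{\sigma}\,\dd t$, split into a compactly supported Riesz-type local part (handled by Young / weak-Young, i.e.\ Hardy--Littlewood--Sobolev at the endpoint) and a global part $\lesssim(1+|x|)^{K}\e^{-\rho|x|}$ handled by the Kunze--Stein criterion $\|f*\kappa\|_{q}\lesssim\|f\|_{q'}\big(\int_{0}^{\infty}(\sinh r)^{n-1}|\kappa(r)|^{q/2}\varphi_{0}(r)\,\dd r\big)^{2/q}$. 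The one place you deviate is in asserting that bounding the global kernel requires pushing through the implicitly defined intermediate regime of the fractional heat-kernel estimates: the paper instead sidesteps those entirely on $|x|^{\sigma}\leq t\leq|x|^{2}$ by invoking the spherical inversion formula together with $|\varphi_{\xi}|\leq\varphi_{0}$, which is cleaner and worth noting if you want a self-contained write-up.
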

	
	 To this end, we first compare this result to other known Poincar\'e inequalities, and then discuss negative powers of the Laplacian.
	 
	 	\begin{remark}\label{rem:Poincarecomparison}
In~\cite[(1.2)]{MS}, the Poincar\'e--Sobolev inequality
			\begin{equation}\label{PoincareMS}
				\int_{\mathbb{H}^n} (|\nabla \phi |^{2}-\lambda_{0}\phi^{2})\, \dd \mu \geq C \|\phi\|_{q}^{2}, \qquad \phi \in \mathcal{C}_{c}^{\infty}, \quad q \in \big(2,\tfrac{2n}{n-2}\big]
			\end{equation}
			is proved from a Hardy--Sobolev--Maz'ya inequality. Since $\Delta = - \mathrm{div}\, \mathrm{\nabla}$ but also $\Delta = \Delta^{1/2} \Delta^{1/2}$, for $\phi \in \mathcal{C}_{c}^{\infty}$
			\[
			\int_{\mathbb{H}^n} |\nabla \phi|^{2} \, \dd \mu = \int \Delta \phi \cdot \phi \, \dd \mu = \int |\Delta^{1/2} \phi|^{2}\, \dd \mu,
			\]
			whence~\eqref{PoincareMS} is the same as
			\[
			\int_{\mathbb{H}^n} (|\Delta^{1/2} \phi |^{2}-\lambda_{0}\phi^{2})\, \dd \mu \geq C \|\phi\|_{q}^{2},  \qquad \phi \in \mathcal{C}_{c}^{\infty}, \quad q \in \big(2,\tfrac{2n}{n-2}\big]
			\]
			namely~\eqref{fracPoincare} with $\sigma=1$. In~\cite[Theorem 6.2]{LLY}, see also \cite[Theorem 1.12]{BP2022}, instead, the fractional Poincar\'e inequality			\begin{equation}\label{PoincareBP}
				\|(\Delta -\lambda_{0})^{\sigma/2} \phi \|_{2}^{2}= \int_{\mathbb{H}^n} |(\Delta -\lambda_{0})^{\sigma/2} \phi|^{2} \, \dd\mu \geq C \|\phi\|_{q}^{2}, \qquad \phi \in \mathcal{C}_{c}^{\infty}, \quad q \in \big(2,\tfrac{2n}{n-2}\big]
			\end{equation}
			is proved for $\sigma \in (0,3/2)$ for all $n\geq 3$.			 For $\sigma \in (0,1)$ the one given by Theorem~\ref{thm:fracPoincare}, namely~\eqref{fracPoincare}, is stronger.  Indeed, recalling~\eqref{fracPoincarebis}, it is enough to observe that 
\begin{equation}\label{better}
			\|(\Delta -\lambda_{0})^{\sigma/2} \phi\|_{2}\geq c\| (\Delta^{\sigma} - \lambda_{0}^{\sigma})^{1/2}\phi \|_{2},
	\end{equation}
which follows from the boundedness of the multiplier $s\mapsto (s^{\sigma} - \lambda_{0}^{\sigma})^{1/2} (s-\lambda_{0})^{-\sigma/2}$  on $[\lambda_{0},+\infty)$  for any $\sigma\in (0,1)$. Observe that the reciprocal of such multiplier is instead not bounded, so one cannot obtain~\eqref{fracPoincare} directly from~\eqref{PoincareBP}. Equivalently, by the Plancherel formlula, if $\phi \in \mathcal{C}_c^{\infty}$, we have
				\[
				\|(\Delta -\lambda_{0})^{\sigma/2} \phi \|_{2}^{2}=\mathrm{const.}\int_K\int_{0}^{\infty} \xi^{2\sigma}\, |\widehat{\phi}(\xi, k\mathbb{M})|^2 \, \frac{\dd \xi}{|\textbf{c}(\xi)|^2}\dd k
				\]
					and 
					$$\|(\Delta^{\sigma} -\lambda_{0}^{\sigma})^{1/2} \phi \|_{2}^{2}=\mathrm{const.}\int_K\int_{0}^{\infty} ((\xi^2+\lambda_0)^{\sigma}-\lambda_0^{\sigma})\, |\widehat{\phi}(\xi, k\mathbb{M})|^2 \, \frac{\dd \xi}{|\textbf{c}(\xi)|^2}\dd k.$$
					While for $\xi\geq 1$ the above quantities are comparable, for $0<\xi<1$ we have
					$$(\xi^2+\lambda_0)^{\sigma}-\lambda_0^{\sigma}\asymp \xi^2 \lesssim \xi^{2\sigma}, \quad 0<\sigma<1,$$
					from which~\eqref{better} follows.
		\end{remark}	
	Let us note once and for all that
			\begin{equation}\label{multiplier}
(\xi^2+\lambda_0)^{\sigma}-\lambda^{\sigma} \asymp \begin{cases}
\xi^{2\sigma},  \quad &\text{if } \: 0\leq \lambda\leq \lambda_0, \quad \xi \geq 1\\
\xi^2,  \quad &\text{if }\: \lambda=\lambda_0, \quad \quad  \quad   0\leq \xi \leq 1 \\
1,  \quad &\text{if }\: 0\leq \lambda<\lambda_0, \quad  0\leq \xi \leq 1
	\end{cases}
	\end{equation} 
	as this will be needed several times later on. As a consequence, one also obtains the following immediate corollary of Theorem~\ref{thm:fracPoincare}, which for future reference we state as a remark.
\begin{remark}\label{rem-Poincare-lambda}
The inequality~\eqref{fracPoincare} holds if $\lambda_{0}$ is replaced by any $0\leq \lambda\leq \lambda_{0}$.
\end{remark}
	
\subsection{Negative powers of the shifted fractional Laplacian}
Suppose $0<\sigma<1$ and $\alpha>0$. In this subsection, we discuss how to define $(\Delta^{\sigma}-\lambda_{0}^{\sigma})^{-\frac{\alpha}{2}}$, as this will be of use in the following; namely, a range for the parameter $\alpha$. To this end, we borrow some ideas from \cite[p.145]{CGM} and \cite[p.1073]{AJ99}. Given $\phi \in \mathcal{C}_c^{\infty}$, consider the integral 
	\begin{align*}
		\int_K\int_{\mathbb{R}}  ((\xi^2+\lambda_{0})^{\sigma} -\lambda_{0}^{\sigma})^{-\alpha}\,|\widehat{\phi}(\xi, k\mathbb{M})|^2\, \frac{\dd \xi}{|\mathbf{c}(\xi)|^{2}}\, \dd k&=\int_K\int_{|\xi|<1}+\int_K\int_{|\xi|\geq 1}\\
		&=I_1+I_2.
	\end{align*}
By~\eqref{multiplier} and owing to the Paley--Wiener theorem~\eqref{eq: PW} for smooth compactly supported functions, which ensures that $\widehat{f}(\xi, k\mathbb{M})$ decays at infinity faster than any polynomial in $\xi$, uniformly in $K$, and to the polynomial growth of the Plancherel density~\eqref{eq: Plancherel}, it follows that the integral $I_2$ is finite for any $\sigma\in (0,1)$ and $\alpha>0$.
	
	On the other hand, again by~\eqref{multiplier} and taking into account that $|\mathbf{c}(\xi)|^{-2}\asymp \xi^2$ around the origin by~\eqref{eq: Plancherel}, we conclude that the integral $I_1$ is finite provided
	$$\int_{0}^{1}\xi^{-2\alpha}\, \xi^2\, \dd \xi<\infty,$$
	which is true for $\alpha<3/2$. By the Plancherel theorem~\eqref{eq: Plancherel thm}, this shows that 
	\begin{equation}\label{eq: negPlanch}
	(\Delta^{\sigma}-\lambda_{0}^{\sigma})^{-\frac{\alpha}{2}}\phi \in L^2, \qquad \phi \in \mathcal{C}_c^{\infty}, \quad 0<\sigma<1, \quad 0<\alpha<3/2,
	\end{equation}
	see also \cite[p.146]{CGM}, which explicitly means that
	\begin{align*}
		\|(\Delta^{\sigma}-\lambda_{0}^{\sigma})^{-\frac{\alpha}{2}}\phi\|_2^2
		= \int_K\int_{\mathbb{R}} \left((\xi^2+\lambda_{0})^{\sigma} -\lambda_{0}^{\sigma} \right)^{-\alpha} |\widehat{{\phi}}(\xi, k\mathbb{M})|^2\, \frac{\dd \xi}{|\mathbf{c}(\xi)|^{2}}\,\dd k<\infty.
	\end{align*}
More generally, if $\phi\in \mathcal{C}_c^{\infty}$ then by the inversion formula \eqref{eq:inversionHelg} and arguing as above, we can see that $(\Delta^{\sigma}-\lambda_{0}^{\sigma})^{-\frac{\alpha}{2}}\phi$ is well defined for $0<\alpha<3$. Via a Mellin type expression we can then write
	\begin{equation}\label{eq: Mellin}
		(\Delta^{\sigma}-\lambda_{0}^{\sigma})^{-\frac{\alpha}{2}}=\frac{1}{\Gamma(\alpha/2)}\int_{0}^{\infty} \e^{t\lambda_{0}^{\sigma}}\, \e^{-t\Delta^{\sigma}} \, \frac{\dd t}{t^{1-\frac{\alpha}{2}}}, \quad 0<\sigma<1, \quad 0<\alpha<3,
	\end{equation}
 as an operator acting on test functions, and by the above discussion $(\Delta^{\sigma}-\lambda_{0}^{\sigma})^{-\frac{\alpha}{2}} \colon \mathcal{C}_{c}^{\infty} \to L^{2}$ for $0<\alpha<3/2$.

	Let now $k_\sigma$ be the convolution kernel of the operator $(\Delta^{\sigma} - \lambda_{0}^{\sigma})^{-1} $. Since by~\eqref{eq: Mellin}
	\[
	(\Delta^{\sigma} - \lambda_{0}^{\sigma})^{-1} = \int_{0}^{\infty} \e^{\lambda_{0}^{\sigma}t}\e^{-t\Delta^{\sigma}} \, \dd t,
	\]
	one gets
	\[
	k_{\sigma} = \int_{0}^{\infty} \e^{\lambda_{0}^{\sigma}t} P_{t}^{\sigma} \, \dd t.
	\]
	It follows that  $k_{\sigma}$ is positive and radial since so is $P_{t}^{\sigma}$. Let us split $k_{\sigma}$ as 
\begin{equation}\label{k0infty}
	k_{\sigma}^{0}=\mathbbm{1}_{B(o,1)} \, k_{\sigma}, \qquad k_{\sigma}^{\infty}=\mathbbm{1}_{B(o,1)^{c}} \, k_{\sigma}.
	\end{equation}

	\begin{proposition}\label{prop: ka est}
		Suppose $\sigma\in (0,1)$. Then
		\begin{equation*} 
			k_{\sigma}^{0}(x)\asymp |x|^{-(n-2\sigma)}, \quad |x|<1,
		\end{equation*}
		while there is a constant $K>0$ such that
		\begin{equation*} 
			k_{\sigma}^{\infty}(x)\lesssim (1+|x|)^{K}\, \e^{-\frac{n-1}{2}|x|}, \quad |x|\geq 1.
		\end{equation*}
	\end{proposition}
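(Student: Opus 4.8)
Recall from the discussion above that $k_\sigma=\int_0^\infty \e^{\lambda_0^\sigma t}\,P_t^\sigma\,\dd t$ is a positive radial function, so the plan is to bound this integral pointwise, splitting the time interval according to the regimes of~\eqref{P kernel estimates} (which I fix with $\kappa=1$) and treating the small-time part at infinity separately via the subordination formula~\eqref{kernelFH}.

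For $|x|<1$ I would write $k_\sigma(x)=\int_0^{1-|x|}+\int_{1-|x|}^{\infty}$. On $[0,1-|x|]$ one has $t+|x|\le1$, hence $\e^{\lambda_0^\sigma t}\asymp1$ and $P_t^\sigma(x)\asymp t\,(t^{1/(2\sigma)}+|x|)^{-(n+2\sigma)}$; the change of variables $t=|x|^{2\sigma}s$ turns this contribution into $|x|^{-(n-2\sigma)}\int_0^{(1-|x|)|x|^{-2\sigma}}s\,(s^{1/(2\sigma)}+1)^{-(n+2\sigma)}\,\dd s$, and since the integrand decays like $s^{-n/(2\sigma)}$ at infinity with $n>2\sigma$ while the upper limit tends to $+\infty$, this piece is $\asymp|x|^{-(n-2\sigma)}$. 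On $[1-|x|,\infty)$ one has $t+|x|\ge1$, so~\eqref{P kernel estimates} applies with no ``excluded'' region (for $|x|<1$ one has $|x|^\sigma>|x|^2$), and a routine estimate using $t+|x|\asymp t$ for large $t$, $\varphi_0(x)\asymp(1+|x|)\e^{-\frac{n-1}{2}|x|}$ from~\eqref{eq: ground}, and $\int_1^\infty t^{-3/2}\,\dd t<\infty$ shows this remaining integral is bounded on $B(o,1)$. Since $|x|^{-(n-2\sigma)}\ge1$ there, the local estimate follows.

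For $|x|\ge1$ I would split $k_\sigma(x)=\int_0^{|x|^2}+\int_{|x|^2}^{\infty}=:C(x)+D(x)$. The term $D(x)$ lies in the second regime of~\eqref{P kernel estimates} (since $|x|\le\sqrt t$), where $\e^{\lambda_0^\sigma t}$ cancels; using $t+|x|\ge t$, $\int_{|x|^2}^\infty t^{-3/2}\,\dd t\asymp|x|^{-1}$ and~\eqref{eq: ground} gives $D(x)\lesssim\varphi_0(x)\,|x|^{-1}\lesssim\e^{-\frac{n-1}{2}|x|}$. For $C(x)$, which includes the region where~\eqref{P kernel estimates} is unavailable, I would return to~\eqref{kernelFH}: from $h_s(x)\lesssim s^{-n/2}(1+|x|)(1+s+|x|)^{(n-3)/2}\e^{-\lambda_0 s-\frac{n-1}{2}|x|-|x|^2/(4s)}$, the elementary bounds $s^{-n/2}\e^{-|x|^2/(4s)}\lesssim|x|^{-n}$ (maximising in $s$), $(1+s+|x|)^{(n-3)/2}\le(1+s)^{m}(1+|x|)^{m}$ with $m=\max(\tfrac{n-3}{2},0)$, and $|x|^{-n}\lesssim1$ yield
\[
P_t^\sigma(x)=\int_0^\infty h_s(x)\,\eta_t^\sigma(s)\,\dd s\;\lesssim\;(1+|x|)^{1+m}\,\e^{-\frac{n-1}{2}|x|}\int_0^\infty(1+s)^{m}\,\e^{-\lambda_0 s}\,\eta_t^\sigma(s)\,\dd s .
\]
Since $\eta_t^\sigma$ is the inverse Laplace transform of $s\mapsto\e^{-ts^\sigma}$, for $j\in\N\cup\{0\}$ we have $\int_0^\infty s^j\e^{-\mu s}\eta_t^\sigma(s)\,\dd s=(-1)^j\partial_\mu^j\e^{-t\mu^\sigma}$, which at the fixed value $\mu=\lambda_0>0$ equals a polynomial in $t$ of degree $j$ times $\e^{-\lambda_0^\sigma t}$; expanding $(1+s)^m\le(1+s)^{\lceil m\rceil}$ gives $\int_0^\infty(1+s)^m\e^{-\lambda_0 s}\eta_t^\sigma(s)\,\dd s\lesssim(1+t)^{\lceil m\rceil}\e^{-\lambda_0^\sigma t}$. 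Hence $\e^{\lambda_0^\sigma t}P_t^\sigma(x)\lesssim(1+|x|)^{1+m}(1+t)^{\lceil m\rceil}\e^{-\frac{n-1}{2}|x|}$ for $|x|\ge1$, $t>0$, and integrating in $t$ over $(0,|x|^2)$ costs only a polynomial factor, so $C(x)\lesssim(1+|x|)^{K}\e^{-\frac{n-1}{2}|x|}$ for a suitable $K>0$; together with the bound for $D(x)$ this proves $k_\sigma^\infty(x)\lesssim(1+|x|)^{K}\e^{-\frac{n-1}{2}|x|}$.

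The main obstacle is precisely the term $C(x)$: it overlaps the intermediate space--time regime for which~\eqref{P kernel estimates} gives no elementary two-sided bound (and which the discussion preceding~\eqref{PtLp} deliberately set aside). The subordination device above is meant to bypass the need for those finer estimates of~\cite{GS04,BP}: reverting to~\eqref{kernelFH} recovers the sharp spatial decay $\e^{-\frac{n-1}{2}|x|}$ straight from the Gaussian factor of $h_s$, while the identity $\int_0^\infty\e^{-\lambda_0 s}\eta_t^\sigma(s)\,\dd s=\e^{-\lambda_0^\sigma t}$ produces exactly the decay in $t$ needed to absorb the weight $\e^{\lambda_0^\sigma t}$, leaving only an innocuous polynomial factor in $|x|$; invoking the explicit ``complicated'' bounds of~\cite{GS04,BP} on that region is an alternative, but the subordination argument keeps the proof self-contained.
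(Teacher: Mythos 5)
Your proof of the local estimate runs along the same lines as the paper's (split the $t$-integral at a fixed small time, use the small-time regime of \eqref{P kernel estimates}, change variables), except that you cut the time axis at $t=1-|x|$ rather than at a fixed point. That choice undercuts the lower bound for $|x|$ close to $1$: the rescaled upper limit $(1-|x|)|x|^{-2\sigma}$ tends to $0$ as $|x|\to 1^{-}$, so your ``first piece'' tends to $0$ there rather than staying comparable to $|x|^{-(n-2\sigma)}\asymp 1$, and the claim ``this piece is $\asymp|x|^{-(n-2\sigma)}$'' is not uniform on $B(o,1)$. This is harmless and easily repaired (split at $t=|x|^{2\sigma}$ as the paper does, or at a fixed $t_0<1$, or simply invoke positivity and continuity of $k_\sigma$ away from the origin to handle $|x|$ bounded away from $0$), but as written the $\gtrsim$ direction has a small gap.

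For the part at infinity your route is genuinely different from the paper's and it works. Both of you dispose easily of $\int_{|x|^2}^\infty$ (their $J_3$, your $D(x)$). The real issue is the intermediate range $|x|^\sigma<t<|x|^2$ where \eqref{P kernel estimates} provides no elementary two-sided bound. The paper (their $J_2$) passes to the spherical inversion formula, bounds $|\varphi_\xi|\leq\varphi_0$, and estimates $\int_0^\infty e^{-t[(\xi^2+\lambda_0)^\sigma-\lambda_0^\sigma]}|\mathbf{c}(\xi)|^{-2}\,d\xi$, while still treating $J_1=\int_0^{|x|^\sigma}$ via the third regime of \eqref{P kernel estimates}. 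You instead merge $J_1$ and $J_2$ into $C(x)=\int_0^{|x|^2}$ and go back to subordination: you pull the spatial decay $e^{-\frac{n-1}{2}|x|}$ straight from the Gaussian factor of $h_s$ via the elementary bound $s^{-n/2}e^{-|x|^2/(4s)}\lesssim 1$, control the residual $(1+s)^m$ weight through the Laplace-transform identity $\int_0^\infty s^j e^{-\lambda_0 s}\eta_t^\sigma(s)\,ds=(-1)^j\partial_\mu^j e^{-t\mu^\sigma}\big|_{\mu=\lambda_0}$ (each such term being a degree-$j$ polynomial in $t$ times $e^{-t\lambda_0^\sigma}$, by complete monotonicity these are all nonnegative), and then the $e^{-\lambda_0^\sigma t}$ produced cancels $e^{\lambda_0^\sigma t}$ exactly, leaving only a polynomial in $t$ to integrate over $(0,|x|^2)$. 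In effect, you trade Fourier-side ingredients (inversion formula, Plancherel density, the bound on spherical functions) for a Laplace-transform computation on the subordinator. This is arguably more elementary and self-contained, and it bypasses the paper's case distinction $J_1$/$J_2$ entirely, at the cost of a larger (and cruder) polynomial exponent $K$ — which is immaterial, as the proposition does not ask for a sharp $K$. Both approaches are correct.
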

Let us note that  the upper bound for the part $k_{\sigma}^{\infty}$ is not optimal in the polynomial part. However, since it will be sufficient for our purposes, we will avoid performing more delicate computations that would require effort beyond the scopes of this paper.	
	
	We postpone the proof of Proposition~\ref{prop: ka est} in order to prove Theorem~\ref{thm:fracPoincare} first.

\begin{proof}[Proof of Theorem~\ref{thm:fracPoincare}]
To prove~\eqref{fracPoincare}, we first observe that
	\begin{equation}\label{fracPoincarebis}
		\int_{\mathbb{H}^n} (|\Delta^{\sigma/2} \phi |^{2}-\lambda_{0}^{\sigma}\phi^{2})\, \dd \mu = \| (\Delta^{\sigma} - \lambda_{0}^{\sigma})^{1/2}\phi \|_{2}^{2}, \qquad \phi\in \mathcal{C}_{c}^{\infty}.
	\end{equation}
	It suffices then to show that for $\phi \in \mathcal{C}_{c}^{\infty}$
	\begin{equation}\label{ets1}
		\bigg| \int_{\mathbb{H}^n} (\Delta^{\sigma} - \lambda_{0}^{\sigma})^{-1} \phi \cdot \phi \, \dd \mu\bigg| \leq C \|\phi\|_{q'}^2.
	\end{equation}
	Indeed, if~\eqref{ets1} holds, then by H\"older's inequality for $\psi \in \mathcal{C}_{c}^{\infty}(\mathbb{H}^{n})$	\begin{align*}
		|\langle \phi, \psi \rangle | 
		&= \big|\big\langle(\Delta^{\sigma} - \lambda_{0}^{\sigma})^{1/2} \phi, (\Delta^{\sigma} - \lambda_{0}^{\sigma})^{-1/2}\psi\big\rangle\big|\\ 
		&\leq \big \|(\Delta^{\sigma} - \lambda_{0}^{\sigma})^{1/2} \phi\big\|_{2} \big\|(\Delta^{\sigma} - \lambda_{0}^{\sigma})^{-1/2} \psi\big\|_{2} \\
		&= \big \|(\Delta^{\sigma} - \lambda_{0}^{\sigma})^{1/2} \phi \big\|_{2} \big|\big\langle(\Delta^{\sigma} - \lambda_{0}^{\sigma})^{-1}\psi, \psi\big\rangle\big|^{1/2} \\
		&\leq  C^{\frac{1}{2}}  \big \|(\Delta^{\sigma} - \lambda_{0}^{\sigma})^{1/2} \phi\big\|_{2} \|\psi\|_{q'},
	\end{align*}	
	and hence, by duality,
	\[
	\|\phi\|_{q} \leq C^{\frac{1}{2}} \big \|(\Delta^{\sigma} - \lambda_{0}^{\sigma})^{1/2} \phi\big\|_{2},
	\]
	which is~\eqref{fracPoincare}. In turn, proving~\eqref{ets1} boils down to showing that
	\begin{equation}\label{Lp' to Lp}
		\|\phi \ast k_\sigma\|_{q}\leq C \|\phi\|_{q'}, \qquad q\in \big(2,\tfrac{2n}{n-2\sigma} \big].
	\end{equation}
	In order to prove~\eqref{Lp' to Lp}, it suffices to study the behavior of the local part $k_\sigma^{0}$ and of the part at infinity $k_{\sigma}^{\infty}$ separately.
	 	
	Let us start the local part. By Young's inequality, 
	$$\|\phi\ast k_{\sigma}^{0}\|_q\leq \|\phi\|_{q'}\|k_{\sigma}^{0}\|_{q/2}.$$
	We have, in view of Proposition~\ref{prop: ka est}, the radiality of $k_{\sigma}^{0}$ and~\eqref{dens}
	\begin{align*}
		\int_{\mathbb{H}^{n}} k_{\sigma}^{0}(x)^{q/2}\, \diff \mu(x)&=\int_{B(o,1)}k_{\sigma}^{0}(x)^{q/2}\, \diff \mu(x) \\
		&\lesssim\int_{0}^{1} r^{-(n-2\sigma)q/2}\, r^{n-1}\, \diff r.
	\end{align*}
	The right-hand side is finite if and only if $q <\frac{2n}{n-2\sigma}$. In addition, since $\mu(B(o, s))\asymp s^{n}$ for $0<s<1$, it is easy to check that $k_{\sigma}^{0}\in L^{\frac{n}{n-2\sigma}, \infty}$. By Young's inequality for weak spaces, it follows that 
	$$\|\phi \ast k_{\sigma}^{0}\|_{\frac{2n}{n-2\sigma}}\leq \|k_{\sigma}^{0}\|_{\frac{n}{n-2\sigma}, \infty}\|\phi \|_{\frac{2n}{n+2\sigma}}.$$
	We have shown that for all $1\leq q\leq \frac{2n}{n-2\sigma}$ the convolution operator with kernel $k_{\sigma}^{0}$ maps $L^{q'}$ to $L^{q}$.
	
	We now deal with the part at infinity. To this end, we recall the following criterion based on the Kunze--Stein phenomenon, see e.g. \cite[Lemma 4.1]{AP14}: there exists a constant $C>0$ such that, for every radial measurable function $\kappa$ on $\mathbb{H}^n$ and for every $2 < q<\infty$ and $f \in L^{q^{\prime}}\left(\mathbb{H}^n\right)$,
\[
	\|f * \kappa\|_{L^q} \leq C_q\|f\|_{L^{q^{\prime}}}\left(\int_0^{+\infty}(\sinh r)^{n-1}\,|\kappa(r)|^{q / 2} \varphi_0(r)\, \dd r\right)^{2 / q}.
\]
	Applying this to the kernel $k_{\sigma}^{\infty}$ which satisfies the estimates of Proposition~\ref{prop: ka est} and using~\eqref{eq: ground}, we get that the convolution operator with kernel $k_{\sigma}^{\infty}$ is bounded $L^{q'}\rightarrow L^q$ for $q>2$. This concludes the proof.
	\end{proof}

	Before proving Proposition~\ref{prop: ka est}, we also state the following corollary.
	\begin{corollary}\label{corkaest}
	Suppose $\sigma \in (0,1)$. Then $k^{0}_{\sigma} \in L^{p}$ for all $p\in \big[1,\frac{n}{n-2\sigma}\big)$ while $k^{\infty}_{ \sigma} \in L^{p}$ for all $p\in (2,\infty]$.
	\end{corollary}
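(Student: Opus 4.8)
The plan is to read off both statements directly from the two-sided bounds in Proposition~\ref{prop: ka est} together with the volume growth \eqref{dens}. For the local part $k_\sigma^0$, which is supported in $B(o,1)$, we use $k_\sigma^0(x)\asymp |x|^{-(n-2\sigma)}$ and compute in polar coordinates
\[
\int_{\mathbb{H}^n} k_\sigma^0(x)^p\,\dd\mu(x)\asymp \int_0^1 r^{-(n-2\sigma)p}\,(\sinh r)^{n-1}\,\dd r\asymp \int_0^1 r^{-(n-2\sigma)p+n-1}\,\dd r,
\]
since $(\sinh r)^{n-1}\asymp r^{n-1}$ for $0<r<1$ by \eqref{dens}. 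This integral is finite if and only if $-(n-2\sigma)p+n-1>-1$, i.e.\ $(n-2\sigma)p<n$, i.e.\ $p<\frac{n}{n-2\sigma}$. This gives $k_\sigma^0\in L^p$ for $1\le p<\frac{n}{n-2\sigma}$; for $p=1$ it is in any case obvious since $k_\sigma^0$ is bounded outside any ball around $o$ and locally integrable, as $\frac{n}{n-2\sigma}>1$.

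For the part at infinity $k_\sigma^\infty$, supported in $B(o,1)^c$, we use the upper bound $k_\sigma^\infty(x)\lesssim (1+|x|)^K\e^{-\frac{n-1}{2}|x|}$. Again in polar coordinates, using $(\sinh r)^{n-1}\asymp \e^{(n-1)r}$ for $r\ge 1$,
\[
\int_{\mathbb{H}^n} k_\sigma^\infty(x)^p\,\dd\mu(x)\lesssim \int_1^\infty (1+r)^{Kp}\,\e^{-\frac{n-1}{2}pr}\,\e^{(n-1)r}\,\dd r = \int_1^\infty (1+r)^{Kp}\,\e^{-(n-1)(\frac{p}{2}-1)r}\,\dd r,
\]
which converges precisely when the exponential coefficient is negative, i.e.\ $\frac{p}{2}-1>0$, i.e.\ $p>2$; the polynomial factor $(1+r)^{Kp}$ is harmless in that regime. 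For $p=\infty$ one simply notes $k_\sigma^\infty(x)\lesssim (1+|x|)^K\e^{-\frac{n-1}{2}|x|}\to 0$ as $|x|\to\infty$ and is bounded on $B(o,1)^c$, so $k_\sigma^\infty\in L^\infty$. Hence $k_\sigma^\infty\in L^p$ for all $p\in(2,\infty]$.

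There is no real obstacle here: this is a direct consequence of Proposition~\ref{prop: ka est} and the asymptotics \eqref{dens}, and the only mild care needed is in the endpoint $p=\infty$ and in noting that the integrability thresholds match. The one point worth flagging is that the statement for $k_\sigma^\infty$ is an ``only if at $p=2$'' type boundary only in the sense that the critical exponent $p=2$ is excluded; this is consistent with the lower bound of order $\varphi_0$-type decay in \eqref{P kernel estimates} and \eqref{eq: ground}, which shows $k_\sigma$ does not belong to $L^2$ at infinity, though we do not need the lower bound for the stated corollary.

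\begin{proof}[Proof of Corollary~\ref{corkaest}]
For the local part, $k_\sigma^0$ is supported in $B(o,1)$ and by Proposition~\ref{prop: ka est} satisfies $k_\sigma^0(x)\asymp |x|^{-(n-2\sigma)}$ there. Using polar coordinates and the fact that $(\sinh r)^{n-1}\asymp r^{n-1}$ for $0<r<1$ by~\eqref{dens},
\[
\int_{\mathbb{H}^n} k_\sigma^0(x)^p\,\dd\mu(x)\asymp \int_0^1 r^{-(n-2\sigma)p+n-1}\,\dd r,
\]
which is finite if and only if $-(n-2\sigma)p+n-1>-1$, that is $p<\frac{n}{n-2\sigma}$. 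Hence $k_\sigma^0\in L^p$ for all $p\in\big[1,\frac{n}{n-2\sigma}\big)$, the inclusion of $p=1$ being automatic since $\frac{n}{n-2\sigma}>1$.

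For the part at infinity, $k_\sigma^\infty$ is supported in $B(o,1)^c$ and by Proposition~\ref{prop: ka est} there is $K>0$ with $k_\sigma^\infty(x)\lesssim (1+|x|)^K\e^{-\frac{n-1}{2}|x|}$. If $p\in(2,\infty)$, then using $(\sinh r)^{n-1}\asymp \e^{(n-1)r}$ for $r\ge1$ by~\eqref{dens},
\[
\int_{\mathbb{H}^n} k_\sigma^\infty(x)^p\,\dd\mu(x)\lesssim \int_1^\infty (1+r)^{Kp}\,\e^{-(n-1)\left(\frac{p}{2}-1\right)r}\,\dd r<\infty,
\]
since $\frac{p}{2}-1>0$. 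For $p=\infty$, the same bound gives $k_\sigma^\infty\in L^\infty$, as $(1+|x|)^K\e^{-\frac{n-1}{2}|x|}$ is bounded on $B(o,1)^c$. Therefore $k_\sigma^\infty\in L^p$ for all $p\in(2,\infty]$.
\end{proof}
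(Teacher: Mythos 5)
Your proof is correct, and it is exactly the intended argument: the paper itself leaves Corollary~\ref{corkaest} as an immediate consequence of Proposition~\ref{prop: ka est} without writing out the integrals, and your computations in polar coordinates using the local asymptotics $k_\sigma^0(x)\asymp |x|^{-(n-2\sigma)}$ and the exponential upper bound on $k_\sigma^\infty$, together with the density asymptotics~\eqref{dens}, are precisely what that deduction requires. Your closing remark about $p=2$ being sharp at infinity is also consistent with the remark in the paper following the proof of Proposition~\ref{prop: ka est}, which records the lower bound $k_\sigma^\infty(r)\gtrsim \e^{-\frac{n-1}{2}r}$.
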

	
It remains to prove Proposition~\ref{prop: ka est}.

	\begin{proof}[Proof of Proposition~\ref{prop: ka est}]
		We first deal with the local part of the kernel. Write, for $|x|< 1$,
		\begin{align*}
			k_{\sigma}^{0}(x) &= \int_{0}^{\infty} \e^{\lambda_0^\sigma t}\, P_{t}^{\sigma}(x)\, \dd t \\
			&\asymp \int_{0}^{1}  P_{t}^{\sigma}(x)\, \dd t+\int_{1}^{\infty} \e^{\lambda_0^\sigma t}\, P_{t}^{\sigma}(x)\, \dd t \\
			&\asymp \int_{0}^{1}\frac{t}{(t^{\frac{1}{2\sigma}}+|x|)^{n+2\sigma}}\, \dd t + \int_{1}^{\infty} \varphi_{0}(x) \, t^{\frac{1}{2-2\sigma}} (t+|x|)^{-\frac{3}{2}-\frac{1}{2-2\sigma}} \, \dd t,
		\end{align*}
		having used the estimates~\eqref{P kernel estimates}. Observe that
		\begin{align*}
			\int_{0}^{1}\frac{t}{(t^{\frac{1}{2\sigma}}+|x|)^{n+2\sigma}}\, \dd t& = \int_{0}^{|x|^{2\sigma}}\frac{t}{(t^{\frac{1}{2\sigma}}+|x|)^{n+2\sigma}}\, \dd t + \int_{|x|^{2\sigma}}^{1}\frac{t}{(t^{\frac{1}{2\sigma}}+|x|)^{n+2\sigma}}\, \dd t\\
			&\asymp |x|^{-n-2\sigma}\int_{0}^{|x|^{2\sigma}}t\, \dd t  + \int_{|x|^{2\sigma}}^{1} t^{-\frac{n}{2\sigma}}\, \dd t\\
			& \asymp |x|^{-(n-2\sigma)}, \qquad |x|<1, 
		\end{align*}
		while the remaining integral is much smaller, that is
		$$\int_{1}^{\infty} \varphi_{0}(x) \, t^{\frac{1}{2-2\sigma}} (t+|x|)^{-\frac{3}{2}-\frac{1}{2-2\sigma}} \, \dd t \lesssim \int_{1}^{\infty} t^{-\frac{3}{2}}\, \dd t\lesssim 1.$$
		This establishes the desired upper and lower bounds for $k_{\sigma}^{0}$.
		
		We now pass to the part of the kernel at infinity.
		For $|x|\geq 1$ write
		\begin{align*}
			k_{\sigma}^{\infty}(x) &= \int_{0}^{|x|^{\sigma}} \e^{\lambda_0^\sigma t}\, P_{t}^{\sigma}(x)\, \dd t+\int_{|x|^{\sigma}}^{|x|^{2}} \e^{\lambda_0^\sigma t}\, P_{t}^{\sigma}(x)\, \dd t+\int_{|x|^{2}}^{\infty} \e^{\lambda_0^\sigma t}\, P_{t}^{\sigma}(x) \, \dd t\\			&=J_1+J_2+J_3.
		\end{align*}
		Let us start with $J_3$. In this case, $1\leq |x|\leq \sqrt{t}$, so by~\eqref{P kernel estimates}, 
		\begin{align*}
			J_3&\asymp \varphi_{0}(x) \,\int_{|x|^{2}}^{\infty} t^{\frac{1}{2-2\sigma}} \, (t+|x|)^{-\frac{3}{2}-\frac{1}{2-2\sigma}}\,\dd t \\
			&\asymp
			\varphi_{0}(x) \,\int_{|x|^{2}}^{\infty}  t^{-\frac{3}{2}}\,\dd t \\
			& \asymp |x|^{-1} \,\varphi_{0}(x).
		\end{align*}

		Next, we treat $J_1$: in this case $|x|\geq t^{1/\sigma}$. Again by~\eqref{P kernel estimates}, we have
		\begin{align*}
			J_1&\asymp \varphi_{0}(x) \int_{0}^{|x|^{\sigma}} t\,(t+|x|)^{-2-\sigma}\,\e^{-\rho |x|} \, \e^{\lambda_0^\sigma t}\,\dd t \\
			&\asymp |x|^{-2-\sigma} \,\varphi_{0}(x) \left( \int_{0}^{1} t\,\e^{-\rho|x|} \, \e^{\lambda_0^\sigma t} \,\dd t+\int_{1}^{|x|^{\sigma}} t\,\e^{-\rho |x|} \, \e^{\lambda_0^\sigma t} \,\dd t \right).
		\end{align*}
		The first integral clearly contributes an additional exponential decay in space, while the second integral is dominated by 
		$$\int_{1}^{|x|^{\sigma}} t\,\e^{-\rho |x|} \, \e^{\lambda_0^\sigma|x|^{\sigma}} \,\dd t  =\int_{1}^{|x|^{\sigma}} t\,\e^{-\rho |x|(1-\rho^{2\sigma-1}|x|^{\sigma-1})} \, \dd t =\textrm{O}(\e^{-c_{\rho}|x|}),
		$$ 
		for some constant $c_{\rho}>0$, for all $|x|$ large enough. Therefore, we conclude that 
		$$J_1=\textrm{O}({|x|^{-N}\varphi_{0}(x)})\quad \forall N>0.$$ 
		Finally, we turn to $J_2$ for which estimates~\eqref{P kernel estimates} are of no use; we shall use the inversion formula instead, according to which
	\[
	P_t^{\sigma}(x)=\text{const.}\int_{0}^{\infty} \e^{-t(\xi^2+\lambda_0)^{\sigma}}\varphi_{\xi}(x)\, \frac{\dd \xi}{|\mathbf{c}(\xi)|^{2}}.
	\]
Therefore, since $|\varphi_{\xi}| \leq \varphi_{0}$ for all $\xi\in \mathbb{R}$,
	\[
	P_t^{\sigma}(x)=|P_t^{\sigma}(x)|\lesssim \varphi_0(x) \, \int_{0}^{\infty} \e^{-t(\xi^2+\lambda_0)^{\sigma}}\, \frac{\dd \xi}{|\mathbf{c}(\xi)|^{2}},
	\]
	the last integral being finite by \eqref{eq: Plancherel}, since $t\geq 1$. Thus
	\begin{align*}
	J_2(x)&\lesssim \varphi_{0}(x)\int_{0}^{\infty} \int_{|x|^{\sigma}}^{|x|^2} \e^{-t [( \xi^2+\lambda_0)^{\sigma}-\lambda_0^\sigma]}\, \dd t \, \frac{\dd \xi}{|\mathbf{c}(\xi)|^{2}}\\
		&=\varphi_{0}(x)\int_{0}^{\infty}\frac{1}{(\xi^2+\lambda_0)^{\sigma}-\lambda_0^\sigma}\int_{|x|^{\sigma}[(\xi^2+\lambda_0)^{\sigma}-\lambda_0^\sigma]}^{|x|^2 [(\xi^2+\lambda_0)^{\sigma}-\lambda_0^\sigma]} \e^{-u} \,\dd u \, \frac{\dd \xi}{|\mathbf{c}(\xi)|^{2}}\\
		&=\varphi_{0}(x)\bigg(\int_{0}^{1}\dots \,  \, \frac{\dd \xi}{|\mathbf{c}(\xi)|^{2}} + \int_{1}^{\infty} \dots \,  \, \frac{\dd \xi}{|\mathbf{c}(\xi)|^{2}}\bigg) =:\varphi_{0}(x) (J_2^{1}(x)+J_2^{2}(x)).
	\end{align*}
We now use~\eqref{multiplier} and~\eqref{eq: Plancherel}, which for $J_2^{1}$ give
\[
J_{2}^{1}(x)\lesssim \int_{0}^{1}  \xi^2\, \xi^{-2}\int_{0}^{\infty} \e^{-u}\dd u  \, \dd \xi \lesssim 1,
\]
while for $J_2^{2}$ (recall that $|x|\geq 1$)
\begin{align*}
J_2^{2}(x)&\lesssim \int_{1}^{\infty}\xi^{n-1}\, \xi^{-2\sigma} \, \e^{-c\, |x|^{\sigma}\xi^{2\sigma}}\, |x|^{2}\, \xi^{2\sigma} \, \dd \xi \\
&\lesssim \e^{-\frac{c}{2}|x|^{\sigma}}|x|^{2}\int_{1}^{\infty} \xi^{n-1}\,  \e^{-\frac{c}{2}\, \xi^{2\sigma}}\, \dd \xi \\
&\lesssim 1.
\end{align*}
Altogether, we have proved that 
\[
J_2(x)\lesssim\varphi_0(x).
\]
Combining the upper bounds above with the two-sided estimates of $\varphi_{0}$  in~\eqref{eq: ground} completes the proof of the proposition.
	\end{proof}

	\begin{remark} The computations above show that $$k_{\sigma}^{\infty}(r)\geq J_3\gtrsim r^{-1}\, \varphi_{0}(r)\gtrsim \e^{-\frac{n-1}{2}r}, \quad r>1,$$		
		owing to~\eqref{eq: ground}, which implies that $k_{\sigma}^{\infty}$ cannot belong to any $L^p$ space for $1\leq p\leq 2$.
	\end{remark}

		\section{A new scale of fractional Sobolev spaces}\label{Sec:7}

		Given $0\leq \lambda \leq \lambda_{0}$ and $\sigma \in (0,1)$, define the quantity
		\[
		\|\phi \|_{\lambda,\sigma}= \| (\Delta^{\sigma} - \lambda^{\sigma})^{1/2}\phi\|_{2}, \qquad \phi \in \mathcal{C}_{c}^{\infty}
		\]
If $\lambda <\lambda_{0}$, then $(\Delta^{\sigma} - \lambda^{\sigma})^{1/2}\Delta^{-\sigma/2}$ is a bounded operator on $L^{2}$ with bounded inverse, whence $\|\cdot \|_{\lambda,\sigma}$ is equivalent to the Sobolev $\mathrm{H}^{\sigma}$ norm. If $\lambda = \lambda_{0}$ this is not true anymore;  however, by Theorem~\ref{thm:fracPoincare}, $\|\cdot \|_{\lambda, \sigma}$ is a norm on $\mathcal{C}_{c}^{\infty}$. Then, we are brought to the following definition.

\begin{definition}
Suppose $0\leq \lambda \leq \lambda_{0}$ and $\sigma \in (0,1)$. We denote by  $\mathcal{H}_{\lambda,\sigma}$ the completion of $\mathcal{C}_{c}^{\infty}$ with respect to the norm $\|\cdot \|_{\lambda,\sigma}$. 
\end{definition}	
		Then $\mathcal{H}_{\lambda,\sigma}$ is a Hilbert space with inner product
		\[
		\langle \phi, \psi \rangle_{\lambda,\sigma} := \int (\Delta^{\sigma}- \lambda^{\sigma})^{1/2} \phi \cdot (\Delta^{\sigma}- \lambda^{\sigma})^{1/2} \psi \, d\mu 
		\]
		if $\phi, \psi \in \mathcal{C}_{c}^{\infty}$;  while for $u,v\in \mathcal{H}_{\lambda,\sigma}$, given $(\phi_{j}),(\psi_{j}) \in \mathcal{C}_{c}^{\infty}$ such that $\phi_{j} \to u$ and $\psi_{j} \to v$ in $\mathcal{H}_{\lambda,\sigma}$,
		\[
		\langle u, v \rangle_{\lambda,\sigma}  := \lim_{j} \langle \phi_{j}, \psi_{j} \rangle_{\lambda,\sigma}.
		\]
	The above observations might be summarized as follows:
	\begin{proposition}\label{HsigmaHlambdasigma}
	Suppose $\sigma \in (0,1)$. If $0\leq \lambda<\lambda_{0}$ then $\mathrm{H}^{\sigma} = \mathcal{H}_{\lambda,\sigma}$ with equivalent norms. Moreover, $\mathrm{H}^{\sigma} \subseteq \mathcal{H}_{\lambda_{0},\sigma}$.  In particular, for all $0\leq \lambda \leq \lambda_{0}$ and $f\in \mathrm{H}^{\sigma} $, 
\begin{equation}\label{lambdasigmanormonHsigma}
	\| f\|_{\lambda,\sigma}^{2} = \|\Delta^{\sigma} f\|_{2}^{2} - \lambda\|f\|_{2}^{2}.
\end{equation}
	\end{proposition}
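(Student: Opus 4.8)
The plan is to reduce the whole statement to a comparison, on the Helgason--Fourier side, of the multiplier defining $\|\cdot\|_{\lambda,\sigma}$ with the one defining the Sobolev norm $\|\cdot\|_{\mathrm{H}^{\sigma}}$. By the Plancherel identity recorded just before the subsection on the fractional heat kernel (with $\alpha=1$), for $\phi\in\mathcal{C}_c^{\infty}$ one has
\[
\|\phi\|_{\lambda,\sigma}^{2}=\int_{K}\int_{\mathbb{R}}\bigl((\xi^{2}+\lambda_{0})^{\sigma}-\lambda^{\sigma}\bigr)\,|\widehat{\phi}(\xi,k\mathbb{M})|^{2}\,\frac{\dd\xi}{|\mathbf{c}(\xi)|^{2}}\,\dd k ,
\]
while, by the spectral gap of $\Delta$ and the equivalence recalled in Section~\ref{Sec:2}, $\|\phi\|_{\mathrm{H}^{\sigma}}^{2}\asymp\int_{K}\int_{\mathbb{R}}(\xi^{2}+\lambda_{0})^{\sigma}\,|\widehat{\phi}(\xi,k\mathbb{M})|^{2}\,\frac{\dd\xi}{|\mathbf{c}(\xi)|^{2}}\,\dd k$. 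Now~\eqref{multiplier} does all the work: for $0\le\lambda<\lambda_{0}$ the two weights $(\xi^{2}+\lambda_{0})^{\sigma}-\lambda^{\sigma}$ and $(\xi^{2}+\lambda_{0})^{\sigma}$ are comparable uniformly in $\xi\in\mathbb{R}$ (both $\asymp 1$ for $|\xi|\le1$ and both $\asymp\xi^{2\sigma}$ for $|\xi|\ge1$), whereas for $\lambda=\lambda_{0}$ one only has $(\xi^{2}+\lambda_{0})^{\sigma}-\lambda_{0}^{\sigma}\lesssim(\xi^{2}+\lambda_{0})^{\sigma}$, the first weight degenerating like $\xi^{2}$ near the origin.

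Hence $\|\cdot\|_{\lambda,\sigma}\asymp\|\cdot\|_{\mathrm{H}^{\sigma}}$ on $\mathcal{C}_c^{\infty}$ when $\lambda<\lambda_{0}$. Since $\mathrm{H}^{\sigma}$ is complete and $\mathcal{C}_c^{\infty}$ is dense in it, $\mathrm{H}^{\sigma}$ is a realization of the completion of $(\mathcal{C}_c^{\infty},\|\cdot\|_{\mathrm{H}^{\sigma}})$; and completions with respect to equivalent norms coincide, the identity map on $\mathcal{C}_c^{\infty}$ extending to a topological isomorphism. This yields $\mathcal{H}_{\lambda,\sigma}=\mathrm{H}^{\sigma}$ with equivalent norms for $0\le\lambda<\lambda_{0}$.

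For $\lambda=\lambda_{0}$ the one-sided bound $\|\cdot\|_{\lambda_{0},\sigma}\lesssim\|\cdot\|_{\mathrm{H}^{\sigma}}$ on $\mathcal{C}_c^{\infty}$ gives only a bounded linear map $\iota\colon\mathrm{H}^{\sigma}\to\mathcal{H}_{\lambda_{0},\sigma}$ extending the identity, and the genuine point is to verify that $\iota$ is injective, so that $\mathrm{H}^{\sigma}$ may literally be regarded as a subspace. This is exactly where Theorem~\ref{thm:fracPoincare} is indispensable: if $f\in\mathrm{H}^{\sigma}$ and $\iota(f)=0$, pick $\phi_{j}\in\mathcal{C}_c^{\infty}$ with $\phi_{j}\to f$ in $\mathrm{H}^{\sigma}$ (hence in $L^{2}$ and $\|\phi_{j}\|_{\lambda_{0},\sigma}\to0$); by~\eqref{fracPoincare} this forces $\phi_{j}\to0$ in $L^{q}$ for some $q\in(2,\frac{2n}{n-2\sigma}]$. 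Passing first to a subsequence converging a.e.\ to $f$ (from $L^{2}$ convergence) and then to a further subsequence converging a.e.\ to $0$ (from $L^{q}$ convergence) gives $f=0$. Thus $\mathrm{H}^{\sigma}\subseteq\mathcal{H}_{\lambda_{0},\sigma}$.

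Finally, the identity~\eqref{lambdasigmanormonHsigma} is pure spectral calculus: for $f\in\mathrm{H}^{\sigma}$, which lies in the form domain of $\Delta^{\sigma}$, self-adjointness of $\Delta^{\sigma/2}$ together with $\Delta^{\sigma}=\Delta^{\sigma/2}\Delta^{\sigma/2}$ gives $\|f\|_{\lambda,\sigma}^{2}=\langle(\Delta^{\sigma}-\lambda^{\sigma})f,f\rangle=\|\Delta^{\sigma/2}f\|_{2}^{2}-\lambda^{\sigma}\|f\|_{2}^{2}$; equivalently one takes $\phi_{j}\in\mathcal{C}_c^{\infty}$ with $\phi_{j}\to f$ in $\mathrm{H}^{\sigma}$ and passes to the limit in~\eqref{fracPoincarebis} (with $\lambda_{0}$ replaced by $\lambda$), which is legitimate by the previous two paragraphs. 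I expect the only real obstacle to be the injectivity of $\iota$ in the critical case $\lambda=\lambda_{0}$ — i.e.\ making sure the abstract completion does not collapse elements of $\mathrm{H}^{\sigma}$ — and the fractional Poincar\'e inequality of Theorem~\ref{thm:fracPoincare} is precisely what removes it; all the rest is bookkeeping with norm equivalences and Plancherel.
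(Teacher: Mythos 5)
Your proof is correct and follows essentially the same route as the paper's: the multiplier comparison (your Plancherel computation is just the Fourier-side restatement of the paper's remark that $(\Delta^{\sigma}-\lambda^{\sigma})^{1/2}\Delta^{-\sigma/2}$ is bounded with bounded inverse when $\lambda<\lambda_0$), and the density argument for the norm identity coincides with the paper's. You go a bit further than the paper in spelling out the injectivity of the natural map $\iota\colon\mathrm{H}^{\sigma}\to\mathcal{H}_{\lambda_0,\sigma}$, which the paper leaves implicit (it rests on the embedding $\mathcal{H}_{\lambda_0,\sigma}\hookrightarrow L^{q}$ of Theorem~\ref{thm:fracPoincare2}, exactly as you argue); this is a worthwhile detail. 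One small remark: your derivation yields $\|f\|_{\lambda,\sigma}^{2}=\|\Delta^{\sigma/2}f\|_{2}^{2}-\lambda^{\sigma}\|f\|_{2}^{2}$, which agrees with what the paper's proof actually establishes — the displayed equation~\eqref{lambdasigmanormonHsigma} in the statement, which has $\Delta^{\sigma}$ and $\lambda$ instead of $\Delta^{\sigma/2}$ and $\lambda^{\sigma}$, is evidently a misprint.
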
	
	\begin{proof}
	Only the last assertion is to be proved. But this follows by noticing that~\eqref{lambdasigmanormonHsigma} is satisfied by functions in $\mathcal{C}_{c}^{\infty}$, and then arguing by density:  if $f\in \mathrm{H}^{\sigma}$ and $\phi_j \rightarrow f$ in $\mathrm{H}^{\sigma}$, where $(\phi_j)_{j}\subseteq \mathcal{C}_c^{\infty}$, we have $\phi_j \rightarrow f$ in $\mathcal{H}_{\lambda,\sigma}$ and
	\begin{align*}
\|f\|^2_{\lambda, \sigma}&=\|(\Delta^{\sigma}-\lambda^{\sigma})^{\frac{1}{2}}f\|_2^2\\
&=\lim_j\|(\Delta^{\sigma}-\lambda^{\sigma})^{\frac{1}{2}}\phi_j\|_2^2\\
&=\lim_j\langle (\Delta^{\sigma}-\lambda^{\sigma})\phi_j, \phi_j \rangle\\
&=\lim_j
\|\Delta^{\sigma/2}\phi_j\|_2^2-\lambda^{\sigma}\|\phi_j\|_2^2=\|\Delta^{\sigma/2}f\|_2^2-\lambda^{\sigma}\|f\|_2^2,
	\end{align*}
which proves~\eqref{lambdasigmanormonHsigma}.
	\end{proof}
	
Let us also observe that Theorem~\ref{thm:fracPoincare} (recall also Remark~\ref{rem-Poincare-lambda}) can be rephrased as a Sobolev embedding theorem as follows.
		
	\begin{theorem}\label{thm:fracPoincare2}
		If $0\leq \lambda \leq \lambda_{0}$, $\sigma \in (0,1)$ and $2<q \leq \frac{2n}{n-2\sigma}$, then $\mathcal{H}_{\lambda,\sigma} \hookrightarrow L^{q}$.
		\end{theorem}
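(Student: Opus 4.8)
The statement is essentially a reformulation of Theorem~\ref{thm:fracPoincare} together with Remark~\ref{rem-Poincare-lambda}, so the plan is to first observe that the Poincar\'e inequality gives a uniform bound on $\mathcal{C}_c^{\infty}$, and then to upgrade this to a genuine continuous embedding of the completed space $\mathcal{H}_{\lambda,\sigma}$ into $L^q$. First I would recall that by Theorem~\ref{thm:fracPoincare} (in the form valid for any $0\le\lambda\le\lambda_0$, see Remark~\ref{rem-Poincare-lambda}) there is a constant $C>0$ with $\|\phi\|_q^2 \le C\,\|\phi\|_{\lambda,\sigma}^2$ for all $\phi\in\mathcal{C}_c^{\infty}$; note that the left-hand side here uses $\lambda_0$ but, since $(\xi^2+\lambda_0)^\sigma-\lambda^\sigma \ge (\xi^2+\lambda_0)^\sigma-\lambda_0^\sigma$ pointwise, the corresponding norm only increases when $\lambda_0$ is replaced by a smaller $\lambda$, so the bound with $\|\cdot\|_{\lambda,\sigma}$ on the right is immediate.

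Next I would pass to the completion. Let $u\in\mathcal{H}_{\lambda,\sigma}$ and pick $(\phi_j)\subseteq\mathcal{C}_c^{\infty}$ with $\phi_j\to u$ in $\mathcal{H}_{\lambda,\sigma}$. The uniform bound shows $(\phi_j)$ is Cauchy in $L^q$, hence converges in $L^q$ to some $g$; moreover, after passing to a subsequence, $\phi_j\to g$ $\mu$-a.e. The only thing to check is that this $L^q$-limit is consistent with the abstract element $u$, i.e.\ that $\mathcal{H}_{\lambda,\sigma}$ can be realized as a space of (equivalence classes of) functions with $u=g$. For $\lambda<\lambda_0$ this is clear from Proposition~\ref{HsigmaHlambdasigma} since $\mathcal{H}_{\lambda,\sigma}=\mathrm{H}^\sigma\subseteq L^2$. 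For $\lambda=\lambda_0$ one argues directly: if $\phi_j\to u$ and $\psi_j\to u$ in $\mathcal{H}_{\lambda_0,\sigma}$, then $\phi_j-\psi_j\to0$ there, so $\|\phi_j-\psi_j\|_q\to0$ by the Poincar\'e inequality, hence the two $L^q$-limits agree; thus the map $u\mapsto g$ is well defined, linear, and injective (if $g=0$ then $\|\phi_j\|_q\to0$, but also $\|\phi_j\|_{\lambda_0,\sigma}\to\|u\|_{\lambda_0,\sigma}$, and by lower semicontinuity of the $L^q$-norm under a.e.\ convergence combined with... ) — more simply, injectivity is not even needed for the embedding statement. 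Having identified $\mathcal{H}_{\lambda,\sigma}$ with a subspace of $L^q_{\mathrm{loc}}$ via this limit, we get $\|u\|_q = \lim_j\|\phi_j\|_q \le C^{1/2}\lim_j\|\phi_j\|_{\lambda,\sigma} = C^{1/2}\|u\|_{\lambda,\sigma}$, which is exactly the asserted continuous embedding.

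The only genuinely delicate point is the \emph{realization} of the abstract Hilbert-space completion $\mathcal{H}_{\lambda_0,\sigma}$ as a space of functions, since a priori a completion is just an abstract metric object; but this is routine, because the Poincar\'e inequality provides a continuous injection of the pre-completion $(\mathcal{C}_c^\infty,\|\cdot\|_{\lambda_0,\sigma})$ into the complete space $L^q$, and a uniformly continuous injection from a metric space into a complete metric space extends uniquely to its completion, the extension again having image in $L^q$ with the same bound. So the concrete argument is: define $\iota:\mathcal{C}_c^\infty\to L^q$ as the identity, note $\|\iota\phi\|_q\le C^{1/2}\|\phi\|_{\lambda,\sigma}$, extend $\iota$ to $\bar\iota:\mathcal{H}_{\lambda,\sigma}\to L^q$ by density/completeness, and record that $\|\bar\iota u\|_q\le C^{1/2}\|u\|_{\lambda,\sigma}$. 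This gives the embedding; I would not belabor whether $\bar\iota$ is injective, as that is not part of the statement (though it does follow from the same estimates together with a.e.\ convergence of a subsequence). I expect no real obstacle here — the content is entirely in Theorem~\ref{thm:fracPoincare}, already proved — so the write-up is short.
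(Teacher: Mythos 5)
Your argument is correct and takes the same route as the paper, which simply presents Theorem~\ref{thm:fracPoincare2} as a restatement of the fractional Poincar\'e inequality (Theorem~\ref{thm:fracPoincare} together with Remark~\ref{rem-Poincare-lambda}); what you add is the standard density/extension-of-a-bounded-map step from $\mathcal{C}_c^\infty$ to the completion $\mathcal{H}_{\lambda,\sigma}$, which the paper leaves implicit. Your parenthetical on injectivity of $\bar\iota$ trails off and is slightly muddled (strictly an embedding should be injective, and this can be verified e.g.\ via distributional convergence of Fourier transforms), but the paper is equally silent on this point, so your write-up matches the paper's level of rigor.
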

	We will see later, see Theorem~\ref{thm: cpt radial emb} below, that in the open range $2<q<\frac{2n}{n-2\sigma}$ such embedding is also compact. For the time being we describe its dual for later purposes, and show some equivalent norms on $\mathcal{H}_{\lambda,\sigma}$ which will be instrumental to prove that if $f\in \mathcal{H}_{\lambda,\sigma}$, then $|f|\in \mathcal{H}_{\lambda,\sigma}$ too (see Corollary~\ref{cor:abs} below).
	
\subsection{The dual space}
Suppose  $0\leq \lambda\leq \lambda_{0}$ and $\sigma \in (0,1)$. We denote by  $\mathcal{H}_{\lambda, \sigma}^{-1}$ the continuous dual of  $\mathcal{H}_{\lambda, \sigma}$, which we do not identify with  $\mathcal{H}_{\lambda, \sigma}$ itself, and we endow $\mathcal{H}_{\lambda, \sigma}^{-1}$ with the dual norm: for $v^{*} \in \mathcal{H}_{\lambda, \sigma}$,
	\[
	\|v^{*}\|_{\mathcal{H}_{\lambda, \sigma}^{-1}} = \sup \big\{ v^{*}(v)\colon v\in \mathcal{H}_{\lambda, \sigma}, \; \|v\|_{\lambda, \sigma}\leq 1 \big\}.
	\]
	As customary, we identify instead $\mathcal{H}_{\lambda, \sigma}$ with a subset of its dual, so that  $\mathcal{H}_{\lambda, \sigma} \subseteq \mathcal{H}_{\lambda, \sigma}^{-1}$. Moreover, since $\mathcal{H}_{\lambda, \sigma} \subseteq L^{q}$ for all $2<q\leq \frac{2n}{n-2\sigma}$ by Theorem~\ref{thm:fracPoincare2}, then $L^{p} \subseteq \mathcal{H}_{\lambda, \sigma}^{-1}$ for all $  \big(\frac{2n}{n-2\sigma}\big)' = \frac{2n}{n+2\sigma} \leq p<2$, where we identify $w\in L^{p}$ with the linear functional $w^{*}$ given by
	\begin{equation}\label{qduality}
		w^{*}(u) = \int_{\mathbb{H}^n} w u\, \dd \mu, \qquad u\in  \mathcal{H}_{\lambda, \sigma}.
	\end{equation}
	The following lemma is standard Hilbert space theory; see also the Lax--Milgram theorem~\cite[\S 6.2.1]{Evans}. We omit the proof.
	\begin{lemma}\label{lemmastar}
		Suppose $0\leq \lambda \leq \lambda_{0}$ and $\sigma \in (0,1)$. Then for all $v^{*}\in \mathcal{H}_{\lambda, \sigma}^{-1}$ there is a unique $v\in \mathcal{H}_{\lambda, \sigma}$ such that $v^{*}(u) = \langle u,v\rangle_{\lambda, \sigma}$  for all $u\in \mathcal{H}_{\lambda, \sigma}$, and the map $T\colon \mathcal{H}_{\lambda, \sigma}^{-1} \to \mathcal{H}_{\lambda, \sigma}$ given by $Tv^{*} = v$ is an isometry.
	\end{lemma}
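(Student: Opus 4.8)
The plan is to read Lemma~\ref{lemmastar} as nothing other than the Riesz representation theorem for the (real) Hilbert space $\mathcal{H}_{\lambda,\sigma}$, whose completeness and inner-product structure have been set up above; the statement is then purely soft, the only care needed being to keep track of the identification conventions for $\mathcal{H}_{\lambda,\sigma}^{-1}$. Alternatively one may simply quote the Lax--Milgram theorem as in~\cite[\S 6.2.1]{Evans}; below I outline the self-contained argument.

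For existence, fix $v^{*}\in\mathcal{H}_{\lambda,\sigma}^{-1}$. If $v^{*}=0$ take $v=0$; otherwise $N=\ker v^{*}$ is a closed proper subspace, so by the projection theorem $N^{\perp}\neq\{0\}$, and I would pick $w\in N^{\perp}$ with $\|w\|_{\lambda,\sigma}=1$ and set $v:=v^{*}(w)\,w$. Given $u\in\mathcal{H}_{\lambda,\sigma}$, the vector $u-\tfrac{v^{*}(u)}{v^{*}(w)}\,w$ lies in $N$ (note $v^{*}(w)\neq0$ since $w\notin N$), hence is orthogonal to $w$; this gives $\langle u,w\rangle_{\lambda,\sigma}=v^{*}(u)/v^{*}(w)$ and therefore $\langle u,v\rangle_{\lambda,\sigma}=v^{*}(w)\,\langle u,w\rangle_{\lambda,\sigma}=v^{*}(u)$, as required. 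Uniqueness is immediate: if $v_{1},v_{2}$ both represent $v^{*}$, then taking $u=v_{1}-v_{2}$ yields $\|v_{1}-v_{2}\|_{\lambda,\sigma}^{2}=0$. Linearity of $T$ follows from uniqueness and the $\mathbb{R}$-bilinearity of $\langle\cdot,\cdot\rangle_{\lambda,\sigma}$, since $a\,Tv_{1}^{*}+b\,Tv_{2}^{*}$ represents $a v_{1}^{*}+b v_{2}^{*}$ and hence equals $T(a v_{1}^{*}+b v_{2}^{*})$.

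Finally, for the isometry, Cauchy--Schwarz gives $|v^{*}(u)|=|\langle u,v\rangle_{\lambda,\sigma}|\leq\|u\|_{\lambda,\sigma}\|v\|_{\lambda,\sigma}$, so $\|v^{*}\|_{\mathcal{H}_{\lambda,\sigma}^{-1}}\leq\|v\|_{\lambda,\sigma}$, while testing against $u=v/\|v\|_{\lambda,\sigma}$ (when $v\neq0$) gives the reverse inequality; the case $v=0$ is trivial. I do not expect any real obstacle: the proof uses only completeness of $\mathcal{H}_{\lambda,\sigma}$ and the orthogonal projection theorem, and the sole subtlety is notational --- the paper identifies $\mathcal{H}_{\lambda,\sigma}$ with a subspace of $\mathcal{H}_{\lambda,\sigma}^{-1}$ but not with $\mathcal{H}_{\lambda,\sigma}^{-1}$ itself, so $T$ should be understood as the concrete realization of the Riesz isomorphism compatible with that choice.
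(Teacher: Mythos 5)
Your proof is correct and is exactly the argument the paper has in mind: the lemma is stated as ``standard Hilbert space theory'' with the proof omitted (citing Lax--Milgram), and what you have written is the canonical Riesz representation argument for the real Hilbert space $\mathcal{H}_{\lambda,\sigma}$, using completeness via the orthogonal projection theorem and the convention that $\mathcal{H}_{\lambda,\sigma}$ is identified with a subspace of, but not all of, $\mathcal{H}_{\lambda,\sigma}^{-1}$.
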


\subsection{Some equivalent norms}	
	
We also show the following identity.
	\begin{lemma}\label{lemma: Tom form1}
Suppose $0<\sigma<1$ and $0\leq \lambda\leq\lambda_{0}$. Then for all $f\in \mathrm{H}^{\sigma}$
	\begin{align*}
	\|f\|^2_{\lambda, \sigma}=	\frac{1}{2|\Gamma(-\sigma)|}\int_{\mathbb{H}^n}\int_{\mathbb{H}^n}\int_{0}^{\infty} &\left[ |f(x)-f(y)|^2 \, +\, (\e^{-t\lambda}-1) (|f(x)|^2 \, +\, |f(y)|^2)\right] \\
		&\qquad \qquad \times h_t(y^{-1}x)\,\frac{\dd t}{t^{1+\sigma}}\, \dd\mu(y)\, \dd \mu(x).
	\end{align*}
\end{lemma}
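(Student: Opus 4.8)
The plan is to derive the identity directly for $f\in\mathrm{H}^{\sigma}$ (so that no density argument is needed), by combining the spectral subordination formula for $\Delta^{\sigma}$ with the elementary fact that $\int_{0}^{\infty}(1-\e^{-ts})\,t^{-1-\sigma}\,\dd t=|\Gamma(-\sigma)|\,s^{\sigma}$ for every $s>0$. Since $f\in\mathrm{H}^{\sigma}$, equation~\eqref{lambdasigmanormonHsigma} gives $\|f\|_{\lambda,\sigma}^{2}=\|\Delta^{\sigma/2}f\|_{2}^{2}-\lambda^{\sigma}\|f\|_{2}^{2}=\langle\Delta^{\sigma}f,f\rangle-\lambda^{\sigma}\|f\|_{2}^{2}$, so it suffices to recover these two summands separately from the triple integral on the right-hand side.

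\textbf{The $\langle\Delta^{\sigma}f,f\rangle$ term.} Since the spectrum of $\Delta$ is contained in $[\lambda_{0},\infty)$, the subordination identity $\int_{0}^{\infty}\frac{1-\e^{-ts}}{t^{1+\sigma}}\,\dd t=|\Gamma(-\sigma)|\,s^{\sigma}$ together with the spectral theorem and Tonelli (the relevant spectral integrand is nonnegative) yields
\[
\langle\Delta^{\sigma}f,f\rangle=\frac{1}{|\Gamma(-\sigma)|}\int_{0}^{\infty}\big(\|f\|_{2}^{2}-\langle\e^{-t\Delta}f,f\rangle\big)\,\frac{\dd t}{t^{1+\sigma}}.
\]
The key elementary computation is then the Gagliardo-type identity: since $h_{t}$ is radial, hence symmetric, and $\int_{\mathbb{H}^{n}}h_{t}(y^{-1}x)\,\dd\mu(y)=\int_{\mathbb{H}^{n}}h_{t}(y^{-1}x)\,\dd\mu(x)=1$, expanding $\tfrac12|f(x)-f(y)|^{2}=\tfrac12|f(x)|^{2}+\tfrac12|f(y)|^{2}-\Re\big(f(x)\overline{f(y)}\big)$ and integrating each term against $h_{t}(y^{-1}x)$ gives
\[
\|f\|_{2}^{2}-\langle\e^{-t\Delta}f,f\rangle=\frac12\int_{\mathbb{H}^{n}}\int_{\mathbb{H}^{n}}|f(x)-f(y)|^{2}\,h_{t}(y^{-1}x)\,\dd\mu(y)\,\dd\mu(x).
\]
Substituting this in and applying Tonelli once more produces the Gagliardo term $\frac{1}{2|\Gamma(-\sigma)|}\int_{\mathbb{H}^{n}}\int_{\mathbb{H}^{n}}\int_{0}^{\infty}|f(x)-f(y)|^{2}\,h_{t}(y^{-1}x)\,t^{-1-\sigma}\,\dd t\,\dd\mu(y)\,\dd\mu(x)=\langle\Delta^{\sigma}f,f\rangle$, and in particular shows this quantity is finite.

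\textbf{The $\lambda^{\sigma}\|f\|_{2}^{2}$ term.} The same normalization of $h_{t}$ gives, for every $t>0$, $\|f\|_{2}^{2}=\tfrac12\int_{\mathbb{H}^{n}}\int_{\mathbb{H}^{n}}\big(|f(x)|^{2}+|f(y)|^{2}\big)\,h_{t}(y^{-1}x)\,\dd\mu(y)\,\dd\mu(x)$; multiplying by $(1-\e^{-t\lambda})\geq0$, integrating against $t^{-1-\sigma}\,\dd t$, applying Tonelli and the identity $\int_{0}^{\infty}(1-\e^{-t\lambda})t^{-1-\sigma}\,\dd t=|\Gamma(-\sigma)|\lambda^{\sigma}$, one obtains
\[
\lambda^{\sigma}\|f\|_{2}^{2}=-\frac{1}{2|\Gamma(-\sigma)|}\int_{\mathbb{H}^{n}}\int_{\mathbb{H}^{n}}\int_{0}^{\infty}(\e^{-t\lambda}-1)\big(|f(x)|^{2}+|f(y)|^{2}\big)\,h_{t}(y^{-1}x)\,t^{-1-\sigma}\,\dd t\,\dd\mu(y)\,\dd\mu(x),
\]
which is absolutely convergent. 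Subtracting the two displays and quoting~\eqref{lambdasigmanormonHsigma} yields precisely the asserted formula.

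\textbf{Obstacles.} I do not anticipate a genuine difficulty: the computations are routine and every interchange of integrals is covered by Tonelli's theorem, since each of the two pieces has a sign-definite integrand. The only points deserving care are the bookkeeping of the constant $|\Gamma(-\sigma)|$ and of the factor $\tfrac12$ produced by the symmetrization, and the remark that, although the combined triple integral in the statement is not sign-definite, it converges absolutely, its positive part being controlled by $\|\Delta^{\sigma/2}f\|_{2}^{2}$ and its negative part by $\lambda^{\sigma}\|f\|_{2}^{2}$, both finite for $f\in\mathrm{H}^{\sigma}$. (An alternative route, symmetrizing the singular integral~\eqref{eq:singint} directly, also works but requires dealing with the failure of absolute convergence near the diagonal when $\sigma\geq1/2$, so the semigroup route above is preferable.)
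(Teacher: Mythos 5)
Your proof is correct and takes essentially the same approach as the paper: subordination to the heat semigroup, the Gagliardo-type double-integral identity for $\|\Delta^{\sigma/2}f\|_{2}^{2}$, the analogous representation of $\lambda^{\sigma}\|f\|_{2}^{2}$ using $\int_{\mathbb{H}^{n}} h_{t}(y^{-1}x)\,\dd\mu(y)=1$, symmetrization in $x\leftrightarrow y$, and Tonelli to justify the interchanges since each piece is sign-definite. The only cosmetic difference is that the paper invokes \cite[Lemma 3.4]{BP2022} for the Gagliardo identity rather than re-deriving it, and performs the $x\leftrightarrow y$ symmetrization on the combined expression at the end rather than inside each piece.
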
 

\begin{proof}
	We know from \cite[Lemma 3.4]{BP2022} that for all $0<\sigma<1$ and $f\in \mathrm{H}^{\sigma}$, it holds
\begin{align*}
\|\Delta^{\sigma/2}f\|_2^2&=\frac{1}{2|\Gamma(-\sigma)|}\int_{\mathbb{H}^n}\int_{\mathbb{H}^n}|f(x)-f(y)|^2\, P_0^{\sigma}(d(x,y))\, \dd \mu(y)\, \dd \mu(x)\\
&=\frac{1}{2|\Gamma(-\sigma)|}\int_{\mathbb{H}^n}\int_{\mathbb{H}^n}\int_{0}^{\infty} |f(x)-f(y)|^2\, \, h_t(y^{-1}x)\,\frac{\dd t}{t^{1+\sigma}}\,  \dd \mu(y)\, \dd \mu(x).
\end{align*}
	Moreover,  for all $0\leq \lambda\leq\lambda_0$,
	\begin{align*}
		\lambda^{\sigma}&=\frac{1}{|\Gamma(-\sigma)|}\int_{\mathbb{H}^n}\int_{0}^{\infty} (1-\e^{-t\lambda})\,h_t(y^{-1}x)\frac{\dd t}{t^{1+\sigma}}\, \dd\mu(y) \\
		&=\frac{1}{|\Gamma(-\sigma)|}\int_{\mathbb{H}^n}\int_{0}^{\infty}  (1-\e^{-t\lambda})\, h_t(x^{-1}y)\,\frac{\dd t}{t^{1+\sigma}}\,  \dd \mu(y).
	\end{align*}
	Then $\|\Delta^{\sigma/2}f\|_2^2-\lambda^{\sigma}\|f\|_2^2$ equals
	\begin{align*}
	 \frac{1}{|\Gamma(-\sigma)|}\int_{\mathbb{H}^n}\int_{\mathbb{H}^n}\int_{0}^{\infty} \left[ \frac{1}{2}|f(y)-f(x)|^2 -(1-\e^{-t\lambda}) |f(x)|^2\right]\, h_t(y^{-1}x)\,\frac{\dd t}{t^{1+\sigma}}\,  \dd \mu(y)\, \dd \mu(x)
		\end{align*} 
		but also, by symmetry,
		\begin{align*}	 
\frac{1}{|\Gamma(-\sigma)|}\int_{\mathbb{H}^n}\int_{\mathbb{H}^n}\int_{0}^{\infty} \left[ \frac{1}{2}|f(y)-f(x)|^2 -(1-\e^{-t\lambda}) |f(y)|^2\right]\, h_t(y^{-1}x)\,\frac{\dd t}{t^{1+\sigma}}\,  \dd \mu(y)\, \dd \mu(x).
	\end{align*} 
Summing term by term and using Proposition~\ref{HsigmaHlambdasigma} brings to the claimed identity.
\end{proof}

An important consequence of Lemma~\ref{lemma: Tom form1} for our purposes it the following.	
	\begin{corollary}\label{cor:abs}
		Suppose $0\leq \lambda\leq \lambda_{0}$ and $\sigma \in (0,1)$. If $f\in \mathcal{H}_{\lambda,\sigma}$, then $|f| \in\mathcal{H}_{\lambda,\sigma} $ and  $\| |f|\|_{\lambda,\sigma} \leq \| f\|_{\lambda,\sigma}$.
	\end{corollary}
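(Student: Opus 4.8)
The plan is to deduce Corollary~\ref{cor:abs} from the pointwise formula in Lemma~\ref{lemma: Tom form1} by an approximation argument. The key observation is that the integrand on the right-hand side of Lemma~\ref{lemma: Tom form1} is \emph{monotone} under the replacement $f \mapsto |f|$: indeed $|\,|f|(x) - |f|(y)\,|^{2} \leq |f(x) - f(y)|^{2}$ by the reverse triangle inequality, while the remaining term $(\e^{-t\lambda} - 1)(|f(x)|^{2} + |f(y)|^{2})$ is left unchanged by passing to absolute values. Since $h_t \geq 0$ and $\dd t/t^{1+\sigma} > 0$, integrating these pointwise inequalities over $\mathbb{H}^n \times \mathbb{H}^n \times (0,\infty)$ gives $\||f|\|_{\lambda,\sigma}^{2} \leq \|f\|_{\lambda,\sigma}^{2}$ directly — but only for $f$ to which Lemma~\ref{lemma: Tom form1} applies, i.e.\ $f \in \mathrm{H}^{\sigma}$.

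So the first step is to record this inequality for $f \in \mathrm{H}^{\sigma}$; in particular it holds for $f \in \mathcal{C}_{c}^{\infty}$. The second step is to pass from $\mathcal{C}_{c}^{\infty}$ to a general $f \in \mathcal{H}_{\lambda,\sigma}$ by density. Take $(\phi_j)_j \subseteq \mathcal{C}_{c}^{\infty}$ with $\phi_j \to f$ in $\mathcal{H}_{\lambda,\sigma}$. By Theorem~\ref{thm:fracPoincare2} we have $\mathcal{H}_{\lambda,\sigma} \hookrightarrow L^{q}$ for every $q \in (2, \frac{2n}{n-2\sigma}]$, so $\phi_j \to f$ in $L^{q}$, and therefore $|\phi_j| \to |f|$ in $L^{q}$ as well (since $\big|\,|\phi_j| - |f|\,\big| \leq |\phi_j - f|$ pointwise). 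On the other hand, the sequence $(|\phi_j|)_j$ is bounded in $\mathcal{H}_{\lambda,\sigma}$ because $\||\phi_j|\|_{\lambda,\sigma} \leq \|\phi_j\|_{\lambda,\sigma}$ by the first step, and $\|\phi_j\|_{\lambda,\sigma} \to \|f\|_{\lambda,\sigma}$. Since $\mathcal{H}_{\lambda,\sigma}$ is a Hilbert space, a norm-bounded sequence has a weakly convergent subsequence $|\phi_{j_k}| \rightharpoonup g$ for some $g \in \mathcal{H}_{\lambda,\sigma}$; weak convergence in $\mathcal{H}_{\lambda,\sigma}$ implies (via the continuous embedding into $L^q$ and testing against $L^{q'}$ functions, or via $\mathcal{H}_{\lambda,\sigma} \subseteq \mathcal{H}_{\lambda,\sigma}^{-1}$ and the duality~\eqref{qduality}) that $|\phi_{j_k}| \rightharpoonup g$ weakly in $L^{q}$, so $g = |f|$ by uniqueness of weak limits. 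Hence $|f| \in \mathcal{H}_{\lambda,\sigma}$, and by weak lower semicontinuity of the norm,
\[
\||f|\|_{\lambda,\sigma} \leq \liminf_{k} \||\phi_{j_k}|\|_{\lambda,\sigma} \leq \liminf_{k}\|\phi_{j_k}\|_{\lambda,\sigma} = \|f\|_{\lambda,\sigma},
\]
which is the claim.

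The main obstacle — or rather the only point requiring a little care — is the identification of the weak limit $g$ with $|f|$: one needs $\mathcal{H}_{\lambda,\sigma}$-weak convergence to transfer to a topology (here $L^q$-weak, or weak-$*$ against $\mathcal{C}_c^\infty$) in which one already knows $|\phi_{j_k}| \to |f|$, and for $\lambda = \lambda_0$ the space $\mathcal{H}_{\lambda_0,\sigma}$ is strictly larger than $\mathrm{H}^{\sigma}$ so one cannot simply invoke $L^2$ theory. The embedding Theorem~\ref{thm:fracPoincare2} is exactly what bridges this gap. Everything else is a routine application of the reverse triangle inequality and standard Hilbert-space weak-compactness; in particular no delicate estimate on the kernel $h_t$ is needed beyond its positivity.
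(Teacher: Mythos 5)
Your proof is correct, and in the limiting step it is arguably tighter than the paper's own. After establishing the contraction for $\phi\in\mathcal{C}_c^{\infty}$, the paper asserts that if $(\phi_j)$ is Cauchy in $\mathcal{H}_{\lambda,\sigma}$ then so is $(|\phi_j|)$. But the map $\phi\mapsto|\phi|$ is nonlinear, and the pointwise inequality one would need in order to run the Tom-form argument for differences, namely
\[
\bigl|\,|\phi_j|(x)-|\phi_k|(x)-|\phi_j|(y)+|\phi_k|(y)\,\bigr| \;\leq\; \bigl|(\phi_j-\phi_k)(x)-(\phi_j-\phi_k)(y)\bigr|,
\]
is false in general (take $\phi_j(x)=2$, $\phi_k(x)=1$, $\phi_j(y)=-1$, $\phi_k(y)=-3$: the left-hand side is $3$ and the right-hand side is $1$). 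Your replacement of Cauchyness by the combination of norm boundedness, Banach--Alaoglu, identification of the weak limit through the continuous embedding into $L^q$, and weak lower semicontinuity of the Hilbert norm sidesteps this issue entirely and is the more robust route.

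There is one small point you should make explicit. To apply Lemma~\ref{lemma: Tom form1} to $|\phi|$ you must first know that $|\phi|\in\mathrm{H}^{\sigma}$. For $\phi\in\mathcal{C}_c^{\infty}$, which is what your argument actually uses, the paper handles this by noting that $|\phi|$ is Lipschitz with compact support, hence lies in $\mathrm{H}^{1}\subseteq\mathrm{H}^{\sigma}$. Equivalently, the double-integral representation of $\|\Delta^{\sigma/2}\cdot\|_2^2$ used in the proof of Lemma~\ref{lemma: Tom form1} together with the reverse triangle inequality gives $\|\Delta^{\sigma/2}|\phi|\|_2\leq\|\Delta^{\sigma/2}\phi\|_2<\infty$, and $|\phi|\in L^2$ trivially, so $|\phi|\in\mathrm{H}^{\sigma}$. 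With this detail supplied, the rest of your argument goes through.
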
 
	\begin{proof}
	We first note that the statement is true if $f=\phi \in \mathcal{C}_{c}^{\infty}$, since its absolute value $|\phi|$ is Lipschitz continuous with compact support, whence an element of $\mathrm{H}^{1} \subseteq \mathrm{H}^{\sigma}$ by~\cite[Theorem 2.5]{Hebey}, and
	\[
	||\phi (x)|- |\phi(y)|| \leq |\phi(x)-\phi(y)| \qquad x,y\in \mathbb{H}^n,
	\]
implies $\| |\phi |\|_{\lambda,\sigma} \leq \| \phi\|_{\lambda,\sigma}$ by Lemma~\ref{lemma: Tom form1}. Then, suppose $f\in \mathcal{H}_{\lambda,\sigma}$  and $\phi_j \rightarrow f$ in $\mathcal{H}_{\lambda,\sigma}$, where $(\phi_j)_{j}\subseteq \mathcal{C}_c^{\infty}$. By the above, $(|\phi_j|)_{j}$ is a Cauchy sequence in $\mathcal{H}_{\lambda,\sigma}$, and by the embedding in $L^{q}$ given by Theorem~\ref{thm:fracPoincare2} it is Cauchy in $L^{q}$ too. Thus $\phi_j \rightarrow f$ in $L^{q}$ and $(|\phi_j|)_{j}$ converges to some $g$ in $L^{q}$, from which $g=|f|$. Then
\[
\| |f|\|_{\lambda, \sigma} = \lim_{j} \| |\phi_{j}|\|_{\lambda, \sigma} \leq  \lim_{j} \| \phi_{j} \|_{\lambda, \sigma} = \| f\|_{\lambda, \sigma} 
\]
and this completes the proof.
	\end{proof}


	\section{Radiality and compact embeddings}\label{Sec:8}

		Suppose $0<\sigma<1$ and $0\leq \lambda\leq \lambda_0$.
	Let us denote by $\mathcal{H}_{\lambda,\sigma}^{\text{rad}}$ the subspace
	$$
	\mathcal{H}_{\lambda,\sigma}^{\text{rad}}=\left\{u \in \mathcal{H}_{\lambda,\sigma}\colon  u \text { is radial}\right\},
	$$
	and observe that $\mathcal{H}_{\lambda,\sigma}^{\text{rad}}$ is complete, whence a Hilbert space itself. Indeed, convergence in $\mathcal{H}_{\lambda,\sigma}$ implies convergence in some $L^{q}$ by Theorem~\ref{thm:fracPoincare2}, whence (up to subsequences) almost everywhere. We consider also the Sobolev subspace $\mathrm{H}_{\text{rad}}^{s}=\left\{u \in \mathrm{H}^{s}:\,  u \text { is radial}\right\}$, $s>0$. In this section, cf.~\cite[Theorem 3.1]{BhaktaSandeep}, we prove that 
	\begin{theorem}\label{thm: cpt radial emb}
		The embedding $\mathcal{H}_{\lambda,\sigma}^{\text{rad}} \hookrightarrow L^q$ is compact for $2<q<\frac{2n}{n-2\sigma}$.
	\end{theorem}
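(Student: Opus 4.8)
The plan is to mimic the classical Rellich--Kondrachov argument, splitting $\mathbb{H}^{n}$ into a large geodesic ball and its complement, with the radial structure entering only in the exterior region; this is the hyperbolic fractional counterpart of \cite[Theorem 3.1]{BhaktaSandeep}. Since $\mathcal{H}_{\lambda,\sigma}^{\mathrm{rad}}$ is a Hilbert space it suffices to show that every bounded sequence $(u_{k})_{k}$ in it has a subsequence converging in $L^{q}$. Passing to a weakly convergent subsequence $u_{k}\rightharpoonup u$ and subtracting the limit (which lies in $\mathcal{H}_{\lambda,\sigma}^{\mathrm{rad}}\hookrightarrow L^{q}$ by Theorem~\ref{thm:fracPoincare2}), we are reduced to proving: if $u_{k}\rightharpoonup 0$ in $\mathcal{H}_{\lambda,\sigma}^{\mathrm{rad}}$ and $\sup_{k}\|u_{k}\|_{\lambda,\sigma}\le 1$, then a subsequence tends to $0$ in $L^{q}$. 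For fixed $R>0$ we write $\|u_{k}\|_{q}^{q}=\int_{B(o,R)}|u_{k}|^{q}\,\dd\mu+\int_{B(o,R)^{c}}|u_{k}|^{q}\,\dd\mu$ and estimate the two terms separately.

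\emph{The local term.} I would first show that the restriction $\mathcal{H}_{\lambda,\sigma}\to L^{q}(B(o,R))$ is compact for every $R>0$ and every $2<q<\frac{2n}{n-2\sigma}$. Fix $\chi\in\mathcal{C}_{c}^{\infty}$ with $\chi\equiv 1$ on $B(o,R)$ and $\operatorname{supp}\chi\subseteq B(o,2R)$. Using the Gagliardo-type representation of $\|\cdot\|_{\lambda,\sigma}$ from Lemma~\ref{lemma: Tom form1}, the short-range behaviour $P_{0}^{\sigma}(d)\asymp d^{-(n+2\sigma)}$ and the integrability of $P_{0}^{\sigma}$ away from the origin in~\eqref{eq: P0 est}, together with the embedding $\mathcal{H}_{\lambda,\sigma}\hookrightarrow L^{\frac{2n}{n-2\sigma}}$ of Theorem~\ref{thm:fracPoincare2} to control $\|u_{k}\|_{L^{2}(B(o,2R))}$, one checks that $(\chi u_{k})_{k}$ is bounded in a fractional Sobolev space of order $\sigma$ supported in a fixed ball, which after passing to normal coordinates is comparable to the Euclidean one. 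The classical fractional Rellich--Kondrachov theorem then provides a subsequence along which $\chi u_{k}\to 0$ in $L^{q}$, hence $u_{k}\to 0$ in $L^{q}(B(o,R))$.

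\emph{The exterior term.} Here the radiality is decisive, through a hyperbolic fractional analogue of the radial lemma of Strauss and Lions~\cite{Lions}: there is $\varepsilon(R)\to 0$ as $R\to\infty$ with $\int_{B(o,R)^{c}}|u|^{q}\,\dd\mu\le\varepsilon(R)$ for every radial $u$ with $\|u\|_{\lambda,\sigma}\le 1$. To prove it I would split $u$ into low and high spherical frequencies via the spherical transform, $u=\mathcal{H}^{-1}(\mathbbm{1}_{\{|\xi|\le M\}}\mathcal{H}u)+\mathcal{H}^{-1}(\mathbbm{1}_{\{|\xi|>M\}}\mathcal{H}u)=:u_{\le M}+u_{>M}$. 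For the low part, $|\varphi_{\xi}|\le\varphi_{0}$, Cauchy--Schwarz, the multiplier behaviour~\eqref{multiplier} and the Plancherel density~\eqref{eq: Plancherel} give a pointwise bound $|u_{\le M}(r)|\lesssim C(M)\,\varphi_{0}(r)$; combining this with~\eqref{eq: ground} and $\delta(r)\asymp\e^{(n-1)r}$ from~\eqref{dens} shows that $\int_{B(o,R)^{c}}|u_{\le M}|^{q}\,\dd\mu\lesssim C(M)^{q}\int_{R}^{\infty}(1+r)^{q}\e^{(n-1)(1-q/2)r}\,\dd r$, which is finite and tends to $0$ as $R\to\infty$ precisely because $q>2$ (this is where the lower endpoint of the admissible range enters). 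For the high part one estimates $\|u_{>M}\|_{q}$ by a quantity vanishing as $M\to\infty$ uniformly over $\|u\|_{\lambda,\sigma}\le 1$, using the Sobolev embedding of Theorem~\ref{thm:fracPoincare2}; it is at this point that the cases $0<\sigma\le 1/2$ and $1/2<\sigma<1$ must be treated differently — the former via the pointwise behaviour of the convolution kernel of $(\Delta^{\sigma}-\lambda^{\sigma})^{-1/2}$ in the spirit of Proposition~\ref{prop: ka est} and the Kunze--Stein phenomenon, the latter by interpolating with the $L^{2}$ Sobolev scale $\mathrm{H}^{s}$.

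\emph{Conclusion.} Given $\eta>0$, first pick $R$ so that the exterior term is $<\eta$ for all $k$, then extract a subsequence along which the local term $\to 0$; a diagonal argument over $\eta=1/j$ yields a single subsequence with $\limsup_{k}\|u_{k}\|_{q}=0$, which gives compactness. The main obstacle is the exterior estimate: obtaining a clean decay rate for radial elements of $\mathcal{H}_{\lambda_{0},\sigma}$ — strictly larger than $\mathrm{H}^{\sigma}$ — and in particular controlling the high spherical frequencies when $\sigma>1/2$, where no direct pointwise kernel bound is available and one is forced to interpolate.
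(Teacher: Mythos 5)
Your overall plan — decompose into a ball and its complement, Rellich on the ball, radial decay outside, diagonal over $R$ — matches the paper's (Propositions~\ref{prop: CT Rellich} and~\ref{prop: Lions}), and your exterior argument is in fact a valid and somewhat cleaner variant. But you have the difficulty location backwards: it is the \emph{local} step, not the exterior one, that has a real gap in your sketch.

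\emph{The local step.} You propose multiplying by a cutoff $\chi$ and invoking the Euclidean fractional Rellich theorem, which presupposes a uniform bound on $\|\chi u_k\|_{\mathrm H^\sigma}$. This is precisely where the pathology $\mathcal H_{\lambda_0,\sigma}\supsetneq\mathrm H^\sigma$ bites: an element of $\mathcal H_{\lambda_0,\sigma}$ need not lie in $L^2$, nor have $\Delta^{\sigma/2}$ of it in $L^2$. The Gagliardo-type form in Lemma~\ref{lemma: Tom form1} is \emph{not} pointwise nonnegative when $\lambda=\lambda_0$ — the term $(\e^{-t\lambda_0}-1)(|f(x)|^2+|f(y)|^2)$ is negative — so you cannot restrict the integration to $x\in\operatorname{supp}\chi$ and dominate $\iint\chi(x)^2|u_k(x)-u_k(y)|^2P_0^\sigma\,\dd\mu\,\dd\mu$ by $\|u_k\|_{\lambda_0,\sigma}^2$ plus a local $L^2$ term. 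The naive identity $\|\Delta^{\sigma/2}u_k\|_2^2=\|u_k\|_{\lambda_0,\sigma}^2+\lambda_0^\sigma\|u_k\|_2^2$ involves the \emph{global} $L^2$ norm, which is uncontrolled. A commutator/paraproduct estimate $\|\chi u_k\|_{\mathrm H^\sigma}\lesssim\|u_k\|_{\lambda_0,\sigma}+\|u_k\|_{L^q(B(o,2R+1))}$ is plausible but is a nontrivial statement that you would actually have to prove; it does not follow from the tools you cite. The paper's Proposition~\ref{prop: CT Rellich} is built to dodge exactly this: it uses the smoothing $\e^{-t(\Delta^\sigma-\lambda^\sigma)}$, for which the $L^2$ estimate~\eqref{eq: CT1} holds purely from the spectral cancellation $1-\e^{-tW(\xi)}\le tW(\xi)$ with $W(\xi)=(\xi^2+\lambda_0)^\sigma-\lambda^\sigma$, valid without $u_k$ belonging to $L^2$ or $\mathrm H^\sigma$.

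\emph{The exterior step.} Your frequency-$M$ split $u=u_{\le M}+u_{>M}$ with a movable cutoff is genuinely different from the paper's fixed split at $|\xi|=1$, and it works. The pointwise bound $|u_{\le M}(r)|\lesssim C(M)\varphi_0(r)\|u\|_{\lambda,\sigma}$ holds with $C(M)\asymp M^{(n-2\sigma)/2}$. For the high-frequency piece, you do \emph{not} actually need the $\sigma\lessgtr 1/2$ dichotomy you borrow from the paper: since $W(\xi)\gtrsim M^{2\sigma}$ on $\{|\xi|>M\}$, one gets $\|u_{>M}\|_2\lesssim M^{-\sigma}\|u\|_{\lambda,\sigma}$, and interpolating this with the endpoint embedding $\|u_{>M}\|_{2n/(n-2\sigma)}\lesssim\|u_{>M}\|_{\lambda,\sigma}\le\|u\|_{\lambda,\sigma}$ yields $\|u_{>M}\|_q\lesssim M^{-\sigma\theta}\|u\|_{\lambda,\sigma}$ for some $\theta>0$ whenever $q<\frac{2n}{n-2\sigma}$, uniformly in $\sigma\in(0,1)$. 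Choosing $M=\e^{\alpha R}$ with $\alpha>0$ small then recovers the exponential decay of Proposition~\ref{prop: Lions}. The paper's dichotomy arises only because its high-frequency tail $u^\infty$ carries no $M$-gain, forcing either a pointwise bound via Ionescu's expansion (which needs $\int_1^\infty\xi^{-2\sigma}\dd\xi<\infty$, i.e.\ $\sigma>1/2$) or the one-dimensional Lions-type route of Lemma~\ref{lemma: L1}--Proposition~\ref{lemma: L3} for $\sigma\le 1/2$. In short: fix the local cutoff step, and your exterior argument stands on its own and is arguably simpler than the paper's.
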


		We begin by stating two auxiliary results which we shall use in the proof of Theorem~\ref{thm: cpt radial emb}. The following result is inspired by~\cite[Theorem 1.5]{CT} and~\cite[Theorem 8.6]{LL}. 
	
	\begin{proposition}\label{prop: CT Rellich}
	Suppose $\sigma \in (0,1)$, $2<q<\frac{2n}{n-2\sigma}$ and $0\leq \lambda \leq \lambda_0$. Let $B$ be a ball in $\mathbb{H}^n$. Assume that a sequence $(f_j)$ in  $\mathcal{H}_{\lambda, \sigma}$ converges weakly to some $f\in \mathcal{H}_{\lambda, \sigma}$. Then 
		\[
		\|f_j-f\|_{L^q(B)}\rightarrow 0, \quad j\rightarrow \infty.
		\]
	\end{proposition}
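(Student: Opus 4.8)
The statement is a local Rellich--Kondrachov-type compactness result for the space $\mathcal{H}_{\lambda,\sigma}$, so the natural approach is to reduce it to the classical fractional Sobolev compactness on Euclidean balls. The first step is to observe that on a fixed ball $B=B(o,R)$, the $\mathcal{H}_{\lambda,\sigma}$-norm controls the local fractional Sobolev seminorm: using Lemma~\ref{lemma: Tom form1} together with the lower bound $P_0^{\sigma}(d(x,y)) \gtrsim d(x,y)^{-(n+2\sigma)}$ for $d(x,y)$ bounded (which follows from~\eqref{eq: P0 est}), and noting that the term $(\e^{-t\lambda}-1)(|f(x)|^2+|f(y)|^2)$ integrated against $h_t$ contributes at most $-\lambda^\sigma\|f\|_2^2 \le 0$, one gets for a slightly larger ball $B'\supseteq B$ that
\[
\int_{B'}\int_{B'} \frac{|f(x)-f(y)|^2}{d(x,y)^{n+2\sigma}}\, \dd\mu(y)\,\dd\mu(x) \;\lesssim\; \|f\|_{\lambda,\sigma}^2 + \|f\|_{L^2(B')}^2.
\]
Here one must also bound $\|f\|_{L^2(B')}$ by $\|f\|_{\lambda,\sigma}$: this is immediate from Theorem~\ref{thm:fracPoincare2}, since $\mathcal{H}_{\lambda,\sigma}\hookrightarrow L^q$ for some $q>2$ and $B'$ has finite measure, so $\|f\|_{L^2(B')}\lesssim \|f\|_{L^q(B')}\lesssim\|f\|_{\lambda,\sigma}$. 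Thus a weakly convergent sequence $(f_j)$ in $\mathcal{H}_{\lambda,\sigma}$ is bounded in the local fractional Sobolev space $H^\sigma(B')$ (in the intrinsic sense of the double integral above plus $L^2(B')$), uniformly in $j$.

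The second step is to transfer to the Euclidean setting via a chart. Fix geodesic normal coordinates on the relatively compact ball $B'$; the Riemannian metric, the measure $\mu$, and the geodesic distance $d(x,y)$ are all comparable to their Euclidean counterparts with constants depending only on $R$. Hence the boundedness of $(f_j)$ in the intrinsic $H^\sigma(B')$-norm translates into boundedness in the standard Gagliardo seminorm $[f_j]_{H^\sigma(\Omega)}$ plus $L^2(\Omega)$-norm on a Euclidean domain $\Omega\Subset\R^n$, with $B$ corresponding to a compactly contained subdomain. The classical fractional Rellich--Kondrachov theorem (see e.g.~\cite[Theorem 7.1]{DNPV} / Di~Nezza--Palatucci--Valdinoci, or the reference~\cite{CT} cited in the statement) then gives that $(f_j)$ is relatively compact in $L^q_{\mathrm{loc}}(\Omega)$ for every $q$ with $2<q<\frac{2n}{n-2\sigma}$; in particular every subsequence has a further subsequence converging strongly in $L^q(B)$. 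Since $(f_j)$ already converges weakly in $\mathcal{H}_{\lambda,\sigma}$, hence (up to the embedding) weakly in $L^q(B)$, to $f$, the strong $L^q(B)$-limit of any such subsequence must coincide with $f$; by the usual subsequence argument the whole sequence converges: $\|f_j-f\|_{L^q(B)}\to 0$.

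\textbf{Main obstacle.} The only genuinely delicate point is the first step — establishing that the $\mathcal{H}_{\lambda,\sigma}$-norm dominates a bona fide \emph{local} fractional Sobolev seminorm. The global double integral defining $\|\Delta^{\sigma/2}f\|_2^2$ via $P_0^\sigma$ includes contributions from pairs $(x,y)$ with $d(x,y)$ large, where $P_0^\sigma$ decays exponentially rather than like $d(x,y)^{-(n+2\sigma)}$; one must be careful that restricting both integration variables to $B'$ only \emph{decreases} the integral (it does, since the integrand is non-negative after the restriction is chosen appropriately), and that the negative shift term does not spoil the inequality — which it cannot, being $\le 0$. A secondary subtlety is that elements of $\mathcal{H}_{\lambda,\sigma}$ need not lie in $L^2(\mathbb{H}^n)$ globally (the space is strictly larger than $\mathrm{H}^\sigma$ when $\lambda=\lambda_0$), so one genuinely needs Theorem~\ref{thm:fracPoincare2} to control $\|f\|_{L^2(B')}$ locally rather than invoking a global $L^2$ bound. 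Once these points are handled, the rest is a routine chart-and-classical-Rellich argument, and the radiality assumption in Theorem~\ref{thm: cpt radial emb} plays no role here — it will only be needed for the companion result controlling the behaviour at infinity.
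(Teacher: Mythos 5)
Your proposal takes a genuinely different route from the paper: you try to descend to a \emph{local Gagliardo seminorm} on a compactly contained ball, pass to a Euclidean chart, and invoke the classical fractional Rellich--Kondrachov theorem. The paper instead follows the Cotsiolis--Tavoularis / Lieb--Loss heat-semigroup scheme: it writes $f_j - f = (f_j - \e^{-t(\Delta^\sigma-\lambda^\sigma)}f_j) + \e^{-t(\Delta^\sigma-\lambda^\sigma)}(f_j - f) + (\e^{-t(\Delta^\sigma-\lambda^\sigma)}f - f)$, controls the two flanking terms by $\sqrt{t}\,\|\cdot\|_{\lambda,\sigma}$ via Plancherel, controls the middle term on the bounded ball by weak $L^q$ convergence, the $L^{q'}$ kernel bound for $P_t^{\lambda,\sigma}$, and dominated convergence, and finally upgrades $L^2(B)$ convergence to $L^q(B)$ by H\"older interpolation. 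The two approaches are not equivalent, and unfortunately your Step~1 contains a genuine gap.

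The problem is precisely in the case $\lambda=\lambda_0$, which is the only case where the proposition is not already classical (for $\lambda<\lambda_0$ one has $\mathcal{H}_{\lambda,\sigma}=\mathrm{H}^\sigma$ with equivalent norms, and your chart-plus-Euclidean-Rellich argument goes through). Lemma~\ref{lemma: Tom form1} is stated and proved for $f\in\mathrm{H}^\sigma$, and the Fubini manipulation that isolates the Gagliardo piece from the triple integral requires both pieces to be separately absolutely convergent; this forces $f\in L^2(\mathbb{H}^n)$ \emph{globally}. When you rearrange, you get
\[
\int_{\mathbb{H}^n}\int_{\mathbb{H}^n} |f(x)-f(y)|^2\,P_0^\sigma(y^{-1}x)\,\dd\mu(y)\,\dd\mu(x)\,=\,2|\Gamma(-\sigma)|\bigl(\|f\|^2_{\lambda,\sigma}+\lambda^\sigma\|f\|_2^2\bigr),
\]
and the term you must absorb is $\lambda^\sigma\|f\|_2^2$ with the \emph{global} $L^2$ norm, not $\|f\|_{L^2(B')}^2$. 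For $f\in\mathcal{H}_{\lambda_0,\sigma}\setminus\mathrm{H}^\sigma$, the global $\|f\|_2$ is infinite, so the right-hand side above is $+\infty$; restricting the left-hand side to $B'\times B'$ then gives no information, and the displayed inequality in your Step~1 is not established. (Concretely: take $\phi_j\in\mathcal{C}_c^\infty$ converging to such an $f$ in $\mathcal{H}_{\lambda_0,\sigma}$; then $\|\phi_j\|_{\lambda_0,\sigma}$ and $\|\phi_j\|_{L^2(B')}$ are bounded, but $\|\phi_j\|_2\to\infty$, and the lemma gives no uniform bound on the local Gagliardo seminorms of $\phi_j$.) Your remark that ``the integrand is non-negative after the restriction is chosen appropriately'' is not correct: the negative term $(\e^{-t\lambda}-1)(|f(x)|^2+|f(y)|^2)$ is pointwise negative for every $(t,x,y)$, and no restriction of the spatial integration region fixes this. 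The issue you flag as a ``secondary subtlety'' — that $\mathcal{H}_{\lambda_0,\sigma}$ contains functions outside $L^2$ — is in fact the central obstruction to your Step~1, and it is exactly what the paper's semigroup argument circumvents (there one never needs $f\in L^2$, only $(\Delta^\sigma-\lambda^\sigma)^{1/2}f\in L^2$, which suffices to control $f-\e^{-t(\Delta^\sigma-\lambda^\sigma)}f$ in $L^2$).
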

	
	\begin{proof}
		
		
		We begin by observing that for $f\in \mathcal{H}_{\lambda,\sigma}$ it holds
			\begin{equation} \label{eq: CT1}
			\|f-\e^{-t(\Delta^{\sigma}-\lambda^{\sigma})}f\|_2\leq \sqrt{t} \, \|(\Delta^{\sigma}-\lambda^{\sigma})^{\frac{1}{2}}f\|_2.
		\end{equation}
		Indeed, we have
		\begin{align*}
			\|f-\e^{-t(\Delta^{\sigma}-\lambda^{\sigma})}f\|_2^{2}&=\int_{K} \int_{\mathbb{R}} |\widehat{f}(\xi, k\mathbb{M})|^2 \, (1-\e^{-t ( (\xi^2+\lambda_0)^{\sigma}-\lambda^{\sigma})}) \, |\textbf{c}(\xi)|^{-2} \, \dd \xi \, \dd k\\
			&\leq \int_{K} \int_{\mathbb{R}}\left((\xi^2+\lambda_0)^{\sigma}-\lambda^{\sigma}\right) \,|\widehat{f}(\xi, k\mathbb{M})|^2 \,  |\textbf{c}(\xi)|^{-2} \, \dd \xi \, \dd k \\
			&= t \, \|(\Delta^{\sigma}-\lambda^{\sigma})^{\frac{1}{2}}f\|_2^{2}.
		\end{align*}
Moreover, the convolution kernel $P_t^{\lambda, \sigma}$ of $\e^{-t(\Delta^{\sigma}-\lambda^{\sigma})}$ is $\e^{t\lambda^{\sigma}}P_{t}^{\sigma}$ and by~\eqref{PtLp}, we get
		\begin{equation}\label{eq: CT3}
			\|P_t^{\lambda, \sigma}\|_p \leq C(\lambda, \sigma, t, p), \quad t>0.
		\end{equation}
		
	Let now $(f_{j})$ be as in the statement. Since the sequence converges weakly in $\mathcal{H}_{\lambda,\sigma}$, it is bounded; therefore, there is $c>0$ such that
		\begin{equation}\label{eq: BA}
			\|f_j\|_{\lambda,\sigma}\leq c, \quad j\in \mathbb{N}.
		\end{equation} 
		We have
		\begin{align}\label{eq: CT 29}
			\|f_j-f\|_{L^2(B)}\leq \|f_j-\e^{-t(\Delta^{\sigma}-\lambda^{\sigma})}f_j\|_{2} &+\|\e^{-t(\Delta^{\sigma}-\lambda^{\sigma})}(f_j-f)\|_{L^2(B)} \\
			& +\|\e^{-t(\Delta^{\sigma}-\lambda^{\sigma})}f-f\|_{2} \notag. 
		\end{align}
		On the one hand, we get by~\eqref{eq: CT1}
		\begin{equation}\label{eq: CT 30}
			\|f_j-\e^{-t(\Delta^{\sigma}-\lambda^{\sigma})}f_j\|_2 \lesssim \sqrt{t}, \qquad \|f-\e^{-t(\Delta^{\sigma}-\lambda^{\sigma})}f\|_2 \lesssim \sqrt{t},
		\end{equation} 
		and on the other hand, by Theorem~\ref{thm:fracPoincare2} and~\eqref{eq: BA},  we get
		\begin{equation*}
			\|f_j\|_{q} \leq C\|f_j\|_{\lambda,\sigma} \leq C(n , \sigma), \qquad 2<q\leq \frac{2n}{n-2\sigma},
		\end{equation*}
		and likewise for $\|f\|_q$. Hence the H{\"o}lder inequality together with~\eqref{eq: CT3}, imply
		\begin{align}\label{eq: CT 34}
			\|\e^{-t(\Delta^{\sigma}-\lambda^{\sigma})} f_j\|_{\infty}&\leq \|P_t^{\lambda,2\sigma}\|_{q'} \, \|f_j\|_{q}\leq C(\lambda, n, \sigma, t), \notag \\  \|\e^{-t(\Delta^{\sigma}-\lambda^{\sigma})}f\|_{\infty}&\leq \| {P}_t^{\lambda,2\sigma}\|_{q'} \, \|f\|_{q}\leq C(\lambda, n, \sigma, t).
		\end{align}
		Next, observe that due to Theorem~\ref{thm:fracPoincare2}, we also have $f_{j}\rightarrow f$ weakly in $L^{q} $ for $2<q<\frac{2n}{n-2\sigma}$, and due to~\eqref{eq: CT3} (for $p=q'\in (\frac{2n}{n+2\sigma},2)$) we
		get that $ f_j\ast \mathcal{P}_t^{\lambda, \sigma} \rightarrow f\ast \mathcal{P}_t^{\lambda,\sigma}$  pointwise on $\mathbb{H}^{n}$ (notice that $f\ast \mathcal{P}_t^{\lambda,\sigma}(x)=\int_{\mathbb{H}^n}  \mathcal{P}_t^{\lambda,\sigma}(x^{-1}\cdot)f$). From this observation together with~\eqref{eq: CT 34} and the fact that the measure $\mu(B)$ is finite, the dominated convergence theorem yields
		\begin{equation} \label{eq: CT 35}
\| \e^{-t(\Delta^{\sigma}-\lambda^{\sigma})} (f_{j} -f)\|_{L^{2}(B)} =	\|f_j\ast \mathcal{P}_t^{\lambda,\sigma}-f\ast \mathcal{P}_t^{\lambda, \sigma}\|_{L^2(B)}\rightarrow 0, \quad j\rightarrow \infty.
		\end{equation}
		Combining~\eqref{eq: CT 29},~\eqref{eq: CT 30} and~\eqref{eq: CT 35} we get that
		\begin{equation}\label{eq: L2 conv B}
			\|f_j-f\|_{L^2(B)}\rightarrow 0, \quad j\rightarrow \infty.
		\end{equation}
		This proves the theorem if $q \leq 2$, since then, by the H{\"o}lder inequality,
		\[ \|f_j-f\|_{L^q(B)}\leq \mu(B)^{1/r} \|f_j-f\|_{L^2(B)}\]
		where $1/q = 1/r + 1/2$.
		If $2<q< \frac{2n}{n-2\sigma}$, then by choosing $r\in (q, \frac{2n}{n-2\sigma})$, again by the H{\"o}lder inequality,
		\[ \|f_j-f\|_{L^q(B)}\leq \|f_j-f\|^{\theta}_{L^2(B)}\,  \|f_j-f\|^{1-\theta}_{L^r(B)}\]
		for $\theta=(1/q-1/r)(1/2-1/r)$. The convergence claim follows by~\eqref{eq: L2 conv B} and the fact that
		\begin{align*}
			\|f_j-f\|_r&\leq C  \|f_j-f\|_{\lambda,\sigma} \leq C (\|f_j\|_{\lambda,\sigma}+\|f\|_{\lambda,\sigma})\leq C,
		\end{align*}
		owing to Theorem~\ref{thm:fracPoincare2} and~\eqref{eq: BA}. The proof is now complete.
		\end{proof}

	The second result is more technical:
	
		\begin{proposition}\label{prop: Lions}
		Suppose $0<\sigma<1$, $0\leq \lambda \leq \lambda_0$ and $2<q<\frac{2n}{n-2\sigma}$. Then there is $\delta=\delta(n, q)>0$  and $C>0$ such that for all $u\in \mathcal{H}_{\lambda, \sigma}^{\text{rad}}$ and $R>1$ 
		\[
		\|u\|_{L^q(B(o, R)^c)}\leq C \, \e^{-\delta R} \|u\|_{\mathcal{H}_{{\lambda, \sigma} }}.   \]
	\end{proposition}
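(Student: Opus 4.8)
The statement is a radial-decay estimate of Strauss–Lions type: radial functions with finite $\mathcal{H}_{\lambda,\sigma}$-norm must decay (in fact exponentially, because of the spectral gap and the volume growth of $\mathbb{H}^n$) as one moves away from the origin. The plan is to reduce the $L^q$ bound on the exterior region to a pointwise bound on the radial profile, and to obtain the latter from the $L^2$ control on $(\Delta^\sigma-\lambda^\sigma)^{1/2}u$ together with one-dimensional Sobolev embedding in the radial variable, paying careful attention to the exponential weight $\delta(r)=(\sinh r)^{n-1}\asymp \e^{(n-1)r}$ coming from the measure.

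\textbf{Step 1: reduce to the smooth radial case and set up profiles.} By density of $\mathcal{C}_c^\infty$ in $\mathcal{H}_{\lambda,\sigma}$ (and Corollary~\ref{cor:abs} / Proposition~\ref{prop:rad} to stay within the radial subspace), it suffices to prove the inequality for $u=\phi\in\mathcal{C}_c^\infty$ radial, with constants independent of $\phi$; the general case follows by taking limits, using that convergence in $\mathcal{H}_{\lambda,\sigma}$ implies convergence in $L^q$ on $B(o,R)^c$ by Theorem~\ref{thm:fracPoincare2}. Write $\phi(x)=\tilde\phi(r)$, $r=|x|$. The goal becomes
\[
\Big(\int_R^\infty |\tilde\phi(r)|^q\,(\sinh r)^{n-1}\,\dd r\Big)^{1/q}\leq C\,\e^{-\delta R}\,\|\phi\|_{\lambda,\sigma}.
\]

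\textbf{Step 2: pointwise bound on the profile at infinity.} The crucial claim is a weighted pointwise estimate of the form $|\tilde\phi(r)|\,\e^{\rho r}\lesssim \|\phi\|_{\lambda,\sigma}$ for $r\geq 1$, where $\rho=(n-1)/2$. Here I would distinguish $0<\sigma\leq 1/2$ and $1/2<\sigma<1$, exactly as flagged in the introduction. For $0<\sigma\leq 1/2$ one can argue via harmonic analysis and the pointwise kernel bounds: write $\phi=(\Delta^\sigma-\lambda^\sigma)^{-1/2}\big[(\Delta^\sigma-\lambda^\sigma)^{1/2}\phi\big]$ and express the first factor as convolution with the radial kernel of $(\Delta^\sigma-\lambda^\sigma)^{-1/2}$; this kernel, by a Mellin representation in the spirit of~\eqref{eq: Mellin} and the estimates~\eqref{PtLp} (or a variant of Proposition~\ref{prop: ka est} for the exponent $1/2<3/2$), has at-infinity decay controlled by $(1+r)^K\e^{-\rho r}$, and one uses the Kunze--Stein phenomenon (as in Theorem~\ref{thm:fracPoincare}) to bound the pointwise value of the convolution against an $L^2$ function by a constant times $\|(\Delta^\sigma-\lambda^\sigma)^{1/2}\phi\|_2=\|\phi\|_{\lambda,\sigma}$, with the gain $\e^{-\rho r}$ pulled out. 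For $1/2<\sigma<1$ I would instead interpolate: pick $\sigma_0\leq 1/2<\sigma$ and use $\mathcal{H}_{\lambda,\sigma}\hookrightarrow \mathcal{H}_{\lambda,\sigma_0}$ (which follows from the multiplier comparison~\eqref{multiplier}, since $(\xi^2+\lambda_0)^\sigma-\lambda^\sigma\gtrsim (\xi^2+\lambda_0)^{\sigma_0}-\lambda^{\sigma_0}$ for the relevant $\lambda$) to reduce to the case already treated; alternatively one uses the complex-interpolation identity for the $\mathrm{H}^\sigma$ scale quoted in the excerpt. In either case one arrives at
\[
|\tilde\phi(r)|\leq C\,(1+r)^{K}\,\e^{-\rho r}\,\|\phi\|_{\lambda,\sigma},\qquad r\geq 1.
\]

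\textbf{Step 3: integrate over the exterior region.} Insert the pointwise bound from Step 2 into the $L^q$ integral and use $(\sinh r)^{n-1}\lesssim \e^{(n-1)r}$:
\[
\int_R^\infty |\tilde\phi(r)|^q\,(\sinh r)^{n-1}\,\dd r
\;\lesssim\; \|\phi\|_{\lambda,\sigma}^{q}\int_R^\infty (1+r)^{Kq}\,\e^{-q\rho r}\,\e^{(n-1)r}\,\dd r.
\]
Since $q\rho=q(n-1)/2>(n-1)$ for $q>2$, the exponent $(n-1)-q\rho=(n-1)(1-q/2)<0$ is strictly negative, the integral converges, and it is bounded by $C\,\e^{-(q\rho-(n-1))R}$ up to the polynomial factor, which is absorbed. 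Taking $q$-th roots gives $\|u\|_{L^q(B(o,R)^c)}\leq C\,\e^{-\delta R}\|u\|_{\lambda,\sigma}$ with $\delta=\big((n-1)/2\big)(1-2/q)>0$, which depends only on $n$ and $q$. Finally one passes from $\phi\in\mathcal{C}_c^\infty$ to general $u\in\mathcal{H}_{\lambda,\sigma}^{\mathrm{rad}}$ by density as in Step 1.

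\textbf{Main obstacle.} The delicate point is Step 2, i.e.\ producing a genuinely pointwise (not merely $L^2$) exponentially-weighted bound on the radial profile directly from the $\mathcal{H}_{\lambda,\sigma}$-norm. The difficulty is that $\mathcal{H}_{\lambda_0,\sigma}$ is strictly larger than $\mathrm{H}^\sigma$, so one cannot simply quote a standard fractional Sobolev embedding; one really must exploit the kernel of the negative power $(\Delta^\sigma-\lambda^\sigma)^{-1/2}$ and its decay at infinity, and the Kunze--Stein trick is what converts the resulting convolution estimate into the desired pointwise control without losing the spectral-gap decay. The case split at $\sigma=1/2$ is a technical nuisance — for $\sigma\le 1/2$ the exponent $1/2$ of the negative power is comfortably below the threshold $3/2$ of~\eqref{eq: negPlanch}, whereas for $\sigma>1/2$ it is cleanest to interpolate down to a smaller $\sigma$ rather than redo the kernel analysis — but it is not a serious obstruction.
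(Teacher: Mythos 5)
Your Step~2, the pointwise bound $|\tilde\phi(r)|\lesssim (1+r)^K\e^{-\rho r}\|\phi\|_{\lambda,\sigma}$ for $r\geq 1$ and \emph{all} $\sigma\in(0,1)$, is not attainable, and your case split is inverted relative to what can actually be proved. Writing $\phi = f\ast k$ with $f=(\Delta^\sigma-\lambda^\sigma)^{1/2}\phi\in L^2$ and $k$ the convolution kernel of $(\Delta^\sigma-\lambda^\sigma)^{-1/2}$, a sup-norm bound on $\phi$ in terms of $\|f\|_2$ would require $k\in L^2$. But near the origin $k(r)\asymp r^{-(n-\sigma)}$ (locally it matches the Euclidean Riesz potential of order $\sigma$), so $\int_0^1 k(r)^2(\sinh r)^{n-1}\,\dd r\asymp\int_0^1 r^{-n+2\sigma-1}\,\dd r$ diverges for every $n\geq 2$, $\sigma<1$. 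Equivalently: for $\sigma\leq 1/2$ even the one-dimensional fractional Sobolev space $W^{\sigma,2}$ does not embed into $L^\infty$, and your proposed estimate is a thinly disguised version of exactly such an embedding. Appealing to Kunze--Stein does not rescue this, since that phenomenon delivers $L^{q'}\to L^q$ bounds for $2<q<\infty$, not $L^2\to L^\infty$.

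What the paper does instead is split the radial profile into a \emph{low}- and a \emph{high}-frequency part $u=u^0+u^\infty$ via the spherical inversion formula. The low part $u^0$ enjoys a pointwise bound $|u^0(r)|\lesssim\varphi_0(r)\|u\|_{\lambda,\sigma}$ for \emph{all} $\sigma$, thanks to the quadratic vanishing $|\mathbf c(\xi)|^{-2}\asymp\xi^2$ of the Plancherel density near $\xi=0$, which tames the singular multiplier. For $u^\infty$ the pointwise route works \emph{only} for $\sigma>1/2$: Cauchy--Schwarz together with Ionescu's large-$r$ expansion of $\varphi_\xi$ gives $|u^\infty(r)|\lesssim\e^{-\rho r}\,\bigl(\int_1^\infty\xi^{-2\sigma}\,\dd\xi\bigr)^{1/2}\,\|u\|_{\lambda,\sigma}$, and the $\xi$-integral converges only for $\sigma>1/2$. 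For $0<\sigma\leq 1/2$ the paper shows $u^\infty\in\mathrm H^\sigma$ with $\|u^\infty\|_{\mathrm H^\sigma}\lesssim\|u\|_{\lambda,\sigma}$, proves $\|(\sinh r)^{(n-1)/2}\tilde u\|_{W^{\sigma,2}(1,\infty)}\lesssim\|u\|_{\mathrm H^\sigma}$ by interpolating between the $L^2$ and $\mathrm H^1$ endpoints, and then uses the one-dimensional fractional embedding $W^{\sigma,2}(1,\infty)\hookrightarrow L^q(1,\infty)$ for $q\leq 2/(1-2\sigma)$; the exponential gain comes from pulling $(\sinh R)^{-(n-1)(q-2)/2}$ out of the exterior integral, \emph{not} from a pointwise bound. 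This is intrinsically an $L^q$ argument. Your interpolation suggestion for $\sigma>1/2$ (embed into $\mathcal H_{\lambda,\sigma_0}$ with $\sigma_0\leq 1/2$) is, in your framework, circular: it reduces precisely to the pointwise claim you cannot establish. Steps~1 and~3 of your proposal are fine; Step~2 needs to be replaced by the low/high split and the two separate mechanisms sketched above.
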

	We postpone its proof for a moment, to show how Theorem~\ref{thm: cpt radial emb} follows.

\begin{proof}[Proof of Theorem~\ref{thm: cpt radial emb}]
		Take a bounded sequence $(u_j)\in \mathcal{H}_{\lambda, \sigma}^{\rad}$, so that $\|u_{j}\|_{\lambda, \sigma}\leq M$ for some $M>0$ and every $j\in \mathbb{N}$. Passing to a subsequence if necessary, we get that $(u_j)$ converges weakly to some $u\in L^q$. So, according to Proposition~\ref{prop: CT Rellich}, we can take a sequence $R_m\rightarrow\infty$ so that $u_j\rightarrow u$ strongly in $L^q(B(o, R_m))$ for every $m$. We will show that $(u_j)$ is Cauchy in $L^q$. For every $j, k \in \mathbb{N}$, write
		\[
		\|u_j-u_k\|_{L^q}\leq \|u_j-u_k\|_{L^q(B(o,R_m))}+\|u_j-u_k\|_{L^q(B(o,R_m)^c)}.
		\]
By the triangle inequality and Proposition~\ref{prop: Lions}
		\[
		\|u_j-u_k\|_{L^q(B(o,R_m)^c)}\leq 2CM\, \e^{-\delta R_m},
		\]
		therefore for $\varepsilon>0$, there is $m_0\in \mathbb{N}$ so that for all $m\geq m_0$,
		\[
		\|u_j-u_k\|_{L^q(B(o,R_m)^c)}<\frac{\varepsilon}{2}.
		\]
		On the other hand, for the ball $B(o,r_{m_0})$, by Proposition~\ref{prop: CT Rellich}, there is $j_0\in \mathbb{N}$ so that for all $j, k\geq j_0$, it holds
		\[
		\|u_j-u_k\|_{L^q(B(o,r_{m_0}))}<\frac{\varepsilon}{2}.
		\]
		The proof of the compactness of the embedding is now complete.
	\end{proof}

	Hence, in the remaining part of this section, the aim is to prove Proposition~\ref{prop: Lions}.

\subsection{Proof of Proposition~\ref{prop: Lions}}
We shall prove Proposition~\ref{prop: Lions} for $u\in(\mathcal{C}_{c}^{\infty})^{\rad}$, since the conclusion follows by density. Suppose then $u\in(\mathcal{C}_{c}^{\infty})^{\rad}$. Then
	\begin{equation}\label{eq: norms}
		\|u\|_{\lambda, \sigma}=\|(\Delta^{\sigma}-\lambda^{\sigma})^{1/2}u\|_2=\|f\|_2<\infty,
	\end{equation}
	where $f=(\Delta^{\sigma}-\lambda^{\sigma})^{1/2}u\in L^2$. By the spectral theorem and a subordination formula, we have
	\[
	f=(\Delta^{\sigma}-\lambda^{\sigma})^{1/2}u=\frac{1}{|\Gamma(-1/2)|}\int_{0}^{\infty} (u -\e^{t\lambda^{\sigma}} \e^{-t\Delta^{\sigma}}u )\, \frac{\dd t}{t^{3/2}},
	\]
	and since $\e^{-t\Delta^{\sigma}}u=u\ast P_t^{\sigma}$ is radial, as the convolution of two radial functions, so is $f$. Hence, by the Plancherel formula,
	\[
	\|u\|_{\lambda, \sigma}=\|f\|_{L^2}=\|\mathcal{H}f\|_{L^2({\mathbb{R}}, |\textbf{c}(\xi)|^{-2} \dd \xi)}.
\]
	Notice that by the inversion formula we can write
	\begin{align*}
		u(r)&= (\Delta^{\sigma}-\lambda^{\sigma})^{-1/2}f(r)=\text{const.} \int_{0}^{\infty} \frac{1}{\sqrt{(\xi^2+\lambda_0)^{\sigma}-\lambda^{\sigma}}}\, \mathcal{H}f(\xi) \, \varphi_{\xi}(r)\, \frac{\dd \xi}{|\textbf{c}(\xi)|^{2}}\\
		&=\text{const.}\left(\int_0^{1}+\int_{1}^{\infty}\right) \frac{1}{\sqrt{(\xi^2+\lambda_0)^{\sigma}-\lambda^{\sigma}}}\, \mathcal{H}f(\xi) \, \varphi_{\xi}(r)\, \frac{\dd \xi}{|\textbf{c}(\xi)|^{2}}\\
		& =u^{0}(r)+u^{\infty}(r).
	\end{align*}
	Clearly, to prove Proposition~\ref{prop: Lions}, it suffices to show that 
	\begin{equation}\label{eq: u0 and uinfty Lq}
		\|u^0\|_{L^q(B(o, R)^c)}\leq C \, \e^{-\delta R} \|u\|_{{\lambda}, \sigma},\quad \|u^{\infty}\|_{L^q(B(o, R)^c)}\leq C \,\e^{-\delta R} \|u\|_{{\lambda}, \sigma}.
	\end{equation}

	For the low frequency part $u^{0}$, using that $|\varphi_{\xi}|\leq \varphi_{0}$ for $\xi\in \mathbb{R} $ and~\eqref{multiplier} we get
	\begin{align*}
		|u^{0}(r)|&\lesssim \varphi_{0}(r)\int_0^{1} \xi^{-1}|\mathcal{H}f(\xi)|\,\frac{\dd \xi}{|\textbf{c}(\xi)|^{2}} \\
		&\lesssim \varphi_{0}(r) \left(\int_0^{1}  |\mathcal{H}f(\xi)|^2\,\frac{\dd \xi}{|\textbf{c}(\xi)|^{2}} \right)^{1/2} \left( \int_0^{1} \xi^{-2}\,\frac{\dd \xi}{|\textbf{c}(\xi)|^{2}} \right)^{1/2}
	\end{align*}
	by Cauchy--Schwarz. By the local behavior~\eqref{eq: Plancherel} of the Plancherel density and~\eqref{eq: norms}, we conclude that
	\[
	|u^{0}(r)| \lesssim \varphi_{0}(r)\, \|u\|_{\lambda, \sigma}, \quad r>0.
	\]
	Then by the radiality of $u^0$ and the behavior~\eqref{eq: ground} of $\varphi_{0}$, we get for $q>2$ and $R>1$ 
	\begin{align}
		\int_{B(o, R)^c} |u^{0}(x)|^q\, \dd \mu (x) &=c_n\int_{R}^{\infty} |u^0(r)|^q\, \sinh^{n-1}r\, \dd r \notag\\
		&\lesssim \|u\|^q_{\lambda, \sigma}\int_{R}^{\infty} (1+r)^q\, \e^{-q\frac{n-1}{2}}\, \e^{(n-1)r}\, \dd r \notag\\
		&\lesssim  \|u\|^q_{\lambda, \sigma} \int_{R}^{\infty} (1+r)^q\, \e^{-\frac{n-1}{2}(q-2)r}\, \dd r \notag\\
		&\lesssim  \|u\|^q_{\lambda, \sigma} \int_{R}^{\infty} \e^{-\frac{n-1}{2}\frac{(q-2)}{2}r} \, \dd r \notag\\
		&\lesssim  \|u\|^q_{\lambda, \sigma} \e^{-\frac{n-1}{2}\frac{(q-2)}{2}R}, \label{eq: pointwise}
	\end{align}
	which proves~\eqref{eq: u0 and uinfty Lq}  for $\delta=\frac{n-1}{2}\frac{q-2}{2q}$, as far as the low-frequency part $u^0$ is concerned.
	
	It therefore remains to deal with the high frequency part $u^{\infty}$, which we treat with different methods for $\frac{1}{2}<\sigma<1$ and $0<\sigma\leq \frac{1}{2}$.

To begin with, we observe that $u^{\infty}=\mathcal{D}^\sigma f$ where $\mathcal{D}^\sigma$ is the spectral multiplier
	\[
	\mathcal{D}^\sigma=\mathbbm{1}_{(1,\infty)}(\sqrt{\Delta-\lambda_0})(\Delta^{\sigma}-\lambda^{\sigma})^{-1/2}.
	\]
	Since the multiplier
	\begin{equation*}
		m(\xi)=\mathbbm{1}_{(1,\infty)}(\xi)\, \frac{1}{\sqrt{(\xi^2+\lambda_0)^{\sigma}-\lambda^{\sigma}}}
	\end{equation*}
	is bounded on $(0, \infty)$, by spectral theory we get
	\begin{equation}\label{eq: spth1}
		\|u^{\infty}\|_{2}= \|\mathcal{D}^\sigma f\|_2 \lesssim \|f\|_{L^2} \lesssim \|u\|_{\lambda, \sigma},
		\end{equation}
whence $u^{\infty}\in L^2$. Moreover $\Delta^{\sigma/2}u^{\infty}\in L^2$, since
	\begin{equation}\label{eq: spth2}
		\|\Delta^{\sigma/2}u^{\infty}\|_2=\|\mathbbm{1}_{(1,\infty)}(\sqrt{\Delta-\lambda_0})\Delta^{\sigma/2}(\Delta^{\sigma}-\lambda^{\sigma})^{-1/2}f\|_2\lesssim \|f\|_2\lesssim \|u\|_{\lambda, \sigma}<\infty
	\end{equation}
	since the multiplier
	\begin{equation*}
		\widetilde{m}(\xi)=\mathbbm{1}_{(1,\infty)}(\xi)\, \frac{(\xi^2+\lambda_0)^{\sigma/2}}{\sqrt{(\xi^2+\lambda_0)^{\sigma}-\lambda^{\sigma}}}
	\end{equation*}
	is bounded on $(0, \infty)$. Therefore  $u^{\infty}\in \mathrm{H}^{\sigma}$ and by Proposition~\ref{HsigmaHlambdasigma}
	\[
	\|u^{\infty}\|_{\lambda, \sigma}^2=\|\Delta^{\sigma/2}u^{\infty}\|_{2}^2-\lambda \|u^{\infty}\|_2^2.	\]

	\subsubsection{The range $1/2<\sigma<1$.} As in the case of $u^{0}$, we rely on pointwise bounds away from the origin for $u^{\infty}$ to derive~\eqref{eq: u0 and uinfty Lq}.
	
	Recall that for every $\xi\in \mathbb{R}$ it holds $\varphi_{\xi}=\varphi_{-\xi}$. In addition the following large scale converging expansion of the spherical function $\varphi_{\xi}$ is true, 
\[
		\varphi_{\xi}(r)=\e^{-\frac{n-1}{2}r}\left(\e^{i\xi r}\textbf{c}(\xi)m_1(\xi, r)+\e^{-i\xi r}\overline{\textbf{c}(\xi)}m_1(-\xi, r)\right),\quad r>1
\]
	since $\textbf{c}(-\xi)^{-1}=\overline{\textbf{c}(\xi)}$, where 
	\[
	|m_1(\xi, r)|\lesssim 1 \quad \text{for all } \xi \in \mathbb{R}, \, r>1,
	\]
	see \cite[Prop. A.2]{Ion}. Since $m_1$ is uniformly bounded in $\xi$ and $r$, and $\varphi_{\xi}$ and $\mathcal{H}f(\xi)$ are even, it is clear that we can write 
	\begin{align*}
		|u^{\infty}(r)|\lesssim \e^{-\frac{n-1}{2}r} \int_{1}^{\infty}\frac{1}{\sqrt{(\xi^2+\lambda_0)^{\sigma}-\lambda^{\sigma}}}\, |\mathcal{H}f(\xi)|\, \frac{\dd \xi}{|\textbf{c}(\xi)|}.
	\end{align*}
	Therefore, by~\eqref{multiplier}, the Cauchy--Schwarz inequality yields 
	\begin{align*}
		|u^{\infty}(r)|&\lesssim \e^{-\frac{n-1}{2}r} \left(\int_1^{\infty}  |\mathcal{H}f(\xi)|^2\, \frac{\dd \xi}{|\textbf{c}(\xi)|^{2}} \right)^{1/2} \left( \int_1^{\infty} \xi^{-2\sigma}\, \dd \xi \right)^{1/2} \\
		&\lesssim \e^{-\frac{n-1}{2}r} \, \|u\|_{\lambda, \sigma},
	\end{align*}
	again by~\eqref{eq: norms} and the fact that $\sigma>1/2$. 
	
	Hence we get~\eqref{eq: u0 and uinfty Lq} with $\delta=\frac{n-1}{2}(q-2)$ for $1/2<\sigma<1$, by arguing as in~\eqref{eq: pointwise}.
	
	\subsubsection{The range $0<\sigma\leq 1/2$.} In this range, we shall prove~\eqref{eq: u0 and uinfty Lq} for $u^{\infty}$ by a different method, which leads to the following.
	\begin{proposition}\label{lemma: L3}
		Suppose  $0< \sigma \leq 1/2$ and $2<q<\frac{2n}{n-2\sigma}$. Then for $u\in \mathrm{H}^{\sigma}_{\text{rad}}$
		\[
		\|(\sinh r)^{\frac{n-1}{2}} \, \tilde u\|_{L^{q}(1, \infty)}\leq C\, \|u\|_{\mathrm{H}^{\sigma}}.
		\]
	\end{proposition}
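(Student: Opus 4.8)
The statement to prove is a one-dimensional Sobolev-type estimate: for radial $u\in \mathrm{H}^{\sigma}_{\mathrm{rad}}$ with $0<\sigma\leq 1/2$ and $2<q<\frac{2n}{n-2\sigma}$, the weighted profile $(\sinh r)^{(n-1)/2}\tilde u$ lies in $L^{q}(1,\infty)$ with norm controlled by $\|u\|_{\mathrm{H}^{\sigma}}$. The guiding idea is that a radial $\mathrm{H}^{\sigma}$ function on $\mathbb{H}^n$, written in polar coordinates, behaves at infinity like a function on the half-line $(0,\infty)$ of Euclidean fractional-Sobolev regularity $\sigma$ after transferring the exponential volume factor $\delta(r)=(\sinh r)^{n-1}\asymp e^{(n-1)r}$ into the unknown. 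The natural substitution is $w(r):=(\sinh r)^{(n-1)/2}\tilde u(r)$, so that $\|u\|_2^2 = c\int_0^\infty |w|^2\,\dd r$, i.e. the weighting makes the $L^2$ norm flat. The plan is then to show $w$ (or rather a cutoff of $w$ supported away from the origin) belongs to the Euclidean fractional Sobolev space $H^{\sigma}(\mathbb{R})$, and then invoke the one-dimensional Sobolev embedding $H^{\sigma}(\mathbb{R})\hookrightarrow L^{q}(\mathbb{R})$, valid precisely for $2<q\leq \frac{2}{1-2\sigma}$ when $\sigma<1/2$ (and for all $q<\infty$ when $\sigma=1/2$); one should check that $\frac{2n}{n-2\sigma}\leq \frac{2}{1-2\sigma}$ for $n\geq 2$, which is where the radial gain over the nonradial range is used.

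The more delicate point is controlling the $\sigma$-Gagliardo seminorm of $w$ in terms of $\|u\|_{\mathrm{H}^\sigma}$, since differentiating or taking fractional differences of the product $(\sinh r)^{(n-1)/2}\tilde u(r)$ produces a bad lower-order term coming from the derivative of the weight. I expect this to be the main obstacle. There are two viable routes. The first, closer in spirit to the rest of the paper, is to use the spherical Fourier transform: write $u = \mathcal{H}^{-1}(\mathcal{H}u)$ and use the large-scale expansion of $\varphi_\xi(r)$ already quoted in the excerpt, $\varphi_\xi(r)=e^{-\frac{n-1}{2}r}\bigl(e^{i\xi r}\mathbf{c}(\xi)m_1(\xi,r)+e^{-i\xi r}\overline{\mathbf{c}(\xi)}m_1(-\xi,r)\bigr)$ with $|m_1|\lesssim 1$, so that for $r>1$,
\[
(\sinh r)^{\frac{n-1}{2}}\tilde u(r) \asymp \int_1^\infty \mathcal{H}u(\xi)\Bigl(e^{i\xi r}m_1(\xi,r)+e^{-i\xi r}m_1(-\xi,r)\Bigr)\,\frac{\dd\xi}{|\mathbf{c}(\xi)|} \; + \; (\text{low-frequency part}),
\]
using $|\mathbf{c}(\xi)|^{-2}\asymp |\xi|^{n-1}$ and $|\mathbf{c}(\xi)|\asymp|\xi|^{(n-1)/2}$ at infinity. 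The low-frequency part contributes an honestly bounded, exponentially decaying term as in the $u^0$ estimate. For the high-frequency part, the measure $|\mathbf{c}(\xi)|^{-2}\dd\xi$ is comparable to $|\xi|^{n-1}\dd\xi$ on $(1,\infty)$, while the $\mathrm{H}^\sigma$ norm weights $\mathcal{H}u$ by $(\xi^2+\lambda_0)^\sigma|\mathbf{c}(\xi)|^{-2}\asymp \xi^{2\sigma+n-1}$; comparing this with the one-dimensional Fourier side (where a weight $|\xi|^{2\sigma}$ on $L^2(\mathbb{R},\dd\xi)$ corresponds to $H^\sigma$), one sees that after the change of variables absorbing $|\xi|^{(n-1)/2}$ into the symbol, $w$ restricted to $r>1$ is, up to the uniformly bounded factor $m_1$, a function whose $H^\sigma(\mathbb{R})$ norm is bounded by $\|u\|_{\mathrm{H}^\sigma}$. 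The presence of $m_1(\xi,r)$ depending on both variables is the technical nuisance; one handles it by noting $m_1$ and its needed derivatives in $r$ are uniformly bounded, so $m_1(\xi,\cdot)e^{\pm i\xi r}$ is a bounded perturbation of $e^{\pm i\xi r}$ at the level of fractional differences — concretely, $|m_1(\xi,r)-m_1(\xi,r')|\lesssim |r-r'|$ uniformly, so the product rule for the Gagliardo seminorm produces only terms already controlled.

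The second, softer route avoids $m_1$ altogether: interpolate. For $\sigma=1/2$ the statement can be obtained directly from the $\sigma>1/2$-type pointwise bound's borderline case together with the $L^2$ identity (or by the trace/embedding $H^{1/2}(\mathbb{R})\hookrightarrow L^q$ for all $q<\infty$ applied to the substituted profile, whose $\dot H^{1/2}$ seminorm equals, up to constants and lower-order terms controlled as above, $\|\Delta^{1/4}u\|_2$); for $\sigma\to 0^+$ one has trivially $\|w\|_{L^2(1,\infty)}\lesssim\|u\|_2$, which is the endpoint $q=2$. The paper's own remark that the spaces $\mathrm{H}^\sigma$ behave well under complex interpolation, $\mathrm{H}^{\sigma}=[\mathrm{H}^{\sigma_0},\mathrm{H}^{\sigma_1}]_\theta$, suggests proving the two endpoint inequalities (a weighted $L^2$ bound at $\sigma=0$-ish and a weighted $H^{1/2}\!\to\!L^q$ bound at $\sigma=1/2$, for a suitable scale of target exponents) and interpolating to land on the claimed open range of $q$'s for intermediate $\sigma$; the target space $L^q$ also interpolates correctly. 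I would carry out the Fourier-side argument as the primary proof because it is self-contained and uses only tools already set up (the expansion of $\varphi_\xi$, the Plancherel density asymptotics, Cauchy–Schwarz), reserving interpolation as the fallback if the $m_1$ bookkeeping becomes unwieldy. The last step in either case is the same: once $\|w\|_{L^q(1,\infty)}\lesssim \|u\|_{\mathrm{H}^\sigma}$ with the stated range of $q$ is in hand, the proof is complete, and plugging this into the decomposition $u^\infty$ (noting $u^\infty\in\mathrm{H}^\sigma$ with $\|u^\infty\|_{\mathrm{H}^\sigma}\lesssim\|u\|_{\lambda,\sigma}$, as established just above via the bounded multipliers $m$ and $\widetilde m$) yields the missing estimate $\|u^\infty\|_{L^q(B(o,R)^c)}\lesssim e^{-\delta R}\|u\|_{\lambda,\sigma}$, because $(\sinh r)^{(n-1)/q}$ against $e^{-(n-1)r/2}$ leaves a genuinely decaying exponential for $r>R$ when $q<\frac{2n}{n-2\sigma}$.
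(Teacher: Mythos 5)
Your overall strategy is the paper's: transfer the volume weight to the profile via $w(r)=(\sinh r)^{\frac{n-1}{2}}\tilde u(r)$, show $w\in W^{\sigma,2}(1,\infty)$ with norm $\lesssim\|u\|_{\mathrm{H}^\sigma}$, and conclude by the one-dimensional embedding $W^{\sigma,2}(1,\infty)\hookrightarrow L^q(1,\infty)$ for $2<q\leq \frac{2}{1-2\sigma}$, which swallows the claimed range since $\frac{2n}{n-2\sigma}\leq\frac{2}{1-2\sigma}$ for $n\geq 2$. Your fallback (interpolation) is precisely the route the paper takes: Lemma~\ref{lemma: L1} establishes that $u\mapsto(\sinh\cdot)^{\frac{n-1}{2}}\tilde u$ is bounded $\mathrm{H}^\sigma_{\mathrm{rad}}\to W^{\sigma,2}(1,\infty)$ by checking the two endpoints $\sigma=0$ (the trivial $L^2$ identity) and $\sigma=1$ (a chain rule using $\cosh r\asymp\sinh r$ for $r>1$) and then complex-interpolating; Proposition~\ref{lemma: L3} then follows from the line embedding. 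Where you diverge is the choice of upper endpoint: you propose $\sigma=1/2$, but establishing the map into $W^{1/2,2}(1,\infty)$ directly requires exactly the nontrivial $\dot H^{1/2}$ control you were trying to sidestep, whereas the $\sigma=1$ endpoint reduces to a one-line derivative bound. So interpolate Lemma~\ref{lemma: L1} with endpoints at $\sigma=0,1$, not the final estimate with an endpoint at $\sigma=1/2$.

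Your primary route via the Harish-Chandra expansion has a real gap in the handling of $m_1(\xi,r)$. Because $m_1$ depends on both variables, $m_1(\xi,\cdot)e^{\pm i\xi r}$ is not a Fourier multiplier in $\xi$, so fractional differentiation in $r$ does not pass through it, and your appeal to a "product rule for the Gagliardo seminorm" together with boundedness of $\partial_r m_1$ does not reduce the seminorm of $w$ to that of a pure oscillatory integral. The correct way to proceed along these lines is to expand $m_1(\xi,r)=\sum_{k\geq 0}a_k(\xi)\,e^{-2kr}$ with geometrically decaying multiplier coefficients $a_k(\xi)$, treat each summand as a genuine Fourier multiplier times an exponentially damped weight, and sum the resulting $W^{\sigma,2}$ bounds. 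That can be carried out, but it is exactly the bookkeeping the paper avoids by interpolating — and your own fallback is the cleaner argument, so you should promote it to the main proof.
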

	Before proving the proposition, we first complete the proof of Proposition~\ref{prop: Lions}, i.e.\ the proof of~\eqref{eq: u0 and uinfty Lq}.  Observe indeed that a radial function $u\in \mathrm{H}^{\sigma}$, $0<\sigma\leq 1/2$, satisfies
	\begin{align}
		\int_{B(o,R)^c}|u|^q &=C_n \int_{R}^{\infty} |u(r)|^q \,(\sinh r)^{n-1} \dd r \notag \\
		&\leq C\,(\sinh R)^{-\frac{n-1}{2}(q-2)} \int_{R}^{\infty}\, |u(r)|^q\, (\sinh r)^{\frac{n-1}{2}q}\, \dd r \notag  \\
		&\leq C\,(\sinh R)^{-\frac{n-1}{2}(q-2)} \, \|u\|_{\mathrm{H}^{\sigma}}, \label{eq: Lqint}
	\end{align}
	where the last step is justified by Proposition~\ref{lemma: L3}. 
	
	Finally, let us apply~\eqref{eq: Lqint} to the radial function $u^{\infty}\in \mathrm{H}^{\sigma}$. Then
	\begin{align*}
		\int_{B(o,R)^c}|u^{\infty}|^q &\leq C\, \e^{-\frac{n-1}{2}(q-2)R}\, \|u^{\infty}\|_{\mathrm{H}^{\sigma}}\\ 
		&\leq C\, \e^{-\frac{n-1}{2}(q-2)R}\, \|u\|_{\lambda, \sigma},
	\end{align*}
	where the last step is justified by~\eqref{eq: spth1} and~\eqref{eq: spth2}. This proves~\eqref{eq: u0 and uinfty Lq} for $\delta=\frac{n-1}{2}\frac{q-2}{q}$ for the high frequency part $u^{\infty}$, in the case $0<\sigma\leq 1/2$, and concludes the proof of Proposition~\ref{prop: Lions}.
	
Therefore, it remains to prove Proposition~\ref{lemma: L3}.

	Let $I=(a, b)$ be an open interval, possibly unbounded. Recall that the Sobolev space $W^{1, 2}(I)$ is defined to be
	$$
	W^{1, 2}(I)=\left\{u \in L^2(I) \colon  \exists u' \in L^2(I) \text { such that } \int_I u \varphi^{\prime}=-\int_I u' \varphi \quad \forall \varphi \in C_c^1(I)\right\},
	$$
endowed with the norm
	\[
	\| u\|_{W^{1,2}(I)} = \|u\|_{L^{2}(I)} + \|u'\|_{L^{2}(I)}.
	\]
	On the other hand, for $\sigma$ in $(0,1)$, the fractional Sobolev space $W^{\sigma, 2}(I)$ is classically defined as 
	$$
	W^{\sigma, 2}(I)=\left\{u \in L^2(I)\colon \frac{|u(x)-u(y)|}{|x-y|^{\frac{1}{2}+\sigma}} \in L^2(I \times I)\right\}
	$$
	i.e., an intermediate Banach space between $L^2(I)$ and $W^{1, 2}(I)$, endowed with the natural norm
	$$
	\|u\|_{W^{\sigma, 2}(I)}=\left(\int_{I}|u|^2 \, \dd x+\int_{I} \int_{I} \frac{|u(x)-u(y)|^2}{|x-y|^{1+2\sigma}} \,\dd x \, \dd y\right)^{\frac{1}{2}}.
	$$
	We first prove a (fractional) hyperbolic analog of~\cite[Lemme II.2]{Lions}. To avoid ambiguities, we distinguish between a radial function $u$ and its profile $\tilde u$ in the next statements.
	\begin{lemma}\label{lemma: L1}
		Suppose $0< \sigma \leq 1$. For all $u\in \mathrm{H}^{\sigma}_{\text{rad}}$  it holds
		\[
		\|(\sinh \cdot )^{\frac{n-1}{2}} \, \tilde u\|_{W^{\sigma,2}(1, \infty)}\leq C\, \|u\|_{\mathrm{H}^{\sigma}}.
		\]
	\end{lemma}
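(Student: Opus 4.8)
The plan is to set $v(r):=(\sinh r)^{(n-1)/2}\,\tilde u(r)$, so that by the polar integration formula $\|v\|_{L^2(0,\infty)}^2\asymp\|u\|_{L^2(\mathbb{H}^n)}^2$; in particular $\|v\|_{L^2(1,\infty)}\lesssim\|u\|_{\mathrm{H}^{\sigma}}$, which takes care of the $L^2$-part of the $W^{\sigma,2}(1,\infty)$-norm. When $\sigma=1$ one finishes by differentiating: $v'(r)=\tfrac{n-1}{2}(\coth r)\,v(r)+(\sinh r)^{(n-1)/2}\,\tilde u'(r)$, and since $\coth r\asymp1$ for $r>1$ while $\|\nabla u\|_2^2\asymp\int_0^\infty|\tilde u'(r)|^2(\sinh r)^{n-1}\,\dd r$ and $\|\nabla u\|_2\lesssim\|u\|_{\mathrm{H}^{1}}$, one obtains $\|v'\|_{L^2(1,\infty)}\lesssim\|u\|_{\mathrm{H}^{1}}$. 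So it remains to bound the Gagliardo seminorm $[v]_{W^{\sigma,2}(1,\infty)}$ when $0<\sigma<1$, which is the bulk of the argument.

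For $0<\sigma<1$ the starting point is the intrinsic identity behind Lemma~\ref{lemma: Tom form1} (i.e.\ \cite[Lemma 3.4]{BP2022}) which, after passing to polar coordinates, reads
\[
\|\Delta^{\sigma/2}u\|_2^2\ \asymp\ \int_0^\infty\!\int_0^\infty|\tilde u(r)-\tilde u(s)|^2\,K(r,s)\,(\sinh r)^{n-1}(\sinh s)^{n-1}\,\dd r\,\dd s,
\]
where $K(r,s)=c_n\int_0^\pi P_0^{\sigma}\!\big(d(r,s,\theta)\big)(\sin\theta)^{n-2}\,\dd\theta$ and $\cosh d(r,s,\theta)=\cosh r\cosh s-\sinh r\sinh s\cos\theta$ (note $\|\Delta^{\sigma/2}u\|_2\lesssim\|u\|_{\mathrm{H}^{\sigma}}$ by Cauchy--Schwarz in the spectral variable). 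For $r\le s$ one splits
\[
v(r)-v(s)=(\sinh r)^{(n-1)/2}\big(\tilde u(r)-\tilde u(s)\big)+\big((\sinh r)^{(n-1)/2}-(\sinh s)^{(n-1)/2}\big)\tilde u(s),
\]
and its symmetric analogue for $r>s$, so that $[v]_{W^{\sigma,2}(1,\infty)}^2$ is controlled by $\mathrm{(I)}+\mathrm{(II)}$, with $\mathrm{(I)}=\iint_{(1,\infty)^2}(\sinh(r\wedge s))^{n-1}|\tilde u(r)-\tilde u(s)|^2\,|r-s|^{-1-2\sigma}\,\dd r\,\dd s$ and $\mathrm{(II)}=\iint_{(1,\infty)^2}\big((\sinh r)^{(n-1)/2}-(\sinh s)^{(n-1)/2}\big)^2|\tilde u(r\vee s)|^2\,|r-s|^{-1-2\sigma}\,\dd r\,\dd s$. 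Piece (II) is elementary: with $g(t)=(\sinh t)^{(n-1)/2}$ one has $g'(t)\asymp\e^{(n-1)t/2}\asymp g(t)$ for $t>1$, so splitting the inner integral over $\{s-r\le1\}$ and $\{s-r>1\}$ gives $\int_1^s(g(s)-g(r))^2(s-r)^{-1-2\sigma}\,\dd r\lesssim g(s)^2=(\sinh s)^{n-1}$ (integrability of $(s-r)^{1-2\sigma}$ at $r=s$ uses $\sigma<1$, that of $(s-r)^{-1-2\sigma}$ at $r=1$ uses $\sigma>0$); hence $\mathrm{(II)}\lesssim\int_1^\infty|\tilde u(s)|^2(\sinh s)^{n-1}\,\dd s\le\|u\|_2^2$.

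Piece (I) is the heart of the matter, and it follows at once from the pointwise lower bound
\[
K(r,s)\ \gtrsim\ |r-s|^{-1-2\sigma}\,(\sinh(r\vee s))^{-(n-1)}\,,\qquad r,s>1,
\]
since then the integrand of (I) is dominated by that of the displayed intrinsic double integral, whence $\mathrm{(I)}\lesssim\|\Delta^{\sigma/2}u\|_2^2\lesssim\|u\|_{\mathrm{H}^{\sigma}}^2$. To prove this lower bound I would retain only the contribution of small angles to $K$: one has $d(r,s,\theta)\ge|r-s|=:\ell$ for every $\theta$, and expanding the identity defining $d$ near $\theta=0$ gives $d-\ell\asymp(\sinh r\sinh s/\sinh\ell)\,\theta^2$ if $\ell\ge1$, respectively $d\asymp\sqrt{\ell^2+\sinh r\sinh s\,\theta^2}$ if $\ell<1$, so $d\asymp\ell$ for $\theta$ up to an explicit threshold; on that range of angles one inserts the two-sided bounds~\eqref{eq: P0 est} ($P_0^{\sigma}\asymp|x|^{-1-\sigma}\e^{-(n-1)|x|}$ when $\ell\ge1$, $P_0^{\sigma}\asymp|x|^{-n-2\sigma}$ when $\ell<1$) and computes the resulting one-dimensional $\theta$-integral, which in both regimes $\ell\ge1$ and $\ell<1$ reproduces exactly the claimed bound (using $r,s>1$, so $\sinh r\asymp\e^{r}$ and $\sinh s\asymp\e^{s}$). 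Adding the $L^2$-estimate, (I) and (II) proves the lemma for $0<\sigma<1$; together with the case $\sigma=1$ this concludes the proof. I expect the main obstacle to be precisely this lower estimate for $K(r,s)$: it hinges on controlling the degeneration of $d(r,s,\theta)$ to $|r-s|$ as $\theta\to0$ and correctly matching it with the two size regimes of $P_0^{\sigma}$, including the transition near $|r-s|=1$.
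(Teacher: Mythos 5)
Your proof of the $\sigma=1$ case coincides with the paper's, but for $\sigma\in(0,1)$ you take a genuinely different route. The paper does not touch the Gagliardo seminorm at all: having established the case $\sigma=1$ and noting that the map $T(u)=(\sinh\cdot)^{\frac{n-1}{2}}\,\tilde u$ is trivially bounded $L^2_{\rad}\to L^2(1,\infty)$, it invokes the classical complex interpolation identities $[L^2(1,\infty),W^{1,2}(1,\infty)]_\sigma=W^{\sigma,2}(1,\infty)$ and $[L^2,\mathrm{H}^1]_\sigma=\mathrm{H}^\sigma$ and concludes by interpolation. Your argument instead goes through the bilinear form representation of $\|\Delta^{\sigma/2}u\|_2^2$ (Lemma~\ref{lemma: Tom form1} with $\lambda=0$, i.e.\ \cite{BP2022}), the decomposition of $v(r)-v(s)$ into pieces (I) and (II), and the pointwise lower bound $K(r,s)\gtrsim|r-s|^{-1-2\sigma}(\sinh(r\vee s))^{-(n-1)}$ for the angular integral $K(r,s)=c_n\int_0^\pi P_0^\sigma(d(r,s,\theta))(\sin\theta)^{n-2}\,\dd\theta$, $r,s>1$. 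I have checked this lower bound: restricting to small angles $\theta\lesssim\sqrt{\sinh|r-s|/(\sinh r\sinh s)}$ (resp.\ $\theta\lesssim|r-s|\,(\sinh r\sinh s)^{-1/2}$) where $d\asymp|r-s|$ and $d-|r-s|\lesssim1$, and inserting~\eqref{eq: P0 est}, one even obtains the strictly stronger bounds $|r-s|^{-1-\sigma}(\sinh(r\vee s))^{-(n-1)}$ for $|r-s|\geq1$ and $|r-s|^{-1-2\sigma}(\sinh r\sinh s)^{-(n-1)/2}$ for $|r-s|<1$, each of which dominates your claim since $|r-s|^{-1-\sigma}\geq|r-s|^{-1-2\sigma}$ for $|r-s|\geq1$ and $(\sinh r\sinh s)^{-(n-1)/2}\geq(\sinh(r\vee s))^{-(n-1)}$. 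Your treatment of piece (II) and the reduction of $\|\Delta^{\sigma/2}u\|_2$ to $\|u\|_{\mathrm{H}^\sigma}$ by Cauchy--Schwarz in the spectral variable (or directly by the spectral gap) are also correct. So your proof is valid; it is more self-contained and does not rely on the interpolation theory for $W^{\sigma,2}$, at the cost of the explicit kernel computation, whereas the paper's argument is shorter because it outsources the fractional step to standard interpolation facts.
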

	
	\begin{proof}
	
	Let us start from the case $\sigma=1$.  Recall that for radial functions on $\mathbb{H}^n$ we have 
			$$
			\left|\nabla_{\mathbb{H}^n} u\right|=\left|\frac{\dd \tilde u}{\dd r}\right|.
			$$
			Thus, the gradient term in the Sobolev norm $\mathrm{H}^1$ simplifies to:
			$$
			\int_{\mathbb{H}^n}\left|\nabla_{\mathbb{H}^n} u\right|^2\, \dd \mu=c_n\int_0^{\infty}\left|\frac{\dd \tilde u}{\dd r}\right|^2 \, (\sinh r)^{n-1} \,\dd r.
			$$
			Therefore, for $v(r)=(\sinh r)^{\frac{n-1}{2}}\,\tilde u(r)$, we have
			\[
			\frac{\dd v}{\dd r}=\frac{n-1}{2}(\sinh r)^{\frac{n-3}{2}}\cosh r \,\tilde u(r)+ (\sinh r)^{\frac{n-1}2}\frac{\dd \tilde u}{\dd r}.
			\]
			Since for $r>1$ it holds $\cosh r\asymp \sinh r$, we get
			\[
			\left|\frac{\dd v}{\dd r}\right|\leq C |v(r)| +(\sinh r)^{\frac{n-1}2}\left|\frac{\dd \tilde u}{\dd r}\right|, \quad r>1.
			\]
			Hence, given that $\|v\|_{L^2(0, \infty)}=C_n\|u\|_{L^2}$, we obtain
			\[
			\left\|\frac{\dd v}{\dd r}\right\|_{L^{2}(1, \infty)} \leq C\|u\|_{L^{2} }+\left\|(\sinh r)^{\frac{n-1}{2}} \frac{\dd \tilde u}{\dd r}\right\|_{L^{2}(0, \infty)} \leq C\|u\|_{\mathrm{H}^{1} },
			\]
			which proves the statement.
			
For $\sigma \in (0,1)$ we argue by interpolation.  Indeed, since  $$\|(\sinh \cdot )^{\frac{n-1}{2}}\, \tilde u\|_{L^2(0, \infty)}=C_n\|u\|_{L^2}$$
as already recalled, we see that the linear operator $T \colon L^{2}_{\rad} \to L^{2}(0,\infty)$	given by
\[
T(u) = (\sinh \cdot )^{\frac{n-1}{2}}\, \tilde u
\]
is bounded $L^{2}_{\rad} \to L^{2}(1,\infty)$ and $\mathrm{H}^{1}_{\rad} \to W^{1,2}(1,\infty)$. Since
		\[
		[L^2(1, \infty), W^{1, 2}(1, \infty)]_{\sigma}=W^{\sigma, 2}(1, \infty),
		\]
		and
		\[
		[L^2 , \mathrm{H}^{1} ]_{\sigma}=\mathrm{H}^{\sigma} ,
		\]
see for instance~\cite[\S 10.1--10.3]{LionsMagenes} and~\cite[Lemma 2.2]{An92} respectively, by the Riesz--Thorin interpolation theorem~\cite[Theorem 4.4.1]{BerghLofstrom} we get
\[
T \colon \mathrm{H}^{\sigma}_{\rad} \to W^{\sigma,2}(1,\infty)
\]
		boundedly, whence the result.
		\end{proof}
		We are now ready to prove Proposition~\ref{lemma: L3}.

	\begin{proof}[Proof of Proposition~\ref{lemma: L3}]
		Suppose $u\in \mathrm{H}^{\sigma}_{\text{rad}}$. Then by \cite[Theorems 6.7 and 6.10]{Hitch} on the real line, for $0<\sigma \leq 1/2$ and for all $q\in \big(2, \frac{2}{1-2\sigma}\big]$ (notice that $\frac{2n}{n-2\sigma}<\frac{2}{1-2\sigma}$ for $n\geq 2$), we get
		\[
		\|(\sinh r)^{\frac{n-1}{2}}\, \tilde u \|_{L^q(1, \infty)}\leq C\, \|(\sinh r)^{\frac{n-1}{2}}\, \tilde u\|_{W^{\sigma,2}(1, \infty)}.
		\]
		Then, by Lemma~\ref{lemma: L1}
		\[
		\|(\sinh r)^{\frac{n-1}{2}}\, \tilde u \|_{L^q(1, \infty)}\leq C\, \|(\sinh r)^{\frac{n-1}{2}}\, \tilde u\|_{W^{\sigma, 2}(1, \infty)} \leq C \|u\|_{\mathrm{H}^{\sigma}(\mathbb{H}^{n})},
		\]
		which finishes the proof.
		\end{proof}

	\subsection{Radialization}
	 In this section we discuss properties of radialized functions, and in particular we prove that the radialization of a function does not increase its Sobolev $\mathcal H_{\lambda,\sigma}$ norm. For $ \phi\in \mathcal{C}_c^{\infty}( \mathbb{H}^n)$, denote its radialization by 
	\[
	\phi^{\#}(x)=\int_{K} \phi(kx)\, \dd k.\]
	Clearly, radial functions satisfy $\phi=\phi^{\#}$ and $\phi\in \mathcal{C}_c^{\infty}$ implies that $\phi^{\#}\in \mathcal{C}_c^{\infty}$. By the fact that for all $\xi \in \mathbb{R}$, the spherical functions $\varphi_{\xi}$ are bi-$K$-invariant and $|\varphi_{\xi}|\leq \varphi_0$, hence belong to all $L^p$, $2<p\leq \infty$,  the Fubini theorem yields
	\begin{align*}
		(\mathcal{H}\phi^{\#})(\xi)&=\int_G \int_{K}\phi(kx)\, \varphi_{-\xi}(x)\, \dd k\,\dd x =\int_G \phi(x)\, \varphi_{-\xi}(x)\, \dd x \\
		&=\int_K \int_G \phi(x)\,\e^{(-i\xi+\rho)\tau(k^{-1}x)}\,\dd x\,  \dd k,
	\end{align*}
	where the last equality follows from the integral representation~\eqref{eq: spherical hyp} of $\varphi_{\xi}$ and a change of variables in $K$. Therefore, by~\eqref{H-Ftr},
\[
		(\mathcal{H}\phi^{\#})(\xi)=\int_{K}	\widehat{\phi}(\xi, k\mathbb{M})\, \dd k, \quad \phi\in \mathcal{C}_c^{\infty},
\]
	so by Cauchy--Schwarz, 
	\begin{align}\label{eq: compare}
		\int_K|(\mathcal{H}\phi^{\#})(\xi)|^2\, \dd k=	|(\mathcal{H}\phi^{\#})(\xi)|^2
		&\leq \left(\int_K |\widehat{\phi}(\xi, k\mathbb{M})|\, \dd k \right)^2 \notag \\
		&\leq \int_K |\widehat{\phi}(\xi, k\mathbb{M})|^2\, \dd k.
	\end{align}

	We are now ready to discuss how radialization affects the norm $\|\cdot\|_{\lambda, \sigma}$.
	\begin{proposition}\label{prop:rad}
		Suppose $0\leq \lambda \leq \lambda_{0}$ and $\sigma \in (0,1)$. If $u \in \mathcal{H}_{\lambda,\sigma}$, then $u^{\#} \in \mathcal{H}_{\lambda,\sigma}$ and
			\[
		\|u^{\#}\|_{\lambda, \sigma}\leq \|u\|_{\lambda, \sigma}.
		\]
		Moreover, $\mathcal{C}_c^{\infty, \text{rad}}$ is dense in $\mathcal{H}_{\lambda, \sigma}^{\rad}$.
	\end{proposition}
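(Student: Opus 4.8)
The plan is to first establish the norm estimate on test functions by combining the Plancherel description of $\|\cdot\|_{\lambda,\sigma}$ with the pointwise bound~\eqref{eq: compare}, and then to extend everything by density, identifying limits through the embedding $\mathcal{H}_{\lambda,\sigma}\hookrightarrow L^{q}$ of Theorem~\ref{thm:fracPoincare2}. So, take first $\phi\in\mathcal{C}_{c}^{\infty}$. Since $\phi^{\#}$ is radial, $\widehat{\phi^{\#}}(\xi,k\mathbb{M})=(\mathcal{H}\phi^{\#})(\xi)$ is independent of $k$, and the Plancherel formula together with the spectral definition of $(\Delta^{\sigma}-\lambda^{\sigma})^{1/2}$ give
\[
\|\phi^{\#}\|_{\lambda,\sigma}^{2}=\mathrm{const.}\int_{\mathbb{R}}\bigl((\xi^{2}+\lambda_{0})^{\sigma}-\lambda^{\sigma}\bigr)\,|(\mathcal{H}\phi^{\#})(\xi)|^{2}\,\frac{\dd\xi}{|\mathbf{c}(\xi)|^{2}}.
\]
Because $\lambda\leq\lambda_{0}$, the multiplier $(\xi^{2}+\lambda_{0})^{\sigma}-\lambda^{\sigma}$ is nonnegative on $\mathbb{R}$, so one may multiply~\eqref{eq: compare} by it and integrate against $|\mathbf{c}(\xi)|^{-2}\,\dd\xi$ to obtain
\[
\|\phi^{\#}\|_{\lambda,\sigma}^{2}\leq\mathrm{const.}\int_{K}\int_{\mathbb{R}}\bigl((\xi^{2}+\lambda_{0})^{\sigma}-\lambda^{\sigma}\bigr)\,|\widehat{\phi}(\xi,k\mathbb{M})|^{2}\,\frac{\dd\xi}{|\mathbf{c}(\xi)|^{2}}\,\dd k=\|\phi\|_{\lambda,\sigma}^{2},
\]
which is the asserted inequality for $\phi\in\mathcal{C}_{c}^{\infty}$.

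Next I would pass to general $u\in\mathcal{H}_{\lambda,\sigma}$, using that radialization is linear, so $\|\phi^{\#}-\psi^{\#}\|_{\lambda,\sigma}=\|(\phi-\psi)^{\#}\|_{\lambda,\sigma}\leq\|\phi-\psi\|_{\lambda,\sigma}$ for $\phi,\psi\in\mathcal{C}_{c}^{\infty}$. Choosing $(\phi_{j})\subseteq\mathcal{C}_{c}^{\infty}$ with $\phi_{j}\to u$ in $\mathcal{H}_{\lambda,\sigma}$, the sequence $(\phi_{j}^{\#})$ is then Cauchy in $\mathcal{H}_{\lambda,\sigma}$, hence converges there to some $w$. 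To identify $w$, note that by Theorem~\ref{thm:fracPoincare2} we have $\mathcal{H}_{\lambda,\sigma}\hookrightarrow L^{q}$ for $q\in\bigl(2,\tfrac{2n}{n-2\sigma}\bigr]$, so in particular $u^{\#}$ is a well-defined element of $L^{q}$, and $\phi_{j}\to u$, $\phi_{j}^{\#}\to w$ in $L^{q}$; since $K$ acts on $\mathbb{H}^{n}$ by isometries, radialization is a contraction on $L^{q}$ by Minkowski's integral inequality, so also $\phi_{j}^{\#}\to u^{\#}$ in $L^{q}$, whence $w=u^{\#}$. Therefore $u^{\#}\in\mathcal{H}_{\lambda,\sigma}$ and
\[
\|u^{\#}\|_{\lambda,\sigma}=\lim_{j}\|\phi_{j}^{\#}\|_{\lambda,\sigma}\leq\lim_{j}\|\phi_{j}\|_{\lambda,\sigma}=\|u\|_{\lambda,\sigma}.
\]
The same argument, now run on $\mathcal{H}_{\lambda,\sigma}$ instead of $\mathcal{C}_{c}^{\infty}$, also gives the stability estimate $\|v^{\#}-w^{\#}\|_{\lambda,\sigma}\leq\|v-w\|_{\lambda,\sigma}$ for all $v,w\in\mathcal{H}_{\lambda,\sigma}$.

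For the density assertion, given $u\in\mathcal{H}_{\lambda,\sigma}^{\rad}$ I would pick $\phi_{j}\in\mathcal{C}_{c}^{\infty}$ with $\phi_{j}\to u$ in $\mathcal{H}_{\lambda,\sigma}$; then $\phi_{j}^{\#}\in\mathcal{C}_{c}^{\infty,\rad}$ and, using the stability estimate and $u^{\#}=u$ (as $u$ is radial), $\|\phi_{j}^{\#}-u\|_{\lambda,\sigma}=\|\phi_{j}^{\#}-u^{\#}\|_{\lambda,\sigma}\leq\|\phi_{j}-u\|_{\lambda,\sigma}\to 0$. I do not expect a real obstacle here; the only point needing a little care is the identification of the $\mathcal{H}_{\lambda,\sigma}$-limit of $(\phi_{j}^{\#})$ with $u^{\#}$, which genuinely has to be routed through the $L^{q}$ embedding, since the $\mathcal{H}_{\lambda,\sigma}$-topology by itself does not directly record pointwise or $L^{q}$ behaviour.
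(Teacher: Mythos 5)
Your proof is correct and follows essentially the same route as the paper: prove the contraction on $\mathcal{C}_c^\infty$ via Plancherel and~\eqref{eq: compare}, observe that linearity of radialization makes $(\phi_j^\#)$ Cauchy, pass to the limit, and identify the limit through the $L^q$ embedding. The one place you diverge is in the identification step: the paper invokes almost-everywhere convergence of an $L^q$-convergent subsequence, whereas you use that radialization is an $L^q$-contraction (by Minkowski's integral inequality and $K$-invariance of the measure) to get $\phi_j^\# \to u^\#$ in $L^q$ directly. Your route is slightly cleaner and more explicit about why the $L^q$-limit of $(\phi_j^\#)$ is $u^\#$ and not something else, a point the paper glosses over somewhat.
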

	
		\begin{proof}
		We begin by showing that the stated inequality holds for $\phi\in \mathcal{C}_{c}^{\infty}$. By the Plancherel formula,
		\begin{align*}
			\|\phi^{\#}\|^2_{\lambda, \sigma}-\|\phi\|^2_{\lambda, \sigma}	&=\|(\Delta^{\sigma}-\lambda^{\sigma})^{\frac{1}{2}}\phi^{\#}\|_2 ^2 -\|(\Delta^{\sigma}-\lambda^{\sigma})^{\frac{1}{2}}\phi\|_2^2\\
			&=\int_K\int_{\mathbb{R}} ((\xi^2+\lambda_0)^{\sigma}-\lambda^{\sigma})\left(|(\mathcal{H}\phi^{\#})(\xi)|^2- |\widehat{\phi}(\xi, k\mathbb{M})|^2\right)\, \frac{\dd \xi}{|\textbf{c}(\xi)|^2}  \, \dd k\\
			&\leq 0,
		\end{align*}
		owing to~\eqref{eq: compare}, since Fubini arguments are justified due to the Paley--Wiener theorem~\eqref{eq: PW}.
		
		Suppose now that $u\in \mathcal{H}_{\lambda,\sigma}$. We claim that 
		\begin{equation}\label{sharpsharpclaim}
		(\phi_{j}) \subseteq \mathcal{C}_{c}^{\infty} \colon \phi_{j} \to u  \mbox{ in } \mathcal{H}_{\lambda, \sigma} \quad \Longrightarrow \quad \phi_{j}^{\#} \to u^{\#} \, \mbox{ in } \mathcal{H}_{\lambda, \sigma}.
		\end{equation}
This will prove both statements, because if~\eqref{sharpsharpclaim} holds, then
\[
\|u^{\#}\|_{\lambda, \sigma} = \lim_{j} \|\phi_{j}^{\#}\|_{\lambda, \sigma}  \leq  \lim_{j} \|\phi_{j}\|_{\lambda, \sigma} = \|u\|_{\lambda, \sigma},
\]
and if $u$ is radial, namely $u=u^{\#}$, then $\phi_{j}^{\#} \to u$ in $\mathcal{H}_{\lambda, \sigma}$. Therefore, it remains to prove the claim.

Let then $(\phi_{j})$ be a sequence in $\mathcal{C}_{c}^{\infty}$ such that $\phi_{j} \to u$ in $\mathcal{H}_{\lambda, \sigma}$. By the above, the sequence $(\phi_{j}^{\#})$ is Cauchy in $\mathcal{H}_{\lambda, \sigma}$; whence it converges to some $v\in \mathcal{H}_{\lambda, \sigma}$. By Theorem~\ref{thm:fracPoincare2},  $\phi_{j} \to u$ in $L^{q}$ for some $q>2$, as well as $(\phi_{j}^{\#}) \to v$ in $L^{q}$. Since convergence in $L^{q}$, up to subsequences, holds almost everywhere too, we obtain that $v=u^{\#}$, and this completes the proof.
		\end{proof}

	\section{The semilinear fractional elliptic equation}\label{Sec:9}

The goal of the section is to prove Theorem~\ref{propeqlambda}. We shall first prove the existence of nonnegative weak solutions to the equation
	\begin{equation}\label{fracequation}
		\Delta^{\sigma} u - \lambda^{\sigma} u - u^{\gamma}=0, \qquad 1<\gamma< \frac{n+2\sigma}{n-2\sigma},
		\end{equation}
	for $0\leq \lambda\leq \lambda_{0}$ and $\sigma \in (0,1)$, in the following sense.
	
	\begin{definition}\label{weak-MS}
	Suppose $0\leq \lambda\leq \lambda_{0}$, $\sigma \in (0,1)$ and $1<\gamma\leq  \frac{n+2\sigma}{n-2\sigma}$. A nonnegative weak solution to equation~\eqref{fracequation} is a function $u\in \mathcal{H}_{\lambda,\sigma}$ such that
	\begin{equation}\label{sol1}
		\langle u ,v\rangle_{\lambda,\sigma} = \int_{\mathbb{H}^n} u^{\gamma}v\, \dd\mu  \qquad \forall v\in \mathcal{H}_{\lambda,\sigma}.
		\end{equation}
	\end{definition}
Notice that the pairing in the right hand side of~\eqref{sol1} is well defined: since $1<\gamma \leq \frac{n+2\sigma}{n-2\sigma}$, $u,v\in L^{\gamma+1}$ by Theorem~\ref{thm:fracPoincare2} and $u^{\gamma} \in L^{(\gamma+1)/\gamma} = L^{(\gamma+1)'}. $

	The first result we shall prove is then the following; only after this, we shall prove that the solution actually matches the regularity stated in Theorem~\ref{propeqlambda}.
	
		\begin{theorem}\label{existence-MS}
			Suppose $0\leq \lambda \leq \lambda_{0}$, $\sigma \in (0,1)$ and $1<\gamma< \frac{n+2\sigma}{n-2\sigma}$. Then~\eqref{fracequation} has at least one nontrivial nonnegative weak solution in $\mathcal{H}_{\lambda,\sigma}$.
		\end{theorem}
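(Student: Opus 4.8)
The plan is to obtain a solution as a minimizer of the natural constrained variational problem associated to~\eqref{fracequation}. Set
\[
S_{\lambda,\sigma}:=\inf\left\{\|u\|_{\lambda,\sigma}^{2}\colon u\in \mathcal{H}_{\lambda,\sigma}, \; \int_{\mathbb{H}^{n}}|u|^{\gamma+1}\, \dd\mu=1\right\}.
\]
Since $1<\gamma<\frac{n+2\sigma}{n-2\sigma}$, we have $2<\gamma+1<\frac{2n}{n-2\sigma}$, so by Theorem~\ref{thm:fracPoincare2} the constraint set is nonempty and $S_{\lambda,\sigma}>0$. First I would take a minimizing sequence $(u_{j})\subseteq \mathcal{H}_{\lambda,\sigma}$; by Corollary~\ref{cor:abs} we may assume $u_{j}\geq 0$ (replacing $u_{j}$ by $|u_{j}|$ does not increase the norm and leaves the constraint unchanged), and by Proposition~\ref{prop:rad} we may assume $u_{j}$ is radial (radialization does not increase the norm, and it does not decrease the $L^{\gamma+1}$ norm by Jensen/Minkowski applied to the $K$-average, hence after renormalizing the constraint we still have a minimizing sequence of nonnegative radial functions). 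The sequence $(u_{j})$ is bounded in $\mathcal{H}_{\lambda,\sigma}^{\mathrm{rad}}$, so up to a subsequence it converges weakly to some $u\in \mathcal{H}_{\lambda,\sigma}^{\mathrm{rad}}$.

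The crucial step is to pass to the limit in the constraint, and this is exactly where Theorem~\ref{thm: cpt radial emb} enters: since $\gamma+1$ lies in the \emph{open} range $(2,\frac{2n}{n-2\sigma})$, the embedding $\mathcal{H}_{\lambda,\sigma}^{\mathrm{rad}}\hookrightarrow L^{\gamma+1}$ is compact, so $u_{j}\to u$ strongly in $L^{\gamma+1}$, whence $\int_{\mathbb{H}^{n}}u^{\gamma+1}\, \dd\mu=1$ and in particular $u\not\equiv 0$. By weak lower semicontinuity of the norm, $\|u\|_{\lambda,\sigma}^{2}\leq \liminf_{j}\|u_{j}\|_{\lambda,\sigma}^{2}=S_{\lambda,\sigma}$, so $u$ is a minimizer. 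Next I would write the Euler--Lagrange equation: for any $w\in \mathcal{H}_{\lambda,\sigma}$ the function $t\mapsto \|u+tw\|_{\lambda,\sigma}^{2}\big/\big(\int|u+tw|^{\gamma+1}\big)^{2/(\gamma+1)}$ has a minimum at $t=0$, and differentiating gives
\[
\langle u,w\rangle_{\lambda,\sigma}=S_{\lambda,\sigma}\int_{\mathbb{H}^{n}}|u|^{\gamma-1}u\, w\, \dd\mu\qquad \forall\, w\in \mathcal{H}_{\lambda,\sigma};
\]
here differentiability of the $L^{\gamma+1}$ term under the integral is routine given the embedding. Since $u\geq 0$, a suitable positive multiple $v=c\,u$ (choosing $c$ so that $c^{\gamma-1}S_{\lambda,\sigma}=1$) satisfies~\eqref{sol1}, i.e.\ $v$ is a nontrivial nonnegative weak solution of~\eqref{fracequation} in $\mathcal{H}_{\lambda,\sigma}$.

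\textbf{Main obstacle.} The only genuinely delicate point is the compactness of the minimizing sequence, i.e.\ ruling out loss of mass. In the non-radial setting this would fail because of the noncompact group of isometries, but here we have restricted to radial functions from the outset using Proposition~\ref{prop:rad}, and Theorem~\ref{thm: cpt radial emb} supplies exactly the compact embedding $\mathcal{H}_{\lambda,\sigma}^{\mathrm{rad}}\hookrightarrow L^{q}$ for $q$ in the open range, which covers $q=\gamma+1$. One minor point to check carefully is that radialization behaves well with respect to the $L^{\gamma+1}$ constraint: by the integral Minkowski inequality $\|\int_{K}|u(k\,\cdot)|\,\dd k\|_{\gamma+1}\leq \|u\|_{\gamma+1}$, so $u^{\#}$ might have strictly smaller $L^{\gamma+1}$ norm, but after rescaling $u^{\#}$ to meet the constraint its $\mathcal{H}_{\lambda,\sigma}$ norm only decreases further, so passing to radial functions can only help. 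Everything else—positivity via Corollary~\ref{cor:abs}, boundedness of the minimizing sequence, weak lower semicontinuity, and the derivation of the Euler--Lagrange identity—is standard Hilbert-space and calculus-of-variations machinery, for which I would cite the relevant parts of the excerpt (Theorems~\ref{thm:fracPoincare2} and~\ref{thm: cpt radial emb}, Corollary~\ref{cor:abs}, Proposition~\ref{prop:rad}).
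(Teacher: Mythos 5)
Your approach (direct constrained minimization) is genuinely different from the paper's, which instead minimizes the energy functional $J(u)=\tfrac12\|u\|_{\lambda,\sigma}^2-\tfrac1{\gamma+1}\|u\|_{\gamma+1}^{\gamma+1}$ on the Nehari manifold $\mathcal N^{\mathrm{rad}}_{\geq 0}$ and runs the Ekeland variational principle to produce a Palais--Smale sequence. Both routes rely on the same infrastructure (Theorems~\ref{thm:fracPoincare2} and~\ref{thm: cpt radial emb}, Corollary~\ref{cor:abs}, Proposition~\ref{prop:rad}), and both are in principle viable, but your version as written has a genuine gap.

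The gap is in the reduction to radial minimizing sequences. Radialization \emph{decreases both} the numerator and the denominator of the Rayleigh quotient $I(u)=\|u\|_{\lambda,\sigma}^2/\|u\|_{\gamma+1}^2$: by Proposition~\ref{prop:rad}, $\|u^{\#}\|_{\lambda,\sigma}\leq\|u\|_{\lambda,\sigma}$, and by Minkowski's integral inequality, $\|u^{\#}\|_{\gamma+1}\leq\|u\|_{\gamma+1}$. So $I(u^{\#})\leq I(u)$ is \emph{not} implied, and your attempted repair is backwards: if $\|u^{\#}\|_{\gamma+1}=a\leq 1$ then the rescaled function $a^{-1}u^{\#}$ meets the constraint but its $\mathcal{H}_{\lambda,\sigma}$ norm is $a^{-1}\|u^{\#}\|_{\lambda,\sigma}\geq\|u^{\#}\|_{\lambda,\sigma}$, i.e.\ the rescaling \emph{increases} the energy, not decreases it. Hence you cannot conclude that the infimum over radial nonnegative functions equals $S_{\lambda,\sigma}$, and the compactness argument (Theorem~\ref{thm: cpt radial emb}) only produces a minimizer of the \emph{radially restricted} problem. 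Consequently your Euler--Lagrange derivation, which differentiates the quotient along \emph{arbitrary} $w\in\mathcal{H}_{\lambda,\sigma}$, is not justified: a priori you only obtain $\langle u,w\rangle_{\lambda,\sigma}=S^{\mathrm{rad}}\int u^{\gamma}w\,\dd\mu$ for \emph{radial} $w$. The missing ingredient is a symmetric criticality principle, which the paper supplies as Lemma~\ref{Effielemma}: for radial $u$ one has $\langle u,w\rangle_{\lambda,\sigma}=\langle u,w^{\#}\rangle_{\lambda,\sigma}$ and $\int u^{\gamma}w\,\dd\mu=\int u^{\gamma}w^{\#}\,\dd\mu$, from which the Euler--Lagrange identity for radial $w$ automatically extends to all of $\mathcal{H}_{\lambda,\sigma}$. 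With that lemma inserted after your constrained minimization, the proof closes (and you can drop the unjustified equality of infima entirely). As a minor point, your normalization constant should satisfy $c^{\gamma-1}=S_{\lambda,\sigma}$, not $c^{\gamma-1}S_{\lambda,\sigma}=1$.
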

	
	We begin by equivalently characterizing the notion of weak solution, wherefrom the name.
	
	\begin{lemma}\label{lem:equiv}
	Suppose $0\leq \lambda\leq \lambda_{0}$, $\sigma \in (0,1)$ and $1<\gamma\leq  \frac{n+2\sigma}{n-2\sigma}$. A nonnegative function $u\in \mathcal{H}_{\lambda,\sigma}$ is a weak solution to equation~\eqref{fracequation} if and only if
	\begin{equation}\label{sol2}
			\int u \cdot (\Delta^{\sigma}-\lambda^{\sigma})\phi \, \dd\mu =  \int_{\mathbb{H}^n} u^{\gamma}\phi\, \dd\mu  \qquad \forall \phi \in \mathcal{C}_{c}^{\infty}.
		\end{equation}
	\end{lemma}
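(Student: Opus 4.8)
The plan is to prove the equivalence by unwinding the definition of the inner product $\langle\cdot,\cdot\rangle_{\lambda,\sigma}$ on $\mathcal{H}_{\lambda,\sigma}$ and using the density of $\mathcal{C}_c^\infty$ together with the self-adjointness of the operators involved. First I would observe that, since by Theorem~\ref{thm:fracPoincare2} every $u\in\mathcal{H}_{\lambda,\sigma}$ lies in $L^q$ for some $q>2$ with $q\leq\frac{2n}{n-2\sigma}$, both sides of~\eqref{sol1} and~\eqref{sol2} make sense for every test function $\phi\in\mathcal{C}_c^\infty\subseteq\mathcal{H}_{\lambda,\sigma}$ (the right-hand pairing being controlled as noted after Definition~\ref{weak-MS}, and the left-hand side of~\eqref{sol2} being finite because $(\Delta^\sigma-\lambda^\sigma)\phi\in L^p$ for $p=q'$, which follows from Proposition~\ref{Prop 4.3} since $\Delta^\sigma\phi$ and $\phi$ are both in every $L^p$).

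Next I would show the direction $\eqref{sol1}\Rightarrow\eqref{sol2}$. Take $v=\phi\in\mathcal{C}_c^\infty$ in~\eqref{sol1}; it suffices to check that for $u\in\mathcal{H}_{\lambda,\sigma}$ and $\phi\in\mathcal{C}_c^\infty$ one has
\begin{equation}\label{eq:ibp}
\langle u,\phi\rangle_{\lambda,\sigma}=\int_{\mathbb{H}^n}u\cdot(\Delta^\sigma-\lambda^\sigma)\phi\,\dd\mu.
\end{equation}
When $u=\psi\in\mathcal{C}_c^\infty$ as well, \eqref{eq:ibp} is just the self-adjointness (on $L^2$, via the Plancherel formula~\eqref{eq: Plancherel thm}) of $(\Delta^\sigma-\lambda^\sigma)^{1/2}$ and of $(\Delta^\sigma-\lambda^\sigma)$, namely $\langle(\Delta^\sigma-\lambda^\sigma)^{1/2}\psi,(\Delta^\sigma-\lambda^\sigma)^{1/2}\phi\rangle=\langle\psi,(\Delta^\sigma-\lambda^\sigma)\phi\rangle$. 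For general $u$, pick $(\phi_j)\subseteq\mathcal{C}_c^\infty$ with $\phi_j\to u$ in $\mathcal{H}_{\lambda,\sigma}$; then the left-hand side of~\eqref{eq:ibp} passes to the limit by continuity of the inner product, while the right-hand side passes to the limit because $\phi_j\to u$ in $L^q$ (Theorem~\ref{thm:fracPoincare2}) and $(\Delta^\sigma-\lambda^\sigma)\phi\in L^{q'}$, so $\int\phi_j\cdot(\Delta^\sigma-\lambda^\sigma)\phi\,\dd\mu\to\int u\cdot(\Delta^\sigma-\lambda^\sigma)\phi\,\dd\mu$ by Hölder. This proves~\eqref{eq:ibp}, hence~\eqref{sol2}.

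For the converse $\eqref{sol2}\Rightarrow\eqref{sol1}$, I would argue that~\eqref{sol2} together with~\eqref{eq:ibp} gives $\langle u,\phi\rangle_{\lambda,\sigma}=\int u^\gamma\phi\,\dd\mu$ for all $\phi\in\mathcal{C}_c^\infty$, and then extend this from $\mathcal{C}_c^\infty$ to all $v\in\mathcal{H}_{\lambda,\sigma}$ by density: given $v\in\mathcal{H}_{\lambda,\sigma}$, take $\phi_j\to v$ in $\mathcal{H}_{\lambda,\sigma}$; the left side converges to $\langle u,v\rangle_{\lambda,\sigma}$, and the right side converges to $\int u^\gamma v\,\dd\mu$ because $\phi_j\to v$ in $L^{\gamma+1}$ (Theorem~\ref{thm:fracPoincare2}, as $\gamma+1\le\frac{2n}{n-2\sigma}$) while $u^\gamma\in L^{(\gamma+1)'}$. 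This yields~\eqref{sol1} and completes the equivalence. The only mildly delicate point — and the step I expect to require the most care — is the justification of~\eqref{eq:ibp} for general $u\in\mathcal{H}_{\lambda,\sigma}$, i.e.\ making sure that the functional $\phi\mapsto\int u\cdot(\Delta^\sigma-\lambda^\sigma)\phi\,\dd\mu$ is the honest $\mathcal{H}_{\lambda,\sigma}$-inner-product pairing; all of this rests on the $L^q$ embedding of Theorem~\ref{thm:fracPoincare2} and the decay estimate of Proposition~\ref{Prop 4.3}, both already available, so no new ingredient is needed.
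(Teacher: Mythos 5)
Your proof is correct and follows essentially the same route as the paper: you isolate the identity $\langle u,\phi\rangle_{\lambda,\sigma}=\int u\cdot(\Delta^\sigma-\lambda^\sigma)\phi\,\dd\mu$ for test functions $\phi$ (which the paper establishes as its display~\eqref{scalarprodsmooth}), prove it by approximating $u$ in $\mathcal{H}_{\lambda,\sigma}$ by test functions and using the $L^{\gamma+1}$ embedding together with Proposition~\ref{Prop 4.3}, and then close both implications by a second density argument. The paper's argument is identical in structure and ingredients, differing only in how the chain of limits is organized on the page.
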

\begin{proof}
Suppose $\phi \in \mathcal{C}_{c}^{\infty}$ and $(\phi_{j}) \subset \mathcal{C}_{c}^{\infty}$ such that $\phi_{j} \to u$ in $\mathcal{H}_{\lambda,\sigma}$. Then
		\begin{equation}\label{scalarprodsmooth}
			\begin{split}
				\langle u ,\phi\rangle_{\lambda,\sigma} 
				&= \lim_{j} \langle \phi_{j} ,\phi\rangle_{\lambda,\sigma} \\
				&= \lim_{j} \int (\Delta^{\sigma}-\lambda^{\sigma})^{1/2} \phi_{j} \cdot (\Delta^{\sigma}-\lambda^{\sigma})^{1/2} \phi \, \dd\mu\\
				& = \lim_{j} \int \phi_{j} \cdot (\Delta^{\sigma}-\lambda^{\sigma}) \phi \, \dd\mu =\int u \cdot (\Delta^{\sigma}-\lambda^{\sigma}) \phi \, \dd\mu,
			\end{split}
		\end{equation}
		the last step because $\phi_{j} \to u$ in $L^{\gamma+1}$ by Theorem~\ref{thm:fracPoincare2} and $ (\Delta^{\sigma}-\lambda^{\sigma}) \phi  \in L^{(\gamma+1)'}$ by Proposition~\ref{Prop 4.3}. This shows that if~\eqref{sol1} holds, then~\eqref{sol2} holds too.
		
		On the other hand, suppose~\eqref{sol2} holds, $v \in\mathcal{H}_{\lambda,\sigma}$, $(\phi_{j}), (\psi_{j})\subset \mathcal{C}_{c}^{\infty}$ such that $\phi_{j} \to u$ and $\psi_{j} \to v$ in $\mathcal{H}_{\lambda,\sigma}$. Then, using~\eqref{scalarprodsmooth} and the fact that $u\in L^{\gamma+1}$ whence $u^{\gamma}\in L^{\frac{\gamma+1}{\gamma}} = L^{(\gamma+1)'}$, and $\psi_{j} \to v$ in $L^{\gamma+1}$,
		\begin{align*}
			\int u^{\gamma} v \, d\mu
			& = \lim_{j} \int u^{\gamma} \psi_{j} \, d\mu = \lim_{j} \int u  \cdot (\Delta^{\sigma}-\lambda^{\sigma}) \psi_{j} \, d\mu = \lim_{j} \langle u,\psi_{j}\rangle_{\lambda,\sigma}  = \langle u,v\rangle_{\lambda,\sigma},
		\end{align*}
		which is~\eqref{sol1}.
\end{proof}

	Let us now define some functionals and some spaces which will be of use in the following.
	
	\begin{definition}
Suppose $0\leq \lambda\leq \lambda_{0}$, $\sigma \in (0,1)$ and $1<\gamma\leq  \frac{n+2\sigma}{n-2\sigma}$. We denote by $I$ and $J$ the functionals on $\mathcal H_{\lambda,\sigma}$ given by
\[
	I(u) = \frac{\|u\|_{\lambda,\sigma}^{2}}{\|u\|_{\gamma+1}^{2}}, \qquad 	J(u) = \frac{1}{2}\|u\|_{\lambda,\sigma}^{2} -\frac{1}{\gamma+1}\|u\|_{\gamma+1}^{\gamma+1}.
	\]
	We also define the Nehari manifold
	\[
	\mathcal{N} = \{u \in \mathcal H_{\lambda,\sigma}\setminus\{0\} \colon \|u\|_{\lambda,\sigma}^{2} = \|u\|_{\gamma+1}^{\gamma+1} \}, 
	\]
	and its submanifolds
	\[
\mathcal{N}^{\rad}_{\geq 0} =  \{u\in \mathcal{N} \colon u \geq 0, u \mbox{ radial} \},	\qquad		\mathcal{N}^{\rad}_{ \pm} =  \{u\in \mathcal{N} \colon u \geq 0 \mbox{ or } u \leq 0, \, u \mbox{ radial} \}.
			\]
	\end{definition}
Observe that $\mathcal{N}$, $\mathcal{N}^{\rad}_{\geq 0}$ and  $\mathcal{N}^{\rad}_{ \pm} $, endowed with the distance induced by the norm of $\mathcal{H}_{\lambda,\sigma}$, are complete metric spaces: indeed, sign, radiality and normalization are preserved by limits (by Theorem~\ref{thm:fracPoincare2}, convergence with respect to the norm of $\mathcal{H}_{\lambda,\sigma}$ implies, up to subsequence, convergence almost everywhere). Moreover, a sequence in $\mathcal{N}$ cannot converge to zero, since again by Theorem~\ref{thm:fracPoincare2} there is $C>0$ such that $\|u\|_{\gamma+1} \geq C$ for all $u\in \mathcal{N}$.

\smallskip

	Let us notice that when $u$ belongs to $\mathcal{N}$ then
	\begin{equation}\label{IJ}
		I(u) = \alpha_{\gamma}^{-2\alpha_{\gamma}} J(u)^{ {2\alpha_{\gamma}} }, \qquad \alpha_{\gamma}= \frac{1}{2}- \frac{1}{\gamma+1}, \quad u\in \mathcal{N}.
	\end{equation}
	
	For the next lemma, recall that a functional $E$ on $\mathcal{H}_{\lambda,\sigma}$ is said to be differentiable at $u\in \mathcal{H}_{\lambda,\sigma}$ if there exists an element $E'(u) \in \mathcal{H}_{\lambda,\sigma}$ such that
\[
		E(w+u) = E(u) +\langle E'(u),w\rangle_{\lambda,\sigma} + o(\|w\|_{\lambda,\sigma}) \qquad \forall w \in \mathcal{H}_{\lambda,\sigma}.
\]
	We say that $E$ is $\mathcal{C}^{1} = \mathcal{C}^{1}( \mathcal{H}_{\lambda,\sigma})$ if $E$ is differentiable at every point of $\mathcal{H}_{\lambda,\sigma}$ and $E' \colon  \mathcal{H}_{\lambda,\sigma} \to  \mathcal{H}_{\lambda,\sigma}$ is continuous.

	\begin{lemma}\label{J'}
		Suppose $0\leq \lambda \leq \lambda_{0}$, $\sigma \in (0,1)$ and $1<\gamma\leq \frac{n+2\sigma}{n-2\sigma}$. Then $J\in \mathcal{C}^{1}$ and $J'(u) = u - T(|u|^{\gamma-1}u)$ for all $u\in \mathcal H_{\lambda,\sigma}$; equivalently, 
		\[
		\langle J'(u),v\rangle_{\lambda,\sigma} = \langle u,v\rangle_{\lambda,\sigma} - \int_{\mathbb{H}^n} |u|^{\gamma-1}uv\, \dd\mu, \qquad u,v\in \mathcal  H_{\lambda,\sigma}.
		\]
	\end{lemma}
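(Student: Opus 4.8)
The plan is to verify the Fréchet differentiability of $J = \frac12 \|\cdot\|_{\lambda,\sigma}^2 - \frac{1}{\gamma+1}\|\cdot\|_{\gamma+1}^{\gamma+1}$ by treating its two summands separately, and then to identify the derivative via the Riesz-type isometry $T \colon \mathcal{H}_{\lambda,\sigma}^{-1} \to \mathcal{H}_{\lambda,\sigma}$ from Lemma~\ref{lemmastar}. For the quadratic part $Q(u) = \frac12 \|u\|_{\lambda,\sigma}^2 = \frac12 \langle u,u\rangle_{\lambda,\sigma}$, an elementary expansion gives $Q(u+w) = Q(u) + \langle u,w\rangle_{\lambda,\sigma} + \frac12\|w\|_{\lambda,\sigma}^2$, so $Q$ is differentiable everywhere with $Q'(u) = u$, and $Q'$ is trivially continuous (in fact the identity). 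Thus the content is entirely in the nonlinear term $\Psi(u) = \frac{1}{\gamma+1}\|u\|_{\gamma+1}^{\gamma+1} = \frac{1}{\gamma+1}\int_{\mathbb{H}^n} |u|^{\gamma+1}\, \dd\mu$.

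First I would record that $\mathcal{H}_{\lambda,\sigma} \hookrightarrow L^{\gamma+1}$ continuously: indeed $\gamma+1 \in (2, \frac{2n}{n-2\sigma}]$ whenever $1 < \gamma \leq \frac{n+2\sigma}{n-2\sigma}$, so Theorem~\ref{thm:fracPoincare2} applies, and we fix $C_0>0$ with $\|u\|_{\gamma+1} \leq C_0\|u\|_{\lambda,\sigma}$. The map $u \mapsto |u|^{\gamma+1}$ on $L^{\gamma+1}$, and its Gâteaux derivative $v \mapsto (\gamma+1)\int |u|^{\gamma-1}uv$, are classical: one uses the pointwise inequality $\big||a+b|^{\gamma+1} - |a|^{\gamma+1} - (\gamma+1)|a|^{\gamma-1}ab\big| \leq C_\gamma(|a|^{\gamma-1}b^2 + |b|^{\gamma+1})$ for $\gamma \geq 1$, then Hölder with exponents $\frac{\gamma+1}{\gamma-1}$ and $\frac{\gamma+1}{2}$ to bound $\int |u|^{\gamma-1}w^2 \leq \|u\|_{\gamma+1}^{\gamma-1}\|w\|_{\gamma+1}^2$. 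This yields $\Psi(u+w) = \Psi(u) + \int |u|^{\gamma-1}uw\, \dd\mu + o(\|w\|_{\gamma+1})$, and since $\|w\|_{\gamma+1} \lesssim \|w\|_{\lambda,\sigma}$ the remainder is also $o(\|w\|_{\lambda,\sigma})$. Now the linear functional $w \mapsto \int_{\mathbb{H}^n} |u|^{\gamma-1}uw\, \dd\mu$ is exactly the action of $|u|^{\gamma-1}u \in L^{(\gamma+1)'}$ viewed inside $\mathcal{H}_{\lambda,\sigma}^{-1}$ via~\eqref{qduality}, so by Lemma~\ref{lemmastar} there is a unique $T(|u|^{\gamma-1}u) \in \mathcal{H}_{\lambda,\sigma}$ with $\langle w, T(|u|^{\gamma-1}u)\rangle_{\lambda,\sigma} = \int |u|^{\gamma-1}uw\, \dd\mu$ for all $w$; note $L^{(\gamma+1)'} \subseteq \mathcal{H}_{\lambda,\sigma}^{-1}$ requires $(\gamma+1)' \geq \frac{2n}{n+2\sigma}$, which holds since $\gamma+1 \leq \frac{2n}{n-2\sigma}$. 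Combining, $J$ is Fréchet differentiable at every $u$ with $\langle J'(u),v\rangle_{\lambda,\sigma} = \langle u,v\rangle_{\lambda,\sigma} - \int_{\mathbb{H}^n}|u|^{\gamma-1}uv\, \dd\mu$, i.e.\ $J'(u) = u - T(|u|^{\gamma-1}u)$.

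It remains to check that $J' \colon \mathcal{H}_{\lambda,\sigma} \to \mathcal{H}_{\lambda,\sigma}$ is continuous. Since $u \mapsto u$ is continuous, it suffices that $u \mapsto T(|u|^{\gamma-1}u)$ is continuous, and because $T$ is an isometry it suffices that $u \mapsto |u|^{\gamma-1}u$ is continuous from $\mathcal{H}_{\lambda,\sigma}$ into $\mathcal{H}_{\lambda,\sigma}^{-1}$. By the embedding it is enough to show $u \mapsto |u|^{\gamma-1}u$ is continuous from $L^{\gamma+1}$ to $L^{(\gamma+1)'}$; this is the standard continuity of the Nemytskii (superposition) operator associated to $t \mapsto |t|^{\gamma-1}t$, which maps $L^{\gamma+1} \to L^{(\gamma+1)'}$ and is continuous because the growth exponent matches the target space exactly (one invokes the elementary lemma on continuity of Nemytskii operators, or argues directly: along a convergent sequence pass to an a.e.-convergent subsequence dominated in $L^{\gamma+1}$, apply $||a|^{\gamma-1}a - |b|^{\gamma-1}b| \leq C_\gamma(|a|^{\gamma-1}+|b|^{\gamma-1})|a-b|$ together with generalized dominated convergence, and use that limits of subsequences agree). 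Hence $J \in \mathcal{C}^1(\mathcal{H}_{\lambda,\sigma})$.

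The only genuinely delicate point is bookkeeping the exponent conditions—checking that $\gamma+1$ lands in the Sobolev range $(2,\frac{2n}{n-2\sigma}]$ and that its conjugate lands in $[\frac{2n}{n+2\sigma},2)$ so that both $u \in L^{\gamma+1}$ and $|u|^{\gamma-1}u \in L^{(\gamma+1)'} \subseteq \mathcal{H}_{\lambda,\sigma}^{-1}$—and making sure the Gâteaux-to-Fréchet upgrade goes through, which it does because the first-order Taylor remainder is controlled in $L^{\gamma+1}$-norm uniformly in the direction. The Nemytskii continuity is routine but should be stated, as it is precisely what gives $J' \in \mathcal{C}^0$ rather than mere Gâteaux differentiability. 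I do not anticipate any serious obstacle; the proof is a clean application of Lemma~\ref{lemmastar}, Theorem~\ref{thm:fracPoincare2}, and classical facts about superposition operators on Lebesgue spaces.
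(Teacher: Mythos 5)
Your proposal is correct and follows essentially the same route as the paper: both split $J$ into the quadratic part (derivative computed by the elementary bilinear expansion) and the nonlinear part (derivative identified through the Riesz isometry $T$ of Lemma~\ref{lemmastar}, remainder controlled via the embedding of Theorem~\ref{thm:fracPoincare2} and H\"older). The one cosmetic difference is in the final continuity step: the paper uses the same pointwise bound $|f(b)-f(a)| \leq C_\gamma(|a|^{\gamma-1}+|b|^{\gamma-1})|b-a|$ together with H\"older to show $J_2'$ is Lipschitz on bounded sets, a slightly sharper and more direct route than the Nemytskii/dominated-convergence argument you invoke, though both deliver the needed $\mathcal{C}^1$ regularity.
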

	\begin{proof}
		Write
		\[
		J_{1}(u) =  \frac{1}{2}\|u\|_{\lambda,\sigma} , \qquad J_{2}(u) =  \frac{1}{\gamma+1}\|u\|_{\gamma+1}^{\gamma+1}.
		\]
		Since for $u,w\in \mathcal{H}_{\lambda, \sigma}$
		\[
		J_{1}(w+u) = \frac{1}{2}\|w+u\|_{\lambda, \sigma}^{2} = \frac{1}{2}\|u\|_{\lambda, \sigma}^{2} + \langle u, w\rangle_{\lambda, \sigma} + \frac{1}{2}\|w\|_{\lambda, \sigma}^{2}  
		\]
		one sees that $J_{1}$ is differentiable at $u$ and $J_{1}'(u) =u$, whence $J_{1}\in \mathcal{C}^{1}$.
		
As for $J_{2}$, recall that $u^{\gamma} \in L^{(\gamma+1)/\gamma} = L^{(\gamma+1)'} \subseteq \mathcal{H}_{\lambda,\sigma}^{-1}$ thanks to Theorem~\ref{thm:fracPoincare2}, the duality being given by~\eqref{qduality}. We write $[u^{\gamma}]^{*}$ to distinguish between $u^{\gamma}$ and the functional $[u^{\gamma}]^{*} \in \mathcal{H}_{\lambda,\sigma}^{-1}$.  By the identity, for any differentiable $f$ and $F$ such that $F'=f$, and $a,b\in \R$,
\begin{equation}\label{Fab}
		F(a+b) = F(a) + f(a) b + \int_{0}^{1}b^{2} (1-s) f'(a+sb)\,\dd s
\end{equation}
	we deduce, for $w\in  \mathcal{H}_{\lambda, \sigma}$ (and $f(x)=|x|^{\gamma-1}x$ and $F(x) = \int_{0}^{x}f(t)\, \dd t =\frac{1}{\gamma+1} |x|^{\gamma+1}$)
		\begin{align*}
		J_{2}(u+w)
			&=  \int_{\mathbb{H}^n} F(u+w) \, \dd\mu\\
			& = \int_{\mathbb{H}^n}( F(u) + f(u)w)\, \dd\mu + R(u,w)\\
			& = J_{2}(u) + [f(u)]^{*}(w)+ R(u,w) = J_{2}(u) + \langle T(f(u)), w\rangle_{\lambda, \sigma} + R(u,w)
		\end{align*}
		where we used Lemma~\ref{lemmastar} in the last equality ( {recall $T\colon \mathcal{H}_{\lambda, \sigma}^{-1}\rightarrow \mathcal{H}_{\lambda, \sigma}$, $T(v^{*})=v$}). By~\eqref{Fab}, H\"older's inequality and Theorem~\ref{thm:fracPoincare}	
		\begin{align*}
			R(u,w) 
			&\leq C \int_{\mathbb{H}^n} (|u|^{\gamma-1} + |w|^{\gamma-1})|w|^{2}\, \dd\mu\\
			& \leq C \int_{\mathbb{H}^n}  |w|^{\gamma+1}\dd\mu + C \||u|^{\gamma-1}\|_{\frac{\gamma+1}{\gamma-1}} \||w|^{2}\|_{\frac{\gamma+1}{2}}\\
			& \leq C ( \|w\|_{\gamma+1}^{\gamma+1} + \|u\|_{\gamma+1}^{\gamma-1} \|w\|_{\gamma+1}^{2})\\
			&  = o(\|w\|_{\lambda, \sigma}).
		\end{align*}
		This shows that $J'_{2}(u) = T(f(u))$. It remains to prove that $J'_{2}$ is continuous.
		
Assume that $u,w\in \mathcal{H}_{\lambda, \sigma}  $ are such that $\|u\|_{\lambda, \sigma}, \|w\|_{\lambda,\sigma}\leq C$. Then by Lemma~\ref{lemmastar}
		\begin{align*}
			\| J'_{2}(u) - J'_{2}(w)\|_{{\lambda,\sigma}} 
			&= \| T(f(u))- T(f(w))\|_{\lambda,\sigma}\\
			& \leq \| f(u)- f(w)\|_{\mathcal{H}^{-1}_{\lambda,\sigma}} \\
			&\leq C\| f(u)- f(w)\|_{\frac{\gamma+1}{\gamma}}
		\end{align*}
		the last inequality by the embedding $L^{ \frac{\gamma+1}{\gamma}} \subseteq \mathcal{H}_{\lambda,\sigma}^{-1}$. 	Observe now that since  $f(b)-f(a) = (b-a) \int_{0}^{1} f'(a+s(b-a))\, ds$, one gets
		\begin{equation}\label{fbfa}
		\begin{split}
			|f(b)-f(a)| 
			&\leq |b-a|\int_{0}^{1}|f'(a+s(b-a))|\, \dd s \\
			& \leq C|b-a|(|a|^{\gamma-1} + |b-a|^{\gamma-1}) \leq C|b-a|(|a|^{\gamma-1} + |b|^{\gamma-1}),
		\end{split}
		\end{equation}
		whence for some $\eta>0$
		\begin{equation}\label{aslater}
		\begin{split}
			\| f(u)- f(w)\|_{\frac{\gamma+1}{\gamma}} 
			&\leq C \bigg( \int_{\mathbb{H}^n} \Big(|u|^{\gamma-1} + |w|^{\gamma-1})|u-w|\Big)^{\frac{\gamma+1}{\gamma}}\bigg)^{\frac{\gamma}{\gamma+1}}\\
			& \leq C\|(|u|^{\gamma-1} + |w|^{\gamma-1})^{\frac{\gamma+1}{\gamma}}\|_{\frac{\gamma}{\gamma-1}}^{\frac{\gamma}{\gamma+1}} \||u-w|^{\frac{\gamma+1}{\gamma}}\|_{\gamma}^{\frac{\gamma}{\gamma+1}} \\
			& = C(\|u\|_{\gamma+1}^{\eta} +\|w\|_{\gamma+1}^{\eta}) \|u-w\|_{\gamma+1}\\
			& \leq C \|u-w\|_{\lambda,\sigma}
			\end{split}
		\end{equation}
		where we used once more Theorem~\ref{thm:fracPoincare}. We conclude that $J'_{2}$ is Lipschitz continuous on bounded sets, whence $J_{2}\in \mathcal{C}^{1}$.
	\end{proof}

	\begin{lemma}\label{Effielemma}
		Suppose $0\leq \lambda \leq \lambda_{0}$, $\sigma \in (0,1)$ and $1<\gamma\leq \frac{n+2\sigma}{n-2\sigma}$. If $u \in \mathcal{H}_{\lambda,\sigma}^{\rad}$, then
		\[
		\langle J'(u), v\rangle_{\lambda,\sigma} = \langle J'(u), v^{\#}\rangle_{\lambda,\sigma} \qquad \forall v\in \mathcal{H}_{\lambda,\sigma},
		\]
		whence $\|J'(u)\|_{\lambda,\sigma} = \sup_{v\in  \mathcal{H}_{\lambda,\sigma}^{\rad} \setminus\{0\}} \langle J'(u),\frac{v}{\|v\|_{\lambda,\sigma}}\rangle_{\lambda,\sigma} $.
	\end{lemma}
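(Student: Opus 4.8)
The plan is to prove the two stated facts in turn, the second being an immediate consequence of the first. For the first identity, the starting point is the explicit formula for $J'$ furnished by Lemma~\ref{J'}, namely $\langle J'(u),v\rangle_{\lambda,\sigma} = \langle u,v\rangle_{\lambda,\sigma} - \int_{\mathbb{H}^n}|u|^{\gamma-1}uv\,\dd\mu$. Thus it suffices to check separately that $\langle u,v\rangle_{\lambda,\sigma} = \langle u,v^{\#}\rangle_{\lambda,\sigma}$ and that $\int_{\mathbb{H}^n}|u|^{\gamma-1}u\,v\,\dd\mu = \int_{\mathbb{H}^n}|u|^{\gamma-1}u\,v^{\#}\,\dd\mu$ whenever $u$ is radial.

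For the inner-product term, I would argue by density. Pick $(\phi_j)\subset\mathcal{C}_c^\infty$ with $\phi_j\to u$ in $\mathcal{H}_{\lambda,\sigma}$; by Proposition~\ref{prop:rad} we may in fact assume $\phi_j\in\mathcal{C}_c^{\infty,\mathrm{rad}}$, so $\phi_j = \phi_j^{\#}$. Similarly take $(\psi_j)\subset\mathcal{C}_c^\infty$ with $\psi_j\to v$ in $\mathcal{H}_{\lambda,\sigma}$; then $\psi_j^{\#}\to v^{\#}$ in $\mathcal{H}_{\lambda,\sigma}$, again by Proposition~\ref{prop:rad}. Using the Plancherel formula~\eqref{eq: Plancherel thm} together with the identity $(\mathcal{H}\psi_j^{\#})(\xi) = \int_K\widehat{\psi_j}(\xi,k\mathbb{M})\,\dd k$ established just before Proposition~\ref{prop:rad}, and the fact that $\widehat{\phi_j}(\xi,k\mathbb{M}) = \mathcal{H}\phi_j(\xi)$ is independent of $k$ since $\phi_j$ is radial, one gets
\[
\langle \phi_j,\psi_j\rangle_{\lambda,\sigma} = \mathrm{const.}\int_K\int_{\mathbb{R}} \big((\xi^2+\lambda_0)^\sigma-\lambda^\sigma\big)\,\mathcal{H}\phi_j(\xi)\,\overline{\widehat{\psi_j}(\xi,k\mathbb{M})}\,\frac{\dd\xi}{|\mathbf{c}(\xi)|^2}\,\dd k = \langle \phi_j,\psi_j^{\#}\rangle_{\lambda,\sigma},
\]
because integrating $\overline{\widehat{\psi_j}(\xi,k\mathbb{M})}$ over $k\in K$ against the $k$-independent factor $\mathcal{H}\phi_j(\xi)$ exactly produces $\overline{(\mathcal{H}\psi_j^{\#})(\xi)}$. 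Passing to the limit $j\to\infty$ (legitimate since $\phi_j\to u$, $\psi_j\to v$, $\psi_j^{\#}\to v^{\#}$ in $\mathcal{H}_{\lambda,\sigma}$ and the inner product is continuous) yields $\langle u,v\rangle_{\lambda,\sigma} = \langle u,v^{\#}\rangle_{\lambda,\sigma}$. For the nonlinear term, since $u$ is radial it is $K$-invariant, so by Fubini and a change of variables on $K$,
\[
\int_{\mathbb{H}^n}|u|^{\gamma-1}u\,v^{\#}\,\dd\mu = \int_{\mathbb{H}^n}|u(x)|^{\gamma-1}u(x)\!\int_K v(kx)\,\dd k\,\dd\mu(x) = \int_K\!\int_{\mathbb{H}^n}|u(kx)|^{\gamma-1}u(kx)\,v(kx)\,\dd\mu(x)\,\dd k = \int_{\mathbb{H}^n}|u|^{\gamma-1}u\,v\,\dd\mu;
\]
here one should note the pairings are finite by Theorem~\ref{thm:fracPoincare2} (so $u,v\in L^{\gamma+1}$ and $|u|^{\gamma-1}u\in L^{(\gamma+1)'}$), which justifies the use of Fubini's theorem. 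Combining the two identities gives $\langle J'(u),v\rangle_{\lambda,\sigma} = \langle J'(u),v^{\#}\rangle_{\lambda,\sigma}$.

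Finally, the norm formula: by definition $\|J'(u)\|_{\lambda,\sigma} = \sup_{v\neq 0}\langle J'(u),v/\|v\|_{\lambda,\sigma}\rangle_{\lambda,\sigma}$; restricting the supremum to radial $v$ can only decrease it. Conversely, for arbitrary $v\in\mathcal{H}_{\lambda,\sigma}$ the first part gives $\langle J'(u),v\rangle_{\lambda,\sigma} = \langle J'(u),v^{\#}\rangle_{\lambda,\sigma}$, and Proposition~\ref{prop:rad} gives $\|v^{\#}\|_{\lambda,\sigma}\leq\|v\|_{\lambda,\sigma}$, so $\langle J'(u),v/\|v\|_{\lambda,\sigma}\rangle_{\lambda,\sigma} \leq \langle J'(u),v^{\#}/\|v^{\#}\|_{\lambda,\sigma}\rangle_{\lambda,\sigma}$ (taking $v^{\#}\neq 0$; if $v^{\#}=0$ the left side vanishes), which shows the radial supremum dominates the full one. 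Hence the two suprema coincide. I do not anticipate a genuine obstacle here; the only point requiring a little care is making sure the density/limiting arguments stay inside $\mathcal{H}_{\lambda,\sigma}$ and that Fubini is applicable for the nonlinear term, both of which are controlled by the Sobolev embedding of Theorem~\ref{thm:fracPoincare2} and by Proposition~\ref{prop:rad}.
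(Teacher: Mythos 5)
Your proposal is correct, and it follows the same overall density strategy as the paper's proof but with two genuinely different ingredients worth noting. For the bilinear term $\langle u,v\rangle_{\lambda,\sigma}=\langle u,v^{\#}\rangle_{\lambda,\sigma}$, you establish the test-function identity $\langle\phi_j,\psi_j\rangle_{\lambda,\sigma}=\langle\phi_j,\psi_j^{\#}\rangle_{\lambda,\sigma}$ via the Plancherel formula and the relation $(\mathcal{H}\psi_j^{\#})(\xi)=\int_K\widehat{\psi_j}(\xi,k\mathbb{M})\,\dd k$; the paper instead works on the physical side, using that $\Delta^{\sigma}\phi_j-\lambda^{\sigma}\phi_j$ is a radial $L^p$ function (via Proposition~\ref{Prop 4.3}) together with a Fubini argument against $\psi_j(kx)$. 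Both routes are valid, and they use the same underlying structure. For the nonlinear term, your approach is a genuine streamlining: you apply Fubini and the $K$-invariance of the Riemannian measure directly at the level of $\mathcal{H}_{\lambda,\sigma}$ functions (justified since $|u|^{\gamma}\in L^{(\gamma+1)'}$ and $|v|^{\#}\in L^{\gamma+1}$ by Minkowski's integral inequality and Theorem~\ref{thm:fracPoincare2}), whereas the paper runs a two-stage density argument, first in $v$ with $u$ a radial test function, then in $u$, needing the intermediate convergence $|\phi_j|^{\gamma-1}\phi_j\to|u|^{\gamma-1}u$ in $L^{(\gamma+1)'}$. Your version avoids that extra limiting step. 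The one small elision is in the norm formula: when you bound $\langle J'(u),v/\|v\|_{\lambda,\sigma}\rangle_{\lambda,\sigma}\leq\langle J'(u),v^{\#}/\|v^{\#}\|_{\lambda,\sigma}\rangle_{\lambda,\sigma}$ you implicitly assume $\langle J'(u),v^{\#}\rangle_{\lambda,\sigma}\geq 0$, but if it is negative the left-hand side is negative and hence trivially dominated by the radial supremum (which is nonnegative, since one may replace $v^{\#}$ by $-v^{\#}$); so the conclusion stands. You also implicitly use that for $u\in\mathcal{H}_{\lambda,\sigma}^{\rad}$ one may take the $K$-invariant representative $u^{\#}$ so that $u(kx)=u(x)$ for every $k$ and a.e.\ $x$ — worth stating, though this is routine.
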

	
	\begin{proof}
		We first show that the lemma holds true for test functions, whence the full claim for $\mathcal{H}_{\lambda, \sigma}$ will follow by density. 
		In other words, let us first prove that 
		\begin{equation}\label{eq: JderCc}
			\langle J'(\phi),  {\psi} \rangle_{\lambda, \sigma}=\langle J'( {\phi}),  {\psi}^{\#} \rangle_{\lambda, \sigma}, \quad  {\phi},  {\psi}\in \mathcal{C}_c^{\infty}, \,  {\phi} \text{ radial.}
		\end{equation}
		As we have shown in Proposition~\ref{Prop 4.3}, if $ {\phi}\in\mathcal{C}_c^{\infty}$ then the function $\Delta^{\sigma} {\phi}\in L^p$, $1\leq p\leq \infty$; in addition, it is radial if $ {\phi}$ is radial. Hence, we can write
		\begin{equation*}
		\Delta^{\sigma} {\phi}(x)-\lambda^{\sigma} {\phi}(x)=\int_K (\Delta^{\sigma} {\phi}(kx)- \lambda^{\sigma} {\phi}(kx)) \, \dd k .
		\end{equation*}
		It follows that Fubini arguments are justified, so 
		\begin{equation}\label{eq: Obs 0}
		\begin{split}
			\langle  {\phi}, {\psi}\rangle_{\lambda, \sigma}&= \int_G (\Delta^{\sigma} {\phi}(x)-\lambda^{\sigma} {\phi}(x))\,  {\psi}(x)\, \dd x  \\
			&=\int_G \int_K (\Delta^{\sigma} {\phi}(kx)- \lambda^{\sigma} {\phi}(kx)) \,  {\psi}(x)\, \dd k \, \dd x  \\
			&=\int_G (\Delta^{\sigma} {\phi}(x)- \lambda^{\sigma} {\phi}(x)) \, \int_K  {\psi}(kx)\, \dd k \, \dd x \\
			&=\langle  {\phi}, {\psi}^{\#}\rangle_{\lambda, \sigma}. 
		\end{split}
		\end{equation}
		Similarly, one can show that 
		\begin{equation}\label{eq: Obs 0'}
			\int_{G}| {\phi}|^{\gamma-1}\,  {\phi}\,  {\psi}\, d\mu=\int_{G}| {\phi}|^{\gamma-1}\,  {\phi}\,  {\psi}^{\#} \, d\mu,
		\end{equation}
		which concludes the proof of~\eqref{eq: JderCc} by Lemma~\ref{J'}.
		
		We now pass to the whole space $\mathcal{H}_{\lambda, \sigma}$. First, we show that for any $u, v\in \mathcal{H}_{\lambda, \sigma}$ with $u$ radial, we have
		\begin{equation}\label{eq: half1}  
			\langle u,v \rangle_{\lambda, \sigma}= \langle u,v^{\#} \rangle_{\lambda, \sigma}.
		\end{equation}
		Let the sequences $ {\phi}_j,  {\psi}_j\in \mathcal{C}_c^{\infty}$,  with $ {\phi}_j$ in addition radial (owing to  Proposition~\ref{prop:rad}) be  such that
		\[
		 {\phi}_j \to u, \qquad  {\psi}_j \to v \qquad \text{in }\, \mathcal{H}_{\lambda, \sigma}.
		\]
		This implies that 
		\[
		\langle  {\phi}_j, {\psi}_j \rangle_{\lambda, \sigma} \rightarrow \langle u,v \rangle_{\lambda, \sigma},
		\]
		while by~\eqref{eq: Obs 0} and~\eqref{sharpsharpclaim} we obtain
		\[
		\langle  {\phi}_j, {\psi}_j \rangle_{\lambda, \sigma}= \langle  {\phi}_j, {\psi}_j^{\#} \rangle_{\lambda, \sigma}\rightarrow \langle u,v^{\#} \rangle_{\lambda, \sigma},
		\] 
		so~\eqref{eq: half1} is proved. 
		
		Thus it remains to prove that for all $u, v\in \mathcal{H}_{\lambda, \sigma}$ with $u$ radial, it holds
		\begin{equation} \label{eq: int p}
			\int_{\mathbb{H}^n}|u|^{\gamma}\, u\, v\, d\mu= \int_{\mathbb{H}^n}|u|^{\gamma}\, u\, v^{\#}\, d\mu.
		\end{equation}
		Notice first that~\eqref{eq: int p} holds for $ {u=\phi}\in \mathcal{C}_{c}^{\infty}$ radial and $v\in \mathcal{H}_{\lambda, \sigma}$. Indeed, let $( {\psi}_j)\subseteq \mathcal{C}_c^{\infty}$ be such that $ {\psi}_j\rightarrow v$ in $\mathcal{H}_{\lambda, \sigma}$, hence also in $L^{\gamma+1}$ due to the embedding. Then
		\[
		\left| 	\int_{\mathbb{H}^n} | {\phi}|^{\gamma-1}\,  {\phi}\,  {\psi}_j- \int_{\mathbb{H}^n} | {\phi}|^{\gamma-1}\,  {\phi}\, v \right|\leq  \int_{\mathbb{H}^n}| {\phi}|^{\gamma}\, | {\psi}_j-v|\leq \|| {\phi}|^{\gamma}\|_{(\gamma+1)'}\, \| {\psi}_j-v\|_{\gamma+1},
		\]
		which, combined with~\eqref{eq: Obs 0'} for test functions, allows us to conclude. Therefore, to show~\eqref{eq: int p}, it suffices to prove that for $( {\phi}_j)\subseteq \mathcal{C}_c^{\infty}$ such that $ {\phi}_j\rightarrow u$ in $\mathcal{H}_{\lambda, \sigma}$, and for all $w\in \mathcal{H}_{\lambda, \sigma}$,
		\begin{equation*}
			\int_{\mathbb{H}^n} | {\phi}_j|^{\gamma-1}\,  {\phi}_j\, w \rightarrow \int_{\mathbb{H}^n} |u|^{\gamma-1}\, u\, w
		\end{equation*}
		(and then apply it to $w=v$ and $w=v^{\#}$, owing to the previous observation). Now since $w\in \mathcal{H}_{\lambda, \sigma}\subseteq L^{\gamma+1}$, it is enough to prove that 
		\[
		| {\phi}_j|^{\gamma}\,  {\phi}_j \rightarrow |u|^{\gamma}\, u \quad \text{in } (L^{\gamma+1})'= L^{\frac{\gamma+1}{\gamma}}.
		\]
But this can be obtained by arguing as in~\eqref{aslater}, by means of~\eqref{fbfa} and H\"older's inequality.
	\end{proof}


		\begin{proposition}\label{Ekeland-PS}
			There exists a sequence $(u_{j})$ in $\mathcal{N}^{\rad}_{\geq 0}$ which is minimizing for $J$ (i.e., such that $J(u_{j}) \to \inf_{u\in \mathcal{N}^{\rad}_{\geq 0}} J(u)$) and such that $J'(u_{j}) \to 0$ in $\mathcal{H}_{\lambda,\sigma}$.
		\end{proposition}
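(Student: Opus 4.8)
The plan is to construct $(u_j)$ by applying Ekeland's variational principle to the functional $J$ on the complete metric space $\mathcal{N}^{\rad}_{\geq 0}$, and then to upgrade the resulting quasi-minimality property to the convergence $J'(u_j)\to 0$ in $\mathcal{H}_{\lambda,\sigma}$.

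First I would check the hypotheses needed for Ekeland's principle. The set $\mathcal{N}^{\rad}_{\geq 0}$ is nonempty: for any nontrivial radial $0\le\phi\in\mathcal{C}_c^{\infty}$ the fibering map $t\mapsto J(t\phi)$ on $(0,\infty)$ has a unique maximum, at $t_\phi=(\|\phi\|_{\lambda,\sigma}^2/\|\phi\|_{\gamma+1}^{\gamma+1})^{1/(\gamma-1)}$, and $t_\phi\phi\in\mathcal{N}^{\rad}_{\geq 0}$. By~\eqref{IJ} one has $J(u)=\alpha_\gamma\|u\|_{\lambda,\sigma}^2$ for $u\in\mathcal{N}$, and since $\|u\|_{\lambda,\sigma}^2=\|u\|_{\gamma+1}^{\gamma+1}\le C\|u\|_{\lambda,\sigma}^{\gamma+1}$ by Theorem~\ref{thm:fracPoincare2}, we get $\|u\|_{\lambda,\sigma}\ge c_0>0$ on $\mathcal{N}$; hence $c:=\inf_{\mathcal{N}^{\rad}_{\geq 0}}J\in(0,\infty)$. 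As $J\in\mathcal{C}^1(\mathcal{H}_{\lambda,\sigma})$ by Lemma~\ref{J'}, $J$ is lower semicontinuous on $\mathcal{N}^{\rad}_{\geq 0}$, so Ekeland's principle produces $(u_j)\subset\mathcal{N}^{\rad}_{\geq 0}$ with $J(u_j)\to c$ (whence $\|u_j\|_{\lambda,\sigma}^2\to c/\alpha_\gamma$, so the sequence is bounded) and
\[
J(w)\ \ge\ J(u_j)-\tfrac1j\,\|w-u_j\|_{\lambda,\sigma}\qquad\text{for every }w\in\mathcal{N}^{\rad}_{\geq 0}.
\]

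To pass from this to $\|J'(u_j)\|_{\lambda,\sigma}\to0$, note that by Lemma~\ref{Effielemma} it suffices to control $\langle J'(u_j),v\rangle_{\lambda,\sigma}$ for $v\in\mathcal{H}_{\lambda,\sigma}^{\rad}$, and since $\langle J'(u_j),u_j\rangle_{\lambda,\sigma}=\|u_j\|_{\lambda,\sigma}^2-\|u_j\|_{\gamma+1}^{\gamma+1}=0$ on $\mathcal{N}$ we may take $\|v\|_{\lambda,\sigma}=1$. For such $v$ and small $s$, set $g_s=u_j+sv$ and project its absolute value onto $\mathcal{N}$, putting
\[
w_s:=t(|g_s|)\,|g_s|\in\mathcal{N}^{\rad}_{\geq 0},\qquad t(f):=\Big(\tfrac{\|f\|_{\lambda,\sigma}^2}{\|f\|_{\gamma+1}^{\gamma+1}}\Big)^{1/(\gamma-1)},
\]
which is well defined for small $s$ (as $\|g_s\|_{\gamma+1}\to\|u_j\|_{\gamma+1}>0$) and satisfies $w_0=u_j$. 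Writing $\widehat J(f)=\max_{t>0}J(tf)=\alpha_\gamma(\|f\|_{\lambda,\sigma}^2/\|f\|_{\gamma+1}^2)^{\frac{\gamma+1}{\gamma-1}}$, which is increasing in $I(f)$, one has $J(w_s)=\widehat J(|g_s|)\le\widehat J(g_s)$ by Corollary~\ref{cor:abs}; and since $\widehat J(u_j)=J(u_j)$ (because $u_j\in\mathcal{N}$), the envelope theorem together with $J'(u_j)\perp u_j$ gives $\widehat J(g_s)=J(u_j)+s\langle J'(u_j),v\rangle_{\lambda,\sigma}+o(s)$ as $s\to0^+$. Inserting $w=w_s$ in the Ekeland inequality, rearranging and dividing by $s>0$ yields $\langle J'(u_j),v\rangle_{\lambda,\sigma}\ge-\tfrac1j\limsup_{s\to0^+}\tfrac1s\|w_s-u_j\|_{\lambda,\sigma}$; applying this also to $-v$ and taking the supremum over radial $v$ with $\|v\|_{\lambda,\sigma}=1$, Lemma~\ref{Effielemma} gives $\|J'(u_j)\|_{\lambda,\sigma}\le C/j$, provided $\limsup_{s\to0^+}\tfrac1s\|w_s-u_j\|_{\lambda,\sigma}$ is bounded uniformly in $j$.

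The main obstacle is precisely this uniform $O(s)$ bound. Decomposing $w_s-u_j=(t(|g_s|)-1)|g_s|+2(u_j+sv)^-+sv$, the first and last terms are clearly $O(s)$, but the negative part $(u_j+sv)^-$ is delicate: although $(u_j+sv)^-\le s|v|$ pointwise because $u_j\ge0$, the seminorm $\|\cdot\|_{\lambda,\sigma}$ is nonlocal and \emph{not} monotone under pointwise domination — as the Gagliardo-type formula of Lemma~\ref{lemma: Tom form1} makes visible — so this pointwise bound does not by itself give $\|(u_j+sv)^-\|_{\lambda,\sigma}=O(s)$. This is exactly where Corollary~\ref{cor:abs} and the monotonicity of $\widehat J$ under passage to the absolute value have to be exploited, so that the negative-part contribution only appears on the favorable side of the Ekeland inequality. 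Should a direct treatment of this point prove too cumbersome, a robust alternative is to run the whole argument on the sign-unconstrained radial Nehari manifold $\mathcal{N}^{\rad}$, which by Lemma~\ref{J'} is a $\mathcal{C}^1$ submanifold of $\mathcal{H}_{\lambda,\sigma}^{\rad}$ on which the Ekeland/Palais--Smale machinery applies without the positivity difficulty; combined with the standard Nehari multiplier estimate and Lemma~\ref{Effielemma} this produces a Palais--Smale sequence $(v_j)$ for $J$ at the level $c'=\inf_{\mathcal{N}^{\rad}}J$, and $c'=c$ because $I(|u|)\le I(u)$. Setting $u_j=t(|v_j|)|v_j|\in\mathcal{N}^{\rad}_{\geq 0}$ one obtains $c\le J(u_j)\le\widehat J(v_j)=J(v_j)\to c$, so $(u_j)$ is minimizing; while the compact embedding of Theorem~\ref{thm: cpt radial emb} forces, along a subsequence, $v_j\to v$ strongly in $\mathcal{H}_{\lambda,\sigma}^{\rad}$ with $J'(v)=0$ and $J(v)=c>0$, whereupon equality in Corollary~\ref{cor:abs} (through Lemma~\ref{lemma: Tom form1}) shows that $v$ does not change sign, so that — up to a sign — $u_j\to v$ in $\mathcal{H}_{\lambda,\sigma}$ and $J'(u_j)\to J'(v)=0$.
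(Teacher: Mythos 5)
Your first strategy — Ekeland on $\mathcal{N}^{\rad}_{\geq 0}$ and projection of $|u_j+sv|$ back onto the Nehari manifold — runs into exactly the obstacle you flag: you would need $\limsup_{s\to0^+} s^{-1}\|w_s-u_j\|_{\lambda,\sigma}$ to be bounded uniformly in $j$ and $v$, and the term $2(u_j+sv)^-$ in the decomposition is the culprit. The pointwise bound $(u_j+sv)^-\le s|v|$ is useless for $\|\cdot\|_{\lambda,\sigma}$ since this norm (Lemma~\ref{lemma: Tom form1}) is nonlocal and not monotone under pointwise domination, and Corollary~\ref{cor:abs} only gives $\|(u_j+sv)^-\|_{\lambda,\sigma}\lesssim\|u_j+sv\|_{\lambda,\sigma}=O(1)$, not $O(s)$. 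So the direct route as you set it up is indeed blocked.

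The paper sidesteps this entirely by \emph{not} imposing nonnegativity on the Ekeland minimizing sequence. It applies Ekeland on $\mathcal{N}^{\rad}_{\pm}$ (radial, sign-definite of either sign, with $\inf_{\mathcal{N}^{\rad}_{\pm}}J=\inf_{\mathcal{N}^{\rad}_{\geq 0}}J$ by evenness of $J$). From the resulting quasi-criticality bound one first gets $|\langle J'(u_j),w\rangle_{\lambda,\sigma}|\lesssim(\epsilon_j+o(1))\|w\|_{\lambda,\sigma}$ for sign-definite radial $w$ (using $\mathcal{C}^1$-regularity of $J$ from Lemma~\ref{J'} and homogeneity to rescale to and from $\mathcal{N}$), and then extends to arbitrary radial $w$ by decomposing $w=w^+ + w^-$ with $w^\pm=\tfrac{w\pm|w|}{2}\in\mathcal{H}^{\rad}_{\lambda,\sigma,\pm}$ and $\|w^\pm\|_{\lambda,\sigma}\le\|w\|_{\lambda,\sigma}$ via Corollary~\ref{cor:abs}. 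This decomposition is linear in $w$ and the norm control is exact, so there is no $(u_j+sv)^-$ issue and no uniform-in-$j$ rate to establish. Lemma~\ref{Effielemma} then passes from radial test directions to all of $\mathcal{H}_{\lambda,\sigma}$, and replacing $u_j$ by $|u_j|$ at the very end lands the sequence in $\mathcal{N}^{\rad}_{\geq 0}$, without any effect on $J$ or $\|J'\|$ since $J$ is even. Only $\mathcal{C}^1$-regularity (Lemma~\ref{J'}), the absolute-value bound (Corollary~\ref{cor:abs}), and the radial reduction (Lemma~\ref{Effielemma}) are used.

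Your fallback route — Ekeland and the Lagrange multiplier estimate on the sign-unconstrained $\mathcal{N}^{\rad}$, then compactness (Theorem~\ref{thm: cpt radial emb}) to extract a strong limit $v$, and the equality case of Corollary~\ref{cor:abs} to force $v$ to be of one sign — is a legitimate alternative, and you are right that the Nehari multiplier argument is clean because $\langle G'(u),u\rangle=(1-\gamma)\|u\|_{\lambda,\sigma}^2$ is bounded away from $0$. But it is structurally more expensive: it imports the compactness theorem into a proposition where the paper deliberately avoids it (compactness is only invoked later, in the proof of Theorem~\ref{existence-MS}), and the equality-case step in Corollary~\ref{cor:abs} needs $v\in\mathrm{H}^\sigma$ to pass through Lemma~\ref{lemma: Tom form1}, which at $\lambda=\lambda_0$ requires a further argument (for instance regularity/$L^2$-membership of the limit, which is not immediate from $v\in\mathcal{H}_{\lambda_0,\sigma}$). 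In short: your diagnosis of the gap in the direct approach is sharp and correct, but the fix the paper uses is lighter than the one you propose — replace $\mathcal{N}^{\rad}_{\geq 0}$ by $\mathcal{N}^{\rad}_{\pm}$ from the outset and use the linear $w=w^++w^-$ splitting rather than the nonlinear Nehari projection of $|u_j+sv|$.
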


		\begin{proof}
	We begin by observing that
		\[
	 \inf_{u\in \mathcal{N}^{\rad}_{\geq 0}} J(u) = \inf_{u\in \mathcal{N}^{\rad}_{\pm}} J(u)
		\]
		since $J(u)= J(-u)$ for all $u\in \mathcal{H}_{\lambda,\sigma}$.	 Fix a sequence $(\epsilon_{j}) $ such that $\epsilon_{j} \to 0$. By the Ekeland principle~\cite[Chapter I, Theorem 5.1]{Struwe} and reasoning as in the beginning of the proof of~\cite[Chapter I, Corollary 5.3]{Struwe} we can find a minimizing sequence $(u_{j})$ in $\mathcal{N}^{\rad}_{\pm }$ such that
			\[
			J(u_{j} ) \leq J(u_{j} + w) + \epsilon_{j}\|w\|_{\lambda,\sigma} \qquad \forall w \in \mathcal{N}^{\rad}_{\pm}.
			\]
			By this and Lemma~\ref{J'}, being $J$ differentiable at $u_{j}$, we obtain
			\[
			J(u_{j} ) \leq J(u_{j}) + \langle J'(u_{j}),w\rangle_{\lambda,\sigma} + o(\|w\|_{\lambda,\sigma}) + \epsilon_{j}\|w\|_{\lambda,\sigma} \qquad \forall w \in \mathcal{N}^{\rad}_{\pm},
			\]
			namely
			\[
			0 \leq  \langle J'(u_{j}),w\rangle_{\lambda,\sigma} + o(\|w\|_{\lambda,\sigma}) + \epsilon_{j}\|w\|_{\lambda,\sigma} \qquad \forall w \in \mathcal{N}^{\rad}_{\pm}.
			\]
			By considering $-w$ in place of $w$ (observe that $- w \in \mathcal{N}^{\rad}_{\pm}$ for all $w\in \mathcal{N}^{\rad}_{\pm}$) we get also
			\[
			0 \leq  \langle J'(u_{j}),- w\rangle_{\lambda,\sigma} + o(\|w\|_{\lambda,\sigma}) + \epsilon_{j}\|w\|_{\lambda,\sigma} \qquad \forall w \in \mathcal{N}^{\rad}_{\pm},
			\]
			and by combining these we get
			\[
			| \langle J'(u_{j}), w\rangle_{\lambda,\sigma}| \leq  o(\|w\|_{\lambda,\sigma}) + \epsilon_{j}\|w\|_{\lambda,\sigma}  \qquad \forall w \in \mathcal{N}^{\rad}_{\pm}.
			\]
			By homogeneity, since for all nonzero $w \in \mathcal{H}_{\lambda,\sigma}$ there is $\beta \neq 0$ such that $\beta w\in \mathcal{N}$, we get 
			\[
			| \langle J'(u_{j}), w\rangle_{\lambda,\sigma}| \leq  o(\|w\|_{\lambda,\sigma}) + \epsilon_{j}\|w\|_{\lambda,\sigma}  \qquad \forall w \in \mathcal{H}^{\rad}_{\lambda, \sigma, \pm},
			\]
			where $\mathcal{H}^{\rad}_{\lambda, \sigma, \pm} = \{u  \in \mathcal{H}^{\rad}_{\lambda, \sigma}: \, u\geq 0 \mbox{ or } u\leq 0 \}$.
			Let now $w \in \mathcal H^{\rad}_{\lambda,\sigma}$.  Then $w = w^{+} + w^{-}$ where
			\[
			w^{+} = \max(0,w) = \frac{w + |w|}{2}, \qquad w^{-} = -\max(0,-w) = \frac{w- |w|}{2},
			\]
			and since $w\in \mathcal H_{\lambda,\sigma}$, then  $|w|\in \mathcal H_{\lambda,\sigma}$ by Corollary~\ref{cor:abs}; whence $w^{+},w^{-} \in \mathcal H_{\lambda,\sigma}$, are radial and do not change sign; in other words, $w^{+},w^{-} \in \mathcal{H}^{\rad}_{\lambda, \sigma, \pm} $. Therefore,
			\[
			| \langle J'(u_{j}), w\rangle_{\lambda,\sigma}| \leq 2  \|w\|_{\lambda,\sigma} (\epsilon_{j} + o(1)) \qquad  \forall w\in \mathcal{H}_{\lambda,\sigma}^{\rad}.
			\]
			By Lemma~\ref{Effielemma}, we get
			\[
			| \langle J'(u_{j}), w\rangle_{\lambda,\sigma}| \leq 2  \|w\|_{\lambda,\sigma} (\epsilon_{j} + o(1)) \qquad  \forall w\in \mathcal{H}_{\lambda,\sigma},
			\]
			which means $J'(u_{j}) \to 0$. Then we have a sequence $(u_{j})$ in $\mathcal{N}^{\rad}_{\pm}$ which minimizes $J$ and such that $J'(u_{j}) \to 0$. Finally, consider the sequence $(|u_{j}|)_{j}$ which then belongs to $\mathcal{N}^{\rad}_{\geq 0}$, and either $|u_{j}|= u_{j}$ or $|u_{j}|=-u_{j}$; this does not affect the behavior of $J$ or $J'$ on the sequence, i.e.\ $J(|u_{j}|) =J(u)$ and $J'(|u_{j}|) \to 0$, as $J'(-u_{j}) = -J'(u_{j})$.
		\end{proof}
		
		We are now ready to prove Theorem~\ref{existence-MS}, inspired by~\cite[Theorem 6.2]{DuttaSandeep}.

		\begin{proof}[Proof of Theorem~\ref{existence-MS}]
Consider
			\[
			S = S_{\lambda,\sigma} = \inf_{u\in \mathcal{H}_{\lambda,\sigma} \setminus\{0\}} I(u).
			\]
			First, we observe that $I$ is invariant under scaling, namely $I(u) = I (\beta u)$ for all $u$ and all $\beta \neq 0$. Therefore
			\[
			S= \inf_{u\in \mathcal{N}} I(u).
			\]
By Proposition~\ref{prop:rad} and Corollary~\ref{cor:abs}, moreover,
\begin{equation}\label{SNrad+}
			S =  \inf_{u\in \mathcal{N}^{\rad}_{\geq 0}} I(u).
\end{equation}
			Let now $(u_{j}) \subseteq \mathcal{N}^{\rad}_{\geq 0}$ be the minimizing sequence for $J$ given by Proposition~\ref{Ekeland-PS}. By~\eqref{SNrad+} and~\eqref{IJ}, 
\[
S=  \inf_{u\in \mathcal{N}^{\rad}_{\geq 0}} I(u) =  \alpha_{\gamma}^{-2\alpha_{\gamma}}   \inf_{u\in \mathcal{N}^{\rad}_{\geq 0}}  J(u)^{ {2\alpha_{\gamma}} } =   \alpha_{\gamma}^{-2\alpha_{\gamma}}   \lim_{j} J(u_{j})^{ {2\alpha_{\gamma}} } = \lim_{j}I(u_{j})
\]
namely $(u_{j})$ is a minimizing sequence for $I$ on $ \mathcal{H}_{\lambda,\sigma}$, such that $J'(u_{j})\to 0$ in $\mathcal{H}_{\lambda,\sigma}$.
			
			Since $J'(u_{j}) \to 0$ in $\mathcal{H}_{\lambda,\sigma}$, by Lemma~\ref{J'}, for all $\epsilon>0$ there is $j_{0}$ such that for all $j\geq j_{0}$ and all $v\in \mathcal{H}_{\lambda,\sigma}$
			\begin{align}\label{Ek}
				|\langle J'(u_{j}), v\rangle_{\lambda,\sigma}| 
				&= \bigg|\langle u_{j},v\rangle_{\lambda,\sigma} - \int_{\mathbb{H}^n} u_{j}^{\gamma}v\, \dd \mu\bigg| \leq \epsilon \|v\|_{\lambda,\sigma} \qquad \forall v\in \mathcal{H}_{\lambda,\sigma}.
			\end{align}
			Since  $I(u_{j}) \to S$, and $(u_{j}) \in \mathcal{N}$, we have
			\[
			\|u_{j}\|_{\lambda,\sigma} \to S^{\frac{\gamma+1}{2(\gamma-1)}}, \qquad \|u_{j}\|_{\gamma+1} \to S^{\frac{1}{\gamma-1}}.
			\]	
			In particular, $(u_{j})$ is a bounded sequence in $\mathcal{H}_{\lambda,\sigma}^{\rad}$. By the Banach--Alaoglu theorem, up to subsequences we can suppose $u_{j} \rightharpoonup u $ for some $u\in \mathcal{H}_{\lambda,\sigma}$. By Theorem~\ref{thm: cpt radial emb}, again up to subsequences, $u_{j} \to u$ in $L^{\gamma+1}$ and pointwise almost everywhere. In particular, $u \geq 0$ almost everywhere. Moreover
			\[
			\|u\|_{\gamma+1} = \lim_{j} \|u_{j}\|_{\gamma+1} = S^{1/(\gamma-1)} >0
			\]
			from which $u$ is not the zero function. By~\eqref{Ek}, passing to the limit, we get
			\[
			\langle u ,v\rangle_{\lambda,\sigma} = \int_{\mathbb{H}^n} u^{\gamma}v\, \dd \mu  \qquad \forall v\in \mathcal{H}_{\lambda,\sigma},
			\]
			namely $u$ is a nonnegative weak solution of~\eqref{fracequation}.
		\end{proof}

		\subsection{Boundedness of the solutions}\label{section: bddness}
	We shall now discuss some properties of the nonnegative weak solutions to equation~\eqref{fracequation}, proving that they are actually bounded.
	 	
We begin with a technical lemma; but before doing so, for $0\leq \lambda \leq \lambda_{0}$, let us denote by $k_{\lambda,\sigma}$ the convolution kernel of the operator $(\Delta^{\sigma} - \lambda^{\sigma})^{-1}$ and let us split $k_{\lambda,\sigma}$ as we did for $k_{\sigma}$ in~\eqref{k0infty}:
\begin{equation}\label{k0inftylambda}
	k_{\lambda,\sigma}^{0}=\mathbbm{1}_{B(o,1)} \, k_{\lambda, \sigma}, \qquad k_{\lambda,\sigma}^{\infty}=\mathbbm{1}_{B(o,1)^{c}} \, k_{\lambda, \sigma}
	\end{equation}
(notice that $k_{\lambda_{0},\sigma} = k_{\sigma}$). Since
\begin{equation}\label{dominance}
k_{\lambda, \sigma}(x)=\int_0^{\infty}\e^{\lambda^{\sigma}t}\, P_t^{\sigma}(x)\, \dd t \leq \int_0^{\infty}\e^{\lambda_0^{\sigma}t}\, P_t^{\sigma}(x)\, \dd t=k_{\sigma}(x),
\end{equation}
we get the following consequence of Corollary~\ref{corkaest}.
	\begin{remark}\label{rem:corkaest}
	Suppose $0\leq \lambda \leq \lambda_{0}$ and $\sigma \in (0,1)$. Then $k^{0}_{\lambda, \sigma} \in L^{p}$ for all $p\in \big[1,\frac{n}{n-2\sigma}\big)$ while $k^{\infty}_{\lambda, \sigma} \in L^{p}$ for all $p\in (2,\infty]$.
	\end{remark}

		\begin{lemma}\label{lemma: dixit}
Suppose $0\leq \lambda \leq \lambda_{0}$, $0<\sigma<1$ and $1<\gamma\leq \frac{n+2\sigma}{n-2\sigma}$ and let $u$ be an element of $\mathcal{H}_{\lambda,\sigma}$. Then 
		\[
 u^{\gamma} \cdot ( |\psi|\ast k_{\lambda,\sigma})\in L^{1} \qquad \text{for all} \quad \psi \in (\Delta^{\sigma}-\lambda^{\sigma})(\mathcal{C}_{c}^{\infty}).
		\]
		\end{lemma}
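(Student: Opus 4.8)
The plan is to reduce the statement, via H\"older's inequality, to the single estimate $|\psi|\ast k_{\lambda,\sigma}\in L^{\gamma+1}$, and then to obtain the latter by splitting $k_{\lambda,\sigma}=k_{\lambda,\sigma}^{0}+k_{\lambda,\sigma}^{\infty}$ as in~\eqref{k0inftylambda} and applying Young's inequality to each piece. Indeed, since $1<\gamma\leq\frac{n+2\sigma}{n-2\sigma}$ we have $2<\gamma+1\leq\frac{2n}{n-2\sigma}$, so by Theorem~\ref{thm:fracPoincare2} every $u\in\mathcal{H}_{\lambda,\sigma}$ lies in $L^{\gamma+1}$, whence $u^{\gamma}\in L^{(\gamma+1)/\gamma}=L^{(\gamma+1)'}$; therefore, once we know $|\psi|\ast k_{\lambda,\sigma}\in L^{\gamma+1}$, H\"older's inequality with exponents $(\gamma+1)'$ and $\gamma+1$ gives
\[
\int_{\mathbb{H}^n}|u^{\gamma}|\,\bigl(|\psi|\ast k_{\lambda,\sigma}\bigr)\,\dd\mu
\leq \|u^{\gamma}\|_{(\gamma+1)'}\,\bigl\||\psi|\ast k_{\lambda,\sigma}\bigr\|_{\gamma+1}<\infty,
\]
which is exactly the assertion.

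It remains to prove $|\psi|\ast k_{\lambda,\sigma}\in L^{\gamma+1}$. First I would collect the relevant integrability facts. Writing $\psi=(\Delta^{\sigma}-\lambda^{\sigma})\phi$ with $\phi\in\mathcal{C}_c^{\infty}$, Proposition~\ref{Prop 4.3} gives $\Delta^{\sigma}\phi\in L^p$ for all $p\in[1,\infty]$, and since $\phi$ is bounded with compact support, also $\psi\in L^p$ (hence $|\psi|\in L^p$) for all $p\in[1,\infty]$. On the kernel side, Remark~\ref{rem:corkaest} yields $k_{\lambda,\sigma}^{0}\in L^{1}$ (as $1\in[1,\tfrac{n}{n-2\sigma})$ since $\sigma>0$) and $k_{\lambda,\sigma}^{\infty}\in L^{\gamma+1}$ (as $\gamma+1>2$). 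Using the radiality of the kernel and Young's inequality on $\mathbb{H}^{n}$, exactly as invoked in the proof of Theorem~\ref{thm:fracPoincare}, I would then estimate
\[
\bigl\||\psi|\ast k_{\lambda,\sigma}^{0}\bigr\|_{\gamma+1}\leq \|\psi\|_{\gamma+1}\,\|k_{\lambda,\sigma}^{0}\|_{1}<\infty,
\qquad
\bigl\||\psi|\ast k_{\lambda,\sigma}^{\infty}\bigr\|_{\gamma+1}\leq \|\psi\|_{1}\,\|k_{\lambda,\sigma}^{\infty}\|_{\gamma+1}<\infty,
\]
the exponents being admissible because $1+\tfrac{1}{\gamma+1}=\tfrac{1}{\gamma+1}+\tfrac{1}{1}$ in the first case and $1+\tfrac{1}{\gamma+1}=\tfrac{1}{1}+\tfrac{1}{\gamma+1}$ in the second. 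Summing the two bounds gives $|\psi|\ast k_{\lambda,\sigma}\in L^{\gamma+1}$, and the proof is complete.

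I do not expect a genuine obstacle here: the statement is essentially bookkeeping built on top of the Sobolev embedding of Theorem~\ref{thm:fracPoincare2}, the pointwise/decay bound of Proposition~\ref{Prop 4.3}, and the kernel bounds of Proposition~\ref{prop: ka est} (through Corollary~\ref{corkaest} and Remark~\ref{rem:corkaest}). The only point demanding a little care is to pair the right $L^{p}$-memberships with the right Young exponents — in particular, it is $k_{\lambda,\sigma}^{0}\in L^{1}$ and $k_{\lambda,\sigma}^{\infty}\in L^{\gamma+1}$ (not $L^{\infty}$) that do the work — and to remember that $u^{\gamma}$ is controlled only through $u\in L^{\gamma+1}$, so no pointwise information on $u$ is needed.
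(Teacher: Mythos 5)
Your argument is correct and, up to bookkeeping, the same as the paper's: split $k_{\lambda,\sigma}$ into $k_{\lambda,\sigma}^{0}+k_{\lambda,\sigma}^{\infty}$, control each convolution with $|\psi|$ via Young's inequality using the $L^p$-memberships from Proposition~\ref{Prop 4.3} and Remark~\ref{rem:corkaest}, and close with H\"older against $u^{\gamma}$. The only cosmetic difference is that you fix the H\"older pair $\bigl((\gamma+1)',\,\gamma+1\bigr)$ from the start (valid since $\gamma>1$ gives $(\gamma+1)'=\tfrac{\gamma+1}{\gamma}<2$ and $\gamma+1>2$), whereas the paper works with $u^{\gamma}\in L^{Q}$ for a range of $Q<2$ and then selects an admissible exponent; both are equivalent instantiations of the same estimate.
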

	
	\begin{proof}
Pick $u\in \mathcal{H}_{\lambda,\sigma}$. Since $u \in L^{q}$ for all $2<q\leq \frac{2n}{n-2\sigma}$ by Theorem~\ref{thm:fracPoincare2}, we have
				\[
		u^{\gamma}\in L^Q, \quad  1<\frac{2}{\gamma}<Q:=\frac{q}{\gamma}\leq \frac{2n}{\gamma(n-2\sigma)}.
		\]
		Recall also that if $\psi \in (\Delta^{\sigma}-\lambda^{\sigma})(\mathcal{C}_{c}^{\infty})$, then $\psi\in L^p$ for all $1\leq p\leq \infty$ by Proposition~\ref{Prop 4.3}.
	We write $k_{\lambda,\sigma}$ as in~\eqref{k0inftylambda}, and use Remark~\ref{rem:corkaest}.
	
	Let us first treat the part at infinity. Given that $1<\frac{2}{\gamma}<2$, it follows from Young's inequality and the facts that $\psi\in L^1$ and $k_{\lambda,\sigma}^{\infty} \in L^{p}$ for $p\in (2,\infty]$ that
	$$|\psi|\ast k_{\lambda,\sigma}^{\infty} \in L^p, \quad p\in (2, \infty].$$
		Since $2/{\gamma}<2$, there is $Q<2$ such that $u^{\gamma}\in L^Q$; then $|\psi|\ast k_{\lambda,\sigma}^{\infty} \in L^{Q'}$ and		\[
		\langle u^{\gamma}, |\psi|\ast k_{\lambda,\sigma}^{\infty} \rangle <+\infty.
		\]
		We now pass to the local part. By Young's inequality again, since $k_{\lambda,\sigma}^{0}\in L^1$, we get that $|\psi|\ast k_{\lambda,\sigma}^{0}\in L^p$ for all $p\in [1, \infty]$. Then it becomes clear that 
		\[
		\langle u^{\gamma}, |\psi|\ast k_{\lambda,\sigma}^{0} \rangle <+\infty.
		\]
		
		Altogether, 
		\[
		\langle u^{\gamma}, |\psi|\ast k_{\lambda,\sigma} \rangle <+\infty, \quad \forall \psi \in (\Delta^{\sigma}-\lambda^{\sigma})(\mathcal{C}_{c}^{\infty}),
		\]
		which completes the proof.
	\end{proof}

\begin{proposition}\label{uL1loc}
Suppose $0\leq \lambda \leq \lambda_{0}$, $\sigma\in (0,1)$ and $1<\gamma< \frac{n+2\sigma}{n-2\sigma}$ and let $u$ be a nonnegative weak solution of~\eqref{fracequation}. Then $u= u^{\gamma}*k_{\lambda,\sigma}$ as $L^{1}_{\mathrm{loc}}$ functions, hence almost everywhere.
	\end{proposition}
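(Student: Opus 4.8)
The plan is to identify $u$ with $w:=u^{\gamma}\ast k_{\lambda,\sigma}$ by showing that both lie in $\mathcal{H}_{\lambda,\sigma}$ and represent the same bounded functional on it, and then using the injectivity of $\|\cdot\|_{\lambda,\sigma}$. First, $w$ is well defined as an $L^{1}_{\mathrm{loc}}$ function: by Theorem~\ref{thm:fracPoincare2} one has $u\in L^{\gamma+1}$, hence $u^{\gamma}\in L^{Q}$ for a suitable $Q\in(1,2)$ (so $Q'>2$), and splitting $k_{\lambda,\sigma}$ as in~\eqref{k0inftylambda}, Young's inequality with $k^{0}_{\lambda,\sigma}\in L^{1}$ and $k^{\infty}_{\lambda,\sigma}\in L^{Q'}$ (Remark~\ref{rem:corkaest}) gives $w\in L^{Q}+L^{\infty}$. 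Next, by Definition~\ref{weak-MS} we have $\langle u,v\rangle_{\lambda,\sigma}=\int_{\mathbb{H}^{n}}u^{\gamma}v\,\dd\mu$ for every $v\in\mathcal{H}_{\lambda,\sigma}$; since $1<\gamma<\frac{n+2\sigma}{n-2\sigma}$ the exponent $(\gamma+1)'$ lies in the range $[\tfrac{2n}{n+2\sigma},2)$ for which $L^{(\gamma+1)'}\subseteq\mathcal{H}_{\lambda,\sigma}^{-1}$, so $v\mapsto\int u^{\gamma}v\,\dd\mu$ is bounded. Consequently, once we know $w\in\mathcal{H}_{\lambda,\sigma}$ and $\langle w,v\rangle_{\lambda,\sigma}=\int u^{\gamma}v\,\dd\mu$ for all $v$, we obtain $\langle u-w,v\rangle_{\lambda,\sigma}=0$ for all $v$; taking $v=u-w$ forces $u=w$ in $\mathcal{H}_{\lambda,\sigma}$, and $\mathcal{H}_{\lambda,\sigma}\hookrightarrow L^{q}$ then gives $u=w$ in $L^{q}$, in particular $\mu$-almost everywhere. (Equivalently, one may phrase this via Lemma~\ref{lemmastar}.)

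To check the identity it suffices, by density of $\mathcal{C}_{c}^{\infty}$ and continuity of both sides, to take $v=\chi\in\mathcal{C}_{c}^{\infty}$. Granting $w\in\mathcal{H}_{\lambda,\sigma}$, the computation~\eqref{scalarprodsmooth} gives $\langle w,\chi\rangle_{\lambda,\sigma}=\int w\cdot(\Delta^{\sigma}-\lambda^{\sigma})\chi\,\dd\mu$, which is legitimate because $(\Delta^{\sigma}-\lambda^{\sigma})\chi\in L^{p}$ for all $p$ by Proposition~\ref{Prop 4.3}. Using that $k_{\lambda,\sigma}$ is radial (hence symmetric) and Fubini's theorem — whose hypotheses are exactly Lemma~\ref{lemma: dixit} with $\psi=(\Delta^{\sigma}-\lambda^{\sigma})\chi\in(\Delta^{\sigma}-\lambda^{\sigma})(\mathcal{C}_{c}^{\infty})$, guaranteeing $u^{\gamma}\cdot\bigl(|(\Delta^{\sigma}-\lambda^{\sigma})\chi|\ast k_{\lambda,\sigma}\bigr)\in L^{1}$ — one rewrites this as $\int u^{\gamma}\cdot\bigl((\Delta^{\sigma}-\lambda^{\sigma})\chi\ast k_{\lambda,\sigma}\bigr)\,\dd\mu$. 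Finally $(\Delta^{\sigma}-\lambda^{\sigma})\chi\ast k_{\lambda,\sigma}=\chi$: from $k_{\lambda,\sigma}=\int_{0}^{\infty}\e^{\lambda^{\sigma}t}P_{t}^{\sigma}\,\dd t$, the identity $\tfrac{\dd}{\dd t}(\chi\ast P_{t}^{\sigma})=-\Delta^{\sigma}(\chi\ast P_{t}^{\sigma})$, and the commutation $(\Delta^{\sigma}-\lambda^{\sigma})\chi\ast P_{t}^{\sigma}=(\Delta^{\sigma}-\lambda^{\sigma})(\chi\ast P_{t}^{\sigma})$, the integrand equals $-\tfrac{\dd}{\dd t}\bigl(\e^{\lambda^{\sigma}t}\chi\ast P_{t}^{\sigma}\bigr)$, so integrating in $t$ leaves $\chi$ minus the boundary term at $t=\infty$, which vanishes since $\|\e^{\lambda^{\sigma}t}\chi\ast P_{t}^{\sigma}\|_{\infty}\lesssim\e^{\lambda^{\sigma}t}\|\chi\|_{1}\|P_{t}^{\sigma}\|_{\infty}\lesssim t^{-3/2}\e^{(\lambda^{\sigma}-\lambda_{0}^{\sigma})t}\to0$ by~\eqref{PtLp} and $\lambda\leq\lambda_{0}$. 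Hence $\langle w,\chi\rangle_{\lambda,\sigma}=\int u^{\gamma}\chi\,\dd\mu$, as required.

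The main step is to prove $w\in\mathcal{H}_{\lambda,\sigma}$. Dualizing the embedding $\mathcal{H}_{\lambda,\sigma}\hookrightarrow L^{\gamma+1}$ of Theorem~\ref{thm:fracPoincare2} — i.e.\ using boundedness of $(\Delta^{\sigma}-\lambda^{\sigma})^{-1/2}\colon L^{(\gamma+1)'}\to L^{2}$ — gives $\|f\ast k_{\lambda,\sigma}\|_{\lambda,\sigma}=\|(\Delta^{\sigma}-\lambda^{\sigma})^{-1/2}f\|_{2}\leq C\|f\|_{(\gamma+1)'}$ for $f\in L^{(\gamma+1)'}$. Pick $f_{j}\in\mathcal{C}_{c}^{\infty}$ with $f_{j}\to u^{\gamma}$ in $L^{(\gamma+1)'}$. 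Each $f_{j}\ast k_{\lambda,\sigma}$ lies in $\mathcal{H}_{\lambda,\sigma}$: when $\lambda<\lambda_{0}$ this is clear, since $\mathcal{H}_{\lambda,\sigma}=\mathrm{H}^{\sigma}$ by Proposition~\ref{HsigmaHlambdasigma} and $f_{j}\ast k_{\lambda,\sigma}=(\Delta^{\sigma}-\lambda^{\sigma})^{-1}f_{j}$ has rapidly decaying Helgason--Fourier transform by the Paley--Wiener bound~\eqref{eq: PW}; when $\lambda=\lambda_{0}$ one approximates $f_{j}\ast k_{\lambda_{0},\sigma}$ in $\|\cdot\|_{\lambda_{0},\sigma}$ by its high-frequency truncations $\mathbbm{1}_{\{|\xi|>\varepsilon\}}(\sqrt{\Delta-\lambda_{0}})(f_{j}\ast k_{\lambda_{0},\sigma})\in\mathrm{H}^{\sigma}\subseteq\mathcal{H}_{\lambda_{0},\sigma}$, the error tending to $0$ because $\|f_{j}\ast k_{\lambda_{0},\sigma}\|_{\lambda_{0},\sigma}^{2}=\|(\Delta^{\sigma}-\lambda_{0}^{\sigma})^{-1/2}f_{j}\|_{2}^{2}<\infty$ (the case $\alpha=1$ of~\eqref{eq: negPlanch}). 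By the displayed bound $(f_{j}\ast k_{\lambda,\sigma})_{j}$ is Cauchy in $\mathcal{H}_{\lambda,\sigma}$, hence converges to some $w_{0}\in\mathcal{H}_{\lambda,\sigma}$; by Young's inequality and~\eqref{k0inftylambda} it also converges to $u^{\gamma}\ast k_{\lambda,\sigma}=w$ in $L^{(\gamma+1)'}+L^{\infty}$, so the two limits coincide $\mu$-a.e.\ and $w=w_{0}\in\mathcal{H}_{\lambda,\sigma}$.

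I expect this last membership, specifically the case $\lambda=\lambda_{0}$, to be the chief obstacle. There $k_{\lambda_{0},\sigma}$ belongs to no $L^{p}$ with $p\leq2$ and $\mathcal{H}_{\lambda_{0},\sigma}\supsetneq\mathrm{H}^{\sigma}$, so one cannot exploit $L^{2}$-coercivity of $\Delta^{\sigma}-\lambda_{0}^{\sigma}$, nor deduce $u=w$ merely from $(\Delta^{\sigma}-\lambda_{0}^{\sigma})(u-w)=0$ in $\mathcal{D}'$: the function $\varphi_{0}$, which lies in every $L^{q}$ with $q>2$, is a nonzero element of that kernel. This is precisely why the argument must be run inside $\mathcal{H}_{\lambda_{0},\sigma}$, where $\|\cdot\|_{\lambda_{0},\sigma}$ is a norm by the fractional Poincar\'e inequality of Theorem~\ref{thm:fracPoincare} and where that same inequality, in dual form, is what makes $\|u^{\gamma}\ast k_{\lambda_{0},\sigma}\|_{\lambda_{0},\sigma}$ finite. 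The remaining ingredients — the Fubini interchanges via Lemma~\ref{lemma: dixit} and the semigroup identity $(\Delta^{\sigma}-\lambda^{\sigma})\chi\ast k_{\lambda,\sigma}=\chi$ — are routine.
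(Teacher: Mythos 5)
Your argument is correct, but it takes a noticeably different route from the paper's. The paper's proof tests the (already known to be $L^1_{\mathrm{loc}}$) function $u^{\gamma}\ast k_{\lambda,\sigma}$ directly against $\phi\in\mathcal{C}_c^{\infty}$: Fubini (justified by Lemma~\ref{lemma: dixit}) converts this to $\langle u^{\gamma},\phi\ast k_{\lambda,\sigma}\rangle$, which equals $\langle u,\phi\ast k_{\lambda,\sigma}\rangle_{\lambda,\sigma}$ by the weak-solution property; the resolvent is then peeled off via spectral calculus along an approximating sequence $\psi_{j}\to u$, using $\langle(\Delta^{\sigma}-\lambda^{\sigma})^{1/2}\psi_{j},(\Delta^{\sigma}-\lambda^{\sigma})^{-1/2}\phi\rangle=\langle\psi_{j},\phi\rangle$, giving $\langle u,\phi\rangle$. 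In other words, the paper never needs the membership $u^{\gamma}\ast k_{\lambda,\sigma}\in\mathcal{H}_{\lambda,\sigma}$, and never needs the semigroup identity $(\Delta^{\sigma}-\lambda^{\sigma})\chi\ast k_{\lambda,\sigma}=\chi$. You, by contrast, build both of these as stepping stones: you establish $w\in\mathcal{H}_{\lambda,\sigma}$ (via the dual Poincar\'e inequality and a density argument, with the high-frequency truncation workaround for $\lambda=\lambda_{0}$), prove the resolvent identity by differentiating $\e^{\lambda^{\sigma}t}\chi\ast P_{t}^{\sigma}$ in $t$ and controlling the boundary term at infinity, and then conclude $u=w$ by orthogonality in the Hilbert space. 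Both approaches are sound; yours is conceptually transparent (``two elements of $\mathcal{H}_{\lambda,\sigma}$ inducing the same functional coincide'') and you correctly flag the key obstruction at $\lambda=\lambda_{0}$ (the nontrivial distributional kernel of $\Delta^{\sigma}-\lambda_{0}^{\sigma}$, e.g.\ $\varphi_{0}$), but it is longer, and the extra conclusion $w\in\mathcal{H}_{\lambda,\sigma}$ that you labor to obtain is already implied by the final identity $u=w$ together with $u\in\mathcal{H}_{\lambda,\sigma}$. One small point worth tightening: the boundedness of $(\Delta^{\sigma}-\lambda^{\sigma})^{-1/2}\colon L^{(\gamma+1)'}\to L^{2}$ is cleanest read off from~\eqref{ets1} (equivalently~\eqref{Lp' to Lp}) rather than from an abstract ``dualization of the embedding,'' since the latter formulation quietly requires knowing that $(\Delta^{\sigma}-\lambda^{\sigma})^{-1/2}g$ lies in $L^{q}$ for $g\in L^{2}$, which is the same nontrivial content.
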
	
\begin{proof}
Since $2/{\gamma}<2$, there is $Q<2$ such that $u^{\gamma}\in L^Q$; moreover $k_{\lambda,\sigma}^{\infty} \in L^{Q'}$ and $k_{\lambda,\sigma}^{0} \in L^{1}$ by  Remark \ref{rem:corkaest}. Then by Young's inequality 
\[
 u^{\gamma}*k_{\lambda,\sigma} = u^{\gamma}*k_{\lambda,\sigma}^{0} + u^{\gamma}*k_{\lambda,\sigma}^{\infty} \in L^{Q} +L^{\infty} \subseteq  L^{1}_{\mathrm{loc}}.
 \]
Since also $u\in L^{1}_{\mathrm{loc}}$, it will be enough to prove that 
\begin{equation}\label{eqtestL1loc}
	\langle u,\phi \rangle = \langle u^{\gamma}*k_{\lambda,\sigma},\phi \rangle \qquad \forall \phi \in \mathcal{C}_c^{\infty}.
\end{equation}
Recall that for $\phi\in \mathcal{C}_c^{\infty}$, we have
\[
		(\Delta^\sigma-\lambda^{\sigma})^{-\frac{1}{2}}\phi\in L^2,\, \quad  \phi *k_{\lambda,\sigma} = (\Delta^\sigma-\lambda^{\sigma})^{-1}\phi\in \mathcal{H}_{\lambda, \sigma},
\]
	the first due to~\eqref{eq: negPlanch} and the second follows by this and the definition of $\mathcal{H}_{\lambda, \sigma}$.
	
We have
	\begin{align}
\langle u^{\gamma}\ast k_{\lambda,\sigma}, \phi \rangle
		&=\langle u^{\gamma}, \phi\ast k_{\lambda,\sigma} \rangle \notag\\
		&=\langle u, \phi*k_{\lambda,\sigma}\rangle_{\lambda, \sigma}. \label{eq: D'1}
	\end{align}
	The first equality follows from Lemma~\ref{lemma: dixit} and Fubini, while the second from the facts that $\phi*k_{\lambda,\sigma}\in \mathcal{H}_{\lambda, \sigma}$ and $u$ is a weak solution. We claim now that 
	\[
	\langle u, \phi*k_{\lambda,\sigma}\rangle_{\lambda, \sigma}= \langle u, \phi\rangle.
	\]
	Indeed, let $(\psi_j)\subseteq \mathcal{C}_c^{\infty}$ be such that $\psi_j\rightarrow u \in \mathcal{H}_{\lambda, \sigma}$. Then
	\begin{align}
		\langle u, \phi*k_{\lambda,\sigma}\rangle_{\lambda, \sigma}&=\lim_{j\rightarrow \infty} \langle \psi_j, (\Delta^\sigma-\lambda^{\sigma})^{-1}\phi\rangle_{\lambda, \sigma} \notag \\
		&=\lim_{j\rightarrow \infty} \langle (\Delta^\sigma-\lambda^{\sigma})^{\frac{1}{2}}\psi_j, (\Delta^\sigma-\lambda^{\sigma})^{-\frac{1}{2}}\phi\rangle \notag \\
		&=\lim_{j\rightarrow \infty} \langle \psi_j, \phi\rangle  \notag\\
		&=\langle u, \phi\rangle , \label{eq: D'2}
	\end{align}
	where the second equality follows by the definition of the inner product on $\mathcal{H}_{\lambda, \sigma}$, the third by self-adjointness, and the fourth because the fractional Sobolev embedding of Theorem \ref{thm:fracPoincare} implies $\psi_j \rightarrow u$ in $L^q$ when $\psi_j \rightarrow u$ in $\mathcal{H}_{\lambda, \sigma}$. Combining~\eqref{eq: D'1} and~\eqref{eq: D'2} yields~\eqref{eqtestL1loc} and the proof is complete.
\end{proof}	
				
	\begin{proposition}\label{prop: u bdd}
		Suppose $\sigma \in (0,1)$ and $1<\gamma<\frac{n+2 \sigma}{n-2 \sigma}$, and let $u$ be a nonnegative weak solution to~\eqref{fracequation}. Then $u \in L^{\infty}$.
	\end{proposition}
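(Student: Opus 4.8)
The plan is to run a Moser-type iteration (bootstrapping) on the integral identity $u = u^{\gamma}\ast k_{\lambda,\sigma}$ supplied by Proposition~\ref{uL1loc}, exploiting the mapping properties of convolution with $k_{\lambda,\sigma}^{0}$ and $k_{\lambda,\sigma}^{\infty}$ recorded in Remark~\ref{rem:corkaest}. The starting point is that $u\in\mathcal{H}_{\lambda,\sigma}\subseteq L^{q}$ for all $q\in(2,\tfrac{2n}{n-2\sigma}]$ by Theorem~\ref{thm:fracPoincare2}; in particular $u$ lies in $L^{q_{0}}$ with $q_{0}=\tfrac{2n}{n-2\sigma}$. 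First I would treat the part at infinity: since $k_{\lambda,\sigma}^{\infty}\in L^{p}$ for every $p\in(2,\infty]$, Young's inequality gives $u^{\gamma}\ast k_{\lambda,\sigma}^{\infty}\in L^{\infty}$ as soon as $u^{\gamma}\in L^{p'}$ for some $p'<2$, i.e.\ as soon as $u\in L^{r}$ with $r<2\gamma$; this is guaranteed by the embedding because $\gamma>1$. So the entire difficulty is concentrated in the local convolution with $k_{\lambda,\sigma}^{0}$, which behaves like the Riesz kernel $|x|^{-(n-2\sigma)}$ near the origin and lies in $L^{p}$ only for $p<\tfrac{n}{n-2\sigma}$ (Remark~\ref{rem:corkaest}).

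The core of the argument is therefore the following iteration. Suppose $u\in L^{q}$ for some $q\geq q_{0}$. Then $u^{\gamma}\in L^{q/\gamma}$, and convolving with $k_{\lambda,\sigma}^{0}\in L^{p}$ for $p\in[1,\tfrac{n}{n-2\sigma})$ Young's inequality yields $u^{\gamma}\ast k_{\lambda,\sigma}^{0}\in L^{q_{1}}$ where
\[
\frac{1}{q_{1}} = \frac{\gamma}{q} + \frac{1}{p} - 1,
\]
and we are free to pick any $p<\tfrac{n}{n-2\sigma}$, i.e.\ $\tfrac{1}{p}$ any number in $(1-\tfrac{2\sigma}{n},1]$. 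Combining with the $L^{\infty}$ bound already obtained for the part at infinity, we conclude $u\in L^{q_{1}}$. The gain per step is governed by the map $q\mapsto q_{1}$; one checks that the subcriticality hypothesis $\gamma<\tfrac{n+2\sigma}{n-2\sigma}$ is exactly what forces $\tfrac{\gamma}{q_{0}}+\bigl(1-\tfrac{2\sigma}{n}\bigr)-1 < \tfrac{1}{q_{0}}$, so that starting from $q=q_{0}$ one can choose $p$ close enough to the endpoint $\tfrac{n}{n-2\sigma}$ to make $q_{1}>q_{0}$ strictly, and in fact to obtain a fixed multiplicative improvement $q_{1}\geq \kappa q$ with $\kappa>1$ as long as $q$ stays below a threshold; once $q$ exceeds $\tfrac{\gamma n}{2\sigma}$ (equivalently $\tfrac{\gamma}{q}<\tfrac{2\sigma}{n}$), a single further step lands in $L^{\infty}$ because then $\tfrac{\gamma}{q}+\tfrac1p-1$ can be made negative. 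Iterating finitely many times thus yields $u\in L^{\infty}$.

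The step I expect to be the main obstacle is the careful bookkeeping of the exponents to certify that the iteration genuinely terminates in finitely many steps rather than merely converging to some finite limit: one must verify that the recursion $q\mapsto q_{1}$, with the optimal choice of $p$ at each stage, does not have a fixed point at a finite value of $q$ below $\tfrac{\gamma n}{2\sigma}$. This is where the strict inequality $\gamma<\tfrac{n+2\sigma}{n-2\sigma}$ (as opposed to $\leq$) is essential; in the borderline case the gain degenerates. A clean way to organize this is to work with the quantities $a_{k}=\tfrac{1}{q_{k}}$, show the affine-type recursion $a_{k+1}=\gamma a_{k}-\tfrac{2\sigma}{n}+\varepsilon_{k}$ for suitable small $\varepsilon_{k}>0$ (coming from $\tfrac1p<1$), and observe that since $\gamma\cdot\tfrac{n-2\sigma}{2n}<\tfrac{2\sigma}{n}\cdot\tfrac{n}{2\sigma}\cdot(\dots)$—more precisely since $\gamma a_{0}<a_{0}+\tfrac{2\sigma}{n}$ by subcriticality—the sequence $a_{k}$ is strictly decreasing and crosses $0$ after finitely many steps, at which point $u^{\gamma}\ast k_{\lambda,\sigma}^{0}\in L^{\infty}$. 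Once $u\in L^{\infty}$ is established, together with Proposition~\ref{uL1loc} and the two-sided estimates for $k_{\lambda,\sigma}$ this also gives the decay and the membership $u\in L^{q}$ for all $q\in(2,\infty]$ claimed in Theorem~\ref{propeqlambda}, and the regularity statements follow from Proposition~\ref{prop: Sil2.1.4} and the cited results of~\cite{Sil,BanEtAl}, but those are downstream of the present proposition.
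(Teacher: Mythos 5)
Your proposal follows the same route as the paper: start from the integral identity $u = u^{\gamma}\ast k_{\lambda,\sigma}$ of Proposition~\ref{uL1loc}, dispose of the part at infinity by Young's inequality since $k_{\lambda,\sigma}^{\infty}\in L^{p}$ for $p>2$ and $u^{\gamma}\in L^{Q}$ for some $Q<2$, and then bootstrap the local convolution $u^{\gamma}\ast k_{\lambda,\sigma}^{0}$ via Young's inequality, with subcriticality $\gamma<\frac{n+2\sigma}{n-2\sigma}$ providing the strict improvement at each step. The paper packages the iteration by defining $T u = u^{\gamma}\ast k_{\sigma}^{0}$ and proving the explicit induction claim $T^{N}u\in L^{r}$ for $\frac{\gamma^{N}(n-2\sigma)-4\sigma\sum_{k<N}\gamma^{k}}{2n}<\frac{1}{r}<\frac{\gamma^{N}}{2}$, then showing the lower bound turns negative; your version performs the same bookkeeping via the affine recursion $a_{k+1}=\gamma a_{k}-\tfrac{2\sigma}{n}+\varepsilon_{k}$ on $a_{k}=1/q_{k}$, which is equivalent.

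Two small points of imprecision worth noting, neither fatal. First, the step ``we conclude $u\in L^{q_{1}}$'' needs a word: what Young gives directly is $u^{\gamma}\ast k^{0}_{\lambda,\sigma}\in L^{q_{1}}$ while $u^{\gamma}\ast k^{\infty}_{\lambda,\sigma}\in L^{\infty}$, so a priori $u\in L^{q_{1}}+L^{\infty}$; one should observe that $0\leq u^{\gamma}\ast k^{\infty}_{\lambda,\sigma}\leq u\in L^{q_{0}}$, so the boundary term is in $L^{q_{0}}\cap L^{\infty}\subseteq L^{q_{1}}$, whence genuinely $u\in L^{q_{1}}$. (The paper avoids this by a slightly different algebraic expansion of $u$ before iterating $T$.) Second, the remark that the recursion ``does not have a fixed point at a finite value of $q$ below $\frac{\gamma n}{2\sigma}$'' is not quite what one needs: the map $a\mapsto\gamma a-\tfrac{2\sigma}{n}$ does have a fixed point $a^{*}=\frac{2\sigma}{n(\gamma-1)}$; the relevant fact is that subcriticality gives $a_{0}=\tfrac{n-2\sigma}{2n}<a^{*}$ and the map is expanding away from $a^{*}$ on $(-\infty,a^{*})$, so the iterates run off to $-\infty$ and cross zero after finitely many steps. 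You state this correctly a sentence later; the earlier phrasing is just a slip.
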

\begin{proof}
 By Proposition~\ref{uL1loc},
	\[
	u=u^\gamma\ast (k_\sigma^0 +k_\sigma^{\infty}),
	\]
	 where $u^\gamma \in L^Q, \frac{2}{\gamma}<Q<\frac{2 n}{\gamma(n-2 \sigma)}$. Then $u^\gamma * k_\sigma^{\infty} \in L^{\infty}$ by Young's inequality {(recall $k_{\sigma}^{\infty}\in L^{p}, p\in(2, \infty]$ by Corollary~\ref{corkaest})}, and
	\begin{align*}
		u & =u^\gamma\ast k_\sigma^{\infty}+u^\gamma \ast k_\sigma^0 \\
		& ={u^\gamma\ast k_\sigma^{\infty}} + \left(u^\gamma * k_\sigma^{\infty}+ u^\gamma * k_\sigma^0\right)^\gamma * k_\sigma^0 \\
		& \lesssim {u^\gamma * k_\sigma^{\infty}}+\left(u^\gamma * k_\sigma^{\infty}\right)^\gamma * k_\sigma^0+\left(u^\gamma * k_\sigma ^0\right)^\gamma * k_\sigma^0.
	\end{align*}
	Notice that $u^\gamma * k_\sigma^{\infty} \in L^{\infty}$ also implies that the middle term $\left(u^\gamma * k_\sigma^{\infty}\right)^\gamma * k_\sigma^0 \in L^{\infty}$ since $k_{\lambda,\sigma}^0\in L^1$.
	
Let us denote, to simplify the notation, 
	\[
	L^{(a,b)}:=\bigcap_{s\in (a,b)}L^s
	\]
as sets. Iteratively, it is enough to show that, given
	$$
	Tu=u^\gamma * k_\sigma^0
	$$
	with $u \in L^{\left(2, \frac{2 n}{n-2 \sigma}\right)}$, then there exists $N$ such that $T^N u \in L^{\infty}.$
	
	By Young's inequality and Corollary~\ref{corkaest}, 
	$$T u \in L^{\left(\frac{2}{\gamma}, \frac{2 n}{\gamma(n-2 \sigma)}\right)} * L^{\left[1, \frac{n}{n-2 \sigma}\right)} \subseteq L^r, \quad  
	\frac{\gamma n-2 \gamma\sigma-4 \sigma}{2 n}<\frac{1}{r}<\frac{\gamma}{2}.
	$$
If $\gamma n-2 \gamma\sigma-4 \sigma\leq 0$, then we are done. Otherwise, suppose  $\gamma n-2 \gamma\sigma-4 \sigma> 0$. Then  $T^2 u=\left(u^\gamma * k_\sigma^{0}\right)^\gamma * k_\sigma^{0}=(T u)^\gamma * k_\sigma^0$ where 
\[
\left(u^\gamma \ast k_\sigma^{0}\right)^\gamma=(T u)^\gamma \in L^{\left(\frac{2}{\gamma^{2}}, \frac{2 n}{\gamma^2n-2 \gamma^2\sigma-4 \sigma \gamma}\right)}.
\]
	Hence
	$$T^2 u \in L^{\left(\frac{2}{\gamma^{2}}, \frac{2 n}{\gamma^2n-2 \gamma^2\sigma-4 \sigma \gamma}\right)} * L^{\left[1, \frac{n}{n-2 \sigma}\right)} \subseteq L^r,\quad \frac{\gamma^2n-2 \gamma^2\sigma-4 \sigma \gamma-4\sigma}{2 n}<\frac{1}{r}<\frac{\gamma^2}{2}.$$
	By induction, we claim that 
	\begin{equation}\label{eq: induction}
		T^N u \in L^r,  \quad  \frac{\gamma^N(n-2 \sigma)-4 \sigma\left(\sum_{k=0}^{N-1} \gamma^k\right)}{2 n}<\frac{1}{r}<\frac{\gamma^N}{2}.
		\end{equation}
	Indeed, assume that for some $N\geq 3$
	$$
	T^{N-1} u \in L^t, \quad  \frac{\gamma^{N-1}(n-2 \sigma)-4 \sigma\left(\sum_{k=0}^{N-2} \gamma^k\right)}{2 n}<\frac{1}{t}<\frac{\gamma^{N-1}}{2}.
	$$
	Then $\left(T^{N-1} u\right)^\gamma \in L^{s}$, where $s=\frac{t}{\gamma}$ is such that 
	$$
	\frac{\gamma^{N}(n-2 \sigma)-4 \sigma\left(\sum_{k=1}^{N-1} \gamma^k\right)}{2 n}<\frac{1}{s}<\frac{\gamma^N}{2}
	$$
Then $T^N u=(T^{N-1}u)^{\gamma}\ast k_{\sigma}^0\in L^r$, where 
$$\frac{\gamma^{N}(n-2 \sigma)-4 \sigma\left(\sum_{k=1}^{N-1} \gamma^k\right)}{2 n}+\frac{n-2\sigma}{n}-1<\frac{1}{{r}}<\frac{\gamma^N}{2}+1-1,$$
	which in turn proves the induction claim~\eqref{eq: induction}.

	Therefore, to conclude, it is enough to prove that there exists $N$ such that
	$$
	\Gamma=\gamma^N(n-2 \sigma)-4 \sigma\bigg(\sum_{k=0}^{N-1} \gamma^k\bigg)<0.
	$$
Since $1<\gamma<\frac{n+2 \sigma}{n-2 \sigma} = 1+ \frac{4\sigma}{n-2\sigma}$, there is $\varepsilon \in (0,4\sigma)$ such that $\gamma=1+\frac{\varepsilon}{n-2 \sigma}$. Then $\sum_{k=0}^{N - 1} \gamma^k=\frac{\gamma^N-1}{\gamma-1}=\frac{(n-2 \sigma)}{\varepsilon}\left(\gamma^{N}-1\right)$
	whence
	\begin{align*}
	\Gamma&=\gamma^N(n-2 \sigma)-\frac{4 \sigma}{\varepsilon}(n-2 \sigma)\left(\gamma^N-1\right) =\frac{n-2 \sigma}{\varepsilon} 4\sigma\left(1- \gamma^N(1-\tfrac{\varepsilon}{4\sigma}) \right).
	\end{align*}
It remains to observe that that there exists $N$ such that
	$$
\quad 1-\Big(1+\frac{\varepsilon}{n-2 \sigma}\Big)^N\Big(1-\frac{\varepsilon}{4 \sigma}\Big)<0,
	$$
	namely $$\Big(1+\frac{\varepsilon}{n-2 \sigma}\Big)^N\Big(1-\frac{\varepsilon}{4 \sigma}\Big)>1,$$
since $1-\frac{\varepsilon}{4 \sigma}>0$ as $\varepsilon<4 \sigma$ and $\left(1+\frac{\varepsilon}{n-2 \sigma}\right)^N \rightarrow+\infty$ as $N \rightarrow \infty$.
\end{proof}
There are some interesting implications of the fact that $u\in L^{\infty}$, where $u$ is a nonnegative weak solution to~\eqref{fracequation}, among which the fact that~\eqref{fracequation} ``self-improves'' the regularity of its solutions. This is what we shall discuss now.

\subsection{Regularity of solutions}\label{subsection: regularity MS} Our aim here is to prove the regularity of the nonnegative weak solutions to~\eqref{fracequation} claimed in Theorem~\ref{propeqlambda}. Motivated by this and by~\cite{MS}, and inspired by~\cite[Lemma 4.4]{CS14}, we show first that if $\sigma>1/2$ then their gradient is bounded. 

\begin{proposition}\label{prop:gradbounded}
Suppose $0\leq \lambda\leq  \lambda_{0}$, $\sigma \in \big(\tfrac{1}{2},1\big)$ and $1<\gamma<\frac{n+2 \sigma}{n-2 \sigma}$. If $u$ is a nonnegative weak solution to~\eqref{fracequation}, then $\nabla u\in L^{\infty}$.
\end{proposition}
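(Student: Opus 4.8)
\textbf{Proof strategy for Proposition~\ref{prop:gradbounded}.}
The plan is to start from the representation $u = u^{\gamma} \ast k_{\lambda,\sigma}$ established in Proposition~\ref{uL1loc}, together with the already-proved boundedness $u\in L^{\infty}$ from Proposition~\ref{prop: u bdd}, and to differentiate the convolution. Since $u\in L^{\infty}$ and $u \in L^{q}$ for all $q\in \big(2,\tfrac{2n}{n-2\sigma}\big]$ by Theorem~\ref{thm:fracPoincare2}, we have $u^{\gamma}\in L^{\infty}\cap L^{Q}$ for a suitable $Q<2$; this is the right-hand side data we feed into the kernel estimates. Recalling the splitting $k_{\lambda,\sigma}= k_{\lambda,\sigma}^{0} + k_{\lambda,\sigma}^{\infty}$ from~\eqref{k0inftylambda}, the idea is that $\nabla(u^{\gamma}\ast k_{\lambda,\sigma}^{\infty})$ is controlled because $k_{\lambda,\sigma}^{\infty}$ is smooth away from the origin with good exponential decay (dominated by $k_{\sigma}^{\infty}$ through~\eqref{dominance}, which satisfies the bounds of Proposition~\ref{prop: ka est}), so $\nabla k_{\lambda,\sigma}^{\infty}$ is still integrable; convolving an $L^{\infty}\cap L^{Q}$ function against an $L^{1}$ (radial) kernel then gives an $L^{\infty}$ function via Young's inequality. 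The genuinely delicate piece is $u^{\gamma}\ast k_{\lambda,\sigma}^{0}$, because near the origin $k_{\lambda,\sigma}^{0}(x)\asymp |x|^{-(n-2\sigma)}$, so $\nabla k_{\lambda,\sigma}^{0}$ has a non-integrable singularity of order $|x|^{-(n-2\sigma+1)}$ when $\sigma\le 1/2$; this is precisely why the hypothesis $\sigma>1/2$ appears.

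For the local term, the key step is a Hölder-type estimate in the spirit of~\cite[Lemma 4.4]{CS14}. One writes, for $x_{1},x_{2}$ close,
\[
(u^{\gamma}\ast k_{\lambda,\sigma}^{0})(x_{1}) - (u^{\gamma}\ast k_{\lambda,\sigma}^{0})(x_{2}) = \int_{\mathbb{H}^{n}} u^{\gamma}(y)\big( k_{\lambda,\sigma}^{0}(y^{-1}x_{1}) - k_{\lambda,\sigma}^{0}(y^{-1}x_{2})\big)\, \dd\mu(y),
\]
splits the integral over $B(x_{1}, 2d(x_{1},x_{2}))$ and its complement (intersected with $B(o,1)$-type neighbourhoods to stay in the regime where the local bound applies), and estimates each piece. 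On the near ball one uses the bound $k_{\lambda,\sigma}^{0}(z)\lesssim |z|^{-(n-2\sigma)}$ for each term separately together with $u^{\gamma}\in L^{\infty}$, producing a term $\lesssim d(x_{1},x_{2})^{2\sigma}$; on the far region one uses the mean value inequality $|k_{\lambda,\sigma}^{0}(y^{-1}x_{1}) - k_{\lambda,\sigma}^{0}(y^{-1}x_{2})|\lesssim d(x_{1},x_{2})\,\sup |\nabla k_{\lambda,\sigma}^{0}| \lesssim d(x_{1},x_{2})\,|z|^{-(n-2\sigma+1)}$ and integrates; since $2\sigma-1>0$ this integral converges and gives again $\lesssim d(x_{1},x_{2})^{2\sigma-1}$. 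Hence $u^{\gamma}\ast k_{\lambda,\sigma}^{0}\in \mathcal{C}^{0,2\sigma-1}$ (in fact one gets a bit of extra room from the $L^{Q}$-integrability at larger scales, but the local Hölder regularity is what matters). Combined with the smoothness of the $k_{\lambda,\sigma}^{\infty}$-part away from $0$, this shows $u\in \mathcal{C}^{1,2\sigma-1}_{\mathrm{loc}}$ or at least that $u$ is Lipschitz; to upgrade to a \emph{global} bound on $\nabla u$ one invokes the exponential-decay estimates of $k_{\lambda,\sigma}^{\infty}$ and the uniform-in-$x$ nature of the local estimate (the constants do not depend on the base point, only on $\|u^{\gamma}\|_{\infty}$ and $\|u^{\gamma}\|_{Q}$, both finite).

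Alternatively — and perhaps more cleanly — one can pass through the interior regularity theory for the fractional Laplacian on $\mathbb{H}^{n}$ from~\cite{Sil} and~\cite{BanEtAl}: the equation reads $\Delta^{\sigma}u = \lambda^{\sigma}u + u^{\gamma}$ with right-hand side in $L^{\infty}$ (by Proposition~\ref{prop: u bdd}); bootstrapping once via those results gives $u\in \mathcal{C}^{1,2\sigma-1+\varepsilon}_{\mathrm{loc}}$ for small $\varepsilon>0$ when $\sigma>1/2$, hence $\nabla u$ locally bounded, and the global bound again follows from the decay of $k_{\lambda,\sigma}^{\infty}$ controlling $u$ and its derivatives at infinity (or from the fact that $u\in L^{q}$ forces $u\to 0$ at infinity together with the quantitative kernel tails). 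I expect the main obstacle to be bookkeeping the split into local/non-local kernel parts carefully enough that the constants in the Hölder estimate are genuinely uniform in the base point, so that the local $\mathcal{C}^{0,2\sigma-1}$ bound on $\nabla u$ (equivalently the Lipschitz bound on $u$) globalizes to $\nabla u\in L^{\infty}(\mathbb{H}^{n})$; the role of $\sigma>1/2$ in making $\nabla k^{0}_{\lambda,\sigma}$ marginally integrable is the structural reason the statement is restricted to that range.
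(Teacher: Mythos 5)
Your argument is structurally parallel to the paper's — both reduce $\nabla u$ to the kernel $k_{\lambda,\sigma}$ and split it into a local singular part and a decaying part at infinity — but it silently takes for granted the one thing the paper actually has to work for: pointwise bounds on the \emph{gradient} of the kernel. In the "far region" estimate you invoke the mean value inequality with $\sup|\nabla k^0_{\lambda,\sigma}|\lesssim |z|^{-(n-2\sigma+1)}$, and for the piece at infinity you assert that "$\nabla k^\infty_{\lambda,\sigma}$ is still integrable" because of the exponential decay of $k^\infty_\sigma$. Neither of these follows from the pointwise estimates on $k_\sigma$ itself (Proposition~\ref{prop: ka est}), and the dominance~\eqref{dominance} controls the kernels, not their derivatives. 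Establishing $|\partial_r k_\sigma(r)|\lesssim r^{-n+2\sigma-1}$ for $r<1$ and $|\partial_r k_\sigma(r)|\lesssim (1+r)^K e^{-(n-1)r/2}$ for $r\geq 1$ is precisely the substance of the paper's proof: one differentiates under the subordination integral, uses the known gradient bounds $|\partial_r h_s(r)|\lesssim h_s(r)\cdot(\dots)$ from~\cite{ADY96}, and applies a Laplace-type analysis (plus a pointwise estimate from~\cite{P24} in the small $(r,t)$-regime) to extract the right powers. Without that input, your Hölder-difference argument has no starting point; and once you do have that input, the $k^\infty$-part and the local part follow from a single application of Young's inequality (the paper's actual conclusion), making the CS14-style near/far decomposition an unnecessary detour.

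A secondary, more minor issue: your bookkeeping in the Hölder step is off. With $|\nabla k^0_{\lambda,\sigma}(z)|\lesssim |z|^{-(n-2\sigma+1)}$ and $\sigma>1/2$, the far region contributes $\lesssim d\int_{2d}^{1} r^{2\sigma-2}\,\dd r = O(d)$ and the near region contributes $\lesssim d^{2\sigma}$, so the correct conclusion is that $u^\gamma\ast k^0_{\lambda,\sigma}$ is \emph{Lipschitz} (exponent $1$), not $\mathcal{C}^{0,2\sigma-1}$. As written, the passage from the asserted $\mathcal{C}^{0,2\sigma-1}$ regularity to "$u\in\mathcal{C}^{1,2\sigma-1}_{\mathrm{loc}}$ or at least $u$ is Lipschitz" is a non sequitur, since a $\mathcal{C}^{0,\beta}$ bound with $\beta<1$ does not yield a gradient bound. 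Your alternative route via the interior regularity results of~\cite{Sil,BanEtAl} is closer to what the paper does in the \emph{subsequent} proposition, but there the paper uses $\nabla u\in L^\infty$ — i.e. the very statement you are asked to prove — as an input to the bootstrapping, so citing it here would be circular unless you carefully check that the H\"older estimates from~\cite{BanEtAl} are already uniform over $\mathbb{H}^n$.
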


\begin{proof}
Using that 
\[
\nabla u=\nabla (u^{\gamma}*k_{\lambda, \sigma})=u^{\gamma}*\nabla k_{\lambda,\sigma},
\]
the first equality in~\eqref{dominance}, which implies 
\[
|\nabla k_{\lambda, \sigma}|\leq \int_0^{\infty}\e^{\lambda^{\sigma}t}\, |\nabla P_t^{\sigma}|\, \dd t \leq \int_0^{\infty}\e^{\lambda_0^{\sigma}t}\, |\nabla P_t^{\sigma}|\, \dd t,
\]
and the radiality of $k_{\sigma} = k_{\lambda_{0},\sigma}$, it is sufficient to obtain upper bounds for   
 	\begin{equation}\label{eq: der fract heat}
			\int_{0}^{\infty} \e^{t\lambda_0^{\sigma}}\left|\frac{\dd}{\dd r}P^{\sigma}_{t}(r)\right|\, \dd t.
		\end{equation}
	Let us stress that we do not aim for sharp bounds in what follows: we claim that for all $\sigma \in (0,1)$, it holds
	\begin{equation}\label{finalclaim}
		\left|\frac{\dd}{\dd r}k_{\sigma}(r)\right|\lesssim r^{-n+2\sigma-1}, \quad 0<r<1, \qquad \left|\frac{\dd}{\dd r}k_{\sigma}(r)\right|\lesssim(1+r)^K\, \e^{-\frac{n-1}{2}r}, \quad r\geq 1
		\end{equation}
		for some positive constant $K>0$. Let us assume the claim for a moment. If $\sigma>1/2$ then
	\[
	\nabla k_{\sigma} \in L^1(B(o,1)), \qquad \nabla k_{\sigma}\in L^p(\mathbb{H}^n \setminus B(o,1)), \quad  \forall p\in (2, \infty].
	\]
	Therefore, since $u^{\gamma} \in L^q, \, q\in (2/\gamma, \infty]$ with $2/\gamma<2$, it follows by Young's inequality that 
	\[
	\nabla u=u^{\gamma}\ast \nabla k_{\sigma} \in L^{\infty}.
	\]
Therefore, it remains to prove the claim~\eqref{finalclaim}.

In view of the subordination formula to the standard heat kernel, we have 
		\[
		\left|\frac{\dd}{\dd r}P^{\sigma}_t(r)\right|\leq \int_{0}^{\infty} \left|\frac{\dd}{\dd r}h_{s}(r)\right|\, \eta_t^{\sigma}(s)\, \dd s,
		\]
		where it is known that \cite{ADY96}
		\[
		\left|\frac{\dd}{\dd r}h_{s}(r)\right|\lesssim h_s(r)\times \begin{cases}
			1+	\frac{1}{\sqrt{s}}+\frac{r}{s}, \quad &\text{if } r<1,\\
			r\left(1+\frac{1}{s}\right)\quad &\text{if } r\geq 1.
		\end{cases}
		\]
		
	\emph{Case I.} $r\geq 1$. Then 
		\begin{align}
			\left|\frac{\dd}{\dd r}P^{\sigma}_t(r)\right|&\lesssim r \int_{0}^{1}s^{-1}\, h_s(r)\,\eta_t^{\sigma}(s)\, \dd s+ r \int_{1}^{\infty} h_s(r)\,\eta_t^{\sigma}(s)\, \dd s \notag  \\
			&\lesssim r \, J(t,r)\, + r\, P_t^{\sigma}(r), \label{eq: der fract J}
		\end{align}
		 We can control $J(t,r)$ using the Laplace method, since $t+r\geq 1$: indeed, notice that the exponential terms which occur by the estimates of $h_s(r)$ and $\eta_t(s)$ are the same as those in the subordination formula $P_t^{\sigma}(r)=\int_{0}^{\infty} h_s(r)\,\eta_t^{\sigma}(s)\, \dd s$. Hence, working as in \cite[p.93]{GS04}, we get
		\[
		J(t,r)\lesssim (t+r)^{-1} \, P_t^{\sigma}(r), \quad r \geq 1.
		\]
		Therefore, we get $\left| \frac{\dd}{\dd r} P_t^{\sigma}(r)\right| \lesssim r P_t^{\sigma}(r)$ so 
		\[
		\left| \frac{\dd}{\dd r} k_{\sigma}(r)\right| \lesssim r\int_{0}^{\infty}\e^{\lambda_0^{\sigma}t}\, P_t^{\sigma}(r)\, \dd t\lesssim r\, k_{\sigma}(r) \lesssim (1+r)^K \, \e^{-\frac{n-1}{2}r}, \quad r\geq1,
		\]
		for some $K>0$, by known estimates of $k_{\sigma}$ at infinity, see Proposition~\ref{prop: ka est}.
				
		\emph{Case II.} $r< 1$. We have
		\[
		1+\frac{1}{\sqrt{s}}+\frac{r}{s}=\frac{s+\sqrt{s}+r}{\tau} \asymp \begin{cases}
			r\, s^{-1}, &\quad s< r^2\\
			s^{-\frac{1}{2}}, &\quad r^2<s< 1\\
			1, &\quad s\geq 1. \\
		\end{cases}
		\]
		Therefore, 
		\begin{equation}\label{eq: small space}
		\left| \frac{\dd}{\dd r}P_t^{\sigma}(r)\right| \lesssim \left\{ \int_{0}^{r^2}r\, s^{-1} +\int_{r^2}^{1}s^{-\frac{1}{2}}+\int_{1}^{\infty} 1 \right\}h_s(r)\, \eta_t^{\sigma}(s) \, \dd s 
		\end{equation}
	If $t+r\geq 1$ (so $t$ is large) then we can argue as before, via the Laplace method, which would still give 
	\[
	\left| \frac{\dd}{\dd r}P_t^{\sigma}(r)\right|\lesssim P_t^{\sigma}(r), \quad r<1, \quad t\geq 1.
	\]
	On the other hand, if $t+r<1$ then~\eqref{eq: small space}	yields
	\begin{align*}
	\left| \frac{\dd}{\dd r}P_t^{\sigma}(r)\right|\lesssim \int_{0}^{r^2}r\, s^{-\frac{n}{2}}\, \e^{-\frac{r^2}{4s}}\, \eta_t^{\sigma}(s) \, \frac{\dd s}{s} +\int_{r^2}^{1}s^{-\frac{n}{2}}  \e^{-\frac{r^2}{4s}}\, \eta_t^{\sigma}(s) \, \frac{\dd s}{s^{\frac{1}{2}}} +P_t^{\sigma}(r),
		\end{align*}
		where we used for the first two integrals that $h_s(r)\asymp s^{-\frac{n}{2}}\, \e^{-\frac{r^2}{4s}}$ for $s$ and $r$ small. But for all $\kappa\geq 0$,
		\[
		\int_{0}^{\infty} s^{-\frac{n}{2}}\, \e^{-\frac{r^2}{4s}}\, \eta_t^{\sigma}(s) \, \frac{\dd s}{s^{\kappa}}\asymp t\, (t^{\frac{1}{2\sigma}}+r)^{-n-2\sigma-2\kappa}, \quad t+r<1,
		\]
		see e.g. \cite[Lemma 9]{P24}. Hence, for $t+r<1$, we get
		\begin{align*}
			\left| \frac{\dd}{\dd r}P_t^{\sigma}(r)\right|&\lesssim t\,r\,  (t^{\frac{1}{2\sigma}}+r)^{-n-2\sigma-2}+t\, (t^{\frac{1}{2\sigma}}+r)^{-n-2\sigma-1}+t\, (t^{\frac{1}{2\sigma}}+r)^{-n-2\sigma} \\
			&\lesssim  t\, (t^{\frac{1}{2\sigma}}+r)^{-n-2\sigma-1}.
		\end{align*}
		Altogether, substituting in~\eqref{eq: der fract J}, we have the following upper bounds for the radial derivatives of $P_t^{\sigma}(r)$ around the origin:
		\[
		\left|\frac{\dd}{\dd r}P^{\sigma}_t(r)\right|\lesssim \begin{cases}
			 t \, (t^{\frac{1}{2\sigma}}+r)^{-(n+2\sigma+1)},\quad &\text{if } r+t<1, \\
			 P_t^{\sigma}(r),\quad &\text{if } r<1, \quad t\geq 1.
		\end{cases}  
		\]
		
		We can finally return to radial derivatives of $k_{\sigma}(r)$ around the origin in view of  $\eqref{eq: der fract heat}$.  Then
		\begin{align*}
			\left|\frac{\dd}{\dd r}k_{\sigma}(r)\right|&\lesssim  \int_{0}^{1} t \, (t^{\frac{1}{2\sigma}}+r)^{-(n+2\sigma+1)}\, \dd t \, +\, \int_{1}^{\infty} \e^{t\lambda_0^{\sigma}}P^{\sigma}_{t}(r)\, \dd t \\
			&\lesssim r^{-n-2\sigma-1}\int_{0}^{r^{2\sigma}} t\,\dd t+\int_{r^{2\sigma}}^{1} t^{-\frac{n+1}{2\sigma}} \, \dd t + k_{\sigma}^{0}(r)\\
			&\lesssim r^{-n+2\sigma-1}, \quad r<1,
		\end{align*}
		since $k_{\sigma}^{0}(r)\asymp r^{-n+2\sigma}$.  The proof of the claim~\eqref{finalclaim} is now complete.
\end{proof}
We are now ready to complete the proof of Theorem~\ref{propeqlambda} via the following.

\begin{proposition}
Suppose $0\leq \lambda \leq \lambda_{0}$, $\sigma \in (0,1)$ and $1<\gamma<\frac{n+2 \sigma}{n-2 \sigma}$, and let $u$ be a nonnegative weak solution to~\eqref{fracequation}.  Then $u\in \mathcal{C}^{0,\alpha}$ if $\sigma< 1/2$, while $u\in \mathcal{C}^{1,\alpha}$ if $\sigma\geq 1/2$, for all $\alpha \in (0,1)$. Moreover, $u\in L^{q}$ for all $q\in (2,\infty]$. 
\end{proposition}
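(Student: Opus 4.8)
The plan is to bootstrap from the facts already established: $u\in L^{\infty}$ by Proposition~\ref{prop: u bdd}, $u\in \mathcal{H}_{\lambda,\sigma}$ hence $u\in L^{q}$ for all $q\in(2,\tfrac{2n}{n-2\sigma}]$ by Theorem~\ref{thm:fracPoincare2}, and (when $\sigma\geq 1/2$) $\nabla u\in L^{\infty}$ by Proposition~\ref{prop:gradbounded}. First I would observe that $u\in L^{\infty}\cap L^{q_0}$ for some $q_0\in(2,\infty)$ immediately gives $u\in L^{q}$ for all $q\in[q_0,\infty]$ by interpolation, so the claim $u\in L^q$ for all $q\in(2,\infty]$ follows at once; the only missing piece is the H\"older/$\mathcal{C}^1$ regularity.

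For the regularity, the strategy is to feed the now-known integrability of $u$ into the representation $u=u^{\gamma}\ast k_{\lambda,\sigma}$ from Proposition~\ref{uL1loc} and then invoke the regularity theory for the fractional Laplacian. Since $u\in L^{\infty}$, we have $u^{\gamma}\in L^{\infty}$, so the right-hand side $u^{\gamma}$ of the equation $(\Delta^{\sigma}-\lambda^{\sigma})u=u^{\gamma}$ is a bounded function. Equivalently, $\Delta^{\sigma}u = u^{\gamma}+\lambda^{\sigma}u \in L^{\infty}$, again because $u\in L^{\infty}$. Now one applies the interior Schauder-type estimates for $\Delta^{\sigma}$ from~\cite{Sil} (transplanted to $\mathbb{H}^n$ as in~\cite{BanEtAl}; cf.\ Proposition~\ref{prop: Sil2.1.4} and the references therein): if $\Delta^{\sigma}u\in L^{\infty}_{\mathrm{loc}}$ and $u\in L^{\infty}$, then $u\in \mathcal{C}^{0,\alpha}$ for every $\alpha<\min(2\sigma,1)$ when $2\sigma<1$, i.e.\ for all $\alpha\in(0,1)$ since $\sigma<1/2$ forces $2\sigma<1$ and we may take $\alpha$ as close to $2\sigma$ as we like — but in fact, a bootstrap sharpens this: once $u\in\mathcal{C}^{0,\alpha}$ we may treat $u^\gamma\in \mathcal{C}^{0,\alpha}$ and re-apply the Schauder estimate to gain $u\in\mathcal{C}^{0,\alpha+2\sigma}$ (or $\mathcal{C}^{1,\alpha+2\sigma-1}$), and iterating this finitely many times reaches $\mathcal{C}^{0,\alpha}$ for every $\alpha\in(0,1)$ in the regime $\sigma<1/2$. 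When $\sigma\geq 1/2$, Proposition~\ref{prop:gradbounded} already gives $\nabla u\in L^{\infty}$; combined with $u\in L^\infty$ and $u^\gamma\in L^\infty$ the same Schauder machinery yields $u\in\mathcal{C}^{1,\alpha}$ for all $\alpha\in(0,1)$. Throughout, the pointwise identity $(\Delta^{\sigma}-\lambda^{\sigma})u=u^{\gamma}$ holds in the classical sense because, with the Hölder (resp.\ $\mathcal{C}^1$) regularity just obtained, $u$ satisfies the hypotheses of Proposition~\ref{prop: Sil2.1.4}, so $\Delta^{\sigma}u$ is given by the singular integral~\eqref{eq:singint} and agrees with $u^{\gamma}+\lambda^{\sigma}u$ pointwise everywhere.

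The main obstacle, I expect, is making the bootstrap precise while tracking the exact Hölder exponents and, in particular, justifying the gain in regularity from $\mathcal{C}^{0,\alpha}$ of the source term $u^{\gamma}$ to $\mathcal{C}^{0,\alpha+2\sigma}$ (or $\mathcal{C}^{1,\alpha+2\sigma-1}$) of $u$, crossing the threshold $2\sigma=1$ correctly, and in each step checking that the relevant function lies in $L_{\sigma}$ and that the hypotheses of the cited regularity results (interior estimates being essentially local, so transplantable from the Euclidean setting via~\cite{BanEtAl}) are genuinely met on $\mathbb{H}^n$. A secondary care point is that $u^{\gamma}$ with $\gamma>1$ non-integer is only as regular as $u$ near points where $u$ vanishes, so one should remark that the composition $t\mapsto t^{\gamma}$ is locally Lipschitz on $[0,\|u\|_\infty]$ and $\gamma$-Hölder near $0$, which suffices to propagate the Hölder bound; since $\gamma>1$, $u^{\gamma}$ is in fact at least as Hölder-regular as $u$ itself, so no loss occurs in the iteration. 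Once these bookkeeping issues are handled, the conclusion $u\in\mathcal{C}^{0,\alpha}$ (resp.\ $\mathcal{C}^{1,\alpha}$) for all $\alpha\in(0,1)$, the pointwise validity of~\eqref{criticreduction}, and $u\in L^q$ for all $q\in(2,\infty]$ assemble into the statement of Theorem~\ref{propeqlambda}.
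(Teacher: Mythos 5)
Your proposal is correct and follows essentially the same route as the paper: establish $\Delta^{\sigma}u = u^{\gamma}+\lambda^{\sigma}u \in L^{\infty}$ from the $L^{\infty}$ bound, invoke the H\"older/Schauder estimates of \cite[Proposition 2.6]{BanEtAl} (transplanting Silvestre's theory to $\mathbb{H}^n$), bootstrap using the Lipschitz bound for $t\mapsto t^{\gamma}$ on $[0,\|u\|_{\infty}]$, feed in Proposition~\ref{prop:gradbounded} for $\sigma\geq 1/2$, and conclude $u\in L^{q}$ for $q\in(2,\infty]$ by interpolating $L^{q_{0}}\cap L^{\infty}$. The only distinction is cosmetic: the paper states the bootstrap bookkeeping (choosing $\alpha$ so that $\alpha+2\sigma\leq 1$ in the case $\sigma<1/2$, and treating $\sigma=1/2$ separately) explicitly where you flag it as ``to be handled,'' but the ideas coincide.
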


\begin{proof}
Observe that by Lemma~\ref{lem:equiv} and since $u\in L^{\infty} \subseteq L_{\sigma}$, the function $u$ is also a solution to~\eqref{fracequation} in the sense of distributions. Since $f(u) := \lambda^{\sigma}u+u^{\gamma} \in L^{\infty}$, the distributional fractional Laplacian $\Delta^{\sigma}u$ is also in $L^{\infty}$. Therefore, by~\cite[Proposition 2.6(c)]{BanEtAl},
\begin{itemize}
	\item[(i)] if $0<\sigma\leq 1/2$, then $u\in \mathcal{C}^{0,\alpha}$ for any $0<\alpha<2\sigma$;
	\item[(ii)] if $1/2<\sigma\leq 1$, then $u\in \mathcal{C}^{1,\alpha}$ for any $0<\alpha<2\sigma-1$.
\end{itemize}
If $0< \sigma < 1/2$, then in turn $f(u)\in \mathcal{C}^{0,\alpha}$ for $0<\alpha<2\sigma$, whence $\Delta^{\sigma}u$ is also in $\mathcal{C}^{0,\alpha}$ for $0<\alpha<2\sigma$. By choosing $\alpha$ sufficiently small so that $\alpha + 2\sigma\leq 1$, by iterating~\cite[Proposition 2.6(b)]{BanEtAl} we have $u\in \mathcal{C}^{0,\alpha}$ for all $\alpha \in (0,1)$.

Similarly, if $\sigma=1/2$  then $\Delta^{\sigma}u$ is in $\mathcal{C}^{0,\alpha}$ for all $\alpha \in (0,1)$ by (i), whence $u\in \mathcal{C}^{1,\alpha }$ for all $\alpha \in (0,1)$ by~\cite[Proposition 2.6(b)]{BanEtAl}.

Suppose now  that $\sigma> 1/2$. Then $u\in \mathcal{C}^{0,1}$ since $\nabla u$ is bounded by Proposition~\ref{prop:gradbounded}, and since $u$ is also bounded, $u\in \mathcal{C}^{0,\alpha}$ for all $0<\alpha<1$. Moreover, $f(u)$ has the same H{\"o}lder continuity as $u$ since by~\eqref{fbfa}
		\[
|u(x)^{\gamma}-u(y)^{\gamma}|\leq C \|u\|_{\infty}^{\gamma-1}|u(x)-u(y)|, \qquad x,y\in \mathbb{H}^{n}.
		\]
		Then $\Delta^{\sigma} u\in \mathcal{C}^{0,\alpha}$ for all $\alpha \in (0,1]$. Since $\alpha + 2\sigma >1$ for all $\alpha>0$, by~\cite[Proposition 2.6(b)]{BanEtAl} we obtain that $u\in  \mathcal{C}^{1,\alpha}$ for all $\alpha \in (0,1]$.
		
		 Finally, combining the fact that $u\in L^{\infty}$ with $u\in L^q$, $q\in (2, \frac{2n}{n-2\sigma})$ by Theorem~\ref{thm:fracPoincare2}, we get that $u\in L^q$ for $q\in (2,\infty)$.
\end{proof}

	\section{Riemannian symmetric spaces of non-compact type}\label{Sec:10}

In this section we discuss how all our results extend to \emph{rank one} symmetric spaces while Theorem~\ref{thm:fracPoincare}, actually holds on Riemannian symmetric spaces of non-compact type.

\subsection{Symmetric spaces} We adopt the standard notation and refer to \cite{Hel} for more details. Let $G$ be a semi-simple Lie group, connected, noncompact, with finite center, and $K$ be a maximal compact subgroup
of $G$. The homogeneous space $\mathbb{X} = G/K$ is a Riemannian symmetric space of noncompact
type. Let $\mathfrak{g} = \mathfrak{k} \oplus \mathfrak{p}$ be the Cartan decomposition of the Lie algebra of $G$. The Killing form
of $\mathfrak{g}$ induces a $K$-invariant inner product on $\mathfrak{g}$, hence a $G$-invariant Riemannian metric
on $G/K$. Fix a maximal abelian subspace $\mathfrak{a}$ in $\mathfrak{p}$. We identify $\mathfrak{a}$ with its dual $\mathfrak{a}^{*}$ by means of
the inner product inherited from $\mathfrak{p}$. Let $\Sigma \subseteq \mathfrak{a}$ be the root system of $(\mathfrak{g}, \mathfrak{a})$ and let $W$ be the associated Weyl group. Choose a
positive Weyl chamber $\mathfrak{a}^{+}\subseteq \mathfrak{a}$ and let $\Sigma^{+}\subseteq \Sigma$ be the corresponding subsystem of positive roots.
Denote by $\rho=\frac{1}{2}\sum_{\alpha \in \Sigma^{+}}m_{\alpha}\alpha$ the half sum of positive roots counted with their multiplicities. The bottom of the spectrum $\lambda_0$ of the nonnegative Laplace-Beltrami operator $\Delta$ is equal to $|\rho|^2$. Let $n$ be the dimension and $\nu$ be the pseudo-dimension 
(or dimension at infinity) of $\mathbb{X}$. Notice that the only space with $n=2$ is the hyperbolic plane, and that $\nu=3$ on real hyperbolic space; in general, we always have $\nu\geq 3$. Finally, $\ell:=\dim\mathfrak{a}$ is called the rank of $G/K$.

We have the decompositions 
\begin{align*}
	\begin{cases}
		\,G\,=\,N\,(\exp\mathfrak{a})\,K 
		\qquad&\textnormal{(Iwasawa)}, \\[5pt]
		\,G\,=\,K\,(\exp\overline{\mathfrak{a}^{+}})\,K
		\qquad&\textnormal{(Cartan)}.
	\end{cases}
\end{align*}
Denote by $A(x)\in\mathfrak{a}$ and $x^{+}\in\overline{\mathfrak{a}^{+}}$
the middle components of $x\in{G}$ in these two decompositions, and by
$|x|=|x^{+}|$ the distance to the origin $o=\{K\}$. Then the Haar measure 
on $G$ writes
\begin{align*}
	\int_{G}f(x)\,\diff{x} 
	=\,
	|K/{\mathbb{M}}|\,\int_{K}\,
	\int_{\mathfrak{a}^{+}}\delta(x^{+})\, 
	\int_{K} f(k_{1}(\exp x^{+})k_{2}) \, \diff{k_2}\,  \diff{x^{+}} \, \diff{k_1}, 
\end{align*}
with density
$$ \delta(x^{+})\,
	=\,\prod_{\alpha\in\Sigma^{+}}\,
	(\sinh\langle{\alpha,x^{+}}\rangle)^{m_{\alpha}}\leq \e^{2\langle\rho, x^{+} \rangle}.$$
Here $K$ is equipped with its normalized Haar measure, $\mathbb{M}$ denotes the centralizer of $\exp\mathfrak{a}$ in $K$ and the volume 
of $K/{\mathbb{M}}$ can be computed explicitly, see \cite[Eq (2.2.4)]{AJ99}.
It follows that locally, we have in terms of the Riemannian measure that $\mu(B(o, s))\asymp s^n, 0<s<1$.

\subsection{Fourier analysis}
The spherical Fourier transform (Harish-Chandra transform)
$\mathcal{H}$ is defined by
\[
	\mathcal{H}f(\xi)
	=\int_{G}\varphi_{-\xi}(x)\,f(x) \, \dd x
	\qquad\forall\,\xi\in\mathfrak{a},\
	\forall\,f\in\mathcal{C}_c^{\infty}(K\backslash{G/K}),
\]
where $\varphi_{\xi}\in\mathcal{C}^{\infty}(K\backslash{G/K})$ is the
spherical function of index $\xi \in \mathfrak{a}$, with the integral representation
\begin{align*}
	\varphi_{\xi}(x)\, 
	=\,\int_{K}\e^{\langle{i\xi+\rho,\,A(kx)}\rangle}\, \diff{k}.
\end{align*} 
Notice that for all $\xi\in \mathfrak{a}$ we have
\begin{equation}\label{eq:Xphi0}
|\varphi_{\xi}(\exp x^{+})|\leq \varphi_{0}(\exp x^{+}) \asymp\,
\Big\lbrace \prod_{\alpha\in\Sigma_{r}^{+}} 
1+\langle\alpha,x^{+}\rangle\Big\rbrace\,
\e^{-\langle\rho, x^{+}\rangle}
\qquad\forall\,x^{+}\in\overline{\mathfrak{a}^{+}}.
\end{equation}

More generally, we have the Helgason-Fourier transform for test functions,
\[
	\widehat{f}(\xi, k\mathbb{M})\,
	=\,
	\int_{G}\,
	f(gK)\,\e^{\langle{-i\xi+\rho,\,A(k^{-1}g)}\rangle} \,\diff{g}
\]
which boils down to the spherical transform 
when $f$ is bi-$K$-invariant. The inversion formula is given by
\[
f(gK)=|W|^{-1}\int_K\int_{\mathfrak{a}}\widehat{f}(\xi, k\mathbb{M})\,\e^{\langle{i\xi+\rho,\,A(k^{-1}g)}\rangle} \,\frac{\diff{\xi}}{|\textbf{c}(\xi)|^2}\, \diff{k}, \quad f\in \mathcal{C}_c^{\infty}(G/K).
\] 
Here $|\textbf{c}(\xi)|^{-2}$ is the Plancherel density, with behavior
\[
|\textbf{c}(\xi)|^{-2}\leq C\, |\xi|^{\nu-\ell} \, (1+|\xi|)^{n-\nu}.
\]

The fractional Poincar{\'e} inequality is then valid on any symmetric space:
\begin{theorem}
Suppose $\sigma\in (0,1)$ and $2<q\leq\frac{2n}{n-2\sigma}$. Then on any Riemannian symmetric space of the non-compact type $\mathbb{X}$, there exists $C>0$ such that
	\begin{equation*}
		\int_{\mathbb{X}} (|\Delta^{\sigma/2} u|^{2}-\lambda_{0}^{\sigma}u^{2})\, \dd \mu \geq C \|u\|_{q}^{2}, \quad u\in \mathcal{C}_c^{\infty}.
	\end{equation*}
\end{theorem}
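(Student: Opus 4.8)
The strategy is to mimic exactly the proof of Theorem~\ref{thm:fracPoincare} (the rank-one case), since the only ingredients that entered there were: the integral representation~\eqref{eq:singint} of $\Delta^{\sigma}$ in terms of the subordinated heat kernel, the pointwise estimates on the convolution kernel $k_{\sigma}$ of $(\Delta^{\sigma}-\lambda_0^{\sigma})^{-1}$ split into a local part $k_{\sigma}^0$ and a part at infinity $k_{\sigma}^{\infty}$, Young's inequality and Young's inequality for weak-type spaces for the local part, and the Kunze--Stein phenomenon for the part at infinity. All of these have analogues on an arbitrary symmetric space of non-compact type. First I would reduce, exactly as in~\eqref{fracPoincarebis}--\eqref{Lp' to Lp}, to the boundedness estimate
\[
\|u\ast k_{\sigma}\|_{q}\leq C\,\|u\|_{q'},\qquad 2<q\leq \tfrac{2n}{n-2\sigma},
\]
where $k_{\sigma}=\int_0^{\infty}\e^{\lambda_0^{\sigma}t}P_t^{\sigma}\,\dd t$ is the (positive, bi-$K$-invariant) convolution kernel of $(\Delta^{\sigma}-\lambda_0^{\sigma})^{-1}$; this reduction is purely functional-analytic and uses only the spectral calculus, so it transfers verbatim.

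Next I would establish the two-sided kernel estimates playing the role of Proposition~\ref{prop: ka est}: namely $k_{\sigma}^0(x)\asymp |x|^{-(n-2\sigma)}$ for $|x|<1$, and $k_{\sigma}^{\infty}(x)\lesssim (1+|x|)^{K}\,\varphi_0(x)$ for $|x|\geq 1$, with $K$ depending on $n,\nu,\ell$. The local estimate is a small-time matter and depends only on the Euclidean-like behavior of the heat kernel near the diagonal together with $\mu(B(o,s))\asymp s^n$ for $s<1$, which holds on every symmetric space. For the part at infinity one repeats the splitting $k_{\sigma}^{\infty}=J_1+J_2+J_3$ over the time intervals $(0,|x|^{\sigma}),(|x|^{\sigma},|x|^2),(|x|^2,\infty)$; the contributions $J_1,J_3$ are handled via the known large-time/large-space estimates of $P_t^{\sigma}$ on $\mathbb{X}$ (which have the same structure, $P_t^{\sigma}\lesssim \varphi_0(x)\,(\cdots)\,\e^{-\lambda_0^{\sigma}t}$, see~\cite{GS04}), while for $J_2$ one uses the inversion formula $P_t^{\sigma}(x)=\mathrm{const.}\int_{\mathfrak{a}}\e^{-t(|\xi|^2+\lambda_0)^{\sigma}}\varphi_{\xi}(x)\,|\mathbf{c}(\xi)|^{-2}\,\dd\xi$ together with $|\varphi_{\xi}|\leq\varphi_0$ and the bound $|\mathbf{c}(\xi)|^{-2}\lesssim |\xi|^{\nu-\ell}(1+|\xi|)^{n-\nu}$; the polynomial weights change but the integrals still converge, yielding $J_2(x)\lesssim \varphi_0(x)$. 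Combined with the estimate $\varphi_0(\exp x^+)\asymp \{\prod_{\alpha\in\Sigma_r^{+}}(1+\langle\alpha,x^+\rangle)\}\e^{-\langle\rho,x^+\rangle}$ of~\eqref{eq:Xphi0}, one gets $k_{\sigma}^{\infty}(x)\lesssim (1+|x|)^{K}\e^{-\langle\rho,x^+\rangle}$, which (crucially) decays like $\e^{-|\rho||x|}$ up to polynomial factors along the walls-adjusted directions and in particular lies in $L^p$ for every $p>2$ after weighting by $\varphi_0$.

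With these kernel bounds the conclusion follows as in the rank-one proof: for the local part, $\|u\ast k_{\sigma}^0\|_q\leq\|u\|_{q'}\|k_{\sigma}^0\|_{q/2}$ and $\int_{B(o,1)}|x|^{-(n-2\sigma)q/2}\,\dd\mu(x)\asymp\int_0^1 r^{-(n-2\sigma)q/2+n-1}\,\dd r<\infty$ iff $q<\frac{2n}{n-2\sigma}$, together with $k_{\sigma}^0\in L^{n/(n-2\sigma),\infty}$ (using $\mu(B(o,s))\asymp s^n$) and Young's inequality for weak spaces to cover the endpoint $q=\frac{2n}{n-2\sigma}$; for the part at infinity, one invokes the Kunze--Stein-based criterion (the same statement as~\cite[Lemma 4.1]{AP14}, valid on all symmetric spaces of non-compact type), which bounds $\|u\ast\kappa\|_q$ for radial $\kappa$ by $C_q\|u\|_{q'}(\int_{\mathfrak{a}^{+}}\delta(x^+)|\kappa(x^+)|^{q/2}\varphi_0(x^+)\,\dd x^+)^{2/q}$, and the finiteness of the last integral with $\kappa=k_{\sigma}^{\infty}$ is exactly what the estimate $k_{\sigma}^{\infty}\lesssim (1+|x|)^{K}\varphi_0$ guarantees, since $\delta(x^+)\leq \e^{2\langle\rho,x^+\rangle}$ and $\varphi_0(x^+)^{q/2+1}$ contributes $\e^{-(q/2+1)\langle\rho,x^+\rangle}$, beating $\e^{2\langle\rho,x^+\rangle}$ once $q>2$. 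I expect the main obstacle to be bookkeeping the higher-rank Plancherel density and spherical-function weights in the estimate of $J_2$ (and the resulting polynomial exponent $K$), but since we only need a crude upper bound these computations are routine adaptations of Proposition~\ref{prop: ka est}; no genuinely new phenomenon appears, which is precisely why we relegated this case to the final section.
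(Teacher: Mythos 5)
Your proposal follows essentially the same route as the paper's proof: reduce to the $L^{q'}\to L^q$ boundedness of convolution with $k_{\sigma}$, split into $k_{\sigma}^{0}$ and $k_{\sigma}^{\infty}$, treat the local part via Young and weak Young inequalities using $\mu(B(o,s))\asymp s^n$, bound $k_{\sigma}^{\infty}\lesssim(1+|x|)^{K}\varphi_0(x)$ via the same $J_1,J_2,J_3$ split and the higher-rank inversion formula and Plancherel density, and close with the Kunze--Stein/Herz criterion noting that $\delta(x^{+})\leq\e^{2\langle\rho,x^{+}\rangle}$ is beaten by $\varphi_0^{q/2+1}$ once $q>2$. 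The only nit is the aside claiming decay ``like $\e^{-|\rho||x|}$'': the decay is $\e^{-\langle\rho,x^{+}\rangle}$, which along wall directions is only $\e^{-\rho_{\min}|x^{+}|}$ with $\rho_{\min}=\min_{|x^{+}|=1,\,x^{+}\in\mathfrak{a}^{+}}\langle\rho,x^{+}\rangle$ possibly strictly smaller than $|\rho|$; but your final bookkeeping (exponent $-(q/2-1)\langle\rho,x^{+}\rangle$, still uniformly negative and integrable against polynomials) is correct, so this does not affect the argument.
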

\begin{proof}
	We only sketch parts of the proof. First of all, notice that  the discussion on positive and negative powers of the shifted fractional Laplacian, carried out for hyperbolic space, is valid for arbitrary rank symmetric spaces. More precisely, if $u\in \mathcal{C}_c^{\infty}$ then  $(\Delta^{\sigma}u-\lambda_0^{\sigma}u)^{\frac{\alpha}{2}}\in L^2$ for all $\alpha>0$, while $(\Delta^{\sigma}u-\lambda_0^{\sigma}u)^{-\frac{\alpha}{2}}\in L^2$ for all $0<\alpha<\nu/2$; for the second case, we justify following the hyperbolic space case arguments and observing that
	\begin{align*}
	\int_{K} \int_{\mathfrak{a}\cap B(0,1)}((|\xi|^2+\lambda_{0})^{\sigma}-\lambda_{0}^{\sigma})^{-\alpha}\,|\widehat{u}(\xi, k\mathbb{M})|^2\frac{\dd \xi}{|\textbf{c}(\xi)|^2}\, \dd k &\lesssim \int_{\mathfrak{a}\cap B(0,1)}|\xi|^{2\alpha} \, |\xi|^{\nu-\ell}\, \dd \xi\\
	&\lesssim \int_{0}^{1} s^{2\alpha+\nu-1}\, \dd s,
	\end{align*}
which is finite for $\alpha <\nu/2$. Therefore, as in the hyperbolic space case, it suffices to control
\[
k_{\sigma}(x)=\int_{0}^{\infty}\e^{\lambda_{0}^{\sigma}t}\, P_t^{\sigma}(x)\, \dd t.
\]
The arguments for the local part $k_{\sigma}^{0}$ run the same way as for $\mathbb{H}^{n}$. For the part at infinity one can show that $$k_{\sigma}^{\infty}(x)\lesssim (1+|x|)^{K}\varphi_{0}(x),\quad  |x|>1,$$ 
working similarly.  Let $\kappa$ be a sufficiently regular bi-$K$-invariant function on $G$. Then the convolution operator with kernel $\kappa$ 
\[
\| \ast \kappa \|_{L^{q'}\rightarrow L^q} \leq \bigg(\int_{G} \varphi_{0}(x)\, |\kappa(x)|^{\frac{q}{2}} \dd x\bigg)^{\frac{2}{q}}, \qquad \forall q\in [2, +\infty).
\] 
Notice that this result has been proved in several contexts. For $q = 2$, it is the
so-called Herz's criterion, see for instance \cite{Cow97}. For $q > 2$, the proof carried
out on Damek--Ricci spaces \cite[Theorem 4.2]{APV11} adapts straightforwardly in
the higher rank case. Hence, considering the bi-$K$-invariant kernel $$\kappa(x):=(1+|x|)^K \, \varphi_{0}(x)$$
 it suffices to check that $\varphi_0\,\kappa^{q/2}\in L^1$ for $q>2$. Taking into account \eqref{eq:Xphi0} for the ground spherical function and the Cartan decomposition, we see that for any $M>0$, the integral
\begin{align*}
	\int_{\mathfrak{a}}  (1+|x^{+}|)^{M} \, \e^{-\frac{q-2}{2}\langle \rho, x^{+}\rangle} \dd x^{+} \lesssim \int_{0}^{\infty} \, (1+r)^{M+\ell-1} \e^{-\frac{q-2}{2} \rho_{\min}r}\, \dd r
\end{align*}
is finite, since 
$\rho_{\min} :=\min_{x^{+}\in\mathfrak{a}^{+},  |x^{+}|=1}  \langle \rho, x^{+} \rangle \in(0, |\rho|]$.
We omit further details. 
\end{proof}
We also have the following Fujita-type results on arbitrary Riemannian noncompact symmetric spaces in the non-critical regime of parameters; one follows essentially the same arguments by relying on pointwise estimates \cite{GS04} and norm estimates \cite{CGMII} of the fractional heat kernel (see Proposition \ref{prop:oldFujita}).
\begin{theorem}
Let $\mathbb{X}$ be a Riemannian noncompact symmetric space of pseudo-dimension $\nu$. Suppose $h(t)=\e^{\beta t}$ for some $\beta>0$ and $\sigma \in (0,1)$. Define $\gamma^{*} = 1 + \frac{\beta}{\lambda_{0}^{\sigma}}$. Then the following holds.
	\begin{itemize}
		\item If $1<\gamma< \gamma^{*}$, then every nontrivial nonnegative mild solution to the problem~\eqref{fracheat} blows up in finite time.
		\item If $\gamma> \gamma^{*}$, or if $\gamma= \gamma^{*}$ and $\beta>\frac{2}{\nu}\lambda_{0}^{\sigma}$, then there exists a nontrivial nonnegative mild global solution to the problem~\eqref{fracheat} for sufficiently small initial data $f$.
	\end{itemize}
\end{theorem}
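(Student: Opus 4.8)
The plan is to mimic the proof of the main Fujita result on $\mathbb{H}^{n}$ (Theorem~\ref{mainconj}), replacing every pointwise or norm estimate for the hyperbolic fractional heat kernel by its higher-rank counterpart. For the blow-up half ($1<\gamma<\gamma^{*}$), I would first re-run Lemma~\ref{lemma:existence} verbatim: the iteration producing the lower bounds $u(\cdot,t)\geq H_{k}(t)(\e^{-t\Delta^{\sigma}}f)^{\gamma^{k}}$ uses only that $\|P_{t}^{\sigma}\|_{1}=1$ (Jensen) and $h(t)\geq\e^{\beta t}$, both of which hold on any symmetric space. This reduces the blow-up claim to showing $\e^{\frac{\beta}{\gamma-1}t}\|\e^{-t\Delta^{\sigma}}f\|_{\infty}\to\infty$ whenever $\frac{\beta}{\gamma-1}>\lambda_{0}^{\sigma}$. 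For this I would use the sharp large-time lower bound for the fractional heat kernel on symmetric spaces from \cite{GS04}: in the region $|x|\leq\sqrt{t}$, $t$ large, one has $P_{t}^{\sigma}(x)\asymp\varphi_{0}(x)\,t^{\frac{1}{2-2\sigma}}(t+|x|)^{-\frac{3}{2}-\frac{1}{2-2\sigma}}\e^{-\lambda_{0}^{\sigma}t}$, exactly as in~\eqref{P kernel estimates}. Integrating against $f$ over a fixed small ball $B(o,c)$ and using $t+|y|\asymp t$ there gives, as in the proof of Proposition~\ref{prop:blowup}, $t^{3/2}\e^{\lambda_{0}^{\sigma}t}\|\e^{-t\Delta^{\sigma}}f\|_{\infty}\gtrsim\sup_{x}(f\ast\varphi_{0}\mathbbm{1}_{B(o,c)})(x)>0$ for $f\not\equiv0$; combined with the algebraic factor $t^{-3/2}$ this forces the claimed divergence. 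Note the power $3/2$ in the large-time kernel bound is universal (it comes from the one-dimensional spectral parameter near the bottom $\lambda_{0}$, governed by the subordinator), so the blow-up threshold is $\gamma^{*}=1+\beta/\lambda_{0}^{\sigma}$ independently of $\nu$.

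For the global-existence half I would invoke Lemma~\ref{prop:ex} — whose proof uses only the contractivity of $\e^{-t\Delta^{\sigma}}$ on $\mathcal{C}_{0}$ and monotone convergence, valid on every symmetric space — so it suffices to verify the smallness condition~\eqref{lem2_for1} for $0\leq f\leq\varepsilon P_{\eta}^{\sigma}$ with $\varepsilon$ small. By the semigroup property $\|\e^{-s\Delta^{\sigma}}f\|_{\infty}\leq\varepsilon\|P_{s+\eta}^{\sigma}\|_{\infty}$, and the large-time $L^{\infty}$ bound for the fractional heat kernel on symmetric spaces (proved in \cite{CGMII}, or obtained from the Kunze--Stein estimate $\|P_{s}^{\sigma}\|_{\infty}\leq\|P_{s/2}^{\sigma}\|_{2}^{2}$ together with $\|P_{s}^{\sigma}\|_{2}\asymp s^{-\nu/4}\e^{-\lambda_{0}^{\sigma}s}$) one gets $\|P_{s+\eta}^{\sigma}\|_{\infty}\lesssim(s+\eta)^{-\nu/2}\e^{-\lambda_{0}^{\sigma}(s+\eta)}$ for $s$ large. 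Hence
\[
\int_{0}^{\infty}h(s)\|\e^{-s\Delta^{\sigma}}f\|_{\infty}^{\gamma-1}\,\dd s\lesssim\varepsilon\int_{0}^{\infty}\e^{(\beta-\lambda_{0}^{\sigma}(\gamma-1))s}(s+\eta)^{-\frac{\nu}{2}(\gamma-1)}\,\dd s.
\]
If $\gamma>\gamma^{*}$ the exponent $\beta-\lambda_{0}^{\sigma}(\gamma-1)$ is strictly negative and the integral converges; choosing $\varepsilon$ small enough makes the right-hand side less than $1/(\gamma-1)$. If $\gamma=\gamma^{*}$ the exponential factor is $1$ and convergence requires $\frac{\nu}{2}(\gamma^{*}-1)>1$, i.e. $\beta>\frac{2}{\nu}\lambda_{0}^{\sigma}$, which is precisely the stated hypothesis. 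Either way Lemma~\ref{prop:ex} delivers a nonnegative global mild solution; this is all that is claimed (no classical regularity is asserted in the higher-rank statement).

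The main obstacle, and the reason the critical threshold $\gamma=\gamma^{*}$ with $0<\beta\leq\frac{2}{\nu}\lambda_{0}^{\sigma}$ is \emph{excluded} from the theorem, is that in that regime the Weissler-type argument of Lemma~\ref{prop:ex} fails, and one would need the full detour through the semilinear elliptic equation~\eqref{criticreduction0}. On $\mathbb{H}^{n}$ that detour rests on Theorem~\ref{propeqlambda}, whose proof (the compact radial embedding of $\mathcal{H}_{\lambda_{0},\sigma}$, Proposition~\ref{prop: Lions}, and the kernel estimates of Proposition~\ref{prop: ka est}) was carried out with rank-one harmonic analysis — Ionescu's expansion of $\varphi_{\xi}$, the one-variable Sobolev trace inequalities, the explicit $\delta(r)\asymp\e^{(n-1)r}$. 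Extending all of that to higher rank is genuinely more involved (the $\mathfrak{a}$-integration is multidimensional and $\delta(x^{+})$ is only bounded by $\e^{2\langle\rho,x^{+}\rangle}$ with degeneracies on the walls), so rather than attempt it I would simply state the theorem in the non-critical range, exactly as above, and remark that the critical case $0<\beta\leq\frac{2}{\nu}\lambda_{0}^{\sigma}$ is the sole point where the passage from rank one to arbitrary rank is not routine. The blow-up part and the supercritical/large-$\beta$ part, by contrast, require only the universally available kernel estimates of \cite{GS04} and \cite{CGMII}, with the pseudo-dimension $\nu$ replacing the exponent $3$ (which is $\nu$ for $\mathbb{H}^{n}$) throughout.
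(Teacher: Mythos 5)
Your proposal is correct and follows essentially the same path the paper takes: the paper's own proof of this theorem is a one-line pointer to the pointwise bounds of \cite{GS04} and the norm bounds of \cite{CGMII}, saying to ``follow the same arguments'' as Propositions~\ref{prop:blowup} and~\ref{prop:oldFujita}, and your proposal fills in exactly those steps, correctly identifying that the pseudo-dimension $\nu$ replaces the hyperbolic exponent $3$ in the $L^\infty$ decay rate and hence produces the stated threshold $\beta>\frac{2}{\nu}\lambda_0^\sigma$ in the critical case, and that the sub-$\frac{2}{\nu}\lambda_0^\sigma$ critical regime must be excluded because the elliptic detour relies on rank-one specific analysis. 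One small inaccuracy: your parenthetical remark that ``the power $3/2$ in the large-time kernel bound is universal'' is not right --- the large-time pointwise and $L^\infty$ bounds carry the exponent $\nu/2$ (which equals $3/2$ only in rank one, $\nu=3$), as you in fact use correctly a few lines later in the global-existence step; this misstatement is harmless for the blow-up argument, since there the algebraic factor is dominated by the exponential regardless of its power, but it should be corrected so as not to suggest that the Plancherel density near $\xi=0$ is rank-independent.
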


On the other hand, all our results extend to \emph{rank one} symmetric spaces (including real hyperbolic spaces). Indeed, if $\phi$ is a test function on any rank one symmetric space $\mathbb{X}$, then for all $\sigma\in (0,1)$
\[
\Delta^{\sigma} \phi(x)=\frac{1}{|\Gamma(-\sigma)|}\, \mathrm{p.v.}\int_{\mathbb{X}} (\phi(x)- \phi(y))\, P_0^{\sigma}(d(x,y))\,\dd \mu(y),
\]
owing to the fact that $P_0^{\sigma}=\int_{0}^{\infty}h_t\, \frac{\dd t}{t^{1+\sigma}}$, as subordinate to the heat kernel, is a radial function in the rank one case, and so is the Jacobian of $\exp_x$ at any $x\in \mathbb{X}$. This allows to define fractional powers of the Laplacian for less regular functions with certain control at infinity, namely the class $L_{\sigma}$. All arguments used thereafter are valid for rank one symmetric spaces (e.g. the expansion due to \cite{Ion} of spherical functions $\varphi_{\xi}(r)$ for large $r$), with obvious minor modifications. We omit the details.

\begingroup
  \renewcommand{\addcontentsline}[3]{} 
  \section*{Declarations}
\endgroup
\subsection*{Data Availability} No data, models, or code were generated or used during the study.

\subsection*{Conflicts of Interest} The authors have no conflicts of interest to declare.

	\end{document}